\theoremstyle{plain}
\newtheorem{theor}{Theorem}[section]
\newtheorem{lem}[theor]{Lemma}
\newtheorem{prop}[theor]{Proposition}
\newtheorem{cor}[theor]{Corollary}
\newtheorem{defin}[theor]{Definition}
\theoremstyle{definition}
\newtheorem{rem}[theor]{Remark}
\numberwithin{equation}{section}
\newcommand{\N}{\mathbb N}
\newcommand{\R}{\mathbb R}
\newcommand{\Z}{\mathbb Z}
\newcommand{\e}{\varepsilon}
\newcommand{\Lc}{\mathcal{L}}
\newcommand{\Bc}{\mathcal{B}}
\newcommand{\Dm}{\mathbb{D}}
\newcommand{\Kk}{\boldsymbol K}
\newcommand{\Ll}{\boldsymbol L}
\newcommand{\Uu}{\boldsymbol U}
\newcommand{\Gc}{\mathcal G}
\newcommand{\Ac}{\mathcal A}
\newcommand{\Fc}{\mathcal F}
\newcommand{\Rc}{\mathcal R}
\newcommand{\Hf}{\mathfrak H}
\newcommand{\pv}{\operatorname{p.v.}}
\newcommand{\Sym}{{\operatorname{sym}}}
\newcommand{\dist}{{\operatorname{dist}}}
\newcommand{\supp}{{\operatorname{supp}}}
\newcommand{\Id}{\operatorname{Id}}
\newcommand{\E}{\mathbb{E}}
\newcommand{\ee}{e}
\newcommand{\Aa}{\boldsymbol a}
\newcommand{\Bb}{\boldsymbol b}
\newcommand{\Ld}{\operatorname{L}}
\newcommand{\step}[1]{\noindent \textit{Step} #1.}
\newcommand{\substep}[1]{\noindent \textit{Substep} #1.}
\newcommand{\Pm}{\mathbb{P}}
\newcommand{\pr}[1]{\mathbb{P}\left[#1\right]}
\newcommand{\cro}[1]{[\![#1]\!]}
\newcommand{\expec}[1]{\mathbb{E}\left[ #1 \right]}
\newcommand{\expecm}[1]{\mathbb{E}\big[ #1 \big]}
\newcommand{\var}[1]{\mathrm{Var}\left[#1\right]}
\newcommand{\cov}[2]{\operatorname{Cov}\left[{#1};{#2}\right]}
\title{On Bourgain's approach to stochastic homogenization}
\author[M. Duerinckx]{Mitia Duerinckx}
\address[Mitia Duerinckx]{Universit\'e Libre de Bruxelles, D\'epartement de Math\'ematique, 1050~Brussels, Belgium}
\email{mitia.duerinckx@ulb.be}
\author[M. Lemm]{Marius Lemm}
\address[Marius Lemm]{University of T\"ubingen, Department of Mathematics, 72076~T\"ubingen, Germany}
\email{marius.lemm@uni-tuebingen.de}
\author[F. Pagano]{Fran\c cois Pagano}
\address[Fran\c cois Pagano]{Universit\'e de Gen\`eve, D\'epartement de Math\'ematiques, 1205, Geneva, Switzerland}
\email{francois.pagano@unige.ch}
\begin{document}
\selectlanguage{english}

\begin{abstract}
In 2018, Bourgain pioneered a novel {perturbative} harmonic-analytic approach to the stochastic homogenization theory of discrete elliptic equations with weakly random i.i.d.\@ coefficients. The approach was subsequently refined to {show that {homogenized approximations}
of ensemble averages can be derived to a precision four times better than almost sure homogenized approximations,}
which was unexpected by the state-of-the-art homogenization theory. In this paper, we grow this budding theory in various directions: First, we prove that the approach is robust by extending it to the continuum setting with exponentially mixing random coefficients. Second, we give a new proof via Malliavin calculus in the case of Gaussian coefficients, which avoids the main technicality of Bourgain's original approach. This new proof also  applies to strong Gaussian correlations {with power-law decay.}
Third, we extend Bourgain's approach to the study of fluctuations by constructing weak correctors up to order~$2d$, which also clarifies the link between Bourgain's approach and the standard corrector approach to homogenization.
Finally, we draw several consequences from those different results, both for quantitative homogenization of ensemble averages and for asymptotic expansions of the annealed Green's function.
\end{abstract}

\maketitle

\setcounter{tocdepth}{1}
\tableofcontents

\section{Introduction}

{In spatial dimension $d\ge1$, given an underlying probability space $(\Omega,\Pm)$, consider a stationary measurable random coefficient field $\Aa:\R^d\times\Omega\to\R^{d\times d}$} satisfying the following uniform ellipticity and boundedness assumptions, $\Pm$-almost surely,
\begin{equation}\label{eq:ellipt-a}
e\cdot\Aa(x,\omega)e\,\ge\,\tfrac1{C_0}|e|^2,\qquad|\Aa(x,\omega)e|\,\le\,C_0|e|,\qquad\text{for all $x,e\in\R^d$}.
\end{equation}
{Here, stationarity means that the (finite-dimensional) law of $\Aa$ is shift-invariant; we refer to Section~\ref{sec:assumptions} for the detailed assumptions used throughout this work.}
Given a deterministic field $f\in\Ld^2(\R^d)^d$, for almost all $\omega\in\Omega$, we consider the following heterogeneous elliptic problem in $\R^d$,
\begin{equation}\label{eq:ellipt}
-\nabla\cdot\Aa(\tfrac\cdot\e,\omega)\nabla u_{\e,f}(\cdot,\omega)=\nabla\cdot f,\qquad u_{\e,f}(\cdot,\omega)\in \dot H^1(\R^d),
\end{equation}
where $\e>0$ stands for the length scale of heterogeneities. For simplicity, we shall drop the $\omega$-dependence in the notation, simply writing for instance $\Aa(x)=\Aa(x,\omega)$ and $u_{\e,f}=u_{\e,f}(\cdot,\omega)$, and we shall further abbreviate $\Aa_\e(x):=\Aa(\frac x\e)$.
To have a physical picture in mind, we may think of the solution field $\nabla u_{\e,f}$ as an electric field generated by a given charge distribution $\nabla\cdot f$ in an heterogeneous material with conductivity $\Aa_\e$.

In this setting, the goal of \textit{homogenization theory} is to describe as accurately as possible the solution field~$\nabla u_\e$ in the ``macroscopic'' limit $\e\downarrow0$.
The present contribution builds on a new approach to stochastic homogenization pioneered by Bourgain~\cite{Bourgain-18} in~2018.
As described in Section~\ref{sect:background} below,
the starting point of the approach is inspired by the Fourier method developed in the early works of Conlon and Naddaf~\cite{CN,conlon2000green}, and  also studied by Sigal~\cite{Sigal}: it starts by dividing the description of the solution field~$\nabla u_{\e,f}$ between its ensemble average~$\expec{\nabla u_{\e,f}}$ and its fluctuation $\nabla u_{\e,f}-\expec{\nabla u_{\e,f}}$. Then, it proceeds by viewing the homogenization problem as a regularity question for suitable Fourier symbols. In the weakly random regime, this regularity question can be efficiently tackled by perturbative methods. More precisely, in~\cite{Bourgain-18}, Bourgain focused on the ensemble-averaged solution field and investigated the regularity of the corresponding Fourier symbol in the simplified setting of discrete elliptic equations with weakly random i.i.d.\ coefficients on~$\Z^d$. In an improved version obtained in~\cite{Lemm-18}, in the same discrete setting, Bourgain's regularity result has led to a refined homogenized description of ensemble averages, which took the homogenization community by surprise~\cite{DGL,D-21a}: in a nutshell, it was shown that the ensemble-averaged field $\expec{\nabla u_{\e,f}}$ allows for an homogenized approximation with an accuracy \emph{four times better} than the field $\nabla u_{\e,f}$ itself. This goes far beyond what could be obtained from the standard corrector approach to homogenization~\cite{Gu-17,DO1}.
In the present work, we extend this new theory in three main directions.
\begin{enumerate}[---]
\item First, while only the case of {\it discrete} elliptic equations with i.i.d.\ coefficients was originally considered in~\cite{Bourgain-18,Lemm-18}, we show the robustness of the approach by extending it to the continuum setting and to coefficient fields with stretched exponential $\alpha$-mixing rate; see Theorem~\ref{th:exp-mix}. This is achieved by a suitable coarse-graining argument.
\smallskip\item {Second, we investigate the possible extension to strongly-correlated coefficient fields:
as an illustrative model, we focus on
the Gaussian setting with power-law decaying correlations
and we show that a transition occurs at the power-law exponent~$2d$ for the accuracy of the homogenized description of ensemble averages; see Theorem~\ref{th:correl}.
To this aim, we appeal to Malliavin calculus and discover how, when available, stochastic calculus leads to a completely new route to harness both oscillatory and probabilistic cancellations in the perturbation series: this reduces the main difficulty in Bourgain's original approach, avoiding any use of the so-called disjointification lemma.\footnote{For discrete elliptic equations with i.i.d.\@ coefficients, as originally studied in~\cite{Bourgain-18,Lemm-18}, we could similarly use the so-called Glauber calculus developed in~\cite{DGO1,D-20a} to avoid any need for Bourgain's disjointification lemma. This is an immediate consequence of our use of Malliavin calculus in the proof of Theorem~\ref{th:correl} and we skip the detail for conciseness.}}
\smallskip\item Third, we show how the approach can be extended from the description of ensemble averages to further describe fluctuations {of the solution field}; see Theorem~\ref{th:weak-cor}. This connects to the question of existence of the so-called weak correctors introduced in~\cite{D-21a} and it sheds a new light on the topic, in particular revealing the possible limitations of the non-perturbative approach initiated in~\cite{D-21a}.
\end{enumerate}
We refer to Section~\ref{sec:mainres} for precise statements of the main results, as well as for their consequences on the quantitative homogenization of ensemble averages and on asymptotic expansions of the annealed Green's function.
While showing that the approach pioneered by Bourgain is more robust and powerful than first realized,
we however emphasize that as in~\cite{Bourgain-18,Lemm-18} we are still restricted to a \emph{weakly} disordered regime
(that is, small ellipticity contrast).
The validity of corresponding non-perturbative results beyond those of~\cite{D-21a} remains a wide open question and is known as the \textit{Bourgain--Spencer conjecture}~\cite{DGL}; see Sections~\ref{sec:Bourgain-res} and~\ref{sec:weak-cor}.

\subsection*{Plan of the paper}
In Section~\ref{sect:background}, we revisit the background around Bourgain's approach and we prove in particular new results on the link between homogenized approximations and regularity questions for suitable Fourier symbols.
Our main results and some applications are stated in Section~\ref{sec:mainres}. The proofs are split into the six subsequent sections.
In Section~\ref{sect:detest}, we develop a continuum analog of the deterministic estimates from~\cite{DGL,Lemm-18} by using suitable mixed Lebesgue spaces, where local averaging is designed to handle singularities. In Section~\ref{sect:stretchedexponentialmixingproof}, we prove Theorem~\ref{th:exp-mix} about the stretched exponential mixing setting by using a coarse-graining argument and the deterministic estimates of Section~\ref{sect:detest}. In Section~\ref{sect:correlatedGaussianproof}, we prove Theorem~\ref{th:correl} about the Gaussian setting with power-law correlations by means of Malliavin calculus.
In Section~\ref{sect:weakcorrectors}, we prove Theorem~\ref{th:weak-cor} on the construction of weak correctors.
Finally, Sections~\ref{sect:massive} and~\ref{sect:corproofs} are devoted to some applications of our main results (Corollaries~\ref{cor:applyDGL}--\ref{cor:higherorder}).

\section{Alternative perspective on homogenization}\label{sect:background}

In this section, we recall the standard corrector approach to homogenization and compare it to an alternative approach first initiated by Conlon and Naddaf~\cite{CN,conlon2000green} and rediscovered by Sigal~\cite{Sigal}.
We explain how the latter leads us to viewing the existence of two-scale expansions in homogenization theory as a regularity question for suitable Fourier symbols.
This new perspective on homogenization provides the starting point of Bourgain's analysis~\cite{Bourgain-18}, which we shall further develop in the present work.

\subsection{Standard corrector approach to homogenization}\label{sec:standard-hom}
Building on the expected separation of scales, a standard approach to describe the solution $u_{\e,f}$ of the heterogeneous elliptic problem~\eqref{eq:ellipt} is based on postulating a formal two-scale expansion, see e.g.~\cite{BLP-78},
\begin{equation}\label{eq:2sc}
u_{\e,f}~\sim~\bar u_{\e,f}+\sum_{{n \geq 1}}\e^n\varphi^n_{j_1\ldots j_n}(\tfrac\cdot\e)\,\nabla^n_{j_1\ldots j_n}\bar u_{\e,f},\qquad\text{in $\Ld^2(\Omega;\dot H^1(\R^d))$},
\end{equation}
where we use Einstein's summation convention on repeated indices $1\le j_1,\ldots,j_n\le d$.
This amounts to approaching $u_{\e,f}$ as a sum of small modulations at scale $O(\e)$ around a deterministic profile~$\bar u_{\e,f}$. Modulations are given by so-called {\it correctors} $\{\varphi^n\}_{n\ge1}$, which we would expect to construct as stationary random fields just like the underlying coefficient field $\Aa$ itself.
If such an expansion is possible, then we find that it must necessarily be characterized as follows:
\begin{enumerate}[---]
\item The (higher-order) correctors $\{\varphi^n\}_{n\ge1}$ must be defined iteratively by letting $\varphi^n_{j_1\ldots j_n}$ be the stationary random field that has vanishing expectation, finite second moments, and satisfies almost surely the following equation in the weak sense on $\R^d$,
\begin{equation}\label{eq:correctors}
\quad-\nabla\cdot\Aa\nabla\varphi^n_{j_1\ldots j_n}\,=\,\nabla\cdot\big(\Aa\varphi^{n-1}_{j_1\ldots j_{n-1}}\ee_{j_n}\big)
+\ee_{j_n}\cdot P^\bot\Aa\big(\nabla\varphi^{n-1}_{j_1\ldots j_{n-1}}+\varphi^{n-2}_{j_1\ldots j_{n-2}}\ee_{j_{n-1}}\big),
\end{equation}
with the conventions $\varphi^0\equiv1$ and $\varphi^{-1}\equiv0$, and with the notation $P^\bot=\Id-\E$.
\smallskip\item The (higher-order) homogenized solution $\bar u_{\e,f}$ satisfies
\begin{equation}\label{eq:homog-eqn}
\qquad-\nabla\cdot\Big(\sum_{n\ge1}\bar\Aa^n_{j_1\ldots j_{n-1}}(\e\nabla)^{n-1}_{j_1\ldots j_{n-1}}\Big)\nabla\bar u_{\e,f} \,=\,\nabla\cdot f,
\end{equation}
which is viewed as a suitable dispersive correction of the standard (first-order) homogenized equation $-\nabla\cdot\bar\Aa^1\nabla\bar u_f=\nabla\cdot f$, in terms of the so-called (higher-order) homogenized tensors
\begin{equation}\label{eq:homcoeff}
\quad\bar\Aa^{n}_{j_1\ldots j_{n-1}}\ee_{j_{n}}\,=\,\expecm{\Aa\big(\nabla\varphi^{n}_{j_1\ldots j_{n}}+\varphi^{n-1}_{j_1\ldots j_{n-1}}\ee_{j_{n}}\big)},
\qquad n\ge1.
\end{equation}
Note that, due to the dispersive corrections, equation~\eqref{eq:homog-eqn} requires a suitable regularization to ensure its well-posedness; see Remark~\ref{rem:sol-homog-high} below. If $f$ has compactly supported Fourier transform, though, well-posedness holds for $\e$ small enough.
\end{enumerate}
The expansion~\eqref{eq:2sc} would give two pieces of information:
\begin{enumerate}[(A)]
\item The ensemble average $\expec{\nabla u_{\e,f}}\sim\nabla\bar u_{\e,f}$ would satisfy an homogenized equation of the form~\eqref{eq:homog-eqn}.
\smallskip\item The fluctuation $\nabla u_{\e,f}-\expec{\nabla u_{\e,f}}$ has spatial oscillations on the scale $O(\e)$, just as the coefficient field $\Aa_\e$ itself, and these would be captured by the two-scale expansion~\eqref{eq:2sc} in form of
\begin{equation}\label{eq:2sc-class}
\qquad\nabla u_{\e,f}-\expec{\nabla u_{\e,f}}\,\sim\,\nabla\sum_{n\ge1}\e^n\varphi^n_{j_1\ldots j_n}(\tfrac\cdot\e)\nabla^{n-1}_{j_1\ldots j_{n-1}}\expec{\nabla_{j_n} u_{\e,f}},
\end{equation}
or equivalently, expanding the gradient in the right-hand side,
\begin{equation*}
\qquad\nabla u_{\e,f}\,\sim\,\sum_{n\ge1}\e^{n-1}\psi^n_{j_1\ldots j_n}(\tfrac\cdot\e)\nabla^{n-1}_{j_1\ldots j_{n-1}}\expec{\nabla_{j_{n}} u_{\e,f}},
\end{equation*}
in terms of $\psi^n_{j_1\ldots j_n}:=\nabla\varphi^{n}_{j_1\ldots j_{n}}+\varphi^{n-1}_{j_1\ldots j_{n-1}}e_{j_{n}}$, which are stationary random fields satisfying $\E[\psi^1_{j}]=e_j$ and $\E[\psi^n]=0$ for $n>1$.
This is viewed as a series of stationary mean-zero oscillatory modulations around the ensemble average $\E[\nabla u_{\e,f}]$.
\end{enumerate}
In case of a periodic coefficient field $\Aa$, all correctors $\{\varphi^n\}_{n\ge1}$ can indeed be constructed as periodic solutions of the corrector equations~\eqref{eq:correctors}, and the different series above are all convergent for $\e\ll1$ small enough provided that~$f$ is smooth enough (say, provided that~$f$ has compactly supported Fourier transform); see e.g.~\cite[Proposition~3.3]{DGL}.
In contrast, in the case of a stationary random coefficient field $\Aa$, it is well-known that higher-order correctors cannot all be constructed as well-behaved stationary objects. More precisely, under suitable mixing assumptions, only the correctors $\varphi^n$'s with $n<\lceil\frac d2\rceil$ can be defined in general as stationary random fields with bounded second moments.
In this random setting, the above asymptotic expansions must then be truncated to order $\lceil\frac d2\rceil$ and in turn only yield an accurate description of the ensemble average $\expec{\nabla u_\e}$ and of the fluctuation $\nabla u_\e-\expec{\nabla u_\e}$ to order~$O(\e^{d/2-})$; see e.g.~\cite{Gu-17,DO1}.

\subsection{Conlon--Naddaf--Sigal alternative approach}
We now turn to another way to approach the homogenization question for~\eqref{eq:ellipt}, which was initiated in the early works of Conlon and Naddaf~\cite{CN,conlon2000green} and was rediscovered in slightly different terms by Sigal a few years ago in an unpublished note~\cite{Sigal}.
Taking Sigal's point of view, letting~$P=\E$ and~$P^\bot=\Id-\E$ on $\Ld^2(\R^d\times\Omega)$, we consider the block decomposition
\begin{equation}\label{eq:block-schur}
\nabla\cdot\Aa\nabla\,\equiv\,\
\begin{pmatrix}
\nabla\cdot P^\bot\Aa P^\bot\nabla&\nabla\cdot P^\bot\Aa P\nabla\\
\nabla\cdot P\Aa P^\bot\nabla&\nabla\cdot P\Aa P\nabla
\end{pmatrix},
\end{equation}
and a direct application of the Schur complement formula then yields the following {non-asymptotic} version of items~(A)--(B) above.
The first instance of this result can be found in the work of Conlon and Naddaf~\cite{CN,conlon2000green} in form of a related representation formula for the Green's function (see~\cite[(2.4)]{CN}, as well as~\cite[(8.1)]{Conlon-Spencer-14a} or~\cite[(6.5)]{Conlon-Giunti-Otto-17}). Sigal's note~\cite{Sigal} only contains a formulation of item~(A$'$) below for ensemble averages.
A short proof is included in Appendix~\ref{app:Sigal} for completeness.

\begin{lem}[Conlon, Naddaf, Sigal]$ $\label{lem:Sigal}
Let $\Psi(\cdot,\nabla)$ be the bounded pseudo-differential operator $\Ld^2(\R^d)^d\to\Ld^2(\R^d\times\Omega)^d$ given by
\begin{equation}\label{eq:defin-K}
\Psi(\cdot,\nabla)\,:=\,P^\bot\nabla(-\nabla\cdot P^\bot\Aa P^\bot\nabla)^{-1}\nabla\cdot P^\bot\Aa P,
\end{equation}
and let $\bar\Ac(\nabla)$ be the bounded convolution operator $\Ld^2(\R^d)^d\to\Ld^2(\R^d)^{d}$ given by
\begin{equation}\label{eq:defin-barA}
\bar\Ac(\nabla)\,:=\,\expec{\Aa(\Id+\Psi(\cdot,\nabla))}.
\end{equation}
By the stationarity, ellipticity, and boundedness assumptions~\eqref{eq:ellipt-a} for the coefficient field~$\Aa$, those operators are well-defined and satisfy the following properties:
\begin{enumerate}[---]
\item The pseudo-differential operator $\Psi(\cdot,\nabla)$ has a symbol $i\R^d\to\Ld^\infty(\R^d;\Ld^2(\Omega))^{d\times d}$,
\begin{equation}\label{eq:defin-k}
\qquad\Psi(\cdot,i\xi)\,=\,P^\bot(\nabla+i\xi)\Big(-(\nabla+i\xi)\cdot P^\bot\Aa P^\bot(\nabla+i\xi)\Big)^{-1}(\nabla+i\xi)\cdot P^\bot\Aa,
\end{equation}
where for all $\xi$ the (matrix-valued) random field $\Psi(\cdot,i\xi)$ is stationary and has vanishing expectation and finite second moments,
\[\qquad\E[\Psi(\cdot,i\xi)]\,=\,0,\qquad\E[|\Psi(\cdot,i\xi)e|^2]\,\le\,C_0^4|e|^2,\qquad\text{for all $e\in\R^d$}.\]
\item The convolution operator $\bar\Ac(\nabla)$ has a symbol $i\R^d\to\R^{d\times d}$,
\begin{equation}\label{eq:defin-barA-re}
\qquad\bar\Ac(i\xi)\,=\,\E[\Aa(\Id+\Psi(\cdot,i\xi))],
\end{equation}
where for all $\xi$ the matrix $\bar\Ac(i\xi)$ is uniformly elliptic and bounded in the sense of
\[\qquad e\cdot\bar\Ac(i\xi)e\,\ge\,\tfrac1{C_0}|e|^2,\qquad|\bar\Ac(i\xi)e|\le C_0^3|e|,\qquad\text{for all $e\in\R^d$}.\]
\end{enumerate}
In these terms, the following exact representation result holds:
\begin{enumerate}[\emph{(A$'$)}]
\item[\emph{(A$'$)}]\label{item:FS1} The ensemble average $\expec{\nabla u_{\e,f}}=\nabla\bar u_{\e,f}$ satisfies the following {well-posed} pseudo-differential equation
\begin{equation}\label{eq:homog-eqn+}
-\nabla\cdot\bar\Ac(\e\nabla)\,\nabla\bar u_{\e,f}\,=\,\nabla\cdot f.
\end{equation}
\item[\emph{(B$'$)}]\label{item:FS2} The fluctuation $\nabla u_{\e,f}-\expec{\nabla u_{\e,f}}$ has spatial oscillations on the scale $O(\e)$ and can be described as follows as a pseudo-differential operator with stationary symbol applied to the ensemble average,
\begin{equation}\label{eq:fluct+}
\nabla u_{\e,f}-\expec{\nabla u_{\e,f}}\,=\,\Psi(\tfrac\cdot\e,\e\nabla)\,\expec{\nabla u_{\e,f}}.
\end{equation}
\end{enumerate}
\end{lem}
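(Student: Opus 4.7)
The plan is to treat equation~\eqref{eq:ellipt} as a $2\times 2$ linear system via the block decomposition~\eqref{eq:block-schur}, apply the Schur complement formula, and then diagonalize spatially by Fourier transform, exploiting the stationarity of~$\Aa$.

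First, I would check that $\Lc := -\nabla\cdot P^\bot\Aa P^\bot\nabla$ is boundedly invertible on the range of $P^\bot$ in a suitable homogeneous Sobolev sense: the coercivity $\langle v,\Lc v\rangle \ge \tfrac1{C_0}\|\nabla v\|_{\Ld^2(\R^d\times\Omega)}^2$ follows from~\eqref{eq:ellipt-a}, so Lax--Milgram yields $\Lc^{-1}$, and formula~\eqref{eq:defin-K} thus defines a bounded operator $\Psi(\cdot,\nabla):\Ld^2(\R^d)^d\to \Ld^2(\R^d\times\Omega)^d$. Splitting $u_{\e,f}=Pu_{\e,f}+P^\bot u_{\e,f}$ and projecting the rescaled equation~\eqref{eq:ellipt} against $P$ and $P^\bot$ (noting that $P^\bot f=0$ since $f$ is deterministic) yields
\begin{align*}
-\nabla\cdot P^\bot\Aa P^\bot\nabla(P^\bot u_{\e,f}) &= \nabla\cdot P^\bot\Aa P\nabla(Pu_{\e,f}),\\
-\nabla\cdot P\Aa P\nabla(Pu_{\e,f}) &= \nabla\cdot P\Aa P^\bot\nabla(P^\bot u_{\e,f})+\nabla\cdot f.
\end{align*}
Inverting the first line via $\Lc^{-1}$ gives $\nabla(P^\bot u_{\e,f}) = \Psi(\cdot,\nabla)\nabla(Pu_{\e,f})$, which is~(B$'$) since $\E[\nabla u_{\e,f}]=\nabla Pu_{\e,f}=\nabla\bar u_{\e,f}$. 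Substituting back into the second line and using~\eqref{eq:defin-barA} yields~(A$'$); the rescaling $\Aa_\e(x)=\Aa(x/\e)$ corresponds at the symbol level to replacing $\xi$ by $\e\xi$, which accounts for the $\e\nabla$ in~\eqref{eq:homog-eqn+}.

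Second, by stationarity of $\Aa$, the operator $\Lc$, and hence $\Psi(\cdot,\nabla)$ and $\bar\Ac(\nabla)$, commute with the joint shifts on $\R^d\times\Omega$; this identifies $\bar\Ac(\nabla)$ as a Fourier multiplier and $\Psi(\cdot,\nabla)$ as a pseudo-differential operator with stationary symbol. The explicit symbols are obtained by plane-wave conjugation, since $e^{-i\xi\cdot x}\nabla e^{i\xi\cdot x}=\nabla+i\xi$: substitution into~\eqref{eq:defin-K} produces~\eqref{eq:defin-k} directly. The identity $\E[\Psi(\cdot,i\xi)]=0$ is built in since $\Psi(\cdot,i\xi)$ lies in the range of $P^\bot$, and testing the equation $-(\nabla+i\xi)\cdot P^\bot\Aa P^\bot(\nabla+i\xi)v = (\nabla+i\xi)\cdot P^\bot\Aa e$ against $v$ and invoking~\eqref{eq:ellipt-a} yields the announced $\Ld^2(\Omega)$-bound on $\Psi(\cdot,i\xi)e$. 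For $\bar\Ac$, using $\E[\Psi(\cdot,i\xi)e]=0$ to symmetrize,
\[e\cdot\bar\Ac(i\xi)e \,=\, \E\big[\Aa(e+\Psi(\cdot,i\xi)e)\cdot(e+\Psi(\cdot,i\xi)e)\big] \,\ge\, \tfrac1{C_0}|e|^2,\]
while the upper bound $|\bar\Ac(i\xi)e|\le C_0^3|e|$ follows from $|\Aa|\le C_0$ combined with the $\Ld^2(\Omega)$-bound on $\Psi(\cdot,i\xi)e$.

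The only genuine obstacle is functional-analytic: $Pu_{\e,f}$ is a deterministic $\dot H^1(\R^d)$ function, not in $\Ld^2(\R^d)$, so the Schur manipulations must be justified in homogeneous spaces. The cleanest route is to Fourier transform in~$x$ first: at each fixed $\xi\neq 0$, solving for the symbol reduces to a coercive variational problem on the mean-zero subspace of $\Ld^2(\Omega;\mathbb{C}^d)$, and the full statements~(A$'$)--(B$'$) then assemble by Plancherel against $\hat f(\xi)$.
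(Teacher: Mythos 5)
Your proposal is correct in substance and follows essentially the same route as the paper's Appendix~A proof: Schur-complement block elimination via~\eqref{eq:block-schur}, variational estimates on $\Psi(\cdot,i\xi)$ and $\bar\Ac(i\xi)$ obtained by testing the corrector equation against its own solution, and $\e$-rescaling at the symbol level. Two minor points of care: the symmetrization $e\cdot\bar\Ac(i\xi)e=\E\big[(\overline{e+\Psi(\cdot,i\xi)e})\cdot\Aa(e+\Psi(\cdot,i\xi)e)\big]$ uses the weak formulation of the corrector equation rather than merely $\E[\Psi(\cdot,i\xi)e]=0$ (the latter is what feeds the Jensen step $\E[|e+\Psi(\cdot,i\xi)e|^2]\ge|e|^2$), and the stated constant $C_0^3$ in the upper bound requires estimating $\E[|e+\Psi(\cdot,i\xi)e|^2]^{1/2}\le C_0^2|e|$ directly by the same testing trick, whereas combining $|\Aa|\le C_0$ with the triangle inequality and $\E[|\Psi(\cdot,i\xi)e|^2]^{1/2}\le C_0^2|e|$ as you suggest only yields the weaker bound $C_0(1+C_0^2)|e|$.
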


{We view~\eqref{eq:homog-eqn+} and~\eqref{eq:fluct+} as {non-asymptotic} versions of the formal higher-order homogenized equation~\eqref{eq:homog-eqn} and of the two-scale expansion~\eqref{eq:2sc-class}, respectively.} Although this description is exact, it is complicated and of no {immediate} practical use since it involves general convolution and pseudo-differential operators.
It is therefore natural to investigate to what accuracy the convolution operator $\bar\Ac(\e\nabla)$ in~\eqref{eq:homog-eqn+} and the pseudo-differential operator $\Psi(\frac\cdot\e,\e\nabla)$ in~\eqref{eq:fluct+} can be approximated by partial differential operators as $\e\downarrow0$.
In fact, we showed in~\cite{DGL} that the regularity of the Fourier symbol~$i\xi\mapsto\bar\Ac(i\xi)$ at the origin $\xi=0$
is equivalent to the existence of a homogenized approximation for the ensemble average. We recall the following result from~\cite{DGL}, where this equivalence is made explicit: homogenized coefficients are equal to derivatives of the symbol $\bar\Ac(i\xi)$ at~$\xi=0$.

\begin{prop}[see Prop.~2.1 in~\cite{DGL}]\label{prop:equiv-DGL}
Given regularity exponents $\ell\in\N$ and $0<\eta<1$, the following two properties are equivalent:
\begin{enumerate}[\emph{(ii)}]
\item[\emph{(i)}] The symbol $i\R^d\to\R^{d\times d}:i\xi\mapsto \bar\Ac(i\xi)$
is of H\"older class $C^{\ell-\eta}$ at the origin.
\smallskip\item[\emph{(ii)}] {There exist constant tensors $\{\bar \Aa^n\}_{1\leq n\leq \ell}$,
where for all $n$ and $1\le j_1,\ldots,j_{n-1}\le d$ the value $\bar \Aa^n_{j_1\ldots j_{n-1}}$ is a matrix,
such that the following property holds. For all~$\e>0$ and~$f\in\Ld^2(\R^d)^d$, letting $u_{\e,f}\in\Ld^\infty(\Omega;\dot H^1(\R^d))$ be the unique Lax--Milgram solution of the heterogeneous elliptic equation~\eqref{eq:ellipt},
and defining $\bar u^\ell_{\e,f}\in\dot H^1(\R^d)$ as a suitable notion of solution (in the sense of Remark~\ref{rem:sol-homog-high} below) for the $\ell$th-order homogenized equation
\begin{equation}\label{eq:homog-high}
\qquad-\nabla\cdot\Big(\sum_{n=1}^\ell\bar\Aa^n_{j_1\ldots j_{n-1}}(\e\nabla)^{n-1}_{j_1\ldots j_{n-1}}\Big)\nabla\bar u_{\e,f}^\ell\,=\,\nabla\cdot f+O(\e^\ell),
\end{equation}
we have the following error bound for ensemble averages,
\begin{equation}\label{eq:estim-err-homog-aver}
\qquad\|\nabla(\E[u_{\e,f}]-\bar u^\ell_{\e,f})\|_{\Ld^2(\R^d)}\leq \e^{\ell-\eta} C_\ell \|\langle\nabla\rangle^{2\ell-1} f\|_{\mathrm{L}^2(\R^d)},
\end{equation}
for some constant $C_\ell$ depending only on $d,C_0,\ell$.}
\end{enumerate}
Moreover, if those properties hold, then the so-called homogenized coefficients $\{\bar\Aa^n\}_{1\le n\le\ell}$ are related to derivatives of the symbol of $\bar\Ac(\nabla)$: for all $1\le n\le \ell$ and $z\in\R^d$,
\begin{equation}\label{eq:expansionDGL}
\frac{\bar\Aa^n_{j_1\ldots j_{n-1}}+(\bar\Aa^n_{j_1\ldots j_{n-1}})^T }{2}z_{j_1}\ldots z_{j_{n-1}}~=~\sum_{|\alpha|=n-1}
\tfrac{z^\alpha}{\alpha!}(\nabla_{i\xi}^\alpha \bar\Ac|_{\xi=0}),
\end{equation}
an identity between symmetric matrices.
\end{prop}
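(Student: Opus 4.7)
\medskip

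\textbf{Proof plan.} The starting point is the exact representation of Lemma~\ref{lem:Sigal}: the ensemble average $\bar u_{\e,f}:=\E[u_{\e,f}]$ is the unique solution of $-\nabla\cdot\bar\Ac(\e\nabla)\nabla\bar u_{\e,f}=\nabla\cdot f$, so the entire problem is reduced to comparing the Fourier symbol $\bar\Ac(i\xi)$ with its Taylor-like polynomial approximation built from the $\{\bar\Aa^n\}_n$. Throughout, I would use that by Lemma~\ref{lem:Sigal} the symbol~$\bar\Ac$ is globally bounded and uniformly elliptic, which lets one invert both operators in frequency space.

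\smallskip

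\noindent\emph{Step 1: (i) $\Rightarrow$ (ii) and formula~\eqref{eq:expansionDGL}.}
Assuming $\bar\Ac$ is of H\"older class $C^{\ell-\eta}$ near $\xi=0$, write its Taylor expansion to order $\ell-1$ and define the symmetric parts of the tensors $\bar\Aa^n$ by matching \eqref{eq:expansionDGL}; the antisymmetric parts can be fixed arbitrarily (e.g.\ to zero) since they do not affect the differential operator acting on gradients. Set $\bar\Ac^\ell(i\xi):=\sum_{n=1}^\ell\bar\Aa^n_{j_1\ldots j_{n-1}}(i\xi)_{j_1}\!\cdots(i\xi)_{j_{n-1}}$, so that by construction $|\bar\Ac(i\xi)-\bar\Ac^\ell(i\xi)|\lesssim|\xi|^{\ell-\eta}$ on a neighborhood of the origin, and $|\bar\Ac(i\xi)-\bar\Ac^\ell(i\xi)|\lesssim\langle\xi\rangle^{\ell-1}$ globally by boundedness of $\bar\Ac$. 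Define $\bar u^\ell_{\e,f}$ through a truncation of~\eqref{eq:homog-high}, as will be made precise in Remark~\ref{rem:sol-homog-high}, so that the residual is $O(\e^\ell)$. Then the difference $w:=\bar u_{\e,f}-\bar u^\ell_{\e,f}$ solves, in Fourier,
\begin{equation*}
\xi\cdot\bar\Ac(i\e\xi)\xi\,\hat w(\xi)\,=\,\xi\cdot\big(\bar\Ac^\ell(i\e\xi)-\bar\Ac(i\e\xi)\big)\xi\,\widehat{\bar u^\ell_{\e,f}}(\xi)+O(\e^\ell)\hat f.
\end{equation*}
Using uniform ellipticity of $\bar\Ac$ to invert the left-hand side and splitting the frequency integral into $|\e\xi|\le1$ (where the pointwise bound $|\e\xi|^{\ell-\eta}$ applies) and $|\e\xi|\ge1$ (where the global bound $\langle\e\xi\rangle^{\ell-1}$ is absorbed by $\langle\xi\rangle^{2\ell-1}$ on $\hat f$ through the elliptic equation satisfied by $\bar u^\ell_{\e,f}$), one obtains~\eqref{eq:estim-err-homog-aver}.

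\smallskip

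\noindent\emph{Step 2: (ii) $\Rightarrow$ (i).} Here one reverses the logic: the assumed error bound~\eqref{eq:estim-err-homog-aver} together with the exact equation of Lemma~\ref{lem:Sigal} and the $\ell$th-order equation~\eqref{eq:homog-high} forces, for every $f$ with $\hat f$ supported in a fixed compact set and after rescaling $\e\xi\rightsquigarrow\xi$,
\begin{equation*}
\big\|\,\xi\cdot\big(\bar\Ac(i\xi)-\bar\Ac^\ell(i\xi)\big)\xi\,\hat f(\xi)\,\big\|_{\Ld^2}\lesssim\e^{\ell-\eta}\,\|\langle\nabla\rangle^{2\ell-1}f\|_{\Ld^2}\quad\text{after absorbing powers of }\e.
\end{equation*}
Choosing $f$ to be a sharply localized bump (or a smooth approximation of a plane wave) centered at frequency $\xi_0$ and rescaling allows one to extract pointwise information on $\xi\mapsto\bar\Ac(i\xi)-\bar\Ac^\ell(i\xi)$ near the origin. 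Since $\bar\Ac^\ell$ is a polynomial of degree $\ell-1$, a standard characterization of H\"older spaces via polynomial approximation yields that $\bar\Ac\in C^{\ell-\eta}$ at $\xi=0$, while simultaneously identifying the Taylor coefficients as in~\eqref{eq:expansionDGL}.

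\smallskip

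\noindent\emph{Main obstacle.} The central subtlety is the high-frequency regime: the polynomial symbol $\bar\Ac^\ell(i\xi)$ is not elliptic at infinity, so \eqref{eq:homog-high} is ill-posed and one must work with a regularized definition of~$\bar u^\ell_{\e,f}$ (the $O(\e^\ell)$ in the equation absorbs the regularization error). Balancing this regularization against the global-vs-local behavior of $\bar\Ac-\bar\Ac^\ell$ is what forces the appearance of the number~$2\ell-1$ of derivatives on~$f$ in~\eqref{eq:estim-err-homog-aver}. All remaining work is bookkeeping: symmetrizing the tensors to deduce~\eqref{eq:expansionDGL} from the Taylor expansion, and verifying that the antisymmetric ambiguity in $\bar\Aa^n_{j_1\ldots j_{n-1}}$ is irrelevant since these tensors only appear composed with $\nabla^{n-1}_{j_1\ldots j_{n-1}}\nabla$ in~\eqref{eq:homog-high}.
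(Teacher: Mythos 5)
The paper does not prove this statement itself: it cites it as Proposition~2.1 of~\cite{DGL} and recalls the conclusion without reproducing the argument, so there is no in-paper proof to compare against line by line. On its own terms, your plan is the right one and hits the essential points of the argument in~\cite{DGL}: you use Lemma~\ref{lem:Sigal} to reduce the comparison $\E[u_{\e,f}]$ versus $\bar u^\ell_{\e,f}$ entirely to a Fourier-side comparison of the symbol $\bar\Ac(i\xi)$ with a degree-$(\ell-1)$ polynomial $\bar\Ac^\ell(i\xi)$, you correctly note that the antisymmetric part of the $\bar\Aa^n$ is invisible because the symbol only sees $\xi\cdot\bar\Ac^\ell(i\xi)\xi$, you correctly flag that the polynomial symbol is not elliptic at high frequency so that $\bar u^\ell_{\e,f}$ must be built hierarchically as in Remark~\ref{rem:sol-homog-high} (with the $O(\e^\ell)$ residual absorbing the regularization), and your low-frequency/high-frequency split does reproduce the estimate with $\langle\nabla\rangle^{2\ell-1}$ on $f$. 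For the converse, the localization-in-frequency argument does extract pointwise control on $\bar\Ac-\bar\Ac^\ell$ after the rescaling $\zeta=\e\xi_0$, and the standard Whitney-type characterization of $C^{\ell-\eta}$ by polynomial approximation closes the loop while identifying the Taylor coefficients, giving~\eqref{eq:expansionDGL}. One small point worth making explicit is that the converse step needs the symbol $\xi\mapsto\bar\Ac(i\xi)$ to be (at least) continuous so that an $\Ld^2$ bound against concentrating bumps upgrades to a pointwise bound; this follows from the Lax--Milgram representation in Lemma~\ref{lem:Sigal} but you do not say it. This is a gap in exposition rather than in substance.
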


\begin{rem}[Higher-order homogenized solutions]\label{rem:sol-homog-high}
We recall that the higher-order homogenized equation~\eqref{eq:homog-high} might not be well-posed as the symbol of the operator might not be positive due to the dispersive corrections. In the above statement, as in~\cite{DO1,DGL}, we can use for instance the following well-defined proxy for the higher-order homogenized solution, which solves the desired equation up to $O(\e^\ell)$: we define $\bar u_{\e,f}^\ell:=\sum_{n=1}^\ell\e^{n-1} \tilde u^n_f$, where $\tilde u_f^1$ is the solution in $\dot H^1(\R^d)$ of the (first-order) homogenized equation
\[-\nabla\cdot\bar\Aa^1\nabla \tilde u^1_f\,=\,\nabla\cdot f,\]
and where the corrections $\{\tilde u^n_f\}_{2\leq n\leq \ell}$ are iteratively defined as the solutions in~$\dot{H}^1(\R^d)$ of
\begin{equation*}
-\nabla\cdot\bar\Aa^1\nabla \tilde u^n_f
\,=\,\nabla\cdot \sum_{k=2}^n \bar\Aa^k_{j_1\ldots j_{k-1}}\nabla^{k-1}_{j_1\ldots j_{k-1}}\nabla\tilde u^{n+1-k}_f,\qquad 2\leq n\leq \ell.
\end{equation*}
\end{rem}

The proof of the above result in~\cite{DGL} is easily adapted to a similar equivalence result for fluctuations: the regularity of the Fourier symbol $i\xi\mapsto\Psi(\cdot,i\xi)$ at the origin $\xi=0$ is equivalent to the accuracy of higher-order two-scale expansions. Note that by stationarity it suffices to consider the symbol $i\R^d\to\Ld^2(\Omega)^{d\times d}:i\xi\mapsto\Psi(0,i\xi)$ at $x=0$. We omit the proof for conciseness.

\begin{prop}\label{prop:equiv-2}
Given regularity exponents $\ell\in\N$ and $0<\eta<1$, the following two properties are equivalent:
\begin{enumerate}[\emph{(ii)}]
\item[\emph{(i)}] The symbol $i\R^d\to\Ld^2(\Omega)^{d\times d}:i\xi\mapsto\Psi(0,i\xi)$ is of H\"older class $C^{\ell-\eta}$ at the origin. In particular, by definition~\eqref{eq:defin-barA-re}, this implies that the symbol $i\xi\mapsto\bar\Ac(i\xi)$ has also (at least) the same regularity.
\smallskip\item[\emph{(ii)}] There exist random fields $\{\psi^n\}_{1\le n\le\ell}$ and constant tensors $\{\bar\Aa^n\}_{1\le n\le\ell}$, where each~$\psi^n$ is a tensor-valued stationary random field with bounded second moments, \mbox{$\E[\psi^1_j]=e_j$}, and $\E[\psi^n]=0$ for $n>1$, such that the following property holds.
For all \mbox{$\e>0$} and $f\in\Ld^2(\R^d)^d$, letting $u_{\e,f}\in\Ld^\infty(\Omega;\dot H^1(\R^d))$ be the unique Lax--Milgram solution of the heterogeneous elliptic equation~\eqref{eq:ellipt}, and defining the $\ell$th-order homogenized solution~$\bar u_{\e,f}^\ell$ as in Proposition~\ref{prop:equiv-DGL}(ii),
we have the following error bound for two-scale expansions,
\[\qquad\bigg\|\nabla u_{\e,f}-\sum_{n=1}^{\ell}\e^{n-1}\psi^n_{j_1\ldots j_{n}}(\tfrac\cdot\e)\nabla_{j_1\ldots j_n}^n\bar u_{\e,f}^\ell\bigg\|_{\Ld^2(\R^d\times\Omega)}\,\le\,\e^{\ell-\eta}C_\ell\|\langle\nabla\rangle^{2\ell-1}f\|_{\Ld^2(\R^d)},\]
for some constant $C_\ell$ depending only on $d,C_0,\ell$.
\end{enumerate}
Moreover, if those properties hold, then the collection $\{\psi^n\}_{1\le n\le\ell}$ is related to derivatives of the symbol of $\Psi(\cdot,\nabla)$: for all $1\le n\le \ell$ and $x,z\in\R^d$,
\begin{equation}\label{eq:expansionDGL-bis}
\psi^n_{j_1\ldots j_{n}}(x)\,z_{j_1}\ldots z_{j_{n}}~=~z\mathds1_{n=1}+\sum_{|\alpha|=n-1}
\tfrac{z^\alpha}{\alpha!}(\nabla_{i\xi}^\alpha \Psi)(x,0).
\end{equation}
\end{prop}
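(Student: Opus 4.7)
My plan is to follow the blueprint of Proposition~\ref{prop:equiv-DGL}, pushing the argument of~\cite{DGL} from the ensemble-average level to the fluctuation level by means of the exact representation provided by item~(B$'$) of Lemma~\ref{lem:Sigal},
\begin{equation*}
\nabla u_{\e,f}-\E[\nabla u_{\e,f}]\,=\,\Psi(\tfrac\cdot\e,\e\nabla)\,\E[\nabla u_{\e,f}].
\end{equation*}
Adding back the ensemble average, the two-scale expansion in~(ii) is, up to replacing $\E[\nabla u_{\e,f}]$ by $\nabla\bar u_{\e,f}^\ell$ (controlled by Proposition~\ref{prop:equiv-DGL} itself, since H\"older regularity of $\Psi(0,i\xi)$ transmits to $\bar\Ac$ through~\eqref{eq:defin-barA-re}), equivalent to approximating the pseudo-differential operator $\Id+\Psi(\tfrac\cdot\e,\e\nabla)$ by the polynomial differential operator $\sum_{n=1}^\ell\e^{n-1}\psi^n_{j_1\ldots j_n}(\tfrac\cdot\e)\nabla^n_{j_1\ldots j_n}$. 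The equivalence between (i) and~(ii) then reduces to the equivalence between H\"older regularity at the origin and Taylor approximability in an appropriate operator norm.

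\medskip

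\noindent\textit{Direction (i)$\Rightarrow$(ii).} Starting from H\"older regularity of $\Psi(0,i\xi)$, define the $\psi^n$ via formula~\eqref{eq:expansionDGL-bis}; they inherit stationarity from $\Psi(\cdot,i\xi)$ and the moment identities $\E[\psi^1_j]=e_j$, $\E[\psi^n]=0$ for $n>1$ from $\E[\Psi(\cdot,i\xi)]=0$. Taylor-expanding,
\begin{equation*}
\Psi(x,i\e\xi)\,=\,\sum_{|\alpha|\le\ell-1}\tfrac{(i\e\xi)^\alpha}{\alpha!}\,(\nabla^\alpha_{i\xi}\Psi)(x,0)\,+\,R_\ell(x,i\e\xi),
\end{equation*}
the H\"older hypothesis bounds $R_\ell$ by $C(\e|\xi|)^{\ell-\eta}$ at low frequencies, while the uniform control $\|\Psi(0,i\xi)e\|_{\Ld^2(\Omega)}\le C_0^2|e|$ from Lemma~\ref{lem:Sigal} gives a polynomial bound at high frequencies. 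The Plancherel identity for stationary random symbols, $\|m(\tfrac\cdot\e,\e\nabla)g\|_{\Ld^2(\R^d\times\Omega)}^2=\int\E[|m(0,i\e\xi)|^2]\,|\hat g(\xi)|^2\,d\xi/(2\pi)^d$, then turns this pointwise bound into the operator-level remainder estimate $\|R_\ell(\tfrac\cdot\e,\e\nabla)g\|_{\Ld^2(\R^d\times\Omega)}\le C\e^{\ell-\eta}\|\langle\nabla\rangle^{\ell-\eta}g\|_{\Ld^2}$. Applying this to $g=\nabla\bar u_{\e,f}^\ell$, whose Sobolev norm is controlled by $\|\langle\nabla\rangle^{2\ell-1}f\|_{\Ld^2}$ through the higher-order homogenized equation, yields the announced two-scale expansion.

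\medskip

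\noindent\textit{Direction (ii)$\Rightarrow$(i).} Taking the ensemble average of the two-scale expansion and using the moment identities on the $\psi^n$, we recover the higher-order homogenized approximation of Proposition~\ref{prop:equiv-DGL}(ii), which forces H\"older regularity of $\bar\Ac$ at the origin. Subtracting from the full expansion and invoking~\eqref{eq:fluct+} shows that $\Psi(\tfrac\cdot\e,\e\nabla)$ acting on $\nabla\bar u_{\e,f}^\ell$ is approximated up to $O(\e^{\ell-\eta})$ by the polynomial operator $\sum_{n=1}^\ell\e^{n-1}(\psi^n-\E[\psi^n])_{j_1\ldots j_n}(\tfrac\cdot\e)\nabla^n_{j_1\ldots j_n}$. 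Specializing $f$ to source fields whose Fourier transforms concentrate at scale $\e^{-1}\xi_0$ for prescribed $\xi_0\in\R^d$, then reading off the Taylor coefficients and H\"older remainder of $\Psi(0,i\xi)$ at $\xi=0$ from the induced error bound, completes the argument exactly as in the corresponding step of~\cite{DGL} for~$\bar\Ac$.

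\medskip

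The main analytic input, and the chief technical point, is the Plancherel-like identity for pseudo-differential operators with stationary random symbols, which cleanly reduces the operator-level remainder estimate to a scalar frequency-localized interpolation between H\"older behavior at the origin and uniform boundedness at high frequencies. The remaining steps are bookkeeping that mirrors the proof of Proposition~\ref{prop:equiv-DGL} in~\cite{DGL}.
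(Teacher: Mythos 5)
The paper explicitly omits the proof of this proposition, stating only that it is ``easily adapted'' from the proof of Proposition~\ref{prop:equiv-DGL} in~\cite{DGL}. Your sketch is exactly that adaptation, and the core technical observation you isolate is correct: for a \emph{stationary} matrix-valued symbol $m$ and a \emph{deterministic} $g$, taking expectation restores Fourier orthogonality, i.e.\@ $\E[m(\tfrac x\e,i\e\xi)\overline{m(\tfrac x\e,i\e\xi')}]$ is $x$-independent, whence
\[
\|m(\tfrac\cdot\e,\e\nabla)g\|_{\Ld^2(\R^d\times\Omega)}^2
\,=\,\int_{\R^d}\E\big[|m(0,i\e\xi)\hat g(\xi)|^2\big]\,d\xi,
\]
which cleanly converts the operator estimate into a pointwise frequency bound on the symbol. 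With this in hand, (i)$\Rightarrow$(ii) follows by Taylor-expanding $\Psi(\cdot,i\e\xi)$ around $\xi=0$, using the Hölder hypothesis at low frequency and the uniform $\Ld^2(\Omega)$ bound from Lemma~\ref{lem:Sigal} at high frequency, then invoking Proposition~\ref{prop:equiv-DGL} (legitimate, since by~\eqref{eq:defin-barA-re} the regularity of $\Psi(0,\cdot)$ controls that of $\bar\Ac$) to replace $\E[\nabla u_{\e,f}]$ by $\nabla\bar u^\ell_{\e,f}$ at the cost of an additional $O(\e^{\ell-\eta})$; and (ii)$\Rightarrow$(i) follows by taking expectations (the moment identities kill all but the $n=1$ term, giving back Proposition~\ref{prop:equiv-DGL}(ii)), subtracting to isolate the fluctuation via~\eqref{eq:fluct+}, and testing with narrowly Fourier-localized $f$ to read off pointwise symbol information.

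Two cosmetic remarks: the $(2\pi)^d$ factor in your Plancherel identity is a convention artifact (with the paper's symmetric normalization it drops out); and ``Fourier transforms concentrate at scale $\e^{-1}\xi_0$'' is a somewhat ambiguous phrase — what you mean, and what the argument needs, is $\hat f$ supported near a fixed frequency $\xi_0$ so that after the scaling $\e\nabla$ one probes the symbol $\Psi(0,\cdot)$ near $\e\xi_0\to 0$, with the choice $|\xi_0|\simeq 1$ and $\e\downarrow 0$ yielding the Hölder bound for $|\zeta|=\e|\xi_0|$. Apart from that, this is the natural and correct adaptation that the paper has in mind.
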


In short, the above two propositions state that all standard homogenization questions are equivalent to regularity questions for the symbols
\[
\begin{array}{rclrcl}
i\R^d&\to&\R^{d\times d}:
\quad&i\xi&\mapsto&\bar\Ac(i\xi),\\[1mm]
i\R^d&\to& \Ld^2(\Omega)^{d\times d}:
\quad&i\xi&\mapsto&\Psi(0,i\xi),
\end{array}
\]
at the origin $\xi=0$.
In the periodic setting, as two-scale expansions are convergent, both symbols can be checked to be analytic in a neighborhood of the origin; see~\cite{DGL}. In contrast, in the random setting, as homogenization theory only gives in general the accuracy of two-scale expansions at best to order $O(\e^{d/2-})$, cf.~\cite{Gu-17,DO1}, we conclude that the symbol $i\xi\mapsto\Psi(0,i\xi)$ is at best of class~$C^{d/2-}$ at the origin.
As we will see below, in the weakly random regime, Bourgain proposed a new approach that goes \emph{four times} beyond this threshold for the regularity of the symbol $i\xi\mapsto\bar\Ac(i\xi)$, hence also for the accuracy of homogenized approximations for ensemble averages by Proposition~\ref{prop:equiv-DGL}.

\subsection{Bourgain's surprising result}\label{sec:Bourgain-res}

As the ensemble average $\expec{\nabla u_{\e,f}}$ is an averaged quantity, we may expect a more accurate intrinsic description to hold than for the pointwise solution field $\nabla u_{\e,f}$ itself. Equivalently, as the convolution operator $\bar\Ac(\nabla)$ is given by the expected value~\eqref{eq:defin-barA}, we may expect its symbol to have a better regularity than that of $\Psi(\cdot,\nabla)$.
As such improved results for ensemble averages could not be obtained from standard corrector theory, this direction of research was long abandoned in the homogenization community.
A recent result by Bourgain~\cite{Bourgain-18}, in its improved form obtained in~\cite{Lemm-18} by Kim and the second-named author, has shown that the above intuition is indeed correct: in the weakly random regime, the symbol of $\bar\Ac(\nabla)$ is actually four times more regular than that of $\Psi(\cdot,\nabla)$.
More precisely, the result in~\cite{Bourgain-18,Lemm-18} was obtained for an i.i.d.\@ discrete analog of the elliptic equation~\eqref{eq:ellipt}, and it can be stated as follows.\footnote{This result is only stated in~\cite{Lemm-18} for $d\ge3$, but we note that the case $d=2$ actually follows from~\cite[Theorem~1.3]{Lemm-18} up to letting the regularization parameter tend to $0$.
In addition, for~$d=1$, explicitly solving equation~\eqref{eq:ellipt} shows that the result is trivial with $\bar\Ac(\nabla)=\bar\Aa^1$.}

\begin{theor}[Bourgain~\cite{Bourgain-18}, Kim and Lemm~\cite{Lemm-18}]\label{th:bourgain}
Consider the discrete operator \mbox{$\nabla^*\cdot\Aa\nabla$} on $\ell^2(\Z^d)$, where $\nabla$ and $\nabla^*$ are forward and backward finite differences and where the coefficient field $\Aa$ on $\Z^d$ is a collection $\Aa=\{\Aa(x)\}_{x\in\Z^d}$ of i.i.d.\@ random variables.\footnote{Note that this discrete model differs from the standard random conductance model, where i.i.d.\@ coefficients would be defined on edges rather than on vertices. The proof in~\cite{Bourgain-18,Lemm-18} is easily adapted to the random conductance model, up to a slight coarse-graining argument in the spirit of Section~\ref{sect:stretchedexponentialmixingproof} of this work.}
There exists a constant $K<\infty$ (only depending on $d,C_0$) such that the following holds. If the coefficient field~$\Aa$ is close enough to a constant coefficient $\Aa_0\in\R^{d\times d}$ in the sense of
\begin{equation}\label{eq:delta-pert}
\delta\,:=\,\|\Aa-\Aa_0\|_{\Ld^\infty(\R^d\times\Omega)}\,\le\,\tfrac1K,
\end{equation}
then the convolution operator $\bar\Ac(\nabla)$ defined in Lemma~\ref{lem:Sigal} can be decomposed as
\[\bar\Ac(\nabla)\,=\,\Aa_0+\delta\,\bar\Bc(\nabla),\]
where the kernel of $\bar\Bc(\nabla)$ satisfies the following estimate for all $x,y\in\Z^d$,
\begin{equation}\label{eq:improved-dec-bourgain}
|\bar\Bc(\nabla)(x-y)|\,\le\,\delta\,\mathds1_{x=y}+ \delta^3K\langle x-y\rangle^{\delta K-3d}.
\end{equation}
In particular, this implies that the symbol of $\bar\Ac(\nabla)$ belongs to $C^{2d-\delta K-}_b(i\R^d)$.
\end{theor}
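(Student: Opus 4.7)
First, I would observe that the H\"older regularity statement on the symbol follows from the kernel bound~\eqref{eq:improved-dec-bourgain} by standard Fourier analysis: the $\delta\mathds{1}_{x=y}$ piece contributes only an additive constant to $\widehat{\bar\Bc}$, while for any multi-index $\alpha$ with $|\alpha|<2d-\delta K$ the function $z\mapsto z^\alpha\,\delta^3K\langle z\rangle^{\delta K-3d}$ is summable on $\Z^d$, so the symbol has bounded continuous derivatives up to that order, and standard dyadic interpolation at the borderline yields the class $C^{2d-\delta K-}_b(i\R^d)$. The heart of the theorem is therefore the pointwise kernel estimate~\eqref{eq:improved-dec-bourgain}.

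To derive the latter, I would write $\Aa=\Aa_0+\delta\Bb$ with $\|\Bb\|_{\Ld^\infty(\Omega)}\le1$; after absorbing the mean of $\Bb$ into $\Aa_0$, the family $\{\Bb(x)\}_{x\in\Z^d}$ is i.i.d.\ and mean-zero. Since $\Aa_0$ is deterministic, $P^\bot\Aa_0P=0$, and one factor of $\delta$ can be extracted from the outer $\Aa$ in the definition~\eqref{eq:defin-K}. Expanding the inner resolvent $(-\nabla^*\!\cdot P^\bot\Aa P^\bot\nabla)^{-1}$ in Neumann series around the deterministic Green's operator $\Gc_0:=(-\nabla^*\!\cdot\Aa_0\nabla)^{-1}$ yields a diagrammatic representation of the schematic form
\begin{equation*}
\bar\Bc(\nabla)(x-y)\;=\;\sum_{n\ge1}\delta^{n}\sum_{z_1,\ldots,z_{n-1}\in\Z^d}\E\Big[\Bb(z_0)\Bb(z_1)\cdots\Bb(z_{n-1})\Bb(z_n)\Big]\prod_{i=0}^{n-1}\Kk(z_i-z_{i+1}),
\end{equation*}
where $z_0=x$, $z_n=y$, and $\Kk$ is a discrete mixed second derivative of $\Gc_0$ decaying as $|\Kk(z)|\lesssim\langle z\rangle^{-d}$.

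By independence and vanishing mean, the joint moment is zero unless the list $(z_0,\ldots,z_n)$ clusters into blocks of size at least two. This yields the two features of~\eqref{eq:improved-dec-bourgain}: at order $\delta$ only the collapsed configuration $x=y$ survives, producing the diagonal term $\delta\mathds{1}_{x=y}$; at order $\delta^2$ there is no admissible pairing that connects $x$ to $y$ off-diagonally, so the off-diagonal expansion starts at $n=3$. Thus the remaining task is to prove, for each $n\ge3$ and $x\neq y$,
\begin{equation*}
\bigg|\sum_{\pi}\sum_{\mathbf{z}\sim\pi}\prod_{i=0}^{n-1}\Kk(z_i-z_{i+1})\bigg|\;\le\; K^{n}\,\langle x-y\rangle^{-3d+\delta K},
\end{equation*}
where $\pi$ runs over partitions of $\{0,\ldots,n\}$ with blocks of size $\ge2$ and $\mathbf{z}\sim\pi$ forces equality of the $z_i$'s inside each block. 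Summing the resulting geometric series in $\delta K\le1$ then closes the argument.

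The main obstacle is precisely this per-order diagram estimate. Each free summation $\sum_{z}\langle z-u\rangle^{-d}\langle z-v\rangle^{-d}$ is only logarithmically convergent, so a naive pointwise triangle inequality does not recover a $\langle x-y\rangle^{-3d}$ decay. The strategy is to classify coincidence partitions by their quotient graph and organize the integration so that the backbone from $x$ to $y$ uses at least three non-degenerate kernel legs; the multiplicativity of the $\langle\cdot\rangle^{-d}$ bound then produces the $\langle x-y\rangle^{-3d}$ factor, while logarithmic losses from free vertices are absorbed into $\langle\cdot\rangle^{\delta K}$ once the geometric series in $\delta$ is summed. Bourgain handled the decoupling between coincidence clusters through his \emph{disjointification lemma}, whereas the refinement of~\cite{Lemm-18} streamlined the combinatorics to push the exponent up to the optimal value $3d$ and to control the number of partitions at order $n$ by $K^n$; this combinatorial bookkeeping is the true technical core of the theorem.
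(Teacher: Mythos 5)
Your outline correctly captures the skeleton of the argument: the Neumann expansion $\bar\Bc(\nabla)=\sum_{n\ge1}(-\delta)^nP\Bb(\Kk P^\bot\Bb)^nP$, the vanishing of joint moments for non-clustered paths, which produces the diagonal term $\delta\,\mathds1_{x=y}$ and postpones off-diagonal contributions to order $\delta^3$, the identification of the per-order kernel estimate as the heart of the proof, and the derivation of $C^{2d-\delta K-}_b$ regularity from summability of the weighted kernel. You also correctly name Bourgain's disjointification lemma as a key ingredient. However, there is a genuine gap in the strategy you propose for the per-order estimate itself.

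The approach you describe --- classify coincidence partitions by quotient graph and perform a direct summation of iterated kernels arranged so that a backbone of three $\langle\cdot\rangle^{-d}$ legs joins $x$ to $y$ --- is precisely the brutal estimate that Remark~\ref{rem:sum-irred} explains is useless here. Summing the pointwise bounds $|\Kk(z)|\lesssim\langle z\rangle^{-d}$ directly over irreducible configurations does give $\langle x-y\rangle^{-3d}$ up to logarithms, but at the cost of a combinatorial prefactor of order $n^n$ from the free summations and the bookkeeping of partitions, and such a prefactor is fatal: the series in $n$ cannot be summed against $\delta^n$, and the ``logarithmic losses absorbed into $\langle\cdot\rangle^{\delta K}$'' you invoke cannot compensate a superexponential factor. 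The actual proof of Bourgain and Kim--Lemm (and its continuum adaptation in Section~\ref{sect:stretchedexponentialmixingproof}) runs quite differently: it exploits that $\Kk$ is a Calder\'on--Zygmund operator, so that the composition $(\Kk P^\bot\Bb)^n$ has $\Ld^p\to\Ld^p$ operator norm bounded by $(C/(p-1))^n$ with no combinatorial factor; this operator bound is then converted into a kernel bound by a dyadic decomposition in which only \emph{one} quantity --- the first interval $|z_{j_1}-z_{j_1+1}|$ that reaches the largest dyadic scale --- is opened up to direct summation (Lemma~\ref{lem:deterministic}), while all other sums remain bundled inside operator norms. Irreducibility is then used in a deliberately minimal way: the disjointification lemma (Lemma~\ref{lem:disjointif}) allows one to impose a bounded number of coincidence constraints $|z_i-z_{i'}|\le1$ at a time, at a fixed constant cost, and it is this sparing use of coincidences --- just enough to gain the extra $\langle x-y\rangle^{-2d}$, not so many that one falls back to pointwise kernel summation --- that keeps the estimate summable in $n$. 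Without this operator-theoretic structure and the first-long-interval trick, the bound you propose cannot be closed.
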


\begin{rem}
The above result actually states the {\it global} regularity of the symbol of~$\bar\Ac(\nabla)$ on~$i\R^d$, not only at the origin. Comparing it with Proposition~\ref{prop:equiv-DGL}, it amounts to the accuracy of homogenized approximations for ensemble averages around any given frequency. More precisely, given $\xi_0\in\R^d$, we can consider the solution $u_{\e,g,\xi_0}$ of equation~\eqref{eq:ellipt} with right-hand side $f$ replaced by $f_\e(x):=e^{ix\cdot\xi_0/\e}g(x)$, for some fixed $g\in\Ld^2(\R^d)^d$. In this setting, the regularity of the symbol of $\bar\Ac(\nabla)$ at $\xi_0$ amounts to the accuracy of asymptotic expansions for $\bar v_{\e,g,\xi_0}:=e^{-ix\cdot\xi_0/\e}\E[u_{\e,g,\xi_0}]$.
Such expansions could be obtained for any fixed $\xi_0$ from the standard corrector approach to homogenization, but we emphasize that Bourgain's approach allows to cover automatically all $\xi_0$'s at once in a uniform way.
This global regularity can be useful, as for instance in Remark~\ref{rem:no-frequ-cutoff} below.
\end{rem}

Theorem~\ref{th:bourgain} above constitutes a surprisingly strong improvement of classical predictions of stochastic homogenization theory: based on the existence of stationary correctors $\varphi^n$'s for all $n<\lceil\frac d2\rceil$, which is known to hold under suitable mixing assumptions, cf.~\cite{DO1}, we can only deduce that
\[\text{the symbol of $\bar\Ac(\nabla)$ is of class $C^{\frac{d}2-}$ at the origin,}\]
and similarly for $\Psi(0,\nabla)$, cf.~\cite{DGL}.
In contrast, Theorem~\ref{th:bourgain} provides a four times better regularity $C^{2d-\delta K-}$ for $\bar\Ac(\nabla)$ in the $\delta$-perturbative regime~\eqref{eq:delta-pert}.
A natural question, referred to in~\cite{DGL} as the \emph{Bourgain--Spencer conjecture}, is whether this improved regularity actually holds beyond the perturbative regime~\eqref{eq:delta-pert}, that is, independently of $\delta$.
In~\cite{D-21a}, the first-named author showed that in the general non-perturbative regime at least the following intermediate result holds (actually in some slightly weaker form):
\[\text{the symbol of $\bar\Ac(\nabla)$ is of class $C^{d-}$ at the origin.}\]
This last result is based on the construction of a larger number of so-called ``weak'' stationary correctors $\{\varphi^n\}_{n<d}$ in some distributional sense on the probability space.
Any further improvement in the non-perturbative setting, or any argument in favor of optimality, remains an open question.
We refer to Section~\ref{sec:weak-cor} for further discussion on this topic.

\section{Main results}\label{sec:mainres}
From now on, we focus on the case~$d>1$: indeed, for $d=1$, explicitly solving equation~\eqref{eq:ellipt} yields $\bar\Ac(\nabla)=\bar \Aa^1$ and $\Psi(x,\nabla)=\nabla\varphi^1(x)$, so that all our results are actually trivial in that case.

\subsection{Statistical assumptions}\label{sec:assumptions}
Given an underlying probability space $(\Omega,\Pm)$, we consider a uniformly elliptic stationary measurable random coefficient field $\Aa:\R^d\times\Omega\to\R^{d\times d}$, in the following sense:
\begin{enumerate}[---]
\item \emph{Measurability:} The map $\Aa(x,\cdot):\Omega\to\R^{d\times d}$ is measurable for all $x$, and $\Aa$ is jointly measurable on $\R^d\times\Omega$. This ensures in particular that realizations $\Aa(\cdot,\omega)$ are almost surely measurable functions on $\R^d$.
\smallskip\item \emph{Stationarity:} The finite-dimensional laws of the field $\Aa$ are shift-invariant. More precisely, for all $n\ge1$ and $x_1,\ldots,x_n\in\R^d$, the law of $(\Aa(x_1+x),\ldots,\Aa(x_1+x))$ does not depend on the shift $x\in\R^d$.
\smallskip\item \emph{Uniform ellipticity:} Almost surely, realizations of $\Aa$ are uniformly elliptic in the sense that
\[\quad\ee\cdot\Aa(x,\cdot)\ee\ge\tfrac1{C_0}|\ee|^2,\qquad|\Aa(x,\cdot)\ee|\le C_0|\ee|,\qquad\text{for all $x,\ee\in\R^d$},\]
for some $C_0<\infty$.
\end{enumerate}
On top of those general assumptions, as usual for quantitative stochastic homogenization theory, we shall need some strong mixing condition on the coefficient field~$\Aa$. More precisely, we shall consider the following two situations. On the one hand, we consider a general $\alpha$-mixing condition with stretched exponential mixing rate, in which case we will be able to recover the same regularity~\eqref{eq:improved-dec-bourgain} as in Bourgain's theorem. On the other hand, we also wish to study what the result becomes in a strongly correlated setting and for that purpose we consider a model Gaussian setting with algebraic correlation structure.
\begin{enumerate}[\textbf{(H$_1$)}]
\item[\textbf{(H$_1$)}]\label{H1} \emph{Stretched exponential $\alpha$-mixing setting:} The random field $\Aa$ is {$\alpha$-mixing with some stretched exponential rate function} in the sense that for all $U,V\subset\R^d$ and all events $A\in\sigma(\Aa|_U)$ and $B\in\sigma(\Aa|_V)$ we have
\[\qquad|\,\pr{A\cap B}-\pr{A}\pr{B}|\,\le\,C_0\exp(-\tfrac1{C_0}\dist(U,V)^\gamma),\]
for some exponent $\gamma>0$ and some constant $C_0<\infty$.

\smallskip\item[\textbf{(H$_2$)}]\label{H2} \emph{Correlated Gaussian setting:} The random field $\Aa$ is of the form
\begin{equation}\label{eq:rep-AA_0}
\Aa(x,\omega)\,=\,A_0(G(x,\omega)),
\end{equation}
where the function $A_0\in C_b^2(\R^\kappa;\R^{d\times d})$ is such that the uniform ellipticity assumption for $\Aa$ is satisfied pointwise,
and where $G:\R^d\times\Omega\to\R^\kappa$ is an $\R^\kappa$-valued stationary centered Gaussian random field on $\R^d$, characterized by its covariance function
\[c(x-y)\,:=\,\expec{G(x,\cdot)\otimes G(y,\cdot)},\qquad c:\R^d\to\R^{\kappa\times\kappa}.\]
Moreover, we assume that the covariance function has algebraic decay at infinity in the following sense: we assume that $c$ can be decomposed as $c=c_0\ast c_0$ for some even convolution kernel $c_0:\R^d\to\R^{\kappa\times\kappa}$ satisfying
\begin{equation}\label{eq:decay-c0}
|c_0(x)|\,\le\,C_0(1+|x|)^{-\beta},\qquad\beta:=\gamma\vee\tfrac{d+\gamma}2,
\end{equation}
for some exponent $\gamma>0$ and some constant $C_0<\infty$.
This implies in particular that the covariance function satisfies precisely
\[|c(x)|\lesssim_\gamma C_0^2(1+|x|)^{-\gamma}.\]
Under those assumptions, whenever $\gamma>0$, we note that the Gaussian field $G$ is necessarily strongly mixing, hence ergodic, but it is $\alpha$-mixing only provided~$\gamma>d$; see e.g.~\cite{Doukhan-94}.
\end{enumerate}

\subsection{Extensions of Bourgain's approach}
Our first main result generalizes Bourgain's approach, cf.~Theorem~\ref{th:bourgain}, to the continuum setting in case of a coefficient field with stretched exponential $\alpha$-mixing rate~{\bf(H$_1$)}.

\begin{theor}[Main result~1]\label{th:exp-mix}
Consider the stretched exponential $\alpha$-mixing setting~\emph{\bf(H$_1$)}, with some exponent $\gamma>0$ and some constant $C_0<\infty$.
There exists a constant \mbox{$K<\infty$} (only depending on~$d,\gamma,C_0$) such that the following holds. If the coefficient field~$\Aa$ is close enough to a constant coefficient $\Aa_0\in\R^{d\times d}$ in the sense of
\begin{equation}\label{eq:pertu-delta}
\delta\,:=\,\|\Aa-\Aa_0\|_{\Ld^\infty(\R^d\times\Omega)}\,\le\,\tfrac1K,
\end{equation}
then the convolution operator $\bar\Ac(\nabla)$ defined in Lemma~\ref{lem:Sigal} can be decomposed as
\begin{equation}
\label{eq:decompmain}\bar\Ac(\nabla)\,=\,\Aa_0+\delta\,\bar\Bc(\nabla),\end{equation}
where $\bar\Bc(\nabla)$ satisfies the following estimate for all $x,y\in\R^d$ and $1+\delta K<q<\tfrac1{\delta K}$,
\begin{equation}\label{eq:main1}
\|\mathds1_{Q(x)}\bar\Bc(\nabla)\mathds1_{Q(y)}\|_{\Ld^q(\R^d)^d\to\Ld^q(\R^d)^d}\,\le\,\delta K\tfrac{q^2}{q-1}\log(2+|x-y|)^K
\langle x-y\rangle^{\delta K-3d}.
\end{equation}
In particular, the symbol of $\bar\Ac(\nabla)$ belongs to $C_b^{2d-\delta K-}(i\R^d)$.
\end{theor}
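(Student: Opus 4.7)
The plan is to adapt the Neumann-series/perturbative strategy of~\cite{Bourgain-18,Lemm-18} to the continuum setting, combining the deterministic bounds of Section~\ref{sect:detest} with a coarse-graining argument that converts the stretched exponential $\alpha$-mixing of~{\bf(H$_1$)} into approximate independence at a suitable logarithmic scale. Write $\Aa = \Aa_0 + \delta\Bb$ with $\Bb := \delta^{-1}(\Aa-\Aa_0)$ stationary and satisfying $\|\Bb\|_\infty\le 1$. Inserting this into the definition~\eqref{eq:defin-K} of $\Psi(\cdot,\nabla)$ and expanding the resolvent of $-\nabla\cdot P^\bot\Aa P^\bot\nabla$ as a Neumann series around $L_0:=-\nabla\cdot P^\bot\Aa_0 P^\bot\nabla$, then inserting into~\eqref{eq:defin-barA} and using $\E[\Psi(\cdot,\nabla)]=0$, one obtains a formal identity
\[
\bar\Bc(\nabla) \,=\, \E[\Bb] \,+\, \sum_{n\ge 1}\delta^{n-1}\,\E\bigl[\Bb\,(R_0\Bb)^n\bigr],
\]
where $R_0$ schematically denotes the $L_0$-resolvent block $P^\bot\nabla L_0^{-1}\nabla\cdot P^\bot$. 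The constant first term $\E[\Bb]$ produces the local $\delta\mathds1_{x=y}$ contribution in~\eqref{eq:main1}, so the analysis reduces to controlling each multilinear chain $\E[\Bb(R_0\Bb)^n]$ in the cube-to-cube norm of~\eqref{eq:main1}.

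To bound each chain deterministically, I would invoke the continuum Calder\'on--Zygmund-type estimates for $L_0$ developed in Section~\ref{sect:detest}, formulated in mixed Lebesgue spaces whose local scale is designed to absorb the Green-function singularity of $L_0$. These furnish uniform $\Ld^q\to\Ld^q$ bounds for each $R_0$-link; chaining $n$ of them and using $\|\Bb\|_\infty\le 1$ gives a realization-wise estimate of the form $\|\Bb(R_0\Bb)^n\|_{\Ld^q(Q(x))\to\Ld^q(Q(y))}\lesssim \tfrac{q^2}{q-1}C^n$, with long-distance decay obtained by performing integrations by parts against those links that cross the annulus between $Q(x)$ and $Q(y)$.

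The heart of the argument is then the probabilistic cancellation. Partition $\R^d$ into a lattice of unit cubes $\{Q_z\}_{z\in\Z^d}$ and decompose each chain according to the cubes occupied by its $(n{+}1)$ integration variables. By~{\bf(H$_1$)}, the joint expectation of $\Bb$-factors localized in cubes that are mutually $L$-separated factorizes up to an error $\exp(-cL^\gamma)$. Choosing $L\sim(\log\langle x-y\rangle)^{K/\gamma}$ and iterating this factorization on the tree of cube-clusters should replicate, in the continuum, the role of Bourgain's disjointification lemma from~\cite{Bourgain-18,Lemm-18}: configurations in which some cube contains a single $\Bb$-factor can be peeled off as a constant $\E[\Bb]$ times a shorter chain, leaving only configurations in which the variables cluster tightly, at least in pairs. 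Coupling this combinatorial reduction with $2d$ integrations by parts on each $R_0$-link produces the kernel bound with polynomial decay $\langle x-y\rangle^{-3d+\delta K}$ and a logarithmic loss $\log(2+|x-y|)^K$ inherited from the coarse-graining scale. Summing the resulting geometric series in~$n$ converges in the regime $\delta K<1$ and $1+\delta K<q<1/(\delta K)$, yielding~\eqref{eq:main1}. The H\"older regularity $\bar\Ac(\nabla)\in C_b^{2d-\delta K-}(i\R^d)$ follows by the standard Fourier principle that polynomial kernel decay of order $\langle\cdot\rangle^{-d-s}$ corresponds to $C_b^s$ regularity of the symbol.

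The main obstacle will be the probabilistic step: in the i.i.d.\ discrete setting of~\cite{Bourgain-18,Lemm-18}, Bourgain's disjointification exploits \emph{exact} independence at the level of single sites, whereas here the coarse-graining delivers only \emph{approximate} independence, and the stretched exponential mixing errors must be propagated through a combinatorial sum indexed by all cube-partitions of the $n{+}1$ variables. Keeping both the combinatorial blow-up and the coarse-graining overhead polylogarithmic in $|x-y|$, so as to preserve the geometric factor~$\delta^n$ and the sharp $\tfrac{q^2}{q-1}$ dependence in~\eqref{eq:main1}, is the delicate balance on which the whole argument rests.
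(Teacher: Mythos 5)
Your high-level outline matches the paper's strategy (reduction to $\Aa=\Id+\delta\Bb$, Neumann series~\eqref{eq:barA}, coarse-graining over a lattice $R\Z^d$, separating reducible from irreducible paths, and balancing the mixing error against the combinatorics), but there is a genuine gap at the precise point you flag as the ``delicate balance.'' You propose that iterating the approximate factorization from~{\bf(H$_1$)} over the tree of cube-clusters will ``replicate the role of Bourgain's disjointification lemma,'' but this conflates two different mechanisms. Bourgain's disjointification lemma (Lemma~\ref{lem:disjointif}) is a purely \emph{deterministic harmonic-analytic} device — introducing random phases $e^{i\theta_z}$ on the cubes, integrating against a product of Poisson kernels, and invoking the Markov brothers' inequality on the resulting polynomial in $t$ — whose sole purpose is to impose coincidence constraints ($|z_i-z_{i'}|_\infty\le R$ for a few chosen index pairs) on the path sum at cost $2^{O(1)}$ rather than $n^n$. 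It has nothing to do with the probabilistic factorization; mixing is used only once, in Lemma~\ref{lem:bound-reduct}, to discard $R$-reducible paths, and that bound \emph{does} carry the $n^n$ factor, compensated by choosing $R=R(n,|x-y|)$ (not $R=R(|x-y|)$ alone, as you propose) so that $\exp(-R^\gamma/C_0)$ beats $n^n$. Without the disjointification lemma, summing irreducible paths directly still costs $n^n$ (the iterated-kernel sum in Remark~\ref{rem:sum-irred} and~\eqref{eq:brutal-estim-det}), which is not summable in $n$; a cluster-factorization argument does not remove this.

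A secondary issue: the $\langle x-y\rangle^{-3d}$ decay does not come from ``$2d$ integrations by parts on each $R_0$-link.'' In the paper it arises from the structure of irreducible paths: after conditioning on the two longest intervals and on the two coincidences $U_{i,i'}$, $U_{l,l'}$, one gets a factor $(2^m)^{-d}$ from the pointwise decay of $\Kk$ along the longest segment and, after summing out the conditioned variables, a product of three $\langle\cdot\rangle^{\theta-d}$ factors along the coincidence chain connecting $x$ to $y$ (Step~3 of Section~\ref{sec:proof-exp-mix}), combining to $\langle\tfrac1R(x-y)\rangle^{7\theta-3d}$ after evaluating the dyadic sum. This graphical/combinatorial origin (three disjoint trails from $x$ to $y$ in the quotient graph of the irreducible path, cf.\ Remark~\ref{rem:sum-irred}) is what you would need to make precise, and it is inseparable from the disjointification step that keeps the constants geometric in $n$.
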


Next, in order to illustrate how the decay rate is affected in case of strongly correlated coefficient fields, we focus on the model Gaussian setting~{\bf(H$_2$)}. In this case, the regularity exponent~$2d$ is replaced by $(2d)\wedge\gamma$ when the covariance function has algebraic decay of order $\gamma$. Note that this decay saturates whenever $\gamma\ge2d$.

\begin{theor}[Main result 2]\label{th:correl}
Consider the correlated Gaussian setting~\emph{\bf(H$_2$)}, with some exponent $\gamma>0$, say $\gamma\ne d$, and for some constant $C_0<\infty$. There exists a constant $K<\infty$ (only depending on~$d,\gamma,C_0$) such that the following holds. If the coefficient field~$\Aa$ is close enough to a constant coefficient $\Aa_0\in\R^{d\times d}$ in the sense that the function~$A_0$ in the representation~\eqref{eq:rep-AA_0} satisfies
\[\delta\,:=\,\|A_0-\Aa_0\|_{C^2_b(\R^\kappa)}\,\le\,\tfrac1K,\]
then the convolution operator $\bar\Ac(\nabla)$ defined in Lemma~\ref{lem:Sigal} can be decomposed as
\[\bar\Ac(\nabla)\,=\,\Aa_0+\delta\bar\Bc(\nabla),\]
where $\bar\Bc(\nabla)$ satisfies the following estimate for all $x,y\in\R^d$ and $1+\delta K<q<\tfrac1{\delta K}$,
\begin{equation}\label{eq:main2}
\|\mathds1_{Q(x)}\bar\Bc(\nabla)\mathds1_{Q(y)}\|_{\Ld^q(\R^d)^d\to\Ld^q(\R^d)^d}\,\le\,
\delta K\tfrac{q^2}{q-1}\langle x-y\rangle^{\delta K-d-(2d)\wedge\gamma}.
\end{equation}
In particular, the symbol of $\bar\Ac(\nabla)$ is of class {$C^{(2d)\wedge\gamma-\delta K-}_b(i\R^d)$}.
\end{theor}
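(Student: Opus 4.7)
The core strategy is twofold: first, expand the nonlinear operator $\bar\Ac(\nabla)-\Aa_0$ as a perturbation series in $\Aa_1:=\Aa-\Aa_0$ via the Neumann expansion of the resolvent appearing in definition~\eqref{eq:defin-K}; second, estimate the kernel of each term using Gaussian integration by parts, exploiting the white-noise representation afforded by the assumption $c=c_0\ast c_0$ in~\textbf{(H$_2$)}. This replaces Bourgain's discrete disjointification lemma by clean stochastic calculus identities.

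\textbf{Step 1: Perturbative series.} Since $\Aa_0$ is constant, $P^\bot\Aa_0=\Aa_0P^\bot$ and $P^\bot\Aa_0P=0$, so only $\Aa_1$ survives in the outer brackets of~\eqref{eq:defin-K}. Expanding the resolvent $(-\nabla\cdot P^\bot\Aa P^\bot\nabla)^{-1}$ in powers of $\Aa_1$ around the constant-coefficient Green's operator $\Gc_0:=(-\nabla\cdot\Aa_0\nabla)^{-1}$ and taking expectations yields
\[
\bar\Ac(\nabla)-\Aa_0\,=\,\sum_{n\ge1}\bar\Bc_n(\nabla),\qquad \|\bar\Bc_n\|\lesssim \delta^{n}K^n,
\]
where each term $\bar\Bc_n(\nabla)$ is an $n$-fold expectation of products of copies of $\Aa_1$ contracted with deterministic gradient--kernel factors built from $\Gc_0$.

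\textbf{Step 2: Malliavin calculus.} Representing $G=c_0\ast\xi$ for a standard vector white noise $\xi$ on $\R^d$, the Malliavin derivative of $\Aa_1(x)=A_0(G(x))-\Aa_0$ reads
\[
D_z\Aa_1(x)\,=\,A_0'(G(x))\,c_0(x-z),
\]
a quantity of size $O(\delta)$ with algebraic decay $|c_0(x-z)|\lesssim\langle x-z\rangle^{-\beta}$ for $\beta=\gamma\vee\tfrac{d+\gamma}{2}$. Iteratively apply the Gaussian integration-by-parts formula to the expectation in $\bar\Bc_n(\nabla)$: each step extracts one $\Aa_1(x_j)$ factor, turns it into an $\xi$-Wiener integral, and redistributes $D_z$ over the remaining factors. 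Derivatives landing on deterministic $\Gc_0$-factors vanish; those landing on other $\Aa_1(x_i)$ generate a product of two $c_0$-kernels that contract in the auxiliary variable, producing covariance factors $c(x_i-x_j)$ of decay $\langle x_i-x_j\rangle^{-\gamma}$. The result is a sum, indexed by Wick-type pairings, of spatial integrals of gradient-Green's-function kernels weighted by $c_0$-chains.

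\textbf{Step 3: Kernel decay and mixed-Lebesgue estimate.} Plugging each diagram into the mixed-Lebesgue-space framework of Section~\ref{sect:detest}, the decay of $\Gc_0$ and $\nabla\Gc_0$ combines with the correlation decays $c_0,c$ to give
\[
\|\mathds1_{Q(x)}\bar\Bc_n(\nabla)\mathds1_{Q(y)}\|_{\Ld^q\to\Ld^q}\,\le\,\delta^{n}K^n\tfrac{q^2}{q-1}\langle x-y\rangle^{-d-(2d)\wedge\gamma},
\]
where the exponent $d+(2d)\wedge\gamma$ reflects the two competing mechanisms: Gaussian pairings contribute $\langle\cdot\rangle^{-\gamma}$ each, while the intrinsic Green's function cancellation (already present in Bourgain's i.i.d.\ analysis) yields at best $\langle\cdot\rangle^{-3d}$. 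Summing the geometric series in $n$ for $\delta K<1$ gives~\eqref{eq:main2}. The claimed H\"older regularity $C^{(2d)\wedge\gamma-\delta K-}_b(i\R^d)$ of the symbol is then equivalent to the kernel decay via a Fourier-side argument identical to the one at the end of the proof of Theorem~\ref{th:bourgain}.

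\textbf{Main obstacle.} The delicate point is Step~2--3: showing that, uniformly in the combinatorial complexity of the $n$-th order Wick expansion, every diagram decays at rate $\langle x-y\rangle^{-d-(2d)\wedge\gamma}$ with only a $K^n$ prefactor (absorbed by $\delta^n$). One must track which $c$-contractions span the bulk distance $|x-y|$ and which are short-range, and separate the regimes $\gamma<2d$ (where the bound is saturated by a single long $c$-contraction) and $\gamma\ge 2d$ (where pairings can never beat the Green's function decay and one recovers Bourgain's exponent $3d$). Avoiding the disjointification lemma is precisely where Malliavin calculus buys us the needed simplification, since the derivatives of deterministic kernels vanish and the stochastic structure factorizes cleanly along pairings.
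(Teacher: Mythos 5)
Your overall strategy is indeed the paper's strategy: reduce to the Neumann series in $\Bb$, replace Bourgain's disjointification lemma by Gaussian stochastic calculus exploiting the representation $c=c_0\ast c_0$, and read off the two competing decay mechanisms ($\gamma$ from correlations vs.\ $3d$ from Green's function oscillations). You also correctly identify the critical obstacle. But the mechanism you propose for Step~2 does not overcome it, and this is a genuine gap rather than a detail.

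Concretely, you propose to ``iteratively apply the Gaussian integration-by-parts formula,'' extracting one $\Aa_1(x_j)$ factor at a time until the expectation $\E[\Bb(x_0)P^\bot\cdots P^\bot\Bb(x_n)]$ is resolved into a sum over ``Wick-type pairings.'' At order $n$ this produces at least $(n-1)!!$ diagrams (more, since $\Bb=B_0(G)$ is a nonlinear function of the Gaussian), and each diagram must then be bounded by $\delta^n K^n\langle x-y\rangle^{-d-(2d)\wedge\gamma}$. The super-exponential number of diagrams cannot be absorbed into $K^n$ with $K$ independent of $n$: this is precisely the $n^n$ wall described in~\eqref{eq:brutal-estim-det} and Remark~\ref{rem:sum-irred}, which renders the series in $n$ non-summable and which Bourgain's disjointification (and, here, its Malliavin replacement) is designed to dodge. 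Naming this as ``the delicate point'' in your final paragraph is accurate, but the proposal offers no way past it.

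What the paper actually does is much more surgical. The Helffer--Sj\"ostrand identity (Lemma~\ref{lem:Mall}) is applied \emph{exactly twice}, not $n$ times, and at carefully chosen places: first at the index $j_1$ of the longest dyadic interval $|z_{j_1}-z_{j_1+1}|_\infty$ in the path, which splits $P(\cdots)P^\bot(\cdots)P$ into a covariance and produces indices $i\le j_1<i'$ where the two Malliavin derivatives land; then, in the sub-case where $z_i$ and $z_{i'}$ are close, a second time at the longest interval $j_2$ on the unbridged side, using the commutator $\hat D_w(1+\Lc)^{-1}=(2+\Lc)^{-1}\hat D_w$ to push the derivative through. This yields only a \emph{polynomial} number (roughly $O(n^6)$, times a dyadic sum) of terms, each a product of intermediate chains of the form $(\Bb\Kk_\ell)^k$ separated by a few $c_0$-factors. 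These chains are \emph{not} further expanded into pairings: they are estimated wholesale by Bourgain's deterministic lemma (Lemma~\ref{lem:deterministic}), whose internal dyadic-maximum trick yields the benign constant $C^n\theta^{1-n}$ rather than $n^n$, and the $c_0$-factors supply the $\gamma$-decay via~\eqref{eq:estim-c0c0-sup}. Until you specify a route that controls the diagram count---and a full Wick expansion does not---the proof is incomplete at exactly the step you flag as hard.
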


\begin{rem}[$\delta$-dependence and transitions]
We briefly comment on the dependence on~$\delta$ in the above results: similarly as in the result~\eqref{eq:improved-dec-bourgain} obtained in the i.i.d.\@ discrete setting, we may expect the prefactor~$\delta$ in~\eqref{eq:main1} and~\eqref{eq:main2} to be somehow replaced by $\delta^3$ for $|x-y|\gg1$.
In fact, the kernel of $\bar\Bc(\nabla)$ can be decomposed into two contributions:
\begin{enumerate}[---]
\item the first contribution is of order $O(\delta)$ and has decay given by the na\"ive bound $\langle x-y\rangle^{-d}$ multiplied by another decay rate driven by mixing properties;
\smallskip\item the second contribution is of order~$O(\delta^3)$ and has the unusual decay $\langle x-y\rangle^{-3d}$ due to random cancellations.
\end{enumerate}
This decomposition is clear from the proof: in the perturbation series and the path analysis performed in the proof, the two contributions correspond respectively to so-called reducible and irreducible paths.
More precisely, the bound~\eqref{eq:main1} in Theorem~\ref{th:exp-mix} can be improved to the following, for all~$\e>0$,
\begin{equation*}
\|\mathds1_{Q(x)}\bar\Bc(\nabla)\mathds1_{Q(y)}\|_{\Ld^q(\R^d)^d\to\Ld^q(\R^d)^d}\,\le\,K\tfrac{q^2}{q-1}\Big(\delta\,\exp\big(\!-|x-y|^\frac{\e\gamma}{3d}\big)+\delta^3\langle x-y\rangle^{\e+\delta K-3d}\Big),
\end{equation*}
and similarly the bound~\eqref{eq:main2} in Theorem~\ref{th:correl} can be improved to
\begin{equation*}
\|\mathds1_{Q(x)}\bar\Bc(\nabla)\mathds1_{Q(y)}\|_{\Ld^q(\R^d)^d\to\Ld^q(\R^d)^d}\,\le\,K\tfrac{q^2}{q-1}
\Big(\delta\, \langle x-y\rangle^{\delta K-d-\gamma}+\delta^3\langle x-y\rangle^{\delta K-3d}\Big).
\end{equation*}
In this last estimate, we find that the decay of the first contribution dominates that of the second one when the decay of correlations is too slow in the sense of $\gamma<2d$. This amounts to a transition where the leading term in the perturbation series switches from the irreducible path $(x,y,x,y)$ to the reducible path $(x,y)$, and this comes with a corresponding transition in the $\delta$-dependence.
\end{rem}

\begin{rem}[$q$-dependence]
The bounds~\eqref{eq:main1} and~\eqref{eq:main2} contain two distinct pieces of information.
The main piece is the surprising decay at large distances $|x-y|\gg1$, which has important consequences as discussed below.
At short distances $|x-y|\lesssim1$, on the other hand, the bounds do not give precise information on the pointwise singularity of the kernel of~$\bar\Bc(\nabla)$, but they still contain nontrivial information: they show that the operator norm of~$\bar\Bc(\nabla)$ on~$\Ld^q$ is bounded by $O(\tfrac{q^2}{q-1})$,
which matches the bound on the operator norm of Calder\'on--Zygmund operators by Marcinkiewicz interpolation, at least for $q$ not too close to $1$ or $\infty$ in the sense of $1+\delta K<q<\frac1{\delta K}$.
\end{rem}

\begin{rem}[Sobolev regularity of the symbol]\label{rem:reg-Hs}
In the above statements, we focus on the H\"older regularity of the symbol of $\bar\Ac(\nabla)$, which is shown to belong to $C^{2d-
\delta K-}(i\R^d)$ or~$C^{(2d)\wedge\gamma-\delta K-}(i\R^d)$ in Theorems~\ref{th:exp-mix} and~\ref{th:correl}, respectively.
Alternatively, we may also investigate its Sobolev regularity: following the lines of the proof of H\"older regularity in Section~\ref{ssect:hoelder}, we can easily check that the symbol also belongs for instance to~$H^{\frac{5d}2-\delta K-}(i\R^d)$ or~$H^{\frac d2+(2d)\wedge\gamma-\delta K-}(i\R^d)$ in the setting of Theorems~\ref{th:exp-mix} and~\ref{th:correl}, respectively.
This weak differentiability can be useful, as in particular for our estimates on the averaged Green's function in Remark~\ref{rem:no-frequ-cutoff} below.
\end{rem}

\subsection{Application~1: homogenization of ensemble averages}
We briefly describe several consequences of Theorems~\ref{th:exp-mix} and~\ref{th:correl} to homogenization theory, and we start with the homogenization of ensemble averages. The following result is an immediate consequence of Theorems~\ref{th:exp-mix} and~\ref{th:correl} combined with the equivalence stated in Proposition~\ref{prop:equiv-DGL}. As a comparison, we recall that standard quenched homogenized expansions only reach the order~$\ell=\frac d2$ in the stretched exponential mixing setting~{\bf(H$_1$)}, and the order $\ell=\frac 12(d\wedge\gamma)$ in the correlated Gaussian setting~{\bf(H$_2$)} with exponent~$\gamma$, cf.~\cite{DO1}.
In particular, while in the stretched exponential mixing setting the ensemble-averaged solution allows for an homogenized approximation with an accuracy {\it four times better} than the quenched solution itself, we find that the accuracy only gets {\it two times better} in the very correlated Gaussian setting with $\gamma< d$, thus matching in that case the non-perturbative result that would be obtained with the weak corrector method of~\cite{D-21a}.

\begin{cor}[Homogenization for ensemble averages]\label{cor:applyDGL}\
\begin{enumerate}[(i)]
\item \emph{Stretched exponential $\alpha$-mixing setting:}
Under the assumptions of Theorem~\ref{th:exp-mix}, there exist constant tensors $\{\bar\Aa^n\}_{1\le n\le 2d}$ such that the following property holds for any integer $\ell\ge1$ and any $0\le\eta<1$ with
\[\ell-\eta\,<\,2d-\delta K.\]
For all $\e>0$ and~$f\in\Ld^2(\R^d)^d$, letting $u_{\e,f}\in\Ld^\infty(\Omega;\dot H^1(\R^d))$ be the unique Lax--Milgram solution of the heterogeneous elliptic equation~\eqref{eq:ellipt},
and defining $\bar u^\ell_{\e,f}\in\dot H^1(\R^d)$ as a suitable notion of solution (in the sense of Remark~\ref{rem:sol-homog-high}) for the $\ell$th-order homogenized equation
\[\qquad-\nabla\cdot\Big(\sum_{n=1}^{\ell}\bar\Aa^n_{j_1\ldots j_{n-1}}(\e\nabla)^{n-1}_{j_1\ldots j_{n-1}}\Big)\nabla\bar u_{\e,f}^{\ell}\,=\,\nabla\cdot f+O(\e^{\ell}),\]
we have the following error bound for ensemble averages,
\begin{equation*}
\qquad\|\nabla(\E[u_{\e,f}]-\bar u^\ell_{\e,f})\|_{\mathrm{L}^2(\R^d)}\leq \e^{\ell-\eta} C_\ell \|\langle\nabla\rangle^{2\ell-1} f\|_{\mathrm{L}^2(\R^d)}.
\end{equation*}
\item \emph{Correlated Gaussian setting with exponent $\gamma$:}
Under the assumptions of Theorem~\ref{th:correl}, the same result holds as in~(i) up to any order
\[\ell-\eta\,<\,(2d)\wedge\gamma-\delta K.\]
\end{enumerate}
\end{cor}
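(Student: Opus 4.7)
The plan is to derive Corollary~\ref{cor:applyDGL} directly from the two-way equivalence between symbol regularity and homogenized expansions recorded in Proposition~\ref{prop:equiv-DGL}, feeding in the regularity statement for $\bar\Ac(\nabla)$ produced by Theorems~\ref{th:exp-mix} and~\ref{th:correl}. In other words, no independent PDE estimate is needed: the work has already been done in proving that $i\xi\mapsto\bar\Ac(i\xi)$ is H\"older of the claimed order at (and in fact uniformly near) the origin, and Proposition~\ref{prop:equiv-DGL} turns this into precisely the quenched ensemble-averaged estimate claimed in the corollary.

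For part~(i), I would argue as follows. Theorem~\ref{th:exp-mix} gives the decomposition $\bar\Ac(\nabla)=\Aa_0+\delta\bar\Bc(\nabla)$ with a kernel bound that, after integrating against polynomial weights in Fourier space, implies $\bar\Ac\in C_b^{2d-\delta K-}(i\R^d)$. Given any integer $\ell\ge1$ and any $0\le\eta<1$ with $\ell-\eta<2d-\delta K$, pick $\eta'\in(\eta,1)$ with $\ell-\eta'<2d-\delta K$ as well; then $\bar\Ac$ is of class $C^{\ell-\eta'}$ at the origin, so property~(i) of Proposition~\ref{prop:equiv-DGL} is satisfied at order $(\ell,\eta')$. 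The implication (i)$\Rightarrow$(ii) of that proposition then furnishes constant tensors $\{\bar\Aa^n\}_{1\le n\le\ell}$ and the error bound
\[
\|\nabla(\E[u_{\e,f}]-\bar u^\ell_{\e,f})\|_{\Ld^2(\R^d)}\,\le\,\e^{\ell-\eta'}C_\ell\|\langle\nabla\rangle^{2\ell-1}f\|_{\Ld^2(\R^d)},
\]
which a fortiori gives the claimed bound at the exponent $\ell-\eta$. The tensors so produced are intrinsically characterized through their symmetrized parts by the Taylor identity~\eqref{eq:expansionDGL} applied to the derivatives of $\bar\Ac(i\xi)$ at $\xi=0$, so in particular the same collection $\{\bar\Aa^n\}_{1\le n\le 2d}$ can be used uniformly for every admissible $(\ell,\eta)$: the higher-order tensors simply arise as further Taylor coefficients of the single symbol $\bar\Ac$.

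For part~(ii), the proof is formally identical, with Theorem~\ref{th:correl} replacing Theorem~\ref{th:exp-mix}: it yields $\bar\Ac\in C_b^{(2d)\wedge\gamma-\delta K-}(i\R^d)$, and Proposition~\ref{prop:equiv-DGL} then produces the tensors and the quenched estimate for every $\ell-\eta<(2d)\wedge\gamma-\delta K$. The whole corollary is therefore one line once Proposition~\ref{prop:equiv-DGL} is granted.

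There is essentially no obstacle in this deduction; the only small care required is the book-keeping with the strict H\"older exponents (Theorems~\ref{th:exp-mix}--\ref{th:correl} give $C^{s-}$ rather than $C^{s}$, so one has to insert an auxiliary $\eta'>\eta$ as above to match the $C^{\ell-\eta'}$ hypothesis of Proposition~\ref{prop:equiv-DGL}), and the verification that the tensors extracted from the symbol do not depend on the chosen $\ell$, which is automatic from~\eqref{eq:expansionDGL}. All the analytic difficulty of the corollary has already been absorbed into the proofs of the two main theorems and of Proposition~\ref{prop:equiv-DGL}.
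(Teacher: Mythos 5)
Your proposal is correct and takes exactly the approach the paper intends: the paper itself states that Corollary~\ref{cor:applyDGL} is ``an immediate consequence of Theorems~\ref{th:exp-mix} and~\ref{th:correl} combined with the equivalence stated in Proposition~\ref{prop:equiv-DGL},'' which is precisely the chain you lay out (H\"older regularity of the symbol $\Rightarrow$ Proposition~\ref{prop:equiv-DGL}(i) $\Rightarrow$ Proposition~\ref{prop:equiv-DGL}(ii), with the auxiliary $\eta'>\eta$ to absorb the open-ended exponent $C^{s-}$ and the Taylor identity~\eqref{eq:expansionDGL} ensuring the tensors are consistent across orders). No gap.
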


For the above homogenization result to be any useful to practitioners, it should be complemented with a practical way to actually compute numerically the constant tensors~$\{\bar\Aa^n\}_{1\le n\le\ell}$ that define the homogenized equation. According to Proposition~\ref{prop:equiv-DGL}, those can be obtained as derivatives of the symbol of~$\bar\Ac(\nabla)$ at the origin, cf.~\eqref{eq:expansionDGL}, but this description is of no much use for numerics.
To solve this issue, we show that those homogenized tensors can also be obtained as the limits of their massive approximations, which in turn are amenable to numerical computations: indeed, as e.g.~in~\cite{Gloria-Habibi-16}, we recall that massive approximations can be evaluated numerically by periodization and Monte Carlo methods. The proof of the following massive approximation result is postponed to Section~\ref{sect:massive}. The question of convergence rates is skipped for conciseness.

\begin{cor}[Massive approximation]\label{cor:massive}
For $\mu>0$, replacing the operator \mbox{$-\nabla\cdot\Aa\nabla$} by its massive version $\mu-\nabla\cdot\Aa\nabla$, all correctors and homogenized coefficients can be constructed as follows:
\begin{enumerate}[---]
\item The massive correctors $\{\varphi_\mu^n\}_{n\ge1}$ can be uniquely defined iteratively by letting $\varphi_{\mu,i_1\ldots i_n}^n$ be the stationary random field that has vanishing expectation, finite second moments, and satisfies almost surely the following massive version of the corrector equation~\eqref{eq:correctors}, in the weak sense on $\R^d$,
\begin{multline*}
\qquad(\mu-\nabla\cdot\Aa\nabla)\varphi^n_{\mu,j_1\ldots j_n}\,=\,\nabla\cdot(\Aa\varphi^{n-1}_{\mu,j_1\ldots j_{n-1}}\ee_{j_n})\\
+\ee_{j_n}\cdot P^\bot\Aa(\nabla\varphi^{n-1}_{\mu,j_1\ldots j_{n-1}}+\varphi^{n-2}_{\mu,j_1\ldots j_{n-2}}\ee_{j_{n-1}}),
\end{multline*}
with the conventions $\varphi_\mu^0\equiv1$ and $\varphi_\mu^{-1}\equiv0$.
\smallskip\item The homogenized tensors $\{\bar\Aa_\mu^n\}_{n\ge1}$ are defined by
\[\bar\Aa^n_{\mu,j_1\ldots j_{n-1}}e_{j_n}\,=\,\expecm{\Aa\big(\nabla\varphi^{n}_{\mu,j_1\ldots j_{n}}+\varphi^{n-1}_{\mu,j_1\ldots j_{n-1}}\ee_{j_{n}}\big)}.\]
\end{enumerate}
In these terms, letting $\ell$ denote the highest order obtained in Corollary~\ref{cor:applyDGL}, we have for all~\mbox{$1\le n\le\ell$},
\[\lim_{\mu\downarrow0}\Sym(\bar\Aa_{\mu}^n)\,=\,\Sym(\bar\Aa^n),\]
where we use the notation $\Sym(\bar\Aa^n)_{j_1\ldots j_n}:=\frac1{n!}\sum_{\sigma\in\Sym(n)}\frac12(\bar\Aa^n_{j_{\sigma(1)}\ldots j_{\sigma(n)}}+(\bar\Aa^n_{j_{\sigma(1)}\ldots j_{\sigma(n)}})^T)$ for tensor symmetrization.
\end{cor}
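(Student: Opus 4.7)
The plan is to identify $\Sym(\bar\Aa_\mu^n)$ and $\Sym(\bar\Aa^n)$ as (symmetric parts of) Taylor coefficients at the origin of massive and non-massive convolution operators $\bar\Ac_\mu(\nabla)$ and $\bar\Ac(\nabla)$, respectively, and then to show that $\bar\Ac_\mu\to\bar\Ac$ in a topology strong enough to pass the first $\ell-1$ derivatives through the limit at $\xi=0$.

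First, I rerun the Schur-complement calculation of Lemma~\ref{lem:Sigal} starting from the massive operator $\mu-\nabla\cdot\Aa\nabla$ in place of $-\nabla\cdot\Aa\nabla$. This produces a massive convolution operator $\bar\Ac_\mu(\nabla)$ with symbol
\[
\bar\Ac_\mu(i\xi)\,=\,\E\bigl[\Aa(\Id+\Psi_\mu(\cdot,i\xi))\bigr],
\]
where $\Psi_\mu(\cdot,i\xi)$ is the obvious massive analog of~\eqref{eq:defin-k}, obtained by replacing $-(\nabla+i\xi)\cdot P^\bot\Aa P^\bot(\nabla+i\xi)$ by its massive version $\mu-(\nabla+i\xi)\cdot P^\bot\Aa P^\bot(\nabla+i\xi)$. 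For fixed $\mu>0$, this shifted massive operator remains uniformly coercive on a complex neighborhood of $\xi=0$, so $\xi\mapsto\bar\Ac_\mu(i\xi)$ is even analytic there. Expanding the resolvent as a Neumann series in~$\xi$ and identifying terms via the same algebra as in~\eqref{eq:expansionDGL} then gives
\[
\frac{\bar\Aa_\mu^n+(\bar\Aa_\mu^n)^T}{2}\,z_{j_1}\ldots z_{j_{n-1}}
~=~\sum_{|\alpha|=n-1}\tfrac{z^\alpha}{\alpha!}\bigl(\nabla_{i\xi}^\alpha\bar\Ac_\mu|_{\xi=0}\bigr).
\]
This identity is cleaner than~\eqref{eq:expansionDGL} for $\mu=0$, since for $\mu>0$ every corrector $\varphi_\mu^n$ is a genuine stationary random field with bounded second moments (by Lax--Milgram on the probability space, the mass term securing coercivity in every order).

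Next, I show $\bar\Ac_\mu\to\bar\Ac$ in $C^{\ell-1}$ near the origin as $\mu\downarrow0$, where $\ell$ is the highest order available from Corollary~\ref{cor:applyDGL}. Pointwise convergence $\bar\Ac_\mu(i\xi)\to\bar\Ac(i\xi)$ for each fixed $\xi$ follows from strong resolvent convergence of $\mu-(\nabla+i\xi)\cdot P^\bot\Aa P^\bot(\nabla+i\xi)$ to its $\mu=0$ version, together with the uniform $\Ld^2(\Omega)$-bound on $\Psi_\mu(\cdot,i\xi)$ already present in Lemma~\ref{lem:Sigal} (unchanged by the mass). To upgrade this to $C^{\ell-1}$ convergence, I rerun the proofs of Theorems~\ref{th:exp-mix} and~\ref{th:correl} verbatim for the massive operator: the deterministic estimates of Section~\ref{sect:detest} only improve upon adding $\mu\ge0$ (the kernels of the relevant resolvents decay at least as fast as in the massless case), and the probabilistic ingredients of Sections~\ref{sect:stretchedexponentialmixingproof}--\ref{sect:correlatedGaussianproof} are insensitive to the mass. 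This yields uniform-in-$\mu$ bounds $\bar\Ac_\mu\in C^{2d-\delta K-}_b(i\R^d)$ in the setting of Theorem~\ref{th:exp-mix}, and $\bar\Ac_\mu\in C^{(2d)\wedge\gamma-\delta K-}_b(i\R^d)$ in that of Theorem~\ref{th:correl}. Arzel\`a--Ascoli then promotes pointwise convergence to convergence in $C^{\ell-1}$ near the origin.

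Combining the two steps, the Taylor coefficients of $\bar\Ac_\mu$ at $\xi=0$ up to order $\ell-1$ converge to those of $\bar\Ac$, which through the identity above translates into $\Sym(\bar\Aa_\mu^n)\to\Sym(\bar\Aa^n)$ for every $1\le n\le\ell$. I expect the main obstacle to be the uniform-in-$\mu$ regularity estimate on~$\bar\Ac_\mu$: it reduces to auditing the constants appearing in the proofs of Theorems~\ref{th:exp-mix} and~\ref{th:correl} and checking that each one can be chosen independently of $\mu\ge0$. This should be a bookkeeping exercise, since adding a mass term only makes the relevant resolvents better behaved, but it is the one step where care is required.
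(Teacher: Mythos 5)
Your proposal is correct and follows the same overall strategy as the paper: identify the homogenized coefficients with Taylor coefficients of the symbol at $\xi=0$, establish a uniform-in-$\mu$ H\"older bound on $\bar\Ac_\mu$ by rerunning the proof of Theorem~\ref{th:exp-mix} (the mass only adding an extra exponential kernel decay $e^{-\mu|\cdot|/C}$), establish pointwise convergence $\bar\Ac_\mu(i\xi)\to\bar\Ac(i\xi)$ from strong operator convergence, and combine. The one place where you diverge is the final passage to the limit: you invoke Arzel\`a--Ascoli (uniform boundedness in $C^{2d-\delta K-}$ plus pointwise convergence implies convergence in $C^{\ell-1}$ on compacts), whereas the paper instead writes out the uniform Taylor remainder bound on $\bar\Ac_\mu(i\xi)$, subtracts at two different masses $\mu,\mu'$, solves for the $\ell$th coefficient via a triangular system in the Taylor scale $\kappa$, and runs an induction to show that the coefficients form a Cauchy sequence. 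Your soft compactness argument is cleaner and arguably more elementary; the paper's hands-on Cauchy argument is more quantitative and would be the natural starting point for extracting a convergence rate in $\mu$ (which the paper explicitly chooses not to pursue). Both are valid, and your observation that the Arzel\`a--Ascoli step works because $\ell-1$ is strictly below the uniform regularity exponent $2d-\delta K$ is exactly the margin one needs.
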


\subsection{Application~2: asymptotics of annealed Green's function}
As shown in~\cite{Lemm-Keller-21} in the discrete setting,
in addition to giving rise to expansions of ensemble averages of the solution operator,
the regularity of the symbol of $\bar\Ac(\nabla)$ can also be used to derive asymptotic expansions for the annealed Green's function $\mathcal G$. The latter is defined for $d>2$ as the tempered distribution\footnote{Note that for $d=2$, due to the integrability issue at $\xi=0$ in the integral, only the gradient of the Green's function can be defined as a tempered distribution. We focus on $d>2$ here for shortness.}
\begin{eqnarray}\label{eq:def-calG}
\Gc(x)&:=&\E\big[(-\nabla\cdot\Aa\nabla)^{-1}\big](x)\nonumber\\
&=&\big(-\nabla\cdot \bar\Ac(\nabla)\nabla\big)^{-1}(x)
\,=\,\int_{\R^d}e^{ix\cdot\xi}\,\big(\xi\cdot \bar\Ac(i\xi)\xi\big)^{-1}\,\frac{d\xi}{(2\pi)^{d/2}}.
\end{eqnarray}
Previous work on Green's functions in stochastic homogenization has been focused on $\Ld^2$-annealed estimates. Specifically, the main concern was to show that the quenched Green's function {$G(x,y):=(-\nabla\cdot\Aa\nabla)^{-1}(x,y)$} behaves similarly as the Green's function for the Laplacian up to $\Ld^2$-averaging over the random ensemble: for $|x-y|\ge1$,
\begin{eqnarray*}
\|G(x,y)\|_{\Ld^2(\Omega)}&\lesssim&|x-y|^{2-d},\\
\|\partial_x G(x,y)\|_{\Ld^2(\Omega)}&\lesssim&|x-y|^{1-d},\\
\|\partial_x\partial_yG(x,y)\|_{\Ld^2(\Omega)}&\lesssim&|x-y|^{-d}.
\end{eqnarray*}
While the first of those bounds follows from the Aronson estimates, the latter two show that the classical De Giorgi--Nash--Moser regularity theory can be substantially improved upon $\Ld^2$-averaging. This was first established in the discrete i.i.d.\@ setting by Delmotte and Deuschel~\cite{Delmotte-Deuschel-05} (see also~\cite{conlon2000green,naddaf1997homogenization}), and has been largely extended and refined since then (see e.g.~\cite{marahrens2015annealed,GMa,armstrong2019quantitative,BGO-20}). It can be viewed as a precursor of the large-scale regularity theory later developed in~\cite{AS,armstrong2019quantitative,GNO-reg}.
We also refer to~\cite{MO-16,BGO-17,BGO-20} for related two-scale expansions of the Green's function, which amount to quenched asymptotic expansions of~$G$.

In the present work, instead of those $\Ld^2$-averaged estimates on the quenched Green's function $G$, we focus on the averaged Green's function $\Gc$ defined in~\eqref{eq:def-calG},
\[\Gc(x-y)\,=\,\E[G(x,y)],\]
and we show that better estimates and expansions can be proved for the latter.
From the perspective of statistical mechanics, we recall that correlation functions for several equilibrium statistical ensembles can be written as the averaged Green's function for some elliptic operator in divergence form with random coefficients, see~\cite{naddaf1997homogenization}, thus providing further motivation for the present study.

We start with the leading-order asymptotics of the annealed Green's function: more precisely, due to the leading-order expansion
\begin{equation}\label{eq:expand-barA-leading}
-\nabla\cdot\bar\Ac(\nabla)\nabla\,=\,-\nabla\cdot\bar\Aa^1\nabla+O(|\nabla|^3),
\end{equation}
we can naturally compare $\Gc$ with the Green's function $\bar G$ associated with the homogenized operator $-\nabla\cdot\bar\Aa^1\nabla$. Note that $\bar G$ satisfies $|\nabla^\alpha \bar G(x)|\simeq_\alpha|x|^{2-d-|\alpha|}$ for all $\alpha\ge0$, so that the result below indeed identifies the leading asymptotics of $\nabla^\alpha\mathcal G$ as $|x|\uparrow\infty$. The proof is postponed to Section~\ref{sect:corproofs-avgcasreg}.
The possibility of removing the frequency cut-off $\chi$ is the object of Remark~\ref{rem:no-frequ-cutoff} below.
Note that a corresponding result holds, with the same proof, under the assumptions of Theorem~\ref{th:correl}, which we skip here for conciseness.

\begin{cor}[Leading asymptotics of annealed Green's function]\label{cor:avgcasreg}
Let $d>2$ and let the assumptions of Theorem~\ref{th:exp-mix} hold. Given a frequency cut-off $\chi$ with Fourier transform $\hat\chi\in C_c^\infty(\R^d)$, we have for all multi-indices $\alpha\ge0$ with $|\alpha|\le d$,
\begin{equation}\label{eq:AGFasymptotics}
|\chi*(\nabla^\alpha\mathcal G-\nabla^\alpha\bar G)(x)|\,\lesssim_\chi\,\langle x\rangle^{1-d-|\alpha|}.
\end{equation}
In addition, for $|\alpha|=d$, we have for all $\eta>\delta K$,
\begin{equation}\label{eq:AGFasymptotics-Cr}
[\chi\ast(\nabla^\alpha \Gc-\nabla^\alpha \bar G)]_{C^{1-\eta}(B(x))}\,\lesssim_{\chi,\eta}\,\langle x\rangle^{\eta-2d},
\end{equation}
where $[\cdot]_{C^{1-\eta}(B(x))}$ stands for the H\"older semi-norm on $B(x)$. If the coefficient field $\Aa$ is symmetric in law (that is, if $\Aa$ has the same law as the pointwise transpose~$\Aa^T$), then the different decay rates can be improved by one order: $\langle x\rangle^{1-d-|\alpha|}$ and~$\langle x\rangle^{\eta-2d}$ can be replaced by~$\langle x\rangle^{-d-|\alpha|}$ and~$\langle x\rangle^{\eta-2d-1}$ in~\eqref{eq:AGFasymptotics} and~\eqref{eq:AGFasymptotics-Cr}, respectively.
\end{cor}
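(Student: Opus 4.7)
The plan is to work entirely in Fourier variables. By the integral representation~\eqref{eq:def-calG} applied both to $\mathcal G$ and (with $\bar{\mathcal A}(i\xi)$ replaced by $\bar\Aa^1$) to $\bar G$, the desired difference admits the representation
\begin{equation*}
\chi\ast(\nabla^\alpha \mathcal G-\nabla^\alpha\bar G)(x)
\,=\,(2\pi)^{-d/2}\int_{\R^d}e^{ix\cdot\xi}\,(i\xi)^\alpha\,m(\xi)\,\hat\chi(\xi)\,d\xi,
\qquad
m(\xi)\,:=\,\frac{\xi\cdot(\bar\Aa^1-\bar{\mathcal A}(i\xi))\xi}{(\xi\cdot\bar{\mathcal A}(i\xi)\xi)\,(\xi\cdot\bar\Aa^1\xi)}.
\end{equation*}
The denominator is bounded below by $|\xi|^4/C_0^2$ by uniform ellipticity of both symbols (Lemma~\ref{lem:Sigal}), while the numerator vanishes to order at least $3$ because of~\eqref{eq:expand-barA-leading}: indeed, $\bar{\mathcal A}(i\xi)-\bar\Aa^1=i\xi_j\bar\Aa^2_j+O(|\xi|^2)$ and the quadratic-in-$\xi$ cancellation is already encoded in the gradient structure. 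Hence $m(\xi)=O(|\xi|^{-1})$ near the origin, and by Theorem~\ref{th:exp-mix} (or more precisely its consequence that $\bar{\mathcal A}(i\xi)\in C_b^{2d-\delta K-}$, together with Remark~\ref{rem:reg-Hs}) the derivatives of $m$ satisfy $|\nabla^k_\xi m(\xi)|\lesssim|\xi|^{-1-k}$ for all $k$ strictly below $2d-\delta K$.

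I would then run the standard dyadic/cutoff argument at the natural frequency scale $|\xi|\sim|x|^{-1}$. Pick a smooth cutoff $\eta_x(\xi)=\eta(|x|\xi)$ that is $1$ on $|\xi|\le|x|^{-1}$ and vanishes on $|\xi|\ge 2|x|^{-1}$, and decompose the Fourier integral accordingly. On the low-frequency side, direct size estimation yields
\begin{equation*}
|I_{\text{low}}(x)|\,\lesssim\,\int_{|\xi|\lesssim|x|^{-1}}|\xi|^{|\alpha|-1}\,d\xi\,\lesssim\,\langle x\rangle^{1-d-|\alpha|}.
\end{equation*}
On the high-frequency side, I integrate by parts $N$ times in $\xi$ against $e^{ix\cdot\xi}$, distributing derivatives among $(i\xi)^\alpha m(\xi)$, $\hat\chi$, and $1-\eta_x$; each derivative on $m$ contributes $|\xi|^{-1}$, each derivative on $1-\eta_x$ contributes $|x|$ supported on $|\xi|\sim|x|^{-1}$, and $\hat\chi$ confines the $\xi$-integral to a compact set. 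Choosing $N$ just below $2d-\delta K$ (which, by the smallness $\delta\le 1/K$, exceeds $|\alpha|+d-1$ for any $|\alpha|\le d$) produces a factor $|x|^{-N}$ times a residual $\xi$-integral of order $|x|^{N-d-|\alpha|+1}$, again yielding $\langle x\rangle^{1-d-|\alpha|}$. This proves~\eqref{eq:AGFasymptotics}.

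The Hölder refinement~\eqref{eq:AGFasymptotics-Cr} is proved in the same spirit by writing an increment $f(x+h)-f(x)$ as the same Fourier integral with $(e^{ih\cdot\xi}-1)$ inserted, and using the interpolation $|e^{ih\cdot\xi}-1|\lesssim(|h||\xi|)^{1-\eta}$. This adds a weight $|\xi|^{1-\eta}$ to both halves of the split: the low-frequency estimate gives $|h|^{1-\eta}\langle x\rangle^{\eta-d-|\alpha|}=|h|^{1-\eta}\langle x\rangle^{\eta-2d}$, and the high-frequency estimate requires the slightly stronger condition $N>2d-\eta$, for which $2d-\delta K$ derivatives suffice precisely when $\eta>\delta K$ -- matching the hypothesis exactly. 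For the symmetry-improved bounds, observe that when $\Aa$ has the same law as $\Aa^T$, the representations~\eqref{eq:defin-barA} and~\eqref{eq:defin-k} combined with a transposition in the Schur complement yield $\bar{\mathcal A}(-i\xi)=\bar{\mathcal A}(i\xi)^T$, so that the scalar symbol $\xi\cdot\bar{\mathcal A}(i\xi)\xi$ is even in $\xi$. Consequently the numerator of $m(\xi)$ actually vanishes to order $4$, so $m(\xi)=O(1)$ near the origin and the whole analysis gains one power of decay throughout.

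The main obstacle is a bookkeeping one: the regularity available from Theorem~\ref{th:exp-mix} is precisely $2d-\delta K-$, and pushing the argument to reach the sharp decay $\langle x\rangle^{1-d-|\alpha|}$ for the top case $|\alpha|=d$, and the Hölder refinement with the explicit threshold $\eta>\delta K$, leaves essentially no margin -- both thresholds saturate the regularity budget provided by Bourgain's estimate. A minor subtlety is that $m(\xi)$ is only smooth away from the origin, so when integrating by parts one must verify that derivatives falling on the singularity are harmless because they are confined to the region $|\xi|\gtrsim|x|^{-1}$ where the pointwise bound $|\nabla^k m|\lesssim|\xi|^{-1-k}$ is still integrable against the dyadic counting.
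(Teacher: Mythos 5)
Your representation of the difference in Fourier variables, the factorization of $m(\xi)=R(\xi)$ as a ratio whose numerator vanishes to order $3$ (order $4$ when $\Aa$ is symmetric in law), and the use of Theorem~\ref{th:exp-mix}'s $C^{2d-\delta K-}$ regularity to bound $|\nabla^k_\xi m(\xi)|\lesssim|\xi|^{-1-k}$ are all exactly as in the paper. For $|\alpha|\le d-1$ your single two-piece split at $|\xi|\sim|x|^{-1}$ followed by $N$ \emph{integer} integrations by parts does close, since $N=2d-1$ is strictly below $2d-\delta K$ and strictly above $d+|\alpha|-1$. The gap is precisely in the critical cases you flag: for $|\alpha|=d$ the required count is $N>d+|\alpha|-1=2d-1$, which forces the integer $N\ge 2d$, but the symbol is only $C^{2d-\delta K-}$, so $N=2d$ is unavailable; with $N=2d-1$ the residual scales like $|\xi|^{-d}$ on $|x|^{-1}\lesssim|\xi|\lesssim1$, so the high-frequency piece contributes $\sim\langle x\rangle^{1-2d}\log\langle x\rangle$ rather than the stated $\langle x\rangle^{1-2d}$. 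The same integer obstruction kills your derivation of~\eqref{eq:AGFasymptotics-Cr}: the window $2d-\eta<N<2d-\delta K$ contains no integer, since for small $\delta$ it is contained in $(2d-1,2d)$. You acknowledge the saturation in the last paragraph, but as written the argument does not prove the claimed bounds in exactly these borderline cases.

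The resolution, which is what the paper's proof implements, is to replace the single split by a full Littlewood--Paley decomposition $\sum_{l\ge0}\psi_l=1$ with $\psi_l$ supported at $|\xi|\sim 2^{-l}$, and to use a non-stationary phase estimate that is valid for a \emph{fractional} regularity exponent: for every $r<2d-\delta K$ and every $l\ge1$ one has $\|R\|_{C^r(2^{-l}\le|\xi|\le 2^{2-l})}\lesssim_r(2^{-l})^{-1-r}$, whence $|I_l(x)|\lesssim(2^{-l})^{d+|\alpha|-1}\langle 2^{-l}x\rangle^{-r}$. Summing the dyadic series with $r$ chosen strictly between $d+|\alpha|-1$ and $2d-\delta K$ --- a non-empty interval for $|\alpha|\le d$ exactly because $\delta K<1$ --- gives the logarithm-free bound, and inserting $(e^{iy\cdot\xi}-1)/|y|^{1-\eta}$ into the same scheme handles the Hölder refinement with the stated threshold $\eta>\delta K$. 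In effect the dyadic scheme is the fractional integration by parts your argument needs. Your observation that symmetry in law forces $\xi\cdot\bar\Ac(i\xi)\xi$ to be even, hence the contracted $\bar\Aa^2$ to vanish, is a correct alternative derivation of the fact the paper cites directly, and the resulting extra power of decay is handled identically.
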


Pursuing the expansion~\eqref{eq:expand-barA-leading} to higher order,
we are led to derive the following higher-order expansion of the averaged Green's function. The corrections to the homogenized Green's function $\bar G$ are expressed in a hierarchical form similarly as for higher-order homogenized equations in Remark~\ref{rem:sol-homog-high}.
By definition, the $n$th correction $\tilde G^n$ in~\eqref{eq:corr-Green-0} is an homogeneous function of degree $3-d-n$ that can be computed explicitly in terms of the constants $\{\bar\Aa^k\}_{1\le k\le n}$, so that the present result amounts to an expansion of the annealed Green's function $\Gc$ in powers of $1/|x|$.
The proof is postponed to Section~\ref{sect:corproofs-avgcasreg-2}.

\begin{cor}[Higher asymptotics of annealed Green's function]\label{cor:higherorder}
Let $d>2$ and let the assumptions of Theorem~\ref{th:exp-mix} hold.
For $1\le\ell\le2d$, we define
\begin{equation}\label{eq:corr-Green-0}
\bar G^\ell(x)\,:=\,\sum_{n=1}^\ell\tilde G^n(x),
\end{equation}
where $\tilde G^1:=\bar G$ is the Green's function for the homogenized operator $-\nabla\cdot\bar\Aa^1\nabla$,
and where the corrections $\{\tilde G^n\}_{2\le n\le 2d}$ are tempered distributions defined iteratively as follows,
\begin{equation}\label{eq:corr-Green}
-\nabla\cdot\bar\Aa^1\nabla\tilde G^n=\nabla\cdot\sum_{k=2}^n\bar\Aa^k_{j_1\ldots j_{k-1}}\nabla_{j_1\ldots j_{k-1}}^{k-1}\nabla\tilde G^{n+1-k},\qquad 2\le n\le 2d.
\end{equation}
Given a frequency cut-off $\chi$ with Fourier transform $\hat\chi\in C^\infty_c(\R^d)$,
we have for all $\ell\ge1$ and all multi-indices $\alpha\ge0$ with $|\alpha|+\ell\le d+1$,
\begin{equation}\label{eq:higherorder}
|\chi\ast(\nabla^\alpha\Gc-\nabla^\alpha\bar G^\ell)(x)|\,\lesssim\,\langle x\rangle^{2-d-|\alpha|-\ell}.
\end{equation}
In addition, for $|\alpha|+\ell=d+1$, we have for all $\eta>\delta K$,
\begin{equation}\label{eq:higherorder-re}
[\chi\ast(\nabla^\alpha\mathcal G-\nabla^\alpha\bar G^\ell)]_{C^{1-\eta}(B(x))}\,\lesssim_{\chi,\eta}\,\langle x\rangle^{\eta-2d}.
\end{equation}
\end{cor}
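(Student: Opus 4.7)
The plan is to mirror the Fourier-analytic strategy used for the leading asymptotics of Corollary~\ref{cor:avgcasreg} and to extend it to higher orders $\ell\le 2d$. The starting point is the representation $\widehat{\Gc}(\xi)=(\xi\cdot\bar\Ac(i\xi)\xi)^{-1}$ from~\eqref{eq:def-calG}, combined with the Fourier-side version of the recursion~\eqref{eq:corr-Green}: writing $F^n:=\widehat{\tilde G^n}$ and $P_k(\xi):=\xi\cdot\bar\Aa^k_{j_1\ldots j_{k-1}}(i\xi)_{j_1}\cdots(i\xi)_{j_{k-1}}\xi$ for $k\ge 2$, the recursion becomes $(\xi\cdot\bar\Aa^1\xi)F^1=1$ and $(\xi\cdot\bar\Aa^1\xi)F^n=-\sum_{k=2}^{n}P_k\,F^{n+1-k}$ for $n\ge 2$. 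Summing over $n=1,\ldots,\ell$ and reindexing, one finds that $S_\ell:=\sum_{n=1}^{\ell}F^n=\widehat{\bar G^\ell}$ satisfies the algebraic identity
\[
\Bigl(\xi\cdot\bar\Aa^1\xi+\sum_{k=2}^{\ell}P_k(\xi)\Bigr)S_\ell(\xi)=1+E_\ell(\xi),\qquad E_\ell(\xi):=\sum_{k=2}^{\ell}P_k(\xi)\!\sum_{j=\ell+2-k}^{\ell}F^j(\xi),
\]
where $E_\ell$ is a rational function vanishing at the origin of order at least~$\ell$, so that $|E_\ell(\xi)|\lesssim|\xi|^{\ell}$ for $|\xi|\le 1$.

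By Proposition~\ref{prop:equiv-DGL}, the symmetric parts of $\bar\Aa^1,\ldots,\bar\Aa^{\ell}$ coincide with the Taylor coefficients of $\bar\Ac(i\xi)$ at $\xi=0$ up to order $\ell-1$. Combined with the regularity $\bar\Ac(i\cdot)\in C^{2d-\delta K-}$ from Theorem~\ref{th:exp-mix} --- which in the range $\ell\le 2d-1$ yields the Taylor-remainder bound $|\xi\cdot\bar\Ac(i\xi)\xi-\xi\cdot\bar\Aa^1\xi-\sum_{k=2}^{\ell}P_k(\xi)|\lesssim|\xi|^{\ell+2}$ near the origin --- and with the ellipticity $\xi\cdot\bar\Ac(i\xi)\xi\gtrsim|\xi|^2$, this gives the key Fourier-symbol bound
\[
|\widehat{\Gc}(\xi)-\widehat{\bar G^\ell}(\xi)|\lesssim|\xi|^{\ell-2}\qquad\text{for $\xi$ near $0$,}
\]
together with a matching $C^{2d-\delta K-}$ regularity of the difference away from the origin.

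The pointwise decay~\eqref{eq:higherorder} then follows by the standard low/high-frequency splitting of the Fourier inversion
\[
\chi\ast(\nabla^\alpha\Gc-\nabla^\alpha\bar G^\ell)(x)=\int_{\R^d}e^{ix\cdot\xi}(i\xi)^\alpha\hat\chi(\xi)\bigl(\widehat{\Gc}(\xi)-\widehat{\bar G^\ell}(\xi)\bigr)\,\tfrac{d\xi}{(2\pi)^{d/2}}
\]
at scale $|\xi|\sim|x|^{-1}$: the low-frequency contribution is bounded directly by $\int_{|\xi|<|x|^{-1}}|\xi|^{|\alpha|+\ell-2}d\xi\lesssim\langle x\rangle^{2-d-|\alpha|-\ell}$, while the high-frequency contribution is controlled by (fractional) integration by parts in $\xi$, exploiting the smoothness of $\hat\chi$ and the $C^{2d-\delta K-}$ regularity of $\bar\Ac(i\cdot)$ away from zero. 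For the critical case $|\alpha|+\ell=d+1$, the H\"older seminorm bound~\eqref{eq:higherorder-re} is derived by the same splitting applied to the finite difference $g(x+h)-g(x)$ via $|e^{ih\cdot\xi}-1|\lesssim|h\cdot\xi|^{1-\eta}$; the restriction $\eta>\delta K$ appears precisely because the high-frequency step requires of order $2d-\eta$ fractional derivatives of the symbol, which is compatible with the $C^{2d-\delta K-}$ regularity only when $\eta>\delta K$.

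The main obstacle is the quantitative Fourier-to-physical transfer in the critical H\"older case, where the limited regularity provided by Theorem~\ref{th:exp-mix} forces the condition $\eta>\delta K$. The algebraic Neumann-type manipulation leading to $|\widehat{\Gc}-\widehat{\bar G^\ell}|\lesssim|\xi|^{\ell-2}$, while combinatorial, is elementary once the correct bookkeeping is set up.
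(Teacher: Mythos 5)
Your proposal is correct and follows the same overall Fourier-analytic strategy as the paper: establish a symbol-side remainder bound $|\widehat{\Gc}(\xi)-\widehat{\bar G^\ell}(\xi)|\lesssim|\xi|^{\ell-2}$ near the origin (together with derivative bounds, using the global $C^{2d-\delta K-}_b$ regularity of $\bar\Ac$ from Theorem~\ref{th:exp-mix}), and then transfer this to physical space by a low/high-frequency (dyadic) decomposition of the Fourier inversion integral, exactly as in Corollary~\ref{cor:avgcasreg}. The one place you deviate is in deriving the symbol bound itself: the paper first unrolls the recursion~\eqref{eq:corr-Green} completely, writing $\Fc[\bar G^\ell](\xi)$ as an explicit sum over all compositions $r_1+\ldots+r_m=n$ of products of $(\xi\cdot\bar\Aa^1\xi)^{-1}$ and $i\xi\cdot\bar\Aa^{r+1}(i\xi)\,i\xi$, and then matches this term by term against a truncated geometric series expansion of $(\xi\cdot\bar\Ac(i\xi)\xi)^{-1}$ around $(\xi\cdot\bar\Aa^1\xi)^{-1}$; you instead telescope the Fourier-side recursion by summing it over $n=1,\ldots,\ell$ to get the single algebraic identity $\bigl(\xi\cdot\bar\Aa^1\xi+\sum_{k=2}^\ell P_k\bigr)S_\ell=1+E_\ell$ and then compare directly with $m(\xi)=\xi\cdot\bar\Ac(i\xi)\xi$. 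Both are elementary bookkeeping exercises and land on the same estimate; your route is marginally leaner since it sidesteps the explicit composition sum, but it buys no additional generality. One minor remark: your Taylor-remainder bound $\lesssim|\xi|^{\ell+2}$ is stated for $\ell\le 2d-1$, which is indeed all that is needed here since $|\alpha|+\ell\le d+1$ and $d>2$ force $\ell\le d+1\le 2d-2$; the paper separately treats the borderline $\ell=2d$ with an $\eta$-loss, but your restriction does not create a gap for the corollary as stated.
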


\begin{rem}[Multipole screening interpretation]
The annealed Green's function is the kernel for the averaged solution: for $f\in C^\infty_c(\R^d)$, considering the Lax--Milgram solution $v_f\in \Ld^\infty(\Omega;\dot H^1(\R^d))$ of the equation $-\nabla\cdot\Aa\nabla v_f=f$, the averaged solution is $\E[v_f]=\Gc\ast f$.
We claim that Corollary~\ref{cor:avgcasreg} yields the following screening effect for multipoles: for all $n\le d+1$, if $f\in C^\infty_c(\R^d)$ corresponds to a $2^{n}$-pole, meaning that $\int_{\R^d}x^mf(x)dx=0$ for all $0\le m\le n-1$, then we have the decay $|\E[v_f](x)|\lesssim_f\langle x\rangle^{2-d-n}$.
This is indeed a consequence of the proof of Corollary~\ref{cor:avgcasreg} since the condition that $f\in C^\infty_c(\R^d)$ is a $2^{n}$-pole allows to represent $f=\nabla^\alpha h$ for some $h\in C^\infty_c(\R^d)$ and $|\alpha|=n$, hence $\E[v_f]=\nabla^\alpha \Gc\ast h$.
\end{rem}

\begin{rem}[Estimates without frequency cut-off]\label{rem:no-frequ-cutoff}
We may wonder in which cases the frequency cut-off $\chi$ can be removed in Corollaries~\ref{cor:avgcasreg} and~\ref{cor:higherorder}.
Note that in the corresponding discrete setting of~\cite{Lemm-Keller-21,uchiyama1998green} this cut-off does naturally not occur.
In the continuum setting, removing the cut-off requires information on the high-frequency behavior of $\bar\Ac(\nabla)$, and this question is entirely foreign to homogenization.
We can show for instance the following results, the proofs of which are postponed to Section~\ref{sect:corproofs-cutoff}:
\begin{enumerate}[(a)]
\item For the homogenized Green's function $\bar G$ or its corrections $\bar G^\ell$, the cut-off $\chi$ can always be removed: for all $1\le\ell\le2d$ and all multi-indices $\alpha\ge0$, we indeed have for all $|x|\ge1$ and $p\ge0$,
\[|\chi\ast\nabla^\alpha\bar G(x)-\nabla^\alpha\bar G|
+|\chi\ast\nabla^\alpha\bar G^\ell(x)-\nabla^\alpha\bar G^\ell|
\,\lesssim_{\chi,\alpha,p}\,|x|^{-p}.\]
\item For the annealed Green's function $\mathcal G$, the cut-off $\chi$ can be removed when less than two derivatives are taken: for $|\alpha|\le1$, we have for all $|x|\ge1$,
\begin{equation*}
|\chi\ast\nabla^\alpha\Gc(x)-\nabla^\alpha\Gc(x)|\,\lesssim_{\chi}\,|x|^{-d-1},
\end{equation*}
and in addition, for $|\alpha|=1$, for all $|x|\ge2$,
\begin{equation*}
[\chi\ast\nabla^\alpha\Gc-\nabla^\alpha\Gc]_{C^{1-\eta}(B(x))}\,\lesssim_{\chi,\eta}\,\left\{\begin{array}{lll}
|x|^{-d-1}&:&\eta>\frac{5d}{6d+6}+C\delta K,\\
|x|^{-d}&:&\eta>0.
\end{array}\right.
\end{equation*}
\item If $\Aa$ is rotationally symmetric in law (that is, if $\Aa$ has the same law as $O^T\Aa(O\cdot)O$ for all~$O\in O(d)$), then for the annealed Green's function $\mathcal G$ the cut-off $\chi$ can be removed when less than $\frac{d+3}{2}$ derivatives are taken: for $|\alpha|<\frac{d+3}2$, we have for all~$|x|\ge1$,
\[|\chi\ast\nabla^\alpha\Gc(x)-\nabla^\alpha\Gc(x)|\,\lesssim_{\chi}\,|x|^{2-d-\frac{d+3}2}.\]
\end{enumerate}
\end{rem}

\subsection{Weak correctors}\label{sec:weak-cor}
We turn to our last main result, that is, the construction of weak correctors, or alternatively, the regularity of the symbol of $\Psi(\cdot,\nabla)$ in a suitable weak sense.
As recalled in Section~\ref{sec:standard-hom}, correctors $\{\varphi^n\}_n$ are defined to describe the fine spatial oscillations of the solution field $\nabla u_{\e,f}$, and, if well-defined, they are necessarily given by the recurrence equations~\eqref{eq:correctors}.
Heuristically, focusing on the first right-hand side term in those corrector equations, and replacing the heterogeneous operator $-\nabla\cdot\Aa\nabla$ by the Laplacian, one may expect in a perturbative perspective that the $n$th corrector $\varphi^n$ behaves similarly as
\begin{equation}\label{eq:tildephin}
\tilde\varphi^n\,:=\,[(-\triangle)^{-1}\nabla]^nP^\bot\Aa.
\end{equation}
Due to the iteration of the Riesz potential $(-\triangle)^{-1}\nabla$, such quantities can only make sense for $n$ not too large. More precisely, under strong enough mixing assumptions, such as~{\bf(H$_1$)}, we have the following easy observations:
\begin{enumerate}[---]
\item $\tilde\varphi^n$ can be uniquely constructed as a well-defined stationary random field in $\Ld^2(\Omega)$ provided that~$n<\frac d2$;
\smallskip\item $\tilde\varphi^n$ can be uniquely constructed as a well-defined stationary random field in a Schwartz-like distributional sense on $\Omega$ (in the sense of~\cite{D-21a}) provided that~$n<d$. More precisely, this means that $\tilde\varphi^n$ can be defined through
its conditional expectations $\expec{\tilde\varphi^n\|\Aa|_{B_R}}\in\Ld^2(\Omega)$ for all $R>0$.
\end{enumerate}
The construction of genuine correctors~$\{\varphi^n\}_n$ satisfying the corrector equations~\eqref{eq:correctors} instead of~\eqref{eq:tildephin} is much more intricate, but it is now well-understood that the same result holds: under~{\bf(H$_1$)}, the $n$th corrector $\varphi^n$ can be constructed as a stationary random field in~$\Ld^2(\Omega)$ for all $n<\frac d2$ (see e.g.~\cite{DO1}), and as a stationary random field in a distributional sense on $\Omega$ for all $n<d$ (see~\cite{D-21a}).
The latter notion of so-called {\it weak} correctors is unconventional and was first introduced in~\cite{D-21a} by the first-named author in link with a non-perturbative approach to Bourgain's result.
More precisely, we recall that the existence of weak stationary correctors of orders $n<d$ essentially implies the following intermediate result (actually in some slightly weaker form):
\begin{equation}\label{eq:intermed-dec}
\text{the symbol of $\bar\Ac(\nabla)$ is of class $C^{d-}$ at the origin},
\end{equation}
which is to be compare to the $C^{2d-\delta K-}$ regularity
obtained in the $\delta$-perturbative setting in~Theorem~\ref{th:exp-mix}.
A natural question is therefore whether the above number of weak stationary correctors is optimal or not: if weak stationary correctors could be constructed to all orders $n<2d$, then we might expect the weak corrector theory developed in~\cite{D-21a} to improve on~\eqref{eq:intermed-dec} and to solve the Bourgain--Spencer conjecture.

Intuitively, constructing a larger number of weak stationary correctors seems unlikely as it would mean that the analogy~\eqref{eq:tildephin} actually fails --- but this is indeed so: it turns out that this heuristic comparison~\eqref{eq:tildephin} does not accurately take into 
account the subtle cancellations and algebraic properties of the hierarchy of corrector equations~\eqref{eq:correctors}.
To see this, we propose a different way to construct correctors:
if the symbol of~$\Psi(\cdot,\nabla)$ could be differentiated $k$ times in some sense, then it could be applied to a polynomial of order $k$, say $p_{\ee,x}^k(z):=(e\cdot(z-x))^k$, which would yield, in view of~\eqref{eq:expansionDGL-bis},
\[\big(\Psi(\cdot,\nabla)p_{\ee,x}^n\big)(z)\,=\,\sum_{k=0}^{n}\frac{n!}{(n-k)!}\,\big(\nabla\varphi^{k+1}_{\ee\ldots\ee}(z)+\mathds1_{k>0}\varphi^k_{\ee\ldots\ee}(z)\ee\big)\,(\ee\cdot(z-x))^{n-k},\]
hence, when evaluated at $z=x$,
\[\nabla\varphi^{n+1}_{\ee\ldots\ee}(x)+\mathds1_{n>0}\varphi^n_{\ee\ldots\ee}(x)\ee\,=\,\tfrac1{n!}\big(\Psi(\cdot,\nabla)p_{\ee,x}^n\big)(x).\]
Similarly, recalling that $\Psi(\cdot,\nabla)$ is given by~\eqref{eq:defin-K}, the following formula can be checked to define the $n$th-order stationary corrector, provided that it makes sense,
\begin{equation}\label{eq:def-varphin}
\varphi^n_{e\ldots e}(x)\,=\,\tfrac1{n!}\Big((-\nabla\cdot P^\bot\Aa P^\bot\nabla)^{-1}\nabla\cdot P^\bot\Aa P\nabla p_{\ee,x}^n\Big)(x).
\end{equation}
Surprisingly, in the weakly random regime, the following result shows that weak stationary correctors can be constructed in this way for all $n<2d$. The proof is postponed to Section~\ref{sect:weakcorrectors}.
We focus here for shortness on the construction of weak correctors, but this result could be alternatively formulated in terms of the weak regularity of the symbol of $\Psi(\cdot,\nabla)$. Recalling~\eqref{eq:defin-barA}, this constitutes a generalization of the regularity of the symbol of $\bar\Ac(\nabla)$ obtained in Theorem~\ref{th:exp-mix}.
Note that a corresponding result holds, with the same proof, under the assumptions of Theorem~\ref{th:correl}, which we skip here for conciseness.

\begin{theor}[Main result 3]\label{th:weak-cor}
Under the assumptions of Theorem~\ref{th:exp-mix},
we can define weak stationary correctors $\varphi^n_{e\ldots e}$ for all $n<2d$ via the above formula~\eqref{eq:def-varphin} in the distributional sense on $\Omega$ (in the sense of~\cite{D-21a}).
More precisely, the following estimates hold
for all $n<2d$, $x\in\R^d$, and $R>0$,
\[\big\|\expecm{\varphi^n_{e\ldots e}(x)\|\Aa|_{B_R}}\big\|_{\Ld^\infty(\Omega)}\,\le\, \delta KR^{n}\big(1+\langle\tfrac{x}R\rangle^{\delta K+n-d}\big).\]
\end{theor}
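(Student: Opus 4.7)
The plan is to analyze the explicit formula~\eqref{eq:def-varphin} by a Neumann-series expansion in the perturbation parameter $\delta=\|\Aa-\Aa_0\|_{\Ld^\infty}$, and then to bound each term of the series in the conditional $\Ld^\infty(\Omega)$ norm by combining deterministic $\Ld^q$-estimates on iterated Riesz-type operators (as developed in Section~\ref{sect:detest}) with the probabilistic cancellations induced by the $\alpha$-mixing hypothesis~\textbf{(H$_1$)}. Structurally this parallels the proof of Theorem~\ref{th:exp-mix}; the new features are the polynomial right-hand side $\nabla p^n_{\ee,x}$ and the pointwise evaluation on the conditioning ball $B_R$, which together introduce the two independent spatial scales $|x|$ and $R$ appearing in the final bound.

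For setup, write $\Aa=\Aa_0+\delta\Bb$ with $\|\Bb\|_{\Ld^\infty}\le 1$, and note that since $\Aa_0$ commutes with $P$ one has $-\nabla\cdot P^\bot\Aa P^\bot\nabla=M_0+\delta V$ on the range of $P^\bot$, with $M_0:=-\nabla\cdot\Aa_0\nabla$ and $V:=-\nabla\cdot P^\bot\Bb P^\bot\nabla$. Since $P^\bot\Aa P\nabla p^n_{\ee,x}=\delta P^\bot\Bb\,\nabla p^n_{\ee,x}$, formula~\eqref{eq:def-varphin} becomes
\[
\varphi^n_{\ee\ldots\ee}(x)\,=\,\tfrac{\delta}{n!}\sum_{k\ge0}(-\delta)^k\Big((M_0^{-1}V)^{k}M_0^{-1}\nabla\cdot P^\bot\Bb\,\nabla p^n_{\ee,x}\Big)(x),
\]
a sum of multiple integrals with $(k+1)$ insertions of the centered fluctuation $P^\bot\Bb$, glued by the Riesz-type kernel of $M_0^{-1}\nabla$, integrated against the polynomial source $\nabla p^n_{\ee,x}(z)=n(\ee\cdot(z-x))^{n-1}\ee$.

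To control $\expec{\varphi^n_{\ee\ldots\ee}(x)\,\|\,\Aa|_{B_R}}$, apply the conditional expectation termwise and analyze each resulting probabilistic factor $\expec{\prod_i P^\bot\Bb(x_i)\,\|\,\Aa|_{B_R}}$ through the same path analysis that drives Theorem~\ref{th:exp-mix}: organize the integration points $(x_i)$ into reducible and irreducible clusters, the former producing $O(\delta)$-contributions with moderate spatial decay and the latter producing $O(\delta^3)$-contributions with enhanced $\langle\cdot\rangle^{-3d}$ decay due to random cancellations. Insertion points lying outside $B_R$ are handled via the mean-zero identity $\E[P^\bot\Bb]=0$ combined with the stretched-exponential mixing of~\textbf{(H$_1$)}, which yields the necessary decay in the distance to $B_R$. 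This combinatorial mechanism is precisely what upgrades the naive range $n<d$ obtained in~\cite{D-21a} (based on the iterated Riesz ansatz~\eqref{eq:tildephin}) to the doubled range $n<2d$ here.

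Combining these probabilistic bounds with the deterministic $\Ld^q$-mapping properties of the iterated Riesz operators from Section~\ref{sect:detest}, and summing over $k$ with $q$ chosen close to~$1$ or~$\infty$ (as in the proof of Theorem~\ref{th:exp-mix}) to absorb the combinatorial factors, yields the claimed bound $\delta K R^n(1+\langle x/R\rangle^{\delta K+n-d})$. The main obstacle will be to track simultaneously three competing effects against the mixing decay: the polynomial growth of $\nabla p^n_{\ee,x}$ of order $|z-x|^{n-1}$, which produces both the overall $R^n$ weight (polynomial mass on $B_R$) and the additional factor $\langle x/R\rangle^{n-d+\delta K}$ when $x$ is far from the center of~$B_R$; the mild polynomial growth of the iterated Riesz kernels themselves; and the combinatorial explosion in~$k$. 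Unlike in Theorem~\ref{th:exp-mix}, where translation invariance of the kernel of $\bar\Bc(\nabla)$ effectively reduces the problem to one spatial variable, here one must produce estimates that scale correctly in both $x$ and $R$ separately.
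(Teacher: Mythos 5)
Your plan follows essentially the same architecture as the paper's proof: Neumann expansion of the solution operator, the deterministic $\Ld^q$-machinery of Section~\ref{sect:detest}, and the reducible/irreducible path analysis with $\alpha$-mixing from Section~\ref{sect:stretchedexponentialmixingproof}. One presentational difference: the paper reformulates the conditional bound by duality, testing against $\sigma(\Aa|_{Q_{R_0}(x_0)})$-measurable random variables~$\zeta_0$ with $\|\zeta_0\|_{\Ld^1(\Omega)}\le1$, which lets it literally slot in the kernel estimates and Bourgain disjointification lemma from the proof of Theorem~\ref{th:exp-mix} rather than re-deriving a conditional version of the path machinery; the duality route is cleaner in practice though equivalent to your termwise conditional-expectation formulation. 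One small but substantive misattribution in your sketch: the $n^n$ combinatorial factor from direct summation over (reducible) paths is not absorbed by taking $q$ near $1$ or $\infty$; it is killed by the stretched-exponential mixing rate after optimizing the coarse-graining scale $R$ (cf.\ the choice $\exp(\tfrac1{2C_0}R^\gamma)=\langle x-y\rangle^{2d}n^n\e^{-n}$ in Section~\ref{sec:proof-exp-mix}), while the $q$-dependence only controls the $\theta^{-n}$ prefactor that is then beaten by $\delta^n$. The irreducible part avoids the $n^n$ factor entirely thanks to Bourgain's disjointification lemma~\ref{lem:disjointif}, which your sketch leaves implicit but which is indispensable; without it, the direct summation in~\eqref{eq:brutal-estim-det} gives an unsummable series.
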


The above estimates show that for $n\ge d$ the weak stationary corrector $\varphi^n_{e\ldots e}$ starts to display a polynomial growth with respect to the distance $\dist(x,B_R)$ between the evaluation point~$x$ and the conditioning set $B_R$.
This growth, which we believe to be optimal in this perturbative setting, makes correctors hard to exploit. It explains why the weak corrector theory developed in~\cite{D-21a} could only make use of weak stationary correctors up to order~$n<d$ and therefore failed at implying the full decay of Bourgain's perturbative theorem.
This tends to show that correctors could not be used to prove Bourgain's theorem in the non-perturbative setting beyond the intermediate decay~\eqref{eq:intermed-dec} obtained in~\cite{D-21a}.

\section{Deterministic estimates}
\label{sect:detest}
We consider the perturbative regime~\eqref{eq:pertu-delta}.
For notational simplicity, let us assume that~$\E[\Aa]$ is symmetric.
Without loss of generality, up to changing variables, we may then assume that $\Aa$ is close to the constant coefficient $\Aa_0=\Id$. 
More precisely, we may assume that the coefficient field~$\Aa$ takes the form
\begin{equation}\label{eq:decomp-AB}
\Aa\,=\,\Id+\delta \Bb,\qquad\delta\ll1,
\end{equation}
for some random field $\Bb$ satisfying
\[|\Bb|\le1,\qquad\expec{\Bb}=0.\]
This convenient reduction is indeed allowed without loss of generality up to replacing the coefficient field~$\Aa$ by $\expec{\Aa}^{-1/2}\Aa\expec{\Aa}^{-1/2}$, which amounts to changing variables in equation~\eqref{eq:ellipt}.
In this setting~\eqref{eq:decomp-AB}, we shall analyze the decay of the kernel of the convolution operator~$\bar\Ac(\nabla)$ defined in Lemma~\ref{lem:Sigal}, and we start in this section with some preliminary deterministic estimates and useful notation.

\subsection{Representation formula}
In terms of~\eqref{eq:decomp-AB}, the representation formulae~\eqref{eq:defin-K}--\eqref{eq:defin-barA} for the convolution operator~$\bar\Ac(\nabla)$ takes on the following guise,
\begin{equation*}
\bar\Ac(\nabla)\,=\,\Id+\delta^2P\Bb P^\bot\nabla\big(-\triangle-\delta\nabla\cdot P^\bot\Bb P^\bot\nabla\big)^{-1}\nabla\cdot P^\bot\Bb P.
\end{equation*}
Using Neumann series, this yields
\begin{equation}\label{eq:barA}
\bar\Ac(\nabla)\,=\,\Id+\delta\,\bar\Bc(\nabla),\qquad\bar\Bc(\nabla)\,:=\,\sum_{n=1}^\infty (-\delta)^n P\Bb (\Kk P^\bot\Bb)^{n} P,
\end{equation}
in terms of the singular convolution operator
\begin{equation}\label{eq:defKk}
\Kk\,:=\,\nabla\triangle^{-1}\nabla\cdot~:~\Ld^2(\R^d)^d\to\Ld^2(\R^d)^d.
\end{equation}
As we have
\[\|\Kk\|_{\Ld^2(\R^d)^d\to\Ld^2(\R^d)^d}=1,\]
the Neumann series~\eqref{eq:barA} is obviously convergent on $\Bc(\Ld^2(\R^d)^d)$ provided that $\delta<1$. Note that this calculation for $\bar\Ac(\nabla)$ holds in any space dimension.

\subsection{Singular convolution operator $\Kk$}
The following provides an explicit description of the singular convolution operator $\Kk$ defined in~\eqref{eq:defKk} above.

\begin{lem}\label{lem:CZ}
The kernel of the operator $\Kk$ on $\Ld^2(\R^d)^d$ can be decomposed as
\[\Kk(x)\,=\,\tfrac1d\delta(x)\Id\,+\,\Ll(x),\]
where the second term is given by the Calder\'on--Zygmund kernel
\[\textstyle\Ll(x)\,:=\,-\pv\tfrac1{|B|}|x|^{-d}\big(\frac{x\otimes x}{|x|^2}-\tfrac1d\Id\big).\]
In particular, for all $1<q<\infty$, by the Calder\'on--Zygmund theory, the operator $\Kk$ is bounded on~$\Ld^q(\R^d)^d$ with operator norm
\[\|\Kk\|_{\Ld^q(\R^d)^d\to\Ld^q(\R^d)^d}\,\le\,\tfrac{Cq^2}{q-1}.\]
\end{lem}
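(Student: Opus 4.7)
The plan is to compute the Fourier symbol of $\Kk$, identify its position-space kernel by a homogeneity and rotational-symmetry argument, and then deduce the $\Ld^q$ bound from classical Calder\'on--Zygmund theory.

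First, I would compute the symbol: since $\triangle$ has Fourier symbol $-|\xi|^2$ and $\nabla$ has symbol $i\xi$, the operator $\Kk=\nabla\triangle^{-1}\nabla\cdot$ has matrix-valued symbol $\hat\Kk(\xi)=\xi\xi^T/|\xi|^2$. This is the orthogonal projection onto the line $\R\xi$, so $\|\hat\Kk(\xi)\|=1$ for $\xi\ne0$ and Plancherel yields $\|\Kk\|_{\Ld^2\to\Ld^2}=1$. Splitting $\hat\Kk(\xi)=\tfrac1d\Id+m(\xi)$ with $m(\xi):=\xi\xi^T/|\xi|^2-\tfrac1d\Id$, the constant first term inverse-Fourier-transforms to $\tfrac1d\delta(x)\Id$, while $m$ is a smooth, homogeneous degree-$0$ symbol that is trace-free and has vanishing spherical average (by the identity $\int_{S^{d-1}}\xi\xi^T\,d\sigma=\tfrac{|S^{d-1}|}{d}\Id$).

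Second, I identify the inverse Fourier transform of $m$. By the standard theory of homogeneous Fourier multipliers of degree zero, the inverse Fourier transform of a smooth, zero-mean, homogeneous degree-$0$ symbol is a tempered distribution of the form $\pv\,k(x)$ with $k$ smooth off the origin and homogeneous of degree $-d$. Trace-freeness and $O(d)$-covariance force $k(x)=c_d|x|^{-d}\big(x\otimes x/|x|^2-\tfrac1d\Id\big)$ for some dimensional constant. Pinning down $c_d=-1/|B|$ is the one mildly nontrivial point: I would handle it either by a direct test computation (pairing both sides with a Gaussian), or by invoking the classical explicit kernels of second-order Riesz transforms $R_iR_j=-\partial_i\partial_j(-\triangle)^{-1}$, whose principal-value densities are standard.

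Third, the $\Ld^q$ bound follows from Calder\'on--Zygmund theory. The kernel $\Ll$ satisfies the size estimate $|\Ll(x)|\lesssim|x|^{-d}$ and, being smooth and homogeneous of degree $-d$, the usual H\"ormander smoothness condition; combined with the $\Ld^2$-boundedness from the first step, this makes $\Ll$ a standard CZ operator, so it satisfies the weak $(1,1)$ inequality. Marcinkiewicz interpolation between $L^{1,\infty}$ and $\Ld^2$ then gives $\|\Ll\|_{\Ld^q\to\Ld^q}\lesssim 1/(q-1)$ for $1<q\le 2$, and a duality argument (the adjoint of $\Ll$ has the same structure) yields $\lesssim q$ for $2\le q<\infty$. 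Combining these two regimes produces the uniform bound $Cq^2/(q-1)$, and $\Kk=\tfrac1d\Id+\Ll$ inherits it trivially from $\Ll$. The only real obstacle is the explicit determination of $c_d=-1/|B|$; everything else is a routine application of symbol calculus and Calder\'on--Zygmund theory.
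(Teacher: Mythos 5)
Your proof is correct and coincides with the second (``alternative'') route the paper itself sketches: decompose the symbol $\xi\otimes\xi/|\xi|^2 = \tfrac1d\Id + m(\xi)$ and appeal to the general theory of smooth, mean-zero, degree-$0$ homogeneous multipliers (Stein, \emph{Singular Integrals}, Thm.~3.5) to read off the $\pv$ kernel, then run Calder\'on--Zygmund theory plus Marcinkiewicz interpolation and duality for the $\Ld^q$ operator norm. The one difference worth flagging: the paper's primary route is to integrate by parts against the explicit Laplacian Green's function $G(x)=|\partial B|^{-1}(2-d)^{-1}|x|^{2-d}$, which produces the constant $-1/|B|$ (via $d|\partial B|^{-1}=|B|^{-1}$) by direct differentiation rather than by a separate normalization computation; your symbol-calculus route is cleaner for identifying the structure of the kernel but leaves $c_d$ to be pinned down by an auxiliary argument (testing against a Gaussian, or quoting second-order Riesz transform kernels), which is exactly the residual gap you correctly identified.
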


\begin{proof}
The kernel of $\triangle$ is given by the Green's function $G(x)=|\partial B|^{-1}(2-d)^{-1}|x|^{2-d}$ (and $G(x)=|\partial B|^{-1}\log|x|$ in case $d=2$), so that the kernel of $\Kk$ is obtained by integrating by parts in the representation $\Kk g(x)=\int_{\R^d}G(y)\nabla^2g(x-y)dy$ for $g\in C^\infty_c(\R^d)$. Alternatively, we may also argue by decomposing the symbol of the operator $\Kk$ as
\[\tfrac{\xi\otimes\xi}{|\xi|^2}=\tfrac1d\Id+\tfrac{1}{|\xi|^2}\big(\xi\otimes\xi-\tfrac1d|\xi|^2\Id\big),\]
and by appealing to the general result of~\cite[Theorem~3.5]{Stein-70} to deduce the kernel.
Next, the Calder\'on--Zygmund theory ensures that $\Ll$ satisfies a weak-$\Ld^1(\R^d)$ bound, on top of being bounded on $\Ld^2(\R^d)$, hence the claimed bound on $\Ld^q(\R^d)$ follows by Marcinkiewicz interpolation for~$1<q\leq2$,
and by duality for~$2\leq q<\infty$.
\end{proof}

The Calder\'on--Zygmund theory further yields the following approximation result. We shall use it occasionally to express operators as limits of absolutely converging integrals, which can then be manipulated more freely. The proof is standard and is skipped for shortness.

\begin{lem}\label{lem:CZ-eta}
In terms of the truncated kernels
\[\Kk^{(\eta)}(x)\,:=\,\tfrac1d\delta(x)\Id+\Ll(x)\mathds1_{|x|>\eta},\]
we have for all $1<q<\infty$ and $g\in\Ld^q(\R^d)^d$,
\[\lim_{\eta\downarrow0}\|\Kk^{(\eta)} g-\Kk g\|_{\Ld^q(\R^d)}=0,\qquad\limsup_{\eta\downarrow0}\|\Kk^{(\eta)}\|_{\Ld^q(\R^d)^d\to\Ld^q(\R^d)^d}\le\tfrac{Cq^2}{q-1}.\]
\end{lem}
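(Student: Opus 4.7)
The plan is to treat Lemma~\ref{lem:CZ-eta} as a standard application of Calder\'on--Zygmund theory for truncated singular integrals, splitting the argument into a uniform operator bound and a convergence step bridged by a density argument. For the uniform bound $\limsup_{\eta\downarrow0}\|\Kk^{(\eta)}\|_{\Ld^q\to\Ld^q}\le Cq^2/(q-1)$, the term $\tfrac1d\delta(x)\Id$ is a trivial multiplier, so it suffices to control the truncated convolution $T^{(\eta)}g:=(\Ll\mathds1_{|\cdot|>\eta})\ast g$. For the $\Ld^2$-bound, I would compute the Fourier multiplier of $T^{(\eta)}$ and show that it is uniformly bounded in $\eta$; this is classical and relies on the mean-zero property $\int_{\mathbb{S}^{d-1}}\Ll\,d\sigma=0$, which is built into the form of $\Ll$ in Lemma~\ref{lem:CZ}. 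Since $\Ll\mathds1_{|x|>\eta}$ also satisfies the H\"ormander cancellation condition with constants independent of $\eta$, the classical Calder\'on--Zygmund theorem upgrades the $\Ld^2$-bound to a weak-$(1,1)$ bound, uniformly in $\eta$. Marcinkiewicz interpolation for $1<q\le 2$ together with duality for $2\le q<\infty$ then yields the stated $\tfrac{Cq^2}{q-1}$ estimate, mirroring the argument already used for Lemma~\ref{lem:CZ}.

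Next, I would establish strong $\Ld^q$-convergence on the dense subclass $C_c^\infty(\R^d)^d$. For such $g$, the mean-zero property of $\Ll$ on every sphere allows one to rewrite, as an absolutely convergent integral,
\[(\Kk-\Kk^{(\eta)})g(x)\,=\,\int_{|y|\le\eta}\Ll(y)\bigl(g(x-y)-g(x)\bigr)\,dy,\]
with the integrability now ensured by $|g(x-y)-g(x)|\lesssim\|\nabla g\|_{\Ld^\infty}|y|$ and $|\Ll(y)|\lesssim|y|^{-d}$. This gives the uniform pointwise bound $|(\Kk-\Kk^{(\eta)})g(x)|\lesssim\eta\|\nabla g\|_{\Ld^\infty}$, and since the support of $(\Kk-\Kk^{(\eta)})g$ remains in an $\eta$-neighborhood of $\supp g$, the pointwise bound immediately upgrades to $\Ld^q$-convergence as $\eta\downarrow0$.

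Finally, for general $g\in\Ld^q(\R^d)^d$ and $\varepsilon>0$, pick $g_\varepsilon\in C_c^\infty(\R^d)^d$ with $\|g-g_\varepsilon\|_{\Ld^q}\le\varepsilon$ and split
\[\|(\Kk-\Kk^{(\eta)})g\|_{\Ld^q}\,\le\,\|(\Kk-\Kk^{(\eta)})g_\varepsilon\|_{\Ld^q}+\|(\Kk-\Kk^{(\eta)})(g-g_\varepsilon)\|_{\Ld^q}.\]
The second term is bounded by $\tfrac{Cq^2}{q-1}\varepsilon$ via the uniform operator bound combined with Lemma~\ref{lem:CZ}, while the first tends to $0$ as $\eta\downarrow0$ by the previous step; letting first $\eta\downarrow0$ and then $\varepsilon\downarrow0$ concludes the proof. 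The only mildly delicate ingredient is the $\eta$-uniform $\Ld^2$-boundedness of $T^{(\eta)}$, which is really the classical content of Calder\'on--Zygmund theory for truncated kernels; all remaining steps are routine.
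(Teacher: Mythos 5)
The paper explicitly states that ``the proof is standard and is skipped for shortness,'' so there is no in-paper argument to compare against; your proposal fills this gap with exactly the standard Calder\'on--Zygmund argument for truncated singular integrals. The decomposition into a uniform operator bound (Fourier multiplier plus H\"ormander condition uniform in $\eta$, then Marcinkiewicz and duality, mirroring the proof of Lemma~\ref{lem:CZ}) and a convergence step on the dense subclass $C^\infty_c(\R^d)^d$ using the annular mean-zero property of $\Ll$ to write $(\Kk-\Kk^{(\eta)})g(x)=\int_{|y|\le\eta}\Ll(y)(g(x-y)-g(x))\,dy$ is precisely the canonical route, and the density bootstrap to general $g\in\Ld^q$ is carried out correctly.
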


\subsection{Mixed Lebesgue spaces}
It is often convenient to distinguish between the behavior of singular kernels on small and large scales. For that purpose, for all $1\le p,q<\infty$, we shall consider the mixed Lebesgue space $\Ld^p_q(\R^d)$ as the closure of $C^\infty_c(\R^d)$ for the norm
\[\|g\|_{\Ld^p_q(\R^d)}\,:=\,\bigg(\sum_{z\in\Z^d}\Big(\int_{Q(z)}|g|^q\Big)^\frac pq\bigg)^\frac1p.\]
Note that $\Ld^p_p(\R^d)=\Ld^p(\R^d)$ and that the norm is equivalent to
\[\|g\|_{\Ld^p_q(\R^d)}\,\simeq\,\bigg(\int_{\R^d}\Big(\int_{Q(x)}|g|^q\Big)^\frac pqdx\bigg)^\frac1p.\]
For all $1\le p,q,r<\infty$, we further consider the mixed Lebesque space $\Ld^p_q(\R^d;\Ld^r(\Omega))$ as the closure of $C^\infty_c(\R^d;\Ld^\infty(\Omega))$ for the norm
\[\|g\|_{\Ld^p_{q}(\R^d;\Ld^r(\Omega))}\,:=\,\bigg(\sum_{z\in\Z^d}\Big(\int_{Q(z)}\expec{|g|^r}^\frac qr\Big)^\frac pq\bigg)^\frac1p.\]
Note that those definitions can be naturally adapted also to possibly infinite exponents {$1\le p,q,r\le\infty$}.
For $1<p,q,r<\infty$, the singular convolution operator $\Kk$ on $\Ld^p(\R^d)$ can be extended as an operator on the mixed space $\Ld^p_{q}(\R^d;\Ld^r(\Omega))$,
and we show that the same operator bounds hold as in Lemma~\ref{lem:CZ} in this mixed setting.

\begin{lem}\label{lem:mixed-K}
For all $1<p\le q\le r\le2$, the singular convolution operator $\Kk$ on $\Ld^p(\R^d)^d$ extends as a bounded operator on $\Ld^p_{q}(\R^d;\Ld^r(\Omega))^d$ with
\begin{equation*}
\|\Kk\|_{\Ld^p_{q}(\R^d;\Ld^r(\Omega))^d\to\Ld^p_{q}(\R^d;\Ld^r(\Omega))^d}\,\le\,\tfrac{C}{p-1}.
\end{equation*}
Note that the same result holds in particular on $\Ld^p_{q}(\R^d)^d$.
\end{lem}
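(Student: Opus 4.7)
The plan is to reduce the mixed-norm bound to the scalar Calder\'on--Zygmund theory on the product space $\R^d\times\Omega$, combined with a decomposition of $\R^d$ into unit cubes. Since $\Kk$ acts trivially on $\omega$ and its kernel $\tfrac1d\delta+\Ll$ is Calder\'on--Zygmund of scalar type (Lemma~\ref{lem:CZ}), the classical scalar theorem gives $\|\Kk\|_{\Ld^s(\R^d\times\Omega)}\le\tfrac{Cs^2}{s-1}$ for all $1<s<\infty$; at $s=p$ this already produces the target scaling $\tfrac C{p-1}$, and the remaining task is to transfer the bound from $\Ld^p(\R^d\times\Omega)$ to the amalgam-type mixed space $\Ld^p_q(\R^d;\Ld^r(\Omega))$.

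To this end, I would decompose $g=\sum_{z\in\Z^d}g_z$ with $g_z:=g\mathds1_{Q(z)}$, so that for every target cube $Q(z')$,
\[
\|\Kk g\|_{\Ld^q(Q(z');\Ld^r(\Omega))}\,\le\,\sum_{z\in\Z^d}A(z-z')\,\|g\|_{\Ld^q(Q(z);\Ld^r(\Omega))},
\]
where $A(w)$ denotes the operator norm of $h\mapsto\mathds1_{Q(w)}\Kk h$ on functions supported in $Q(0)$, measured in $\Ld^q(\,\cdot\,;\Ld^r(\Omega))$. Taking $\ell^p_{z'}$-norms and applying Young's convolution inequality on $\Z^d$ reduces the lemma to showing $\|A\|_{\ell^1(\Z^d)}\lesssim\tfrac1{p-1}$. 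The bound on $A$ is then obtained by a two-regime analysis: for nearby cubes $|w|\lesssim1$ the $\Ld^q(\Ld^r)$-boundedness of $\Kk$ (via vector-valued CZ for the UMD space $\Ld^r(\Omega)$) gives a bounded contribution; for distant cubes $|w|\gg1$ the naive pointwise kernel estimate $|\Ll(x-y)|\lesssim|w|^{-d}$ is critically non-summable, so one invokes cancellation by splitting $g_z$ into its cube-average $\bar g_z$ and its mean-zero remainder, where the Lipschitz-smoothness of $\Ll$ promotes the remainder's decay to the summable $|w|^{-d-1}$, while the assembled mean parts $\bar g:=\sum_z\bar g_z$ form a piecewise-constant envelope on which the scalar $\Ld^p(\R^d\times\Omega)$ CZ bound applies, using $\|\bar g\|_{\Ld^p(\R^d;\Ld^r(\Omega))}\le\|g\|_{\Ld^p_q(\R^d;\Ld^r(\Omega))}$ by Jensen on unit cubes and on $(\Omega,\Pm)$.

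The main obstacle is ensuring that the final constant depends only on $p$. A naive combination of vector-valued CZ on nearby cubes and pointwise kernel estimates on distant cubes produces constants involving $q$, $r$ and the UMD constant $\beta_{\Ld^r(\Omega)}=O(r')$; the nesting $p\le q\le r\le 2$ is essential to control $\tfrac{q^2}{q-1}$ and $\beta_{\Ld^r(\Omega)}$ by $O(\tfrac1{p-1})$. The delicate point that ultimately drives the sharp scaling $\tfrac C{p-1}$ is to treat the mean parts \emph{globally} through the scalar $\Ld^p(\R^d\times\Omega)$ CZ bound rather than cube-by-cube, so that the full strength of the sharp scalar estimate propagates to the mixed-norm setting and is not lost to secondary constants accumulating in the vector-valued or near-diagonal estimates.
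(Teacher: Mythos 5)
Your route is genuinely different from the paper's: the paper proves the bound on $\Ld^p(\R^d;\Ld^r(\Omega))$ via Fubini at $p=r$, the Benedek--Calder\'on--Panzone vector-valued weak-$(1,1)$ estimate and Marcinkiewicz interpolation, and then transfers it to the amalgam space $\Ld^p_q(\R^d;\Ld^r(\Omega))$ by lifting to the auxiliary product space $\Ld^p(\R^d;\Ld^q(Q;\Ld^r(\Omega)))$ with a local shift operator $T$ and the isometry $Ug(x,\alpha,\omega)=g(x+\alpha,\omega)$, so that $\Kk TU=U\Kk$. There is no cube decomposition, no near/far split and no explicit use of cancellation in the paper; the whole argument is abstract interpolation. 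Your proposal, in the spirit of Wiener-amalgam estimates, goes by unit-cube decomposition, Young's inequality on $\Z^d$, and a near/far split with a cube-average cancellation trick. That is a legitimate alternative strategy.

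However, your treatment of the cube-average piece has a genuine gap. You reduce the far mean parts to the single function $\bar g=\sum_z\bar g_z$, apply the $\Ld^p(\R^d\times\Omega)$ Calder\'on--Zygmund bound to $\Kk\bar g$, and feed in $\|\bar g\|_{\Ld^p(\R^d;\Ld^r(\Omega))}\le\|g\|_{\Ld^p_q(\R^d;\Ld^r(\Omega))}$. But the output of this step is an $\Ld^p(\R^d;\Ld^r(\Omega))$ bound on $\Kk\bar g$, while the target norm is $\Ld^p_q(\R^d;\Ld^r(\Omega))$. For $p<q$ the inclusion runs the wrong way: $\Ld^p_q\subset\Ld^p$ with $\|\cdot\|_{\Ld^p}\lesssim\|\cdot\|_{\Ld^p_q}$, so an $\Ld^p$ bound on $\Kk\bar g$ does not control its $\Ld^p_q$ norm, which requires local $\Ld^q$ integrability on each unit cube that you have not established. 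Concretely, you still need to show that $\|\Kk\bar g\|_{\Ld^q(Q(z');\Ld^r(\Omega))}$ is controlled, with constants summable in $\ell^p_{z'}$. This is fixable --- split $\Kk\bar g$ again into near and far source cubes relative to $Q(z')$; the near piece involves only finitely many cubes and is handled by the vector-valued bound; the far piece has small local oscillation, with Lipschitz constant on $Q(z')$ bounded by $\sum_{|z-z'|>C}|z-z'|^{-d-1}\|\bar g_z\|_{\Ld^r(\Omega)}$, which is $\ell^1$-summable, so that $\|\cdot\|_{\Ld^q(Q(z'))}\lesssim\|\cdot\|_{\Ld^p(Q(z'))}+\mathrm{Lip}$ on each unit cube closes the loop --- but this extra step is exactly the heart of the matter and is absent from the proposal.

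A secondary imprecision: you invoke the UMD/vector-valued CZ machinery for the near part and estimate the UMD constant of $\Ld^r(\Omega)$, which is not needed here and gives worse constants. The paper avoids it entirely by interpolating from the Fubini endpoint $p=r$, which is also what you would want for the near-cube bound. Neither of these remarks is fatal, but the $\Ld^p$ versus $\Ld^p_q$ mismatch in the cube-average step is a real missing ingredient that the write-up should address.
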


\begin{proof}
We split the proof into two steps.

\medskip
\step1 Proof that for all $1<p\le r\le 2$ we have
\[\|\Kk\|_{\Ld^p(\R^d;\Ld^r(\Omega))^d\to\Ld^p(\R^d;\Ld^r(\Omega))^d}\,\le\,\tfrac{C}{p-1}.\]
Starting point is Lemma~\ref{lem:CZ}, which yields by Fubini's theorem, for all $1<r\le2$ and $g\in\Ld^r(\R^d;\Ld^r(\Omega))^d$,
\[\|\Kk g\|_{\Ld^r(\R^d;\Ld^r(\Omega))}\,\le\,\tfrac{C}{r-1}\|g\|_{\Ld^r(\R^d;\Ld^r(\Omega))}.\]
As $\Kk$ is given by a Calder\'on--Zygmund kernel on $\R^d$, cf.~Lemma~\ref{lem:CZ}, we may then appeal to~\cite{Benedek-Calderon-Panzone-62} and deduce the following Banach-valued weak-$\Ld^1$ estimate,
\[|\{x:\|\Kk g\|_{\Ld^r(\Omega)}>\lambda\}|\,\le\,\tfrac{C}{r-1}\lambda^{-1}\|g\|_{\Ld^1(\R^d;\Ld^r(\Omega))}.\]
By Marcinkiewicz interpolation, this yields the claim.

\medskip
\step2 Conclusion.\\
Consider the trivial extension of the convolution operator $\Kk$ to $\Ld^p(\R^d;\Ld^q(Q;\Ld^r(\Omega)))^d$,
and further consider the operator $T$ on that space given by
\[Th(x,\alpha,\omega)\,:=\,\int_Qh(x+\alpha-\alpha',\alpha',\omega)\,d\alpha'.\]
For all $1<q\le r\le2$, integrating over $Q$, the result of Step~1 yields
\begin{eqnarray*}
\|\Kk Th\|_{\Ld^q(\R^d;\Ld^q(Q;\Ld^r(\Omega)))}
~=~\|\Kk Th\|_{\Ld^q(Q;\Ld^q(\R^d;\Ld^r(\Omega)))}
&\le&\tfrac{C}{q-1}\|Th\|_{\Ld^q(Q;\Ld^q(\R^d;\Ld^r(\Omega)))}\\
&\le&\tfrac{C}{q-1}\|h\|_{\Ld^q(\R^d;\Ld^q(Q;\Ld^r(\Omega)))}.
\end{eqnarray*}
As in Step~1, we may then appeal to~\cite{Benedek-Calderon-Panzone-62} and deduce the following weak-$\Ld^1$ estimate,
\[|\{x:\|\Kk Th\|_{\Ld^q(Q;\Ld^r(\Omega))}>\lambda\}|\,\le\,\tfrac{C}{q-1}\lambda^{-1}\|h\|_{\Ld^1(\R^d;\Ld^q(Q;\Ld^r(\Omega)))},\]
which implies, by Marcinkiewicz interpolation, for all $1<p\le q\le r\le2$,
\begin{equation*}
\|\Kk Th\|_{\Ld^p(\R^d;\Ld^q(Q;\Ld^r(\Omega)))}\,\le\,\tfrac{C}{p-1}\|h\|_{\Ld^p(\R^d;\Ld^q(Q;\Ld^r(\Omega)))}.
\end{equation*}
Considering the linear map $U:\Ld^p_q(\R^d;\Ld^r(\Omega))^d\to\Ld^p(\R^d;\Ld^q(Q;\Ld^r(\Omega)))^d$ given by
\[Ug(x,\alpha,\omega)\,:=\,g(x+\alpha,\omega),\]
noting that $\Kk TUg(x,\alpha,\omega)=(\Kk g)(x+\alpha,\omega)$, and applying the above with $h=Ug$, the conclusion follows.
\end{proof}

\subsection{Discrete truncations of $\Kk$}
In order to tackle the perturbative expansion~\eqref{eq:barA}, it will be convenient to discretize the space $\R^d$ at a given scale $R>0$. More precisely, for all~$x\in\R^d$, we define~$z_R(x)\in R\Z^d$ as the lattice point satisfying
\begin{equation}\label{eq:zRdef}
|x-z_R(x)|_\infty\,=\,\min_{z\in R\Z^d}|x-z|_\infty,
\end{equation}
where we take $z_R(x)$ to be the smallest with respect to lexicographic order in case the minimum is attained by several lattice points. In these terms, for all $\ell \geq R$, we define the following truncated kernel,
\begin{equation}\label{eq:truncate-K}
\Kk_{\ell;R}(x,y)\,:=\,\Kk(x-y)\mathds1_{|z_R(x)-z_R(y)|_\infty\le\ell}.
\end{equation}
Of course, we note that $\Kk_{\ell;R}$ is no longer a convolution kernel, but we show that it still satisfies the same operator estimates.

\begin{lem}\label{lem:trunc-CZ}
For all $1<p\le q\le r\le2$, uniformly with respect to $\ell\ge R>0$, we have
\[\|\Kk_{\ell;R}\|_{\Ld^p_q(\R^d;\Ld^r(\Omega))^d\to\Ld^p_q(\R^d;\Ld^r(\Omega))^d}\,\le\,\tfrac{C}{p-1}.\]
\end{lem}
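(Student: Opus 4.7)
The plan is to mimic the two-step structure of the proof of Lemma~\ref{lem:mixed-K} itself, so that the truncation only enters in the scalar step.

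\medskip\noindent
\emph{Step~1 (uniform scalar bound).} I first aim for
\[\|\Kk_{\ell;R}\|_{\Ld^r(\R^d;\Ld^r(\Omega))^d\to\Ld^r(\R^d;\Ld^r(\Omega))^d}\,\le\,\tfrac{C}{r-1},\qquad 1<r\le 2,\]
uniformly in $\ell\geq R>0$. Since $\Kk_{\ell;R}$ acts only in the spatial variable, Fubini reduces this to the corresponding statement on $\Ld^r(\R^d)^d$. I would write
\[\Kk_{\ell;R}\,=\,\Kk-\Kk^{>}_{\ell;R},\qquad \Kk^{>}_{\ell;R}(x,y)\,:=\,\Ll(x-y)\,\mathds{1}_{|z_R(x)-z_R(y)|_\infty>\ell},\]
noting that the Dirac part of $\Kk$ does not contribute to the remainder since $x=y$ forces $z_R(x)=z_R(y)$. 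The first piece is handled by Lemma~\ref{lem:CZ}, so the work is to control $\Kk^{>}_{\ell;R}$. Its kernel obeys the standard size bound $|\Kk^{>}_{\ell;R}(x,y)|\le C|x-y|^{-d}$ and inherits the Hörmander-type regularity of $\Ll$ away from the codimension-one faces of the $R$-lattice where the indicator jumps. A $T(1)$-theorem argument, verifying uniform $\mathrm{BMO}$-bounds for $T(1),T^*(1)$ through the spherical cancellation of $\Ll$ and the (essentially antisymmetric) structure of the jump set, would then yield $\Ld^2$-boundedness uniformly in $\ell,R$. The standard Calderón--Zygmund extension then gives the claimed $\Ld^r$-bound with $O(\frac{1}{r-1})$ constant for $1<r\leq 2$.

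\medskip\noindent
\emph{Step~2 (upgrade to mixed norms).} The passage from $\Ld^r(\R^d;\Ld^r(\Omega))$ to $\Ld^p_q(\R^d;\Ld^r(\Omega))$ repeats verbatim Step~2 of Lemma~\ref{lem:mixed-K}: introduce the $Q$-averaging operator $Th(x,\alpha,\omega)=\int_Q h(x+\alpha-\alpha',\alpha',\omega)\,d\alpha'$ and the transfer map $Ug(x,\alpha,\omega)=g(x+\alpha,\omega)$, view $\Kk_{\ell;R}$ as a Banach-valued CZ operator on $\Ld^r(\R^d;\Ld^q(Q;\Ld^r(\Omega)))$, and invoke the Banach-valued weak-$\Ld^1$ theorem of Benedek--Calderón--Panzone followed by Marcinkiewicz interpolation. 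This argument is insensitive to translation invariance of the underlying operator, so the uniform scalar input from Step~1 promotes directly to the claimed mixed-norm bound with the same uniformity.

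\medskip\noindent
\emph{Main obstacle.} The nontrivial input is Step~1. The truncating factor $\mathds{1}_{|z_R(x)-z_R(y)|_\infty\leq\ell}$ is neither translation invariant nor smooth across $R$-lattice faces, so neither the Fourier-multiplier argument nor the classical pointwise Hörmander smoothness used for Lemma~\ref{lem:CZ} applies. I expect the delicate point to be a scale-invariant verification of the $T(1)$-hypotheses for $\Kk^{>}_{\ell;R}$: namely, showing that the contribution of the lattice-face discontinuities to the Hörmander integral and to $\|T(1)\|_{\mathrm{BMO}}$ is bounded uniformly in $\ell\ge R>0$, which is where the geometry of the discretization $z_R$ (in particular, that the jump set has codimension one and the jump size is quantitatively controlled by $|x-y|^{-d}$ near each face) plays the decisive role.
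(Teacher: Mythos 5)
Your Step~2 (transfer to mixed norms via Benedek--Calder\'on--Panzone) is exactly the paper's, so the comparison hinges on Step~1. There you take a genuinely different route from the paper and, as written, it has a gap precisely where you flag the ``delicate point.''

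The kernel of $\Kk^{>}_{\ell;R}$ has $O(|x-y|^{-d})$-sized jumps across the codimension-one faces of the $R$-lattice, so the pointwise H\"ormander regularity condition $|K(x,y)-K(x',y)|\lesssim |x-x'|^{\alpha}|x-y|^{-d-\alpha}$ manifestly fails when $x,x'$ straddle a face: this is not a technicality the $T(1)$ theorem quietly forgives, it is one of its two kernel hypotheses. One could hope to fall back on an integral ($\Ld^1$-H\"ormander) variant, but then you must actually estimate $\int_{|x|>2|y|}|K(x,y)-K(x,0)|\,dx$ uniformly in $\ell\ge R>0$ over a jump set whose geometry depends on both $R$ and $\ell$ and is not translation-invariant; you state that you ``expect'' the lattice-face contributions are controlled, but you do not verify it, and it is not obvious it is uniform in the two parameters. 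In short: the obstacle you identify is real, and your proposal stops at identifying it rather than resolving it.

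The paper avoids this entirely with a two-stage comparison that is worth internalizing. First it truncates \emph{radially}, setting $\tilde\Kk_\ell(x):=\Kk(x)\mathds1_{|x|\le\ell}$, and checks the integral H\"ormander condition by hand: the only new contribution beyond that of $\Ll$ itself comes from the spherical jump set and is bounded by $2\int_{\frac12\ell<|x|\le\ell}|\Ll(x)|\,dx\lesssim 1$, uniformly in $\ell$. That gives $\Ld^p$-boundedness of $\tilde\Kk_\ell$ with constant $O(\frac{p^2}{p-1})$ by the usual CZ machinery. Second, it observes that
\[
\overline{B_{s_1(\ell)}(x)}\subset\{y:|z_R(x)-z_R(y)|_\infty\le\ell\}\subset\overline{B_{s_2(\ell)}(x)},\qquad s_1(\ell),s_2(\ell)\simeq\ell,
\]
so the kernel of $\Kk_{\ell;R}-\tilde\Kk_\ell$ is supported in the annulus $s_1(\ell)\le|x-y|\le s_2(\ell)$ and has size $\lesssim\ell^{-d}$; such an operator is bounded on every $\Ld^p$ uniformly in $\ell,R$ by a one-line Schur/Young estimate. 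No $T(1)$ theorem, no analysis of lattice-face geometry. If you want to salvage your approach you should replace the appeal to $T(1)$ by this explicit comparison; as it stands, your Step~1 is an unproven claim rather than a proof.
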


\begin{proof}
In view of the proof of Lemma~\ref{lem:mixed-K} above, it suffices to prove the result on $\Ld^p(\R^d)^d$.
First define the radially-truncated kernel
\[\tilde\Kk_\ell(x)\,:=\,\Kk(x)\mathds1_{|x|\le\ell}\,=\,\tfrac1d\delta(x)\Id+\tilde\Ll_\ell(x),\qquad \tilde\Ll_\ell(x)\,:=\,\Ll(x)\mathds1_{|x|\le\ell}.\]
We check that $\tilde\Ll_\ell$ satisfies the following properties, uniformly with respect to $\ell>0$,
\begin{enumerate}[(a)]
\item $|\tilde\Ll_\ell(x)|\lesssim|x|^{-d}$ for all $x$;
\item $\int_{a<|x|<b}\tilde\Ll_\ell(x)dx=0$ for all $b>a>0$;
\item $\int_{x:|x|>2|y|}|\tilde\Ll_\ell(x)-\tilde\Ll_\ell(x-y)|\,dx\,\lesssim\,1$ for all $y$.
\end{enumerate}
While properties~(a) and~(b) are obvious from Lemma~\ref{lem:CZ}, property~(c) is obtained as follows: noting that the inequalities $|x|>2|y|$ and $|x-y|>\ell$ imply $|x|>\frac23\ell$, and that the inequalities $|x|>2|y|$ and $|x|>\ell$ imply $|x-y|>\frac12\ell$, we can estimate
\begin{eqnarray*}
\lefteqn{\int_{x:|x|>2|y|}|\tilde\Ll_\ell(x)-\tilde\Ll_\ell(x-y)|\,dx}\\
&\le&\int_{x:|x|>2|y|}|\Ll(x)-\Ll(x-y)|\,dx
+2\int_{x:\frac12\ell<|x|\le\ell}|\Ll(x)|\,dx,
\end{eqnarray*}
from which property~(c) follows as $\Ll$ is itself a Calder\'on--Zygmund kernel.
From~(a)--(c), the Calder\'on-Zygmund theory can be applied as in the proof of Lemma~\ref{lem:CZ}, which entails that $\tilde\Kk_\ell$ satisfies the following $\Ld^p$ estimates: uniformly with respect to $\ell >0$, we have for all $1<p<\infty$,
\begin{equation}\label{eq:CZest-Kr}
\|\tilde\Kk_\ell\|_{\Ld^p(\R^d)^d\to\Ld^p(\R^d)^d}\,\le\,\tfrac{Cp^2}{p-1}.
\end{equation}
It remains to compare $\Kk_{\ell;R}$ to $\tilde\Kk_\ell$. Noting that for $\ell\ge R$ we have
\[\overline{B_{s_1(\ell)}(x)}\,\subset\,\big\{y:|z_R(x)-z_R(y)|_\infty\le\ell\big\}\, \subset\, \overline{B_{s_2(\ell)}(x)},\]
with $s_1(\ell), s_2(\ell)\simeq \ell$,
we find for all $g\in\Ld^p(\R^d)^d$,
\begin{eqnarray*}
\|(\Kk_{\ell;R}-\tilde\Kk_\ell)g\|_{\Ld^p(\R^d)}^p&\lesssim&\int_{\R^d}\bigg(\int_{\R^d}|x-y|^{-d}|g(y)|\,\mathds1_{s_1(\ell)\le |x-y|\le s_2(\ell)}\,dy\bigg)^p\,dx\\
&\lesssim&\ell^{-dp}\int_{\R^d}\bigg(\int_{\R^d}|g(y)|\,\mathds1_{s_1(\ell)\le |x-y|\le s_2(\ell)}\,dy\bigg)^p\,dx\\
&\lesssim&\|g\|_{\Ld^p(\R^d)}^p.
\end{eqnarray*}
Combined with~\eqref{eq:CZest-Kr}, this yields the conclusion. 
\end{proof}

\subsection{A useful operator notation}
The following notation is particularly convenient in the continuum setting as it
allows to stick very closely to Bourgain's discrete notation in~\cite{Bourgain-18} while singling out local regularity issues: given $1\le q\le\infty$, we define the locally averaged kernel of a bounded operator $T$ on $\Ld^q(\R^d)^d$ as
\begin{gather*}
\cro{T}_{q}:\R^d\times\R^d\to\R^+,\\
\cro{T}_{q}(x,y)\,:=\,\|\mathds1_{Q(x)}T\mathds1_{Q(y)}\|_{\Ld^q(\R^d)^d\to\Ld^q(\R^d)^d}.
\end{gather*}
It will occasionally be useful to further include spatial averaging with a different power over some intermediate scale~$R\ge1$:
given $1\le p,q\le\infty$, we define the $R$-locally averaged kernel of a bounded operator $T$ on $\Ld^p_q(\R^d)^d$ as
\begin{gather*}
\cro{T}_{p,q;R}:R\Z^d\times R\Z^d\to\R^+,\\
\cro{T}_{p,q;R}(x,y)\,:=\,\|\mathds1_{Q_R(x)}T\mathds1_{Q_R(y)}\|_{\Ld^p_q(\R^d)^d\to\Ld^p_q(\R^d)^d}.
\end{gather*}
Note the useful lower bound,
\begin{equation}\label{eq:cro-lower}
\cro{T}_{p,q;R}(x,y)
\,\ge\,\cro{T}_{p,q;1}(x,y)
\,\simeq\,\cro{T}_{q}(x,y).
\end{equation}
(To see the latter equivalence, note that $\cro{T}_{p,q;1}(x,y)$ depends only on local integrability properties and that for these all $\Ld^p_q(\R^d)$ norms are equivalent for different values of $p$.)
As we shall often consider operators acting on random fields, that is, on functions defined on~$\R^d\times\Omega$, we further define corresponding notions of averaged kernels: given $1\le q,r\le\infty$,
for a bounded operator $T$ on $\Ld^q(\R^d;\Ld^r(\Omega))^d$, we define
\begin{gather*}
\cro{T}_{q,r}:\R^d\times\R^d\to\R^+,\\
\cro{T}_{q,r}(x,y)\,:=\,\|\mathds1_{Q(x)}T\mathds1_{Q(y)}\|_{\Ld^q(\R^d;\Ld^r(\Omega))^d\to\Ld^q(\R^d;\Ld^r(\Omega))^d},
\end{gather*}
and given $1\le p,q,r\le\infty$
and $R\ge1$, for a bounded operator $T$ on $\Ld^p_q(\R^d;\Ld^r(\Omega))^d$, we define
\begin{gather*}
\cro{T}_{p,q,r;R}:R\Z^d\times R\Z^d\to\R^+,\\
\cro{T}_{p,q,r;R}(x,y)\,:=\,\|\mathds1_{Q_R(x)}T\mathds1_{Q_R(y)}\|_{\Ld^p_q(\R^d;\Ld^r(\Omega))^d\to\Ld^p_q(\R^d;\Ld^r(\Omega))^d}.
\end{gather*}
For $p,q,r<\infty$, as the dual space of $\Ld^{p}_{q}(\R^d;\Ld^{r}(\Omega))^d$ is clearly isomorphic to $\Ld^{p'}_{q'}(\R^d;\Ld^{r'}(\Omega))^d$, the above definition can be equivalently written as
\begin{multline}\label{eq:def-brack}
\cro{T}_{p,q,r;R}(x,y)\,=\,\sup\Big\{|\langle\phi_x,T\phi_y\rangle|~:~\phi_x,\phi_y\in C^\infty_c(\R^d;\Ld^\infty(\Omega))^d,\\
\supp(\phi_x)\subset Q_R(x)\times\Omega,~\supp(\phi_y)\subset Q_R(y)\times\Omega,\\[-1mm]
\|\phi_x\|_{\Ld^{p'}_{q'}(\R^d;\Ld^{r'}(\Omega))}=\|\phi_y\|_{\Ld^p_q(\R^d;\Ld^r(\Omega))}=1\Big\},
\end{multline}
where henceforth we use the short-hand notation $\langle\cdot,\cdot\rangle$ for the pairing of vector fields on $\R^d\times\Omega$, that is,
\[\langle\phi_x,T\phi_y\rangle:=\expec{\int_{\R^d}\phi_x\cdot T\phi_y}.\]
Given $1\le p,q,r\le\infty$,
for a bounded operator $T$ on $\Ld^r(\Omega)^d$, extended trivially as an operator on $\Ld^p_q(\R^d;\Ld^r(\Omega))^d$, note that we have $\cro{T}_{p,q,r;R}(x,y)\le\mathds1_{x=y}\|T\|_{\Ld^r(\Omega)^d\to\Ld^r(\Omega)^d}$ for all~$x,y\in R\Z^d$.
Moreover, given two operators $T,T'$ on $\Ld^p_q(\R^d;\Ld^r(\Omega))$, we have
\begin{eqnarray}
\cro{T}_{p,q,r;R}(x,y)&\le&\|T\|_{\Ld^p_q(\R^d;\Ld^r(\Omega))^d\to\Ld^p_q(\R^d;\Ld^r(\Omega))^d},\nonumber\\[1mm]
\cro{TT'}_{p,q,r;R}(x,y)&\le&\sum_{z\in R\Z^d}\cro{T}_{p,q,r;R}(x,z)\,\cro{T'}_{p,q,r;R}(z,y).\label{eq:compo-Rp}
\end{eqnarray}
Finally, we show that the following estimate holds for the truncated kernel~$\Kk_{\ell;R}$.

\begin{lem}\label{lem:decay-K}
For all $1<p\le q\le r\le2$, uniformly with respect to $\ell\ge R\ge1$, we have
\[\cro{\Kk_{\ell;R}}_{p,q,r;R}(x,y)\,\lesssim\,\tfrac{1}{p-1}\wedge (|x-y|-2R)_+^{-d}\,\lesssim\,\tfrac{1}{p-1}\langle\tfrac1R(x-y)\rangle^{-d}.\]
\end{lem}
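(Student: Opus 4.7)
The plan is to establish the two ingredients of the minimum separately and then combine them.

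The bound $\cro{\Kk_{\ell;R}}_{p,q,r;R}(x,y)\lesssim\tfrac{1}{p-1}$ is immediate from the definition, since $\cro{\Kk_{\ell;R}}_{p,q,r;R}(x,y)\le\|\Kk_{\ell;R}\|_{\Ld^p_q(\R^d;\Ld^r(\Omega))^d\to\Ld^p_q(\R^d;\Ld^r(\Omega))^d}$, and Lemma~\ref{lem:trunc-CZ} bounds the right-hand side by $\tfrac{C}{p-1}$ uniformly in $\ell\ge R$.

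For the decay bound $\cro{\Kk_{\ell;R}}_{p,q,r;R}(x,y)\lesssim (|x-y|-2R)_+^{-d}$, there is nothing to prove when $|x-y|\le 2R$, so assume $|x-y|>2R$, in which case the cubes $Q_R(x)$ and $Q_R(y)$ are disjoint. On $Q_R(x)\times Q_R(y)$ two simplifications occur: (i) by Lemma~\ref{lem:CZ}, the Dirac part $\tfrac1d\delta(x'-y')\Id$ of $\Kk$ vanishes, leaving only the Calder\'on--Zygmund piece $\Ll(x'-y')$ (multiplied by the indicator $\mathds1_{|x-y|_\infty\le\ell}$, which only shrinks the kernel); and (ii) any $x'\in Q_R(x),\,y'\in Q_R(y)$ satisfy $|x'-y'|\ge |x-y|-2R$, which yields the pointwise bound $|\Ll(x'-y')|\lesssim(|x-y|-2R)^{-d}$.

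To convert this pointwise kernel bound into an operator-norm estimate, I use the dual formulation~\eqref{eq:def-brack}: for admissible test functions $\phi_x,\phi_y$, bound pathwise
\[
|\langle\phi_x,\Kk_{\ell;R}\phi_y\rangle|\le\E\int\!\!\int|\phi_x(x')|\,|\Ll(x'-y')|\,|\phi_y(y')|\,dx'\,dy',
\]
then apply H\"older's inequality in $\omega$ to pass to the deterministic majorants $\psi_x(x'):=\|\phi_x(x',\cdot)\|_{\Ld^{r'}(\Omega)}$ and $\psi_y(y'):=\|\phi_y(y',\cdot)\|_{\Ld^r(\Omega)}$, which have unit $\Ld^{p'}_{q'}$ and $\Ld^p_q$ norms. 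The uniform kernel estimate then factors out, and the remaining task is to control the product $\int_{Q_R(x)}\psi_x\cdot\int_{Q_R(y)}\psi_y$ by a standard H\"older argument that partitions each cube $Q_R$ into unit cubes and exploits the mixed-norm structure.

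Combining the two bounds yields the $\wedge$-estimate. The second inequality $\lesssim\tfrac{1}{p-1}\langle\tfrac1R(x-y)\rangle^{-d}$ then follows by direct comparison of decay profiles: when $|x-y|\le 2R$ both bounds are of order $\tfrac{1}{p-1}$, while when $|x-y|>2R$ the quantities $(|x-y|-2R)^{-d}$ and $\langle\tfrac1R(x-y)\rangle^{-d}$ have the same algebraic decay rate in $|x-y|/R$, using $R\ge 1$. The main technical obstacle is tracking the mixed $\Ld^p_q(\R^d;\Ld^r(\Omega))$ norm cleanly through H\"older's inequalities in space and in $\omega$, but this is routine once one reduces to deterministic majorants.
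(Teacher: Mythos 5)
Your proof follows the same two-ingredient structure as the paper's terse argument (the $\Ld^p_q\to\Ld^p_q$ bound from Lemma~\ref{lem:trunc-CZ}, and a pointwise kernel bound when the cubes are well-separated, combined by $\wedge$), and fleshing out the second ingredient through the dual pairing~\eqref{eq:def-brack} with H\"older in $\omega$ to reduce to deterministic majorants is a sensible way to do it. However, there is a quantitative slip at the very step you describe as ``routine''. Once the pointwise kernel bound $|\Ll(x'-y')|\lesssim M:=(|x-y|-2R)_+^{-d}$ is factored out, what remains is the product $\int_{Q_R(x)}\psi_x\cdot\int_{Q_R(y)}\psi_y$, and the H\"older argument you invoke (comparing $\Ld^1$ against $\Ld^{p'}_{q'}$ and $\Ld^p_q$ over the $\approx R^d$ unit subcubes of each $Q_R$) gives $\int_{Q_R(x)}\psi_x\lesssim R^{d/p}$ and $\int_{Q_R(y)}\psi_y\lesssim R^{d/p'}$, hence the product is $\lesssim R^d$, not $O(1)$. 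So your argument actually delivers $\cro{\Kk_{\ell;R}}_{p,q,r;R}(x,y)\lesssim\tfrac1{p-1}\wedge R^d(|x-y|-2R)_+^{-d}$, and the intermediate $\wedge$-estimate you state, with no $R^d$, does not follow. Your closing claim that $(|x-y|-2R)_+^{-d}$ and $\langle\tfrac1R(x-y)\rangle^{-d}$ ``have the same algebraic decay rate in $|x-y|/R$'' is likewise off: for $|x-y|\gtrsim R$ one has $(|x-y|-2R)_+^{-d}\approx R^{-d}\langle\tfrac1R(x-y)\rangle^{-d}$, i.e.\ they differ exactly by the missing volume factor. The good news is that $\tfrac1{p-1}\wedge R^d(|x-y|-2R)_+^{-d}\lesssim\tfrac1{p-1}\langle\tfrac1R(x-y)\rangle^{-d}$ does hold, and this last form is the only one actually invoked downstream (in Lemmas~\ref{lem:deterministic} and~\ref{lem:bound-reduct}); note also that the lemma statement and the paper's own one-line justification (``the pointwise decay of the kernel yields $\lesssim|x-y|^{-d}$'') share the same $R^d$-imprecision, which you have inherited. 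You should therefore track the $R^d$ factor explicitly and aim directly for the $\langle\tfrac1R(x-y)\rangle^{-d}$ bound rather than routing through $(|x-y|-2R)_+^{-d}$.
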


\begin{proof}
For $|x-y|_\infty>2R$, the pointwise decay of the kernel yields
\[\cro{\Kk_{\ell;R}}_{p,q,r;R}(x,y)\,\lesssim\,|x-y|^{-d}.\]
As Lemma~\ref{lem:trunc-CZ} further yields
\[\cro{\Kk_{\ell;R}}_{p,q,r;R}(x,y)\,\le\,\|\Kk_{\ell;R}\|_{\Ld^p_q(\R^d;\Ld^r(\Omega))^d\to\Ld^p_q(\R^d;\Ld^r(\Omega))^d}\,\lesssim\,\tfrac{1}{p-1},\]
the conclusion follows.
\end{proof}

\subsection{Bourgain's deterministic lemma}\label{sec:detlem}
Using Lemma~\ref{lem:decay-K} and~\eqref{eq:compo-Rp}, say for $R=1$,
a direct estimate of the terms in the expansion~\eqref{eq:barA} for $\bar\Bc(\nabla)$ yields for all $1<p\le r\le2$,
\[\cro{P\Bb(\Kk P^\bot\Bb)^nP}_{p,r}(x,y)\,\le\,(\tfrac{C}{p-1})^n\sum_{z_1,\ldots,z_{n-1}\in\Z^d}\langle x-z_1\rangle^{-d}\ldots\langle z_{n-1}-y\rangle^{-d}.\]
Therefore, evaluating the sum,
\[\cro{P\Bb(\Kk P^\bot\Bb)^nP}_{p,r}(x,y)\,\le\,(\tfrac{C}{p-1})^n\langle x-y\rangle^{-d}\log(2+|x-y|)^{n-1}.\]
For all $\e>0$, using $\log t\le \e^{-1}t^\e$ for $t\ge1$, this bound translates into
\begin{equation}\label{eq:brutal-estim-det}
\cro{P\Bb(\Kk P^\bot\Bb)^nP}_{p,r}(x,y)\,\le\,n^n\e^{1-n}(\tfrac{C}{p-1})^n\langle x-y\rangle^{\e-d}.
\end{equation}
The combinatorial factor $n^n$ destroys any possible use of this direct estimate in the perturbative expansion~\eqref{eq:barA}.
Instead, in~\cite[Lemma~1]{Bourgain-18}, Bourgain made a more clever use of the global structure of the terms to show that this combinatorial factor can, in fact, be removed. In the present continuum setting, Bourgain's argument can be adapted as follows. We include a general statement for later purposes.

\begin{lem}[Bourgain's deterministic lemma]\label{lem:deterministic}
Given $n\ge1$, let $\ell_1,\ldots,\ell_n\ge R\ge1$ and let~$\Bb_1,\ldots,\Bb_n$ be bounded self-adjoint operators on $\Ld^2(\R^d\times\Omega)^d$ such that $\mathds1_{Q_R(z)}\Bb_j=\Bb_j\mathds1_{Q_R(z)}$ for all $j$ and $z\in R\Z^d$. Define
\[B_{p,q}\,:=\,\sup_j\|\Bb_j\|_{\Ld^{p}_q(\R^d;\Ld^2(\Omega))^d\to\Ld^{p}_q(\R^d;\Ld^2(\Omega))^d}.\]
Then, for all $1<p\le q\le2$, we have
\begin{equation*}
\cro{\Kk_{\ell_1;R}\Bb_1\ldots\Kk_{\ell_n;R}\Bb_n}_{p,q,2;R}(x,y)
\,\le\,
(CB_{p,q}\theta^{-1})^n\langle\tfrac1R(x-y)\rangle^{\theta-d},
\end{equation*}
where we have set $\theta:=\frac{2d}{p'}>0$, provided that $2\theta<d$.
\end{lem}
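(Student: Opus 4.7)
The plan is to reduce the bracket norm to a combinatorial sum over discrete paths on the lattice $R\Z^d$ and then to bound that sum by a scale-adapted argument in the spirit of Bourgain's original analysis in~\cite{Bourgain-18}. First, I will iterate the composition inequality~\eqref{eq:compo-Rp}. The locality assumption $\mathds1_{Q_R(z)}\Bb_j=\Bb_j\mathds1_{Q_R(z)}$ forces the bracket $\cro{\Bb_j}_{p,q,2;R}(z,z')$ to vanish unless $z=z'$, in which case it is controlled by $B_{p,q}$; combined with the kernel estimate of Lemma~\ref{lem:decay-K}, this will yield
\[
\cro{T}_{p,q,2;R}(x,y)\;\lesssim\; B_{p,q}^n \sum_{x_1,\ldots,x_{n-1}\in R\Z^d} \prod_{i=1}^n \Big(\tfrac{1}{p-1}\wedge \langle (x_{i-1}-x_i)/R\rangle^{-d}\Big),
\]
with $x_0:=x$ and $x_n:=y$. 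The lemma then reduces to the purely deterministic combinatorial estimate that this sum is at most $(C/\theta)^n\langle(x-y)/R\rangle^{\theta-d}$.

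The hard part is to extract a geometric factor $(C/\theta)^n$ rather than a combinatorial explosion. The naive approach---iterating the convolution bound $\langle\cdot/R\rangle^{-d}\ast\langle\cdot/R\rangle^{-d}\lesssim\log\langle\cdot/R\rangle\cdot\langle\cdot/R\rangle^{-d}$---accumulates $n-1$ logarithmic factors. Converting these to polynomial form via the elementary inequality $\log t\le (\theta \ee)^{-1} t^{\theta}$ produces a factor of order $(n/\theta)^n$, which would destroy the convergence of the Neumann series~\eqref{eq:barA}; this is already the issue noted in~\eqref{eq:brutal-estim-det}.

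To avoid this, following Bourgain, I will organize the path sum by a dyadic hierarchy of scales: at each step the kernel $\Kk_{\ell_i;R}$ is split into a near-diagonal piece, absorbed by means of the $\Ld^p_q$-operator bound $C/(p-1)$ from Lemma~\ref{lem:trunc-CZ}, and a long-range piece treated via the pointwise polynomial decay of Lemma~\ref{lem:decay-K}. A careful bookkeeping---modeled on the disjointification-type argument of~\cite{Bourgain-18} and transposed to the continuum lattice via the bracket notation $\cro{\cdot}_{p,q,2;R}$---then shows that each factor contributes at most $C/\theta$ in lieu of a logarithm, while the total decay stabilizes at $\langle\cdot/R\rangle^{\theta-d}$ uniformly in $n$. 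The constraint $2\theta<d$ enters precisely to guarantee that the weighted Schur-type sums arising in the decomposition remain finite and do not deteriorate the decay rate under iteration. This multi-scale rearrangement is the principal technical obstacle; once it is executed, the deterministic lemma follows by combining it with the path reduction above.
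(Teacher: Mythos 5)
Your opening reduction is already fatal. Once you pass to the purely combinatorial path sum
\[
B_{p,q}^n\sum_{x_1,\ldots,x_{n-1}\in R\Z^d}\prod_{i=1}^n\Big(\tfrac1{p-1}\wedge\langle(x_{i-1}-x_i)/R\rangle^{-d}\Big),
\]
you have thrown away the $\Ld^p$-operator structure, and there is no way to recover a prefactor better than~$O(n^n\theta^{1-n})$: iterating the convolution bound produces $n-1$ logarithms, and no ``bookkeeping'' of the sum as it stands can remove them, because the obstruction is intrinsic to kernel-by-kernel estimation. This is exactly the dead end that the paper records in~\eqref{eq:brutal-estim-det} and again, in a different guise, in Remark~\ref{rem:sum-irred}. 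Your claim that a dyadic rearrangement of this sum yields ``$C/\theta$ per factor in lieu of a logarithm'' is not substantiated and, as stated, is false.

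The correct mechanism is different in kind. After expanding~$\Bb_j=\sum_z\Bb_j\mathds1_{Q_R(z)}$, one conditions on the \emph{first} index $j_1$ at which $|z_{j_1}-z_{j_1+1}|_\infty$ attains the largest dyadic value~$2^m R$. The pointwise polynomial decay of the kernel is then used on that \emph{single} long segment only, contributing $(2^m R)^{-d}$ with $2^m R\gtrsim|x-y|_\infty/n$. All the remaining $n-1$ compositions are \emph{kept as operators}: the dyadic conditioning is encoded as truncations $\Kk_{\ell_j\wedge 2^m R;R}$ (resp.\ $\Kk_{\ell_j\wedge 2^{m+1}R;R}$), which by Lemma~\ref{lem:trunc-CZ} are uniformly bounded on $\Ld^p_q(\R^d;\Ld^2(\Omega))^d$ with norm~$\le C/(p-1)$. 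This produces the geometric prefactor $(C B_{p,q}/(p-1))^{n-1}$ directly, with no lattice sum and hence no $n^n$. The residual $\theta$ loss in the decay exponent is not a log-to-power conversion either; it comes from a H\"older estimate over the annuli $|x-z_{j_1}|_\infty,|y-z_{j_1+1}|_\infty\le n2^{m+1}R$, which contributes $(n2^{m+1}R)^{2d/p'}$ and then makes the dyadic sum in $m$ converge under the hypothesis $2\theta<d$.

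Finally, your appeal to a ``disjointification-type argument'' is misplaced here. Bourgain's disjointification device (the Steinhaus/Poisson-kernel argument, Lemma~\ref{lem:disjointif} in the paper) is a separate tool used in Section~\ref{sect:stretchedexponentialmixingproof} to restrict to irreducible paths when exploiting probabilistic cancellations; it plays no role in the present deterministic estimate.
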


\begin{proof}
For $|x-y|_\infty\le2nR$, the claimed bound is an immediate consequence of~\eqref{eq:compo-Rp} and Lemma~\ref{lem:trunc-CZ} in form of
\begin{eqnarray*}
\cro{\Kk_{\ell_1;R}\Bb_1\ldots\Kk_{\ell_n;R}\Bb_n}_{p,q,2;R}(x,y)
&\le&\|\Kk_{\ell_1;R}\Bb_1\ldots\Kk_{\ell_n;R}\Bb_n\|_{\Ld^p_q(\R^d;\Ld^2(\Omega))^d\to\Ld^p_q(\R^d;\Ld^2(\Omega))^d}\\
&\le&(\tfrac{C}{p-1}B_{p,q})^n.
\end{eqnarray*}
Now let $x,y\in R\Z^d$ be fixed with $|x-y|_\infty>2nR$,
and consider $\phi_x,\phi_y\in C^\infty_c(\R^d;\Ld^\infty(\Omega))^d$ with $\supp(\phi_x)\subset Q_R(x)\times\Omega$ and $\supp(\phi_y)\subset Q_R(y)\times\Omega$.
Decomposing each operator $\Bb_j$ as
\[\Bb_j\,=\,\sum_{z\in R\Z^d}\Bb_j\mathds1_{Q_R(z)},\]
we may then write by Fubini's theorem,
\begin{multline}\label{eq:decomp-100}
\langle \phi_x,\Kk_{\ell_1;R}\Bb_1\ldots\Kk_{\ell_n;R}\Bb_n\phi_y\rangle\\
\,=\,\sum_{z_1,\ldots,z_{n-1}\in R\Z^d}\Big\langle \phi_x,(\Kk_{\ell_1;R}\Bb_1\mathds1_{Q_R(z_1)})\ldots(\Kk_{\ell_{n-1};R}\Bb_{n-1}\mathds1_{Q_R(z_{n-1})})(\Kk_{\ell_n;R}\Bb_n)\phi_y\Big\rangle,
\end{multline}
since this sum is absolutely convergent: indeed, we can bound
\begin{multline*}
\Big|\Big\langle \phi_x,(\Kk_{\ell_1;R}\Bb_1\mathds1_{Q_R(z_1)})\ldots(\Kk_{\ell_{n-1};R}\Bb_{n-1}\mathds1_{Q_R(z_{n-1})})(\Kk_{\ell_n;R}\Bb_n)\phi_y\Big\rangle\Big|\\[1mm]
\,\le\,B_{p,q}^n\|\phi_x\|_{\Ld^{p'}_{q'}(\R^d;\Ld^{2}(\Omega))}\|\phi_y\|_{\Ld^{p}_q(\R^d;\Ld^2(\Omega))}\cro{\Kk_{\ell_1;R}}_{p,q,2;R}(x,z_1)\ldots \cro{\Kk_{\ell_n;R}}_{p,q,2;R}(z_{n-1},y),
\end{multline*}
and the summability of the right-hand side over $z_1,\ldots,z_{n-1}\in R\Z^d$ immediately follows from Lemma~\ref{lem:decay-K}.
To estimate the sum~\eqref{eq:decomp-100}, we shall distinguish with respect to the first interval $|z_j-z_{j+1}|_\infty$ that reaches the largest dyadic value, and we denote by $j=j_1$ the corresponding index. More precisely, setting for notational simplicity $z_0:=x$ and $z_n:=y$, we define for all $0\le j_1< n$ and $m\ge0$,
\begin{multline*}
S_{j_1;R}^m(x,y):=\Big\{(z_1,\ldots,z_{n-1})\in(R\Z^d)^{n-1}~:~\max_{0\le j< n}|z_j-z_{j+1}|_\infty\le2^{m+1}R,\\
~|z_{j_1}-z_{j_1+1}|_\infty>2^mR,
~\text{and}~\max_{0\le j<j_1}|z_j-z_{j+1}|_\infty\le2^mR\Big\}.
\end{multline*}
Note that by the triangle inequality the condition $|x-y|_\infty>2nR$ implies
\[\max_{0\le j<n}|z_j-z_{j+1}|_\infty\,>\,2R.\]
In these terms, the sum~\eqref{eq:decomp-100} becomes
\begin{multline*}
\langle \phi_x,\Kk_{\ell_1;R}\Bb_1\ldots\Kk_{\ell_n;R}\Bb_n\phi_y\rangle
\,=\,\sum_{j_1=0}^{n-1}\sum_{m=0}^\infty\sum_{(z_1,\ldots,z_{n-1})\in S^m_{j_1;R}(x,y)}\\
\times\Big\langle \phi_x\,,\,(\Kk_{\ell_1;R}\Bb_1\mathds1_{Q_R(z_1)})\ldots(\Kk_{\ell_{n-1};R}\Bb_{n-1}\mathds1_{Q_R(z_{n-1})})(\Kk_{\ell_n;R}\Bb_n)\phi_y\Big\rangle.
\end{multline*}
For $(z_1,\ldots,z_{n-1})\in S^m_{j_1;R}(x,y)$, the triangle inequality
\[|x-y|_\infty\,\le\,\sum_{j=0}^{n-1}|z_j-z_{j+1}|_\infty\,\le\,n2^{m+1}R\]
entails that the sum over $m$ is automatically restricted to $2^mR\ge\frac1{2n}|x-y|_\infty$. Similarly, the sums over $z_{j_1},z_{j_1+1}$ are restricted to $|x-z_{j_1}|_\infty\le n2^{m}R$ and $|y-z_{j_1+1}|_\infty\le n2^{m+1}R$, respectively.
Further noting that the truncated kernels are defined in~\eqref{eq:truncate-K} to satisfy for all~$z,z'\in R\Z^d$ and $\ell,\ell'\ge R$,
\[\mathds1_{|z-z'|_\infty\le \ell'}\mathds1_{Q_R(z)}\Kk_{\ell;R}\mathds1_{Q_R(z')}\,=\,\mathds1_{Q_R(z)}\Kk_{\ell\wedge\ell';R}\mathds1_{Q_R(z')},\]
the above can be rewritten as
\begin{multline}\label{eq:use-Sj0R}
\langle \phi_x,\Kk_{\ell_1;R}\Bb_1\ldots\Kk_{\ell_n;R}\Bb_n\phi_y\rangle
\,=\,\sum_{j_1=0}^{n-1}\sum_{m=0}^\infty\mathds1_{2^mR\ge\frac1{2n}|x-y|_\infty}\\
\times\sum_{z_{j_1},z_{j_1+1}\in R\Z^d}\mathds1_{2^mR<|z_{j_1}-z_{j_1+1}|_\infty\le2^{m+1}R}\,\mathds1_{|x-z_{j_1}|_\infty,|y-z_{j_1+1}|_\infty\le n2^{m+1}R}\\
\times\Big\langle \phi_x\,,\,(\Kk_{\ell_1\wedge(2^mR);R}\Bb_1)\ldots(\Kk_{\ell_{j_1}\wedge(2^mR);R}\Bb_{j_1})\\
\times(\mathds1_{Q_R(z_{j_1})}\Kk_{\ell_{j_1+1};R}\mathds1_{Q_R(z_{j_1+1})})\\
\times\Bb_{j_1+1}(\Kk_{\ell_{j_1+2}\wedge(2^{m+1}R);R}\Bb_{j_1+2})\ldots(\Kk_{\ell_n\wedge(2^{m+1}R)}\Bb_n)\phi_y\Big\rangle.
\end{multline}
In other words, the restricted summation over $S_{j_1;R}^m$ has been replaced by kernel truncations.
Using the pointwise decay of the kernel $\Kk$ along the long segment $[z_{j_1},z_{j_1+1}]$, cf.~Lemma~\ref{lem:CZ},
we get
\begin{multline*}
\langle \phi_x,\Kk_{\ell_1;R}\Bb_1\ldots\Kk_{\ell_n;R}\Bb_n\phi_y\rangle
\,\lesssim\,\sum_{j_1=0}^{n-1}\sum_{m=0}^\infty(2^{m}R)^{-d}\mathds1_{2^{m}R\ge\frac1{2n}|x-y|_\infty}\\
\times\sum_{z\in R\Z^d:\atop |x-z|_\infty\le n2^{m+1}R}\int_{Q_R(z)}\Big\|(\Bb_{j_1}\Kk_{\ell_{j_1}\wedge(2^{m}R);R}) \ldots(\Bb_1\Kk_{\ell_1\wedge(2^{m}R);R})\phi_x\Big\|_{\Ld^2(\Omega)}\\
\times\sum_{z'\in R\Z^d:\atop |y-z'|_\infty\le n2^{m+1}R}\int_{Q_R(z')}\Big\|\Bb_{j_1+1}(\Kk_{\ell_{j_1+2}\wedge(2^{m+1}R);R}\Bb_{j_1+2})\ldots(\Kk_{\ell_n\wedge(2^{m+1}R)}\Bb_n)\phi_y\Big\|_{\Ld^2(\Omega)}.
\end{multline*}
Then using H\" older's inequality to bound the sums over $z,z'$, we find
\begin{multline*}
\langle \phi_x,\Kk_{\ell_1;R}\Bb_1\ldots\Kk_{\ell_n;R}\Bb_n\phi_y\rangle
\,\lesssim\,\sum_{j_1=0}^{n-1}\sum_{m=0}^\infty(n2^{m+1}R)^{\frac{2d}{p'}}(2^{m}R)^{-d}\mathds1_{2^{m}R\ge\frac1{2n}|x-y|_\infty}\\
\times\|(\Bb_{j_1}\Kk_{\ell_{j_1}\wedge(2^{m}R);R} )\ldots(\Bb_1\Kk_{\ell_1\wedge(2^{m}R);R})\phi_x\|_{\Ld^p_q(\R^d;\Ld^2(\Omega))}\\
\times\|\Bb_{j_1+1}(\Kk_{\ell_{j_1+2}\wedge(2^{m+1}R);R}\Bb_{j_1+2})\ldots(\Kk_{\ell_n\wedge(2^{m+1}R)}\Bb_n)\phi_y\|_{\Ld^p_q(\R^d;\Ld^2(\Omega))}.
\end{multline*}
By Lemma~\ref{lem:trunc-CZ} together with the boundedness assumption for the $\Bb_j$'s, this yields
\begin{multline*}
\langle \phi_x,\Kk_{\ell_1;R}\Bb_1\ldots\Kk_{\ell_n;R}\Bb_n\phi_y\rangle\\
\,\lesssim\,B_{p,q}^n\big(\tfrac{C}{p-1}\big)^{n-1}\|\phi_x\|_{\Ld^p_q(\R^d;\Ld^2(\Omega))}\|\phi_y\|_{\Ld^p_q(\R^d;\Ld^2(\Omega))}
\sum_{m=0}^\infty(2^m R)^{\frac{2d}{p'}-d}\,\mathds1_{2^mR\ge\frac1{2n}|x-y|_\infty},
\end{multline*}
and thus, evaluating the dyadic series, provided that $\frac{4d}{p'}<d$,
\begin{equation*}
\langle \phi_x,\Kk_{\ell_1;R}\Bb_1\ldots\Kk_{\ell_n;R}\Bb_n\phi_y\rangle
\,\lesssim\,B_{p,q}^n\big(\tfrac{C}{p-1}\big)^{n-1}|x-y|^{\frac{2d}{p'}-d}\|\phi_x\|_{\Ld^p_q(\R^d;\Ld^2(\Omega))}\|\phi_y\|_{\Ld^p_q(\R^d;\Ld^2(\Omega))}.
\end{equation*}
As $\phi_x$ is supported in $Q_R(x)\times\Omega$, we find by Jensen's inequality,
\[\|\phi_x\|_{\Ld^p_q(\R^d;\Ld^2(\Omega))}\,\lesssim\, R^{d\frac{2-p}p}\|\phi_x\|_{\Ld^{p'}_{q'}(\R^d;\Ld^2(\Omega))}\,=\,R^{d-\frac{2d}{p'}}\|\phi_x\|_{\Ld^{p'}_{q'}(\R^d;\Ld^2(\Omega))}.\]
Using this and taking the supremum over $\phi_x,\phi_y$, the above yields
\begin{equation*}
\cro{\Kk_{\ell_1;R}\Bb_1\ldots\Kk_{\ell_n;R}\Bb_n}_{p,q,2;R}
\,\lesssim\,B_{p,q}^n\big(\tfrac{C}{p-1}\big)^{n-1}
\langle\tfrac1R(x-y)\rangle^{\frac{2d}{p'}-d},
\end{equation*}
and the conclusion follows.
\end{proof}

\section{Stretched exponential mixing setting}
\label{sect:stretchedexponentialmixingproof}
This section is devoted to the proof of Theorem~\ref{th:exp-mix} in the exponential mixing setting~{\bf(H$_1$)}.
More precisely, the $\alpha$-mixing condition will be used in form of the following covariance inequality (see e.g.~\cite[Theorem~1.2.3]{Doukhan-94}): for all $U,V\subset\R^d$ and all bounded random variables $X,Y$, if~$X$ is $\sigma(\Aa|_U)$-measurable and if~$Y$ is $\sigma(\Aa|_V)$-measurable, we have for all $p,q,r\ge1$ with $\frac1p+\frac1q+\frac1r=1$,
\begin{equation}\label{eq:cov-inequ}
|\!\cov{X}{Y}\!|\,\le\,8\big[C_0\exp(-\tfrac1{C_0}\dist(U,V)^\gamma)\big]^\frac1r\|X\|_{\Ld^p(\Omega)}\|Y\|_{\Ld^q(\Omega)}.
\end{equation}
We start with several preliminary ingredients and conclude the proof of Theorem~\ref{th:exp-mix} in Section~\ref{sec:proof-exp-mix}. We follow the general strategy of Bourgain~\cite{Bourgain-18}, improved by Kim and the second-named author in~\cite{Lemm-18}, but the present continuum setting requires several important modifications, starting with the coarse-grained notion of irreducibility in Definition~\ref{def:irred} below. We emphasize that most of the present modifications are also needed in the discrete setting beyond the particular i.i.d.\@ case treated in~\cite{Bourgain-18,Lemm-18}.

\subsection{Path decomposition}
Starting point is the perturbative expansion~\eqref{eq:barA}, and we shall proceed to a separate analysis of the kernel of the different terms $\{P\Bb(\Kk P^\bot \Bb)^nP\}_{n\ge1}$.
Given $R\ge1$ and $x,y\in R\Z^d$, arguing as in~\eqref{eq:decomp-100}, we can decompose for all $\phi_x,\phi_y\in C^\infty_c(\R^d;\Ld^\infty(\Omega))^d$ with $\supp(\phi_x)\subset Q_R(x)\times\Omega$ and $\supp(\phi_y)\subset Q_R(y)\times\Omega$,
\begin{multline}\label{eq:decomp-10}
\langle \phi_x,P\Bb(\Kk P^\bot\Bb)^nP\phi_y\rangle\\
\,=\,\sum_{z_1,\ldots,z_{n-1}\in R\Z^d}\Big\langle \phi_x,P\Bb(\Kk P^\bot\Bb\mathds1_{Q_R(z_1)})\ldots(\Kk P^\bot\Bb\mathds1_{Q_R(z_{n-1})})(\Kk P^\bot\Bb)P\phi_y\Big\rangle,
\end{multline}
where the series is absolutely convergent.
At first sight, from the deterministic estimates of Section~\ref{sec:detlem}, this expression might seem to have no reason to enjoy a better decay than the kernel~$\Kk$ itself, that is, $O(|x-y|^{-d})$. Yet, this decay happens to be drastically improved due to couplings that appear when computing the expectation of products of the stationary random field~$\Bb$ in this expression.
The resulting cancellations can be characterized in terms of the following notion of reducibility, which is a coarse-grained version of the one introduced by Bourgain in~\cite{Bourgain-18}.

\begin{defin}\label{def:irred}
Let $n\ge2$, $R\ge1$, and $x_0,\ldots,x_n\in R\Z^d$. The sequence $(x_0,\ldots,x_n)$ is said to be an \emph{$R$-reducible path} (of length $n$ from $x_0$ to $x_n$) if there exists $0\le j< n$ such that
\[|x_a-x_b|_\infty>R\qquad\text{for all $0\le a\le j$ and $j< b\le n$}.\]
Otherwise, the sequence $(x_0,\ldots,x_n)$ is said to be an \emph{$R$-irreducible path}.
We denote by $D_R^n(x,y)$ the subset of elements $(z_1,\ldots,z_{n-1})\in(R\Z^d)^{n-1}$ such that $(x,z_1,\ldots,z_{n-1},y)$ is an $R$-reducible path.
\end{defin}

We emphasize that the condition $|x_a-x_b|_\infty>R$ in this definition means that the cubes~$Q_R(x_a)$ and $Q_R(x_b)$ are not neighbors, hence are at distance $\gtrsim R$ of one another: this distance will allow us to use approximate independence in form of $\alpha$-mixing to neglect the contribution of those reducible paths, cf.~Lemma~\ref{lem:bound-reduct}.
To start gently, let us illustrate this definition in the case when the coefficient field $\Aa$ has a finite range of dependence bounded by $R$,
showing that the contribution of $R$-reducible paths vanishes in that case.
Although a priori counterintuitive, it will be crucial to use strict subsets $A(x,y)$ of the set of irreducible paths: indeed, restricting the sum too much may destroy the special oscillatory structure of the composition of Calder\'on--Zygmund kernels and lead to worse estimates; see Remark~\ref{rem:sum-irred} below.

\begin{lem}\label{lem:sum/irred}
Assume momentarily that $\Aa$ has finite range of dependence in the sense that for all $U,V\subset\R^d$ with $\dist(U,V)>R$ the $\sigma$-algebras $\sigma(\Aa|_U)$ and $\sigma(\Aa|_V)$ are independent.
Then, for all $n\ge1$, the $n$th term in the perturbative expansion~\eqref{eq:barA} can be written as follows: for all $x,y\in R\Z^d$, for all $\phi_x,\phi_y\in C^\infty_c(\R^d)^d$ with $\supp(\phi_x)\subset Q_R(x)$ and $\supp(\phi_y)\subset Q_R(y)$,
\begin{multline}\label{eq:sum/irred-lem}
\big\langle \phi_x,P\Bb(\Kk P^\bot\Bb)^nP\phi_y\big\rangle\\
\,=\,\sum_{(z_1,\ldots,z_{n-1})\in A(x,y)}
\Big\langle \phi_x,P\Bb(\Kk P^\bot\Bb\mathds1_{Q_R(z_1)})\ldots(\Kk P^\bot\Bb\mathds1_{Q_R(z_{n-1})})(\Kk P^\bot\Bb)P\phi_y\Big\rangle,
\end{multline}
where $A(x,y)$ stands for any subset of $(R\Z^d)^{n-1}$ containing the set $(R\Z^d)^{n-1}\setminus D_R^n(x,y)$ of $R$-irreducible paths from $x$ to $y$.
\end{lem}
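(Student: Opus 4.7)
The plan is to establish, in the finite-range setting, that for every $R$-reducible path $(z_0,\ldots,z_n)$ with $z_0=x$, $z_n=y$, the corresponding term
\[T\,:=\,\big\langle\phi_x,\,\Bb_0\,\Kk P^\bot\Bb_1\cdots\Kk P^\bot\Bb_n\,\phi_y\big\rangle,\qquad\Bb_k:=\Bb\mathds1_{Q_R(z_k)},\]
vanishes (the outer $P$'s from~\eqref{eq:barA} may be dropped since $\phi_x,\phi_y$ are deterministic). Once this is shown the lemma is immediate: the absolutely convergent path sum~\eqref{eq:decomp-10} may be restricted to any $A(x,y)\supseteq(R\Z^d)^{n-1}\setminus D_R^n(x,y)$ because all omitted contributions are zero.

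The core idea is to expand each of the $n$ factors $P^\bot=\Id-P$ and exhibit a sign-reversing involution on the resulting $2^n$ terms indexed by subsets $S\subseteq\{1,\ldots,n\}$, where $S$ records the slots carrying $-P$ rather than $\Id$. For a reducibility index $j\in\{0,\ldots,n-1\}$, my involution will be $S\mapsto S\triangle\{j+1\}$, toggling the choice at the unique $P^\bot$ sandwiched between $\Bb_j$ and $\Bb_{j+1}$. It suffices to verify that replacing $\Id$ by $P$ at that slot does not change the scalar value of the term; for then the pair of terms differ only by the sign $-1$ coming from $P^\bot=\Id-P$ and cancel.

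To prove this key step, split the operator string at the $(j+1)$-st slot into a left random operator $L_\omega$, whose kernel $l_\omega(x,z)$ is measurable with respect to $\mathcal F_L:=\sigma(\Bb|_{\bigcup_{a\le j}Q_R(z_a)})$, and a right random function $R_\omega\phi_y$ measurable with respect to $\mathcal F_R:=\sigma(\Bb|_{\bigcup_{b>j}Q_R(z_b)})$. The reducibility condition $|z_a-z_b|_\infty>R$ together with the finite-range hypothesis on $\Aa$ makes $\mathcal F_L$ and $\mathcal F_R$ independent. Writing both candidate terms as double integrals over $(x,z)$ and applying $\E[AB]=\E[A]\E[B]$ to the product $l_\omega(x,z)\cdot(R_\omega\phi_y)(z)$, both the $\Id$- and the $P$-versions collapse to the same scalar $\int\!\int\phi_x(x)\,\E[l_\omega(x,z)]\,\E[(R_\cdot\phi_y)(z)]\,dz\,dx$. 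The main bookkeeping point to watch is that the left/right measurability is preserved regardless of which other $P$'s occur in $S$ on either side of the cut; this is automatic since those $P$'s only implicate random variables on their own side, so the involution argument then closes the proof.
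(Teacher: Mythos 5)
Your overall plan — show that each $R$-reducible summand of \eqref{eq:decomp-10} vanishes, so that the absolutely convergent path sum may be restricted to any $A(x,y)$ containing the irreducible paths — is exactly the paper's. The difference is in how the vanishing is proved: the paper passes to a joint-density representation via the truncated kernels $\Kk^{(\eta)}$ and asserts that the pointwise factor $\E[\Bb(y_0)P^\bot\Bb(y_1)\cdots P^\bot\Bb(y_n)]$ vanishes on the relevant product of cubes, whereas you expand each $P^\bot=\Id-P$ into $2^n$ terms indexed by $S\subseteq\{1,\dots,n\}$ and pair them up by the sign-reversing involution $S\mapsto S\triangle\{j+1\}$. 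These are the same cancellation in different clothing, and your combinatorial version does spell out the detail the paper leaves implicit.

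There is, however, a flaw in your justification of the key identity $T_{\Id}=T_P$, concentrated precisely in the ``bookkeeping point'' you wave away. You argue that the left block $L$ has an $\mathcal F_L$-measurable kernel $l_\omega(x,z)$ and then factor $\E[l_\omega(x,z)(R_\omega\phi_y)(z)]$. When $S$ places a $P$ inside $L$, though, $L$ does not have a kernel of that form: $P=\E$ is the \emph{global} expectation on $\Ld^2(\Omega)$, so a $P$ sitting inside $L$ averages over all the randomness to its right, including the $\mathcal F_R$-randomness coming from $R_\omega\phi_y$. Your explanation that ``those $P$'s only implicate random variables on their own side'' is therefore incorrect as stated --- it would be true for the conditional expectation $\E[\,\cdot\,|\mathcal F_L]$, but that is not the operator appearing here. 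The conclusion $T_{\Id}=T_P$ nevertheless holds, and the repair is short: if $L$ contains at least one $P$, take the rightmost such $P$; by independence of $\mathcal F_L$ and $\mathcal F_R$, the expectation it computes factors as $(\text{an $\mathcal F_L$-measurable quantity})\cdot\E[g]$, so $Lg$ already depends on $g=R_\omega\phi_y$ only through $\E[g]$, and $T_{\Id}=T_P$ is immediate. If $L$ contains no $P$, your kernel argument applies verbatim. With this case distinction, your involution argument closes.
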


\begin{proof}
Starting from~\eqref{eq:decomp-10}, it suffices to show that the contribution of any $R$-reducible path vanishes in the case when $\Aa$ has finite range of dependence bounded by $R$. For that purpose, we start by appealing to Lemma~\ref{lem:CZ-eta} to express operators as limits of absolutely converging integrals: for all $z_1,\ldots, z_{n-1}\in R\Z^d$, we have
\begin{multline*}
\Big\langle \phi_x,P\Bb(\Kk P^\bot\Bb\mathds1_{Q_R(z_1)})\ldots(\Kk P^\bot\Bb\mathds1_{Q_R(z_{n-1})})(\Kk P^\bot\Bb)P\phi_y\Big\rangle\\
\,=\,\lim_{\eta\downarrow0}\Big\langle \phi_x,P\Bb(\Kk^{(\eta)} P^\bot\Bb\mathds1_{Q_R(z_1)})\ldots(\Kk^{(\eta)} P^\bot\Bb\mathds1_{Q_R(z_{n-1})})(\Kk^{(\eta)} P^\bot\Bb)P\phi_y\Big\rangle,
\end{multline*}
where the limit exists. For fixed $\eta>0$, by Fubini's theorem, the argument of the limit can now be written as follows in terms of truncated integral kernels,
\begin{multline*}
\Big\langle \phi_x,P\Bb(\Kk^{(\eta)} P^\bot\Bb\mathds1_{Q_R(z_1)})\ldots(\Kk^{(\eta)} P^\bot\Bb\mathds1_{Q_R(z_{n-1})})(\Kk^{(\eta)} P^\bot\Bb)\phi_y\Big\rangle\\
\,=\,\int\!\!\ldots\!\!\int_{Q_R(x)\times Q_R(z_1)\times\ldots\times Q_R(z_{n-1})\times Q_R(y)} \expecm{\Bb(y_0)P^\bot\Bb(y_1)\ldots  P^\bot\Bb(y_{n})}\\
\times\phi_x(y_0)\Kk^{(\eta)}(y_0-y_1)\ldots \Kk^{(\eta)}(y_{n-1}-y_{n})\phi_y(y_n)\,dy_0\ldots dy_n,
\end{multline*}
where the integral is indeed trivially absolutely convergent due to the small-scale cut-off,
and where we omit to track matrix contractions for notational simplicity.
Now here is the key point: by definition, if $\Bb$ has finite dependence length bounded by~$R$ as presently assumed, then the factor $\expecm{\Bb(y_0)P^\bot\Bb(y_1)\ldots  P^\bot\Bb(y_{n})}$ in the above expression vanishes for all $y_0\in Q_R(x),y_1\in Q_R(z_1),\ldots,y_{n-1}\in Q_R(z_{n-1}),y_n\in Q_R(y)$ whenever $(x,z_1,\ldots,z_{n-1},y)$ is an $R$-reducible path. The sum can therefore be restricted by removing any subset of $R$-reducible paths, and the conclusion follows.
\end{proof}

\begin{rem}[Summing irreducible paths]\label{rem:sum-irred}
When restricting the summation~\eqref{eq:sum/irred-lem} to irreducible paths, we can easily understand the best decay rate $O(\langle x-y\rangle^{-3d})$ from straightforward graphical considerations.
Indeed, given an irreducible path $(x,z_1,\ldots,z_{n-1},y)$, consider the set $Z:=\{x,z_1,\ldots,z_{n-1},y\}$, and define the following equivalence relation on $Z$: two elements $a,b\in Z$ are said to be equivalent if there is $k\ge0$ and a sequence $\{y_i\}_{0\le i\le k}\subset Z$ such that $y_0=a$, $y_k=b$, and such that $|y_i-y_{i+1}|_\infty=R$ for all $0\le i<k$.
Now consider the set $V$ that is the quotient of~$Z$ with respect to this equivalence relation,
and consider the non-oriented graph~$G$ induced by the path $(x,z_1,\ldots,z_{n-1},y)$ on the quotient set~$V$.
Note that the points $x$ and $y$ are not equivalent provided that $|x-y|_\infty>2Rn$. Choosing representatives, we may then write $V\equiv\{x,y,v_1,\ldots,v_l\}\subset Z$ for some $0\le l<n$.
As the path $(x,z_1,\ldots,z_{n-1},y)$ is irreducible, we can deduce that the vertices $x$ and $y$ in the induced graph $G$ have odd degrees $\ge3$ and that each other vertex has even degree~$\ge2$.
Due to this property, we can find three disjoint trails~$L_1,L_2,L_3$ from~$x$ to~$y$ in $G$. Separately evaluating the sums in~\eqref{eq:sum/irred-lem} along each of these trails, the decay rate $O(\langle x-y\rangle^{-3d})$ follows up to logarithmic corrections.
Yet, as this argument is based on a direct summation of iterated kernels, which is key to take full advantage of irreducibility, it would only provide an estimate with a prefactor $O(n^n)$ similarly as in~\eqref{eq:brutal-estim-det}; see also~\cite[Section~1.3]{Lemm-18}.
We skip the detail as such brutal estimates are anyway not summable over $n$ and thus of no use.
In the sequel, as in~\cite{Bourgain-18,Lemm-18}, the irreducibility will be used instead in a much weaker, minimal way, in order not to destroy the special oscillatory structure of compositions of Calder\'on--Zygmund kernels. More precisely, not all reducible paths will be removed from the summation, and the key technical ingredient is given by Lemma~\ref{lem:disjointif} below.
\end{rem}

\subsection{Neglecting reducible paths}
In the mixing setting, we cannot appeal to Lemma~\ref{lem:sum/irred} above to remove the contribution of reducible paths, but we show that such paths still have only a negligible contribution for $R\gg1$.
We proceed by a direct summation of reducible paths, which is why one loses a prefactor~$O(n^n)$ similarly as in~\eqref{eq:brutal-estim-det}.
In the stretched exponential mixing setting, we shall see that this factor can be compensated by the excellent mixing rate. We do not know how to improve on the present estimate and leave it as an open question, in link with the treatment of {\it algebraic} $\alpha$-mixing rates.

\begin{lem}\label{lem:bound-reduct}
For all $n,R\ge1$, and $x,y\in R\Z^d$,
the contribution of $R$-reducible paths in~\eqref{eq:decomp-10} is estimated as follows, for all $1<p\le q\le r\le2$ and~$\e>0$,
\begin{multline*}
\sum_{(z_1,\ldots, z_{n-1})\in D_R^n(x,y)}\cro{P\Bb(\Kk P^\bot\Bb\mathds1_{Q_R(z_1)})\ldots(\Kk P^\bot\Bb\mathds1_{Q_R(z_{n-1})})(\Kk P^\bot \Bb)P}_{p,q,r;R}\\
\,\le\,(Cn)^n(\e(p-1))^{1-n}\big(C_0\exp(-\tfrac1{C_0}R^\gamma)\big)^\frac1{2}\langle \tfrac1R(x-y)\rangle^{\e-d},
\end{multline*}
where we recall that $D_R^n(x,y)$ stands for the set of elements $(z_1,\ldots,z_{n-1})\in R\Z^d$ such that the path $(x,z_1,\ldots,z_{n-1},y)$ is $R$-reducible, cf.~Definition~\ref{def:irred}.
\end{lem}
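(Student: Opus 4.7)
My plan is to decompose the summation over reducible paths by their smallest cut position. Given $(z_1, \ldots, z_{n-1}) \in D_R^n(x, y)$, I let $j(z) \in \{0, \ldots, n-1\}$ denote the smallest index at which the path is cut in the sense of Definition~\ref{def:irred}, and write $D_R^n(x, y) = \bigsqcup_{j=0}^{n-1} D_{R, j}^n(x, y)$. For any path in $D_{R, j}^n(x, y)$, the cube families $\{Q_R(z_a)\}_{a \le j}$ and $\{Q_R(z_a)\}_{a > j}$ sit in disjoint regions $U_j, V_j \subset \R^d$ with $\dist(U_j, V_j) \ge R$, since points on $R\Z^d$ satisfying $|z_a - z_b|_\infty > R$ are automatically at $\infty$-distance $\ge 2R$.

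Following the strategy of the proof of Lemma~\ref{lem:sum/irred}, I would represent each summand via the truncated kernel $\Kk^{(\eta)}$ of Lemma~\ref{lem:CZ-eta} and use Fubini to treat the outer expectation pointwise. The integrand can then be written as $\E[XY]$, with $X = X(\Aa|_{U_j})$ built from the $\Bb$'s at positions $\le j$ and $Y = Y(\Aa|_{V_j})$ built from those at positions $> j$. Applying the $\alpha$-mixing covariance inequality~\eqref{eq:cov-inequ} with exponents $p = q = 4$, $r = 2$, together with $\|\Bb\|_{\Ld^\infty} \le 1$ to bound $\|X\|_4$ and $\|Y\|_4$, yields $|\E[XY] - \E[X]\E[Y]| \le 8 [C_0 e^{-R^\gamma/C_0}]^{1/2} \|X\|_4 \|Y\|_4$, producing the claimed mixing factor. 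The remaining factored piece $\E[X]\E[Y]$ is recognized as a composition of two strictly smaller perturbation operators, each carrying an outer $P = \E$ at the cut, so that the resulting sub-expressions can themselves be re-decomposed along reducible/irreducible paths and eventually absorbed into the perturbation series for $\bar\Bc(\nabla)$ that is treated in Section~\ref{sec:proof-exp-mix}.

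For the remaining spatial sums over $D_{R, j}^n(x, y)$, I would apply the brutal direct estimate~\eqref{eq:brutal-estim-det}: iterating the pointwise decay $\cro{\Kk}_{p, q, r; R}(z_a, z_{a+1}) \lesssim \frac{1}{p-1} \langle (z_a - z_{a+1})/R \rangle^{-d}$ from Lemma~\ref{lem:decay-K} and summing over intermediate lattice points produces $\langle (x-y)/R \rangle^{-d} \log(2 + |x-y|/R)^{n-1}$, which via $\log t \le \e^{-1} t^\e$ is converted into $n^n \e^{1-n} \langle (x-y)/R \rangle^{\e - d}$. Combined with the $\Ld^p$ operator norms $(C/(p-1))^n$ of the $\Kk$ factors and an extra factor of $n$ from the sum over the cut index $j$, this yields the claimed $(Cn)^n (\e(p-1))^{1-n}$ combinatorial prefactor.

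The main obstacle is the careful treatment of the factored piece $\E[X]\E[Y]$: unlike the covariance piece, it does not enjoy the mixing gain and could a priori dominate. The key structural point is that the minimality of $j$ combined with the mean-zero $P^\perp = \Id - \E$ structure around the cut prevents these factored contributions from iterating into a large geometric series, so they can be absorbed into the crude combinatorial estimate without destroying the overall mixing gain. This $(Cn)^n$ tolerance in the statement is precisely what makes the strategy robust enough to succeed under stretched exponential mixing, where the gain $\exp(-R^\gamma/C_0)$ easily absorbs it upon optimizing $R$, but it is also the reason the same approach is not known to work for merely algebraic $\alpha$-mixing rates.
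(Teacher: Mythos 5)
Your route --- cut each reducible path at a separating index $j_0$, apply the $\alpha$-mixing covariance inequality~\eqref{eq:cov-inequ} across the cut, and sum the remaining tails crudely as in~\eqref{eq:brutal-estim-det} using $\log t \le \e^{-1}t^\e$ --- is indeed the paper's route, and the $(Cn)^n$ tolerance is exactly what makes it workable under stretched exponential mixing. But there is a genuine gap: the ``factored piece'' $\E[X]\E[Y]$ that your final paragraph treats as the main obstacle is not merely controllable, it is identically zero. After cutting $P\Bb(\Kk P^\bot\Bb)^nP$ at position $j_0$, the factor on the $\phi_x$ side takes the form $X(v) = \big(P^\bot\Bb\mathds1_{Q_R(z_{j_0})}\Kk\cdots P^\bot\Bb P\phi_x\big)(v)$, whose outermost operation is the projection $P^\bot$, so $\E[X(v)] = 0$ for every $v$. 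Hence $\E[X(v)Y(v)] = \Cov[X(v),Y(v)]$ exactly; the $P^\bot$ structure of the Neumann series~\eqref{eq:barA} is built precisely so that any cut produces a pure covariance with no residual term. Your proposed re-decomposition and ``absorption into the perturbation series'' argument addresses an obstacle that does not exist, and as stated it would leave the proof incomplete.

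A secondary issue is the choice of exponents $p = q = 4$, $r = 2$ in~\eqref{eq:cov-inequ}: bounding $\|X(v)\|_{\Ld^4(\Omega)}$ and $\|Y(v)\|_{\Ld^4(\Omega)}$ requires $\Ld^4(\Omega)$-based operator estimates for iterated compositions with $\Kk$, which the mixed-Lebesgue framework of Section~\ref{sect:detest} does not supply (Lemmas~\ref{lem:mixed-K} and~\ref{lem:decay-K} are proved only for probability exponent $r \le 2$), and $\|\Bb\|_{\Ld^\infty} \le 1$ alone does not close the estimate since $\Kk$ is not $\Ld^\infty$-bounded. The paper instead takes the asymmetric pairing $(p,q,r) = (2,\infty,2)$: the $\Ld^\infty(\Omega)$ side --- the factor not integrated against $\phi_x$ --- is controlled deterministically by the pointwise decay of the kernel of $\Kk$ across the long interval $|z_{j_0}-z_{j_0+1}|_\infty > R$, which extracts the factor $\langle\tfrac1R(z_{j_0}-z_{j_0+1})\rangle^{-d}$, while the $\Ld^2(\Omega)$ side is handled by Lemma~\ref{lem:decay-K}. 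Apart from these two points, your spatial summation, the $\log^{n-1}$ accumulation, and its conversion to $n^n\e^{1-n}\langle\tfrac1R(x-y)\rangle^{\e-d}$ match the paper.
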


\begin{proof}
Let $x,y\in R\Z^d$ be fixed, and consider $\phi_x,\phi_y\in C^\infty_c(\R^d)^d$ with $\supp(\phi_x)\subset Q_R(x)$ and $\supp(\phi_y)\subset Q_R(y)$.
For $(z_1,\ldots,z_{n-1})\in D_R^n(x,y)$, as the path $(x,z_1,\ldots,z_{n-1},y)$ is $R$-reducible, there exists some $0\le j_0< n$ such that
\[\dist\big(\{x,z_1,\ldots,z_{j_0}\},\{z_{j_0+1},\ldots,z_{n-1},y\}\big)\,>\,R.\]
Then writing
\begin{multline*}
\Big\langle\phi_x,P\Bb(\Kk P^\bot\Bb\mathds1_{Q_R(z_1)})\ldots(\Kk P^\bot\Bb\mathds1_{Q_R(z_{n-1})})(\Kk P^\bot \Bb)P\phi_y\Big\rangle\\
\,=\,\int_{Q_R(z_{j_0})}\operatorname{Cov}\Big[\Big(( P^\bot\Bb\mathds1_{Q_R(z_{j_0})}\Kk)\ldots( P^\bot\Bb\mathds1_{Q_R(z_1)}\Kk)P^\bot\Bb P\phi_x\Big)(v),\\
\Big((\Kk P^\bot\Bb\mathds1_{Q_R(z_{j_0+1})})\ldots(\Kk P^\bot\Bb\mathds1_{Q_R(z_{n-1})})(\Kk P^\bot\Bb)P\phi_y\Big)(v)\Big]\,dv,
\end{multline*}
we can appeal to the exponential $\alpha$-mixing condition in form of the corresponding covariance inequality~\eqref{eq:cov-inequ}, followed by H\"older's inequality on $Q_R(z_{j_0})$, to the effect of
\begin{multline*}
\Big|\Big\langle\phi_x,P\Bb(\Kk P^\bot\Bb\mathds1_{Q_R(z_1)})\ldots(\Kk P^\bot\Bb\mathds1_{Q_R(z_{n-1})})(\Kk P^\bot \Bb)\phi_y\Big\rangle\Big|\\
\,\le\,8\big(C_0\exp(-\tfrac1{C_0}R^\gamma)\big)^\frac1{2}\big\|( P^\bot\Bb\mathds1_{Q_R(z_{j_0})}\Kk)\ldots( P^\bot\Bb\mathds1_{Q_R(z_1)}\Kk)P^\bot\Bb P\phi_x\big\|_{\Ld^1(Q_R(z_{j_0});\Ld^{2}(\Omega))}\\
\times\big\|( \Kk P^\bot\Bb\mathds1_{Q_R(z_{j_0+1})})\ldots(\Kk P^\bot\Bb\mathds1_{Q_R(z_{n-1})})(\Kk P^\bot \Bb)P\phi_y\big\|_{\Ld^\infty(Q_R(z_{j_0})\times\Omega)}.
\end{multline*}
Using a pointwise bound on the kernel of $\Kk$ along the long segment $|z_{j_0}-z_{j_0+1}|_\infty>R$ in the last factor, and further using H\"older's
inequality, we are led to
\begin{multline*}
\Big|\Big\langle\phi_x,P\Bb(\Kk P^\bot\Bb\mathds1_{Q_R(z_1)})\ldots(\Kk P^\bot\Bb\mathds1_{Q_R(z_{n-1})})(\Kk P^\bot \Bb)\phi_y\Big\rangle\Big|\\
\,\lesssim\,\big(C_0\exp(-\tfrac1{C_0}R^\gamma)\big)^\frac1{2}R^{\frac{2d}{p'}-d}\langle \tfrac1R(z_{j_0}-z_{j_0+1})\rangle^{-d}\\
\times\big\|\mathds1_{Q_R(z_{j_0})}(\Kk P^\bot\Bb\mathds1_{Q_R(z_{j_0-1})})\ldots( \Kk P^\bot\Bb\mathds1_{Q_R(z_1)})(\Kk P^\bot\Bb)P\phi_x\big\|_{\Ld^{p}_q(\R^d;\Ld^2(\Omega))}\\
\times\big\|\mathds1_{Q_R(z_{j_0+1})}( \Kk P^\bot\Bb\mathds1_{Q_R(z_{j_0+2})})\ldots(\Kk P^\bot\Bb\mathds1_{Q_R(z_{n-1})})(\Kk P^\bot \Bb)P\phi_y\big\|_{\Ld^\infty(\Omega;\Ld^p_q(\R^d))}.
\end{multline*}
Hence, by Lemma~\ref{lem:decay-K},
\begin{multline*}
\Big|\Big\langle\phi_x,P\Bb(\Kk P^\bot\Bb\mathds1_{Q_R(z_1)})\ldots(\Kk P^\bot\Bb\mathds1_{Q_R(z_{n-1})})(\Kk P^\bot \Bb)\phi_y\Big\rangle\Big|\\
\,\lesssim\,\big(C_0\exp(-\tfrac1{C_0}R^\gamma)\big)^\frac1{2}R^{\frac{2d}{p'}-d}
(\tfrac{C}{p-1})^{n-1}\|\phi_x\|_{\Ld^p_q(\R^d)}\|\phi_y\|_{\Ld^p_q(\R^d)}\\
\times\langle\tfrac1R(x-z_1)\rangle^{-d}\langle\tfrac1R(z_1-z_2)\rangle^{-d}\ldots\langle\tfrac1R(z_{n-1}-y)\rangle^{-d}.
\end{multline*}
As $\phi_x$ is supported in $Q_R(x)$, we find by H\"older's inequality,
\[\|\phi_x\|_{\Ld^p_q(\R^d)}\,\lesssim\,R^{d\frac{2-p}p}\|\phi_x\|_{\Ld^{p'}_{q'}(\R^d)}\,=\,R^{d-\frac{2d}{p'}}\|\phi_x\|_{\Ld^{p'}_{q'}(\R^d)}.\]
Using this and taking the supremum over $\phi_x,\phi_y$, the above yields
\begin{multline*}
\cro{P\Bb(\Kk P^\bot\Bb\mathds1_{Q_R(z_1)})\ldots(\Kk P^\bot\Bb\mathds1_{Q_R(z_{n-1})})(\Kk P^\bot \Bb)P}_{p,q,2;R}\\
\,\lesssim\,\big(C_0\exp(-\tfrac1{C_0}R^\gamma)\big)^\frac1{2}
(\tfrac{C}{p-1})^{n-1}
\langle\tfrac1R(x-z_1)\rangle^{-d}\langle\tfrac1R(z_1-z_2)\rangle^{-d}\ldots\langle\tfrac1R(z_{n-1}-y)\rangle^{-d}.
\end{multline*}
Summing over $z_1,\ldots,z_{n-1}$, we deduce
\begin{multline*}
\sum_{(z_1,\ldots, z_{n-1})\in D_R^n(x,y)}\cro{P\Bb(\Kk P^\bot\Bb\mathds1_{Q_R(z_1)})\ldots(\Kk P^\bot\Bb\mathds1_{Q_R(z_{n-1})})(\Kk P^\bot \Bb)P}_{p,q,2;R}\\
\,\lesssim\,\big(C_0\exp(-\tfrac1{C_0}R^\gamma)\big)^\frac1{2}
(\tfrac{C}{p-1})^{n-1}\langle \tfrac1R(x-y)\rangle^{-d}\log(2+\tfrac1R|x-y|)^{n-1}.
\end{multline*}
For all $\e>0$, using $\log t\le\e^{-1}t^\e$ for $t\ge1$, this yields the conclusion.
\end{proof}

\subsection{Bourgain's disjointification lemma}
We shall use the following generalized version of Bourgain's disjointification lemma~\cite[pp.319--320]{Bourgain-18} (see also~\cite[Lemma~2.7]{Lemm-18}). The present version is adapted to the new coarse-grained notion of reducibility, cf.~Definition~\ref{def:irred}.
In a nutshell, this result shows that sufficiently simple path restrictions do not destroy the behavior of compositions of kernels.

\begin{lem}\label{lem:disjointif}
Given $n\ge1$, let $\Ll_1,\ldots,\Ll_n$ be bounded operators and let $\Bb_0,\ldots,\Bb_n$ be bounded multiplication operators on $\Ld^2(\R^d\times\Omega)^d$.
Given $R\ge1$, $x,y\in R\Z^d$, and $\phi_x,\phi_y\in C^\infty_c(\R^d;\Ld^\infty(\Omega))^d$ with $\supp(\phi_x)\subset Q_R(x)\times\Omega$ and $\supp(\phi_y)\subset Q_R(y)\times\Omega$,
consider the function $T_{x,y}^n:(R\Z^d)^{n-1}\to\R$ given by
\begin{equation*}
T_{x,y}^n(z_1,\ldots,z_{n-1})\,:=\,\Big\langle\phi_x\,,\,\Bb_0(\Ll_1\Bb_1\mathds1_{Q_R(z_1)})\ldots(\Ll_{n-1}\Bb_{n-1}\mathds1_{Q_R(z_{n-1})})(\Ll_n\Bb_n)\phi_y\Big\rangle.
\end{equation*}
Given a finite subset $S\subset (R\Z^d)^{n-1}$, assume that $T_{x,y}^n$ satisfies the following bound,
\begin{equation}\label{eq:as-bnd-Txy}
\bigg|\sum_{(z_1,\ldots,z_{n-1})\in S}T_{x,y}^n(z_1,\ldots,z_{n-1})\bigg|\,\le\,M(x,y),
\end{equation}
for some function $M:(R\Z^d)^2\to\R^+$, and assume that this bound is stable under any change of the $\Bb_j$'s of the form
\begin{equation}\label{eq:change-kernel}
\Bb_j\,\to\, r_j\Bb_j,
\end{equation}
where $r_j$ is any multiplication by a function in $\Ld^\infty(\R^d)$ with $\|r_j\|_{\Ld^\infty(\R^d)}\le1$.
Given $m\ge1$ and a sequence of index subsets $E_l,F_l\subset\{0,\ldots,n\}$, $1\le l\le m$, consider now the following restriction of the set $S$, where we let $z_0:=x$ and $z_n:=y$ for notational simplicity,
\[S'\,:=\,S\cap\bigcap_{l=1}^m\Big\{(z_1,\ldots,z_{n-1})\in(R\Z^d)^{n-1}:|z_j-z_k|_\infty> R~~\text{for all $j\in E_l$, $k\in F_l$}\Big\}.\]
Then the bound~\eqref{eq:as-bnd-Txy} on $T_{x,y}^n$ is essentially preserved when summing over $S'$ instead of $S$, in form of
\begin{equation}\label{eq:sum-S'-state}
\bigg|\sum_{(z_1,\ldots,z_{n-1})\in S'}T_{x,y}^n(z_1,\ldots,z_{n-1})\bigg|\,\le\,2^{3^d\sum_{l=1}^m(\sharp E_l+\sharp F_l-1)}M(x,y).
\end{equation}
\end{lem}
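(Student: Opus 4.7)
The plan is to proceed by induction on the number $m$ of constraint pairs imposed. The base case $m = 0$ corresponds to $S' = S$, for which the bound reduces directly to the hypothesis \eqref{eq:as-bnd-Txy}. For the inductive step, it suffices to establish the following single-pair claim: whenever a bound of the form $|\sum_S T_{x,y}^n| \le N(x,y)$ holds and is stable under substitutions of the form \eqref{eq:change-kernel}, imposing one additional separation constraint ``$|z_j - z_k|_\infty > R$ for all $j \in E$ and $k \in F$'' produces a new sum bounded by $2^{3^d(\sharp E + \sharp F - 1)}\,N(x,y)$, and this new bound is itself stable under \eqref{eq:change-kernel}. Iterating this single-pair reduction $m$ times yields the desired bound \eqref{eq:sum-S'-state}.

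For the single-pair reduction, the starting point is the algebraic identity
\begin{equation*}
\mathds1_{\{|z_j - z_k|_\infty > R \text{ for all } j \in E,\, k \in F\}}\,=\,\prod_{(j,k) \in E \times F}\bigl(1 - \tilde\rho(z_j - z_k)\bigr),
\end{equation*}
where $\tilde\rho(z) := \mathds1_{|z|_\infty \le R}$, viewed on $R\Z^d$, is the indicator of the $3^d$-element set $\{-R, 0, R\}^d$. Writing $\tilde\rho(z_j - z_k) = \sum_{\alpha \in \{-R,0,R\}^d} \mathds1_{z_k = z_j + \alpha}$ and expanding the product, the constrained sum is rewritten as a signed combination of sums in which certain variables $z_k$ are locked to shifted copies $z_j + \alpha$ of other variables. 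In each such locking configuration, the indicator $\mathds1_{Q_R(z_k)}$ appearing in the definition of $T^n_{x,y}$ is replaced by the shifted indicator $\mathds1_{Q_R(z_j + \alpha)}$, which is a bounded function of $\Ld^\infty$-norm at most $1$. Absorbing this factor into the adjacent multiplication operator $\Bb_k$ as a substitution $\Bb_k \rightsquigarrow r_k \Bb_k$ of the form \eqref{eq:change-kernel}, the assumed stability bounds each resulting term by $N(x,y)$.

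The main technical obstacle is combinatorial: the naive expansion of the product produces roughly $(1 + 3^d)^{\sharp E \cdot \sharp F}$ terms, far exceeding the claimed bound $2^{3^d(\sharp E + \sharp F - 1)}$. The desired reduction must therefore rely on cancellations among configurations that over-constrain the variables. The underlying mechanism is that once the ``locking graph'' on $E \cup F$ contains a cycle, the corresponding shift-compatibility conditions either force exact cancellation in the signed expansion or collapse the configuration to one with strictly fewer independent constraints. Rigorously implementing this cycle-collapsing step and thereby reducing the effective count to configurations supported on spanning forests on $E \cup F$ (which have at most $\sharp E + \sharp F - 1$ edges, each carrying one of $3^d$ shift labels and a $\pm$ sign from inclusion-exclusion) is the delicate part of the argument, and is the point at which Bourgain's original discrete disjointification argument requires the most careful adaptation to the present coarse-grained continuum setting. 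A secondary, less severe, point that must be checked is that the ``locking plus free summation'' operation genuinely produces a sum of the original form $\sum_z T^n_{x,y}(z)$ with operator substitutions of the admissible type \eqref{eq:change-kernel}, rather than a more general expression falling outside the scope of the stability hypothesis.
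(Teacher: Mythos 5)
Your framing is right up to a point: inducting on the constraints and reducing to a single-constraint statement is exactly how the paper proceeds, and the rewriting of the separation constraint in terms of the $3^d$ shifts $\alpha\in\{-R,0,R\}^d$ is also correct (the paper organizes by $\alpha$ and pays a factor $2^{\sharp E_l+\sharp F_l-1}$ \emph{per shift}, which is where the exponent $3^d(\sharp E_l+\sharp F_l-1)$ comes from). But from there your argument has a genuine gap, and it is not of the ``careful bookkeeping'' sort you suggest at the end.

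First, the naive inclusion--exclusion/locking expansion that you write down is not what the paper does, and I do not believe it can be salvaged by a ``cycle-collapsing to spanning forests'' argument. Your expansion requires absorbing the constraint $z_k=z_j+\alpha$ into the multiplication operator $\Bb_k$, i.e.\@ replacing $\mathds1_{Q_R(z_k)}$ (summed over $z_k$) by $\mathds1_{Q_R(z_j+\alpha)}$. Since $z_j$ is itself a free summation variable, the resulting restriction is not a fixed $\Ld^\infty(\R^d)$ multiplier: it couples two of the $z$'s. This falls outside the stability hypothesis~\eqref{eq:change-kernel}, which allows multiplication by a fixed deterministic function of $x$, not by a function depending on another summation index. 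So the ``secondary, less severe point that must be checked'' that you flag at the end is in fact a primary obstruction, not a formality.

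Second, and more importantly, the mechanism that actually makes the lemma work is entirely absent from your proposal. The paper does not expand indicators and look for cancellations. Instead it introduces an auxiliary family of phase variables $\bar\theta=(\theta_z)_{z\in R\Z^d}$ and the genuinely admissible modifiers $r_j(x,\bar\theta)=\exp\bigl(i(\mathds1_{j\in E}\theta_{z_R(x)+\alpha}-\mathds1_{j\in F}\theta_{z_R(x)})\bigr)$, which are honest $\Ld^\infty(\R^d)$-multipliers of modulus one for every fixed $\bar\theta$. Applying stability and factoring out the phases yields a Steinhaus polynomial bound in $\bar\theta$. Integrating against the product Poisson measure $\prod_z P_t(\theta_z)\frac{d\theta_z}{2\pi}$ turns this into a scalar polynomial in $t\in(-1,1)$ of degree $\le\sharp E+\sharp F$, in which the \emph{top-degree} coefficient is precisely the sum over the $\alpha$-restricted set you are after. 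Markov's inequality for polynomials then gives the factor $2^{\sharp E+\sharp F-1}$. No expansion into $(1+3^d)^{\sharp E\cdot\sharp F}$ terms ever occurs, and no forest/cycle combinatorics is needed. Your proposal identifies that something beyond inclusion--exclusion is required, but it speculates on the wrong mechanism: the decisive idea is the phase/Poisson/Markov trick of Bourgain, and without it the argument does not close.
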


\begin{rem}
Given $L\in R\N$, replacing the condition $|z_i-z_j|_\infty>R$ by $|z_i-z_j|_\infty>L$ in the definition of $S'$ would lead to a corresponding estimate with $3^d$ replaced by $(2\frac LR+1)^d$. This excessive growth in the exponential would quickly make it useless for $L\gg R$, which is why we stick here to the case $L=R$.
\end{rem}

\begin{proof}[Proof of Lemma~\ref{lem:disjointif}]
First note that the restricted set $S'$ in the statement can be rewritten as follows, in terms of $N_R:=\{z\in R\Z^d:|z|_\infty\le R\}$,
\begin{equation*}
S'\,=\,S\cap\bigcap_{l=1}^m\bigcap_{\alpha\in N_R}\Big\{(z_1,\ldots,z_{n-1})\in(R\Z^d)^{n-1}:
\{z_j+\alpha:j\in E_l\}\cap\{z_j:j\in F_l\}=\varnothing\Big\},
\end{equation*}
where we let $z_0:=x$ and $z_n:=y$ for notational simplicity.
Without loss of generality, we can assume $E_l\cap F_l=\varnothing$ for all $l$, as otherwise $S'=\varnothing$.
By a direct induction argument, successively adding restrictions to the set $S$, we observe that it suffices to prove the following reduced result:
given $E,F\subset\{0,\ldots,n\}$ with $E\cap F=\varnothing$, and given $\alpha\in N_R$, considering the following restriction of the set $S$,
\begin{equation}\label{eq:def-S'-simpl}
S''\,:=\,S\cap\Big\{(z_1,\ldots,z_{n-1})\in(R\Z^d)^{n-1}:\{z_j+\alpha:j\in E\}\cap\{z_j:j\in F\}=\varnothing\Big\},
\end{equation}
we have
\begin{equation}\label{eq:sum-S'-red}
\bigg|\sum_{(z_1,\ldots,z_{n-1})\in S''}T_{x,y}^n(z_1,\ldots,z_{n-1})\bigg|\,\le\,2^{\sharp E+\sharp F-1}M(x,y).
\end{equation}
We turn to the proof of this reduced result~\eqref{eq:sum-S'-red}, for which we essentially follow Bourgain's original argument in~\cite[pp.319--320]{Bourgain-18}.
We start by introducing an additional set of variables $\bar\theta=(\theta_{z})_{z\in R\Z^d}$ with $\theta_{z}\in\R/2\pi\Z$, and 
we define
\[r_j(x,\bar\theta)\,:=\,\exp\Big(i\big(\mathds1_{j\in E}\theta_{z_R(x)+\alpha}-\mathds1_{j\in F}\theta_{z_R(x)}\big)\Big),\]
where we recall the definition of the map $z_R:\R^d\to R\Z^d$ in~\eqref{eq:zRdef}.
By definition, this means for all $z_j\in R\Z^d$,
\begin{eqnarray*}
r_j(\cdot,\bar\theta)\mathds1_{Q_R(z_j)}
&=&\mathds1_{Q_R(z_j)}
\exp\Big(i\big(\mathds1_{j\in E}\theta_{z_j+\alpha}-\mathds1_{j\in F}\theta_{z_j}\big)\Big)\\
&=&\mathds1_{Q_R(z_j)}\times\left\{\begin{array}{lll}
\exp(i\theta_{z_{j}+\alpha})&:&\text{if $j\in E$},\\
\exp(-i\theta_{z_j})&:&\text{if $j\in F$},\\
1&:&\text{otherwise.}
\end{array}\right.
\end{eqnarray*}
By assumption~\eqref{eq:as-bnd-Txy}, using stability under the change~\eqref{eq:change-kernel} with the above choice of functions~$r_j$'s,
and factoring out the phases,
we obtain
\begin{equation}\label{eq:estim-Steinhaus-pol}
\bigg|\sum_{(z_1,\ldots,z_{n-1})\in S}T_{x,y}^n(z_1,\ldots,z_{n-1})\exp\Big(i\sum_{j\in E}\theta_{z_{j}+\alpha}-i\sum_{j\in F}\theta_{z_j}\Big)\bigg|\,\le\,M(x,y),
\end{equation}
where the left-hand side is now viewed as a Steinhaus polynomial in the variables $\bar\theta$.
Next, we consider the Poisson kernel
\[P_t(\theta)=\sum_{n\in\Z}t^{|n|}e^{in\theta},\qquad t\in(-1,1),\qquad \theta\in\R/2\pi\Z,\]
and we recall that $P_t(\theta)\frac{d\theta}{2\pi}$ is a probability measure on $\R/2\pi\Z$ with
\[\int_{\R/2\pi\Z}e^{im\theta}P_t(\theta)\tfrac{d\theta}{2\pi}\,=\,t^{|m|},\qquad\text{for all $m\in\Z$.}\]
Integrating the estimate~\eqref{eq:estim-Steinhaus-pol} over $\bar\theta$ with respect to the product measure
\[\prod_{z\in R\Z^d}P_t(\theta_z)\tfrac{d\theta_z}{2\pi},\]
we deduce for all $t\in(-1,1)$,
\begin{equation}\label{eq:pol-t-markov}
\bigg|\sum_{(z_1,\ldots,z_{n-1})\in S}T_{x,y}^n(z_1,\ldots,z_{n-1})~t^{w(z_0,\ldots,z_n)}\bigg|\,\le\,M(x,y),
\end{equation}
where the left-hand side is now viewed as a polynomial in the variable $t$ and where the powers are given by
\[w(z_0,\ldots,z_n)\,:=\,\sum_{z\in R\Z^d}\big|\sharp\{j\in E:z_j+\alpha=z\}-\sharp\{j\in F:z_j=z\}\big|.\]
Note that we have
\[w(z_0,\ldots,z_n)\,\le\, \sharp E+\sharp F,\]
and that the equality $w(z_0,\ldots,z_n)= \sharp E+\sharp F$ holds if and only if
\[\{z_{j}+\alpha:j\in E\}\cap\{z_j:j\in F\}\,=\,\varnothing.\]
By the Markov brothers' inequality, we recall that the leading-order coefficient of a polynomial of degree~$D$ is bounded by $2^{D-1}$ times the maximum of the polynomial on $(-1,1)$.
Applied to the above situation~\eqref{eq:pol-t-markov}, this yields
\[\bigg|\sum_{(z_1,\ldots,z_{n-1})\in S}T_{x,y}^n(z_1,\ldots,z_{n-1})\,\mathds1_{\{z_{j}+\alpha:j\in E\}\cap\{z_j:j\in F\}=\varnothing}\bigg|\,\le\,2^{\sharp E+\sharp F-1}M(x,y).\]
By definition of $S''$, cf.~\eqref{eq:def-S'-simpl}, this precisely proves the claim~\eqref{eq:sum-S'-red}.
\end{proof}

\subsection{Proof of Theorem~\ref{th:exp-mix}}\label{sec:proof-exp-mix}
We prove the following refined version of~\eqref{eq:main1}: provided that $\delta\le\frac1K$ is small enough, we have for all $x,y\in\R^d$, $1+\delta K<q<\frac1{\delta K}$, and~$\e>0$,
\begin{equation}\label{eq:bound-todo-mix-bn-90}
\|\mathds1_{Q(x)}\bar\Bc(\nabla)\mathds1_{Q(y)}\|_{\Ld^q(\R^d)^d\to\Ld^q(\R^d)^d}\,\le\,\delta K\tfrac{q^2}{q-1}\big(\tfrac{|\!\log\e|}\e\big)^\frac{3d}\gamma\langle x-y\rangle^{\delta K+\e-3d}.
\end{equation}
By duality, it suffices to prove this result for $1+\delta K<q\le2$. For that purpose, starting from the perturbative expansion~\eqref{eq:barA}, it is then sufficient to show for all $x,y\in\R^d$, $n\ge1$, $1<q\le2$, $\e>0$, and $0<\theta\le\frac{2d}{q'}$, provided that $\theta\ll1$ is small enough,
\begin{equation}\label{eq:bound-todo-mix-bn}
\cro{P\Bb(\Kk P^\bot \Bb)^nP}_q(x,y)\,\le\,\tfrac{1}{q-1}\big(\tfrac{|\!\log\e|}\e\big)^{\frac{3d}\gamma}C^n\theta^{1-n}\langle x-y\rangle^{7\theta+\e-3d}.
\end{equation}
We shall actually prove instead the following result: for all $x,y\in \R^d$ and $n,R\ge1$, we have for all $1<q\le2$, $\e>0$, and $0<\theta\le\frac{2d}{q'}$, provided that $\theta\ll1$ is small enough,
\begin{multline}\label{eq:red-B-1}
\cro{P\Bb(\Kk P^\bot\Bb)^nP}_{q}(x,y)\,\le\,\tfrac1{q-1}C^n\theta^{1-n}\\
\times\Big(\langle\tfrac1R(x-y)\rangle^{7\theta-3d}
+n^n\e^{1-n}\big(C_0\exp(-\tfrac1{C_0}R^{\gamma})\big)^\frac12\langle\tfrac1R(x-y)\rangle^{\e-d}\Big).
\end{multline}
Choosing $R\ge1$ such that $\exp(\frac1{2C_0}R^\gamma)=\langle x-y\rangle^{2d}n^n\e^{-n}$, we indeed find that this estimate~\eqref{eq:red-B-1} implies~\eqref{eq:bound-todo-mix-bn}.
Now note that, in the case when $|x-y|_\infty\le4nR$, the estimate~\eqref{eq:red-B-1} simply follows from~\eqref{eq:compo-Rp} and from Lemma~\ref{lem:CZ} in the following form,
\[\cro{P\Bb(\Kk P^\bot \Bb)^nP}_{q}(x,y)\,\le\,\|P\Bb(\Kk P^\bot \Bb)^nP\|_{\Ld^q(\R^d)^d\to\Ld^q(\R^d)^d}\,\le\,(\tfrac{C}{q-1})^n\,\le\,\tfrac1{q-1}C^n\theta^{1-n},\]
where the last inequality follows by noting that the restriction $\theta\le\frac{2d}{q'}$ implies $\frac1{q-1}\le\frac{2d}\theta$.
Therefore, it only remains to prove~\eqref{eq:red-B-1} in the case when $|x-y|_\infty>4nR$, and we can restrict for that purpose to $x,y\in R\Z^d$.
More precisely, we shall prove the following result: for all $n,R\ge1$ and $x,y\in R\Z^d$ with $|x-y|_\infty>4nR$, we have for all $1<p\le q\le 2$ and $\e>0$, setting $\theta:=\tfrac{2d}{p'}$, provided that~$\theta\ll1$ is small enough,
\begin{multline}\label{eq:red-B-1re}
\cro{P\Bb(\Kk P^\bot\Bb)^nP}_{p,q;R}(x,y)\,\le\,C^n\theta^{2-n}\langle\tfrac1R(x-y)\rangle^{7\theta-3d}\\
+(Cn)^n\theta^{1-n}\e^{1-n}\big(C_0\exp(-\tfrac1{C_0}R^{\gamma})\big)^\frac12\langle\tfrac1R(x-y)\rangle^{\e-d}.
\end{multline}
As the left-hand side is bounded below by $\cro{P\Bb(\Kk P^\bot \Bb)^nP}_{q}(x,y)$, cf.~\eqref{eq:cro-lower}, this indeed implies the desired result~\eqref{eq:red-B-1}.

The rest of the proof is devoted to establishing~\eqref{eq:red-B-1re}.
From now on, let $n,R\ge1$ and~\mbox{$x,y\in R\Z^d$} be fixed with $|x-y|_\infty>4nR$, let $1<p\le q\le2$, $\e>0$, set $\theta:=\tfrac{2d}{p'}$,
and consider test functions $\phi_x,\phi_y\in C^\infty_c(\R^d)^d$ such that
\begin{gather}\label{eq:cond-phixy0}
\|\phi_x\|_{\Ld^{p'}_{q'}(\R^d)}=\|\phi_y\|_{\Ld^p_q(\R^d)}=1,\qquad\supp(\phi_x)\subset Q_R(x),\qquad\supp(\phi_y)\subset Q_R(y).
\end{gather}
For any subset $S\subset(R\Z^d)^{n-1}$, we shall use for convenience the following short-hand notation,
\begin{multline}\label{eq:def-TnxyS}
T_{x,y}^n(S)
\,:=\,\sum_{(z_1,\ldots,z_{n-1})\in S}~\Big\langle \phi_x,P\Bb(\Kk P^\bot\Bb\mathds1_{Q_R(z_1)})\\[-5mm]
\ldots\, (\Kk P^\bot\Bb\mathds1_{Q_R(z_{n-1})})(\Kk P^\bot\Bb)P\phi_y\Big\rangle,
\end{multline}
where we recall that the sum is always absolutely convergent, cf.~\eqref{eq:decomp-100}, and that
\[T_{x,y}^n((R\Z^d)^{n-1})\,=\,\langle\phi_x,P\Bb(\Kk P^\bot\Bb)^nP\phi_y\rangle.\]
We split the proof into three main steps.

\medskip
\step1 First reduction: we show that it suffices to prove that for all $0\le i\le j_1<i'\le n$ and~$m\ge0$ we have
\begin{multline}\label{eq:red-B-2}
|T_{x,y}^n(S_{j_1}^{m}\cap U_{i,i'})|\,\le\,
C^n\theta^{2-n}\langle\tfrac1R(x-y)\rangle^{3\theta-2d}\,(2^m)^{4\theta-d}\,\mathds1_{2^m\ge\frac1{2nR}|x-y|_\infty}\\
+(Cn)^n\theta^{1-n}\e^{1-n}\big(C_0\exp(-\tfrac1{C_0}R^{\gamma})\big)^\frac12\langle\tfrac1R(x-y)\rangle^{\e-d},
\end{multline}
and that this bound is stable under any modification of iterates of the random field $\Bb$ of the form~\eqref{eq:change-kernel},
where we have defined, setting $z_0:=x$ and $z_n:=y$ for notational simplicity,
\begin{multline*}
S_{j_1}^{m}\,:=\,\Big\{(z_1,\ldots,z_{n-1})\in(R\Z^d)^{n-1}:\max_{0\le j<n}|z_j-z_{j+1}|_\infty\le2^{m+1}R,\\
|z_{j_1}-z_{j_1+1}|_\infty>2^mR,~~\text{and}~\max_{0\le j<j_1}|z_j-z_{j+1}|_\infty\le2^mR\Big\},
\end{multline*}
and
\[U_{i,i'}\,:=\,\Big\{(z_1,\ldots,z_{n-1})\in(R\Z^d)^{n-1}:|z_{i}-z_{i'}|_\infty\le R\Big\}.\]
We start from~\eqref{eq:decomp-10} and distinguish with respect to the first interval $|z_j-z_{j+1}|_\infty$ that reaches the largest dyadic value, and we denote by $j=j_1$ the corresponding index.
Note that by the triangle inequality the condition $|x-y|_\infty>4nR$ implies
\[\max_{0\le j<n}|z_j-z_{j+1}|\,>\,4R.\] 
In terms of the above-defined disjoint index subsets $\{S_{j_1}^{m}\}_{j_1,m}$, recalling the short-hand notation~\eqref{eq:def-TnxyS}, the sum~\eqref{eq:decomp-10} becomes
\begin{equation*}
\langle \phi_x,P\Bb(\Kk P^\bot\Bb)^nP\phi_y\rangle
\,=\,\sum_{j_1=0}^{n-1}\sum_{m=0}^\infty T_{x,y}^n(S_{j_1}^{m}).
\end{equation*}
By definition of the sets $\{U_{i,i'}\}_{i,i'}$, we note that the set $(R\Z^d)^{n-1}\setminus  D_R^n(x,y)$ of irreducible paths from $x$ to $y$ is contained in
\[(R\Z^d)^{n-1}\setminus  D_R^n(x,y)\,\subset\,\bigcup_{0\le i\le j_1<i'\le n}U_{i,i'}.\]
Therefore, by the disjointness of the subsets $\{S_{j_1}^{m}\}_{j_1,m}$, the above can be bounded as follows,
\begin{multline*}
\big|\langle \phi_x,P\Bb(\Kk P^\bot\Bb)^nP\phi_y\rangle\big|
\,\le\,\sum_{j_1=0}^{n-1}\sum_{m=0}^\infty \bigg|T_{x,y}^n\bigg(S_{j_1}^{m}\cap\bigcup_{0\le i\le j_1<i'\le n}U_{i,i'}\bigg)\bigg|\\
+\sum_{(z_1,\ldots,z_n)\in D_R^n(x,y)}\big|\big\langle \phi_x,P\Bb(\Kk P^\bot\Bb\mathds1_{Q_R(z_1)})\ldots (\Kk P^\bot\Bb\mathds1_{Q_R(z_{n-1})})(\Kk P^\bot\Bb)P\phi_y\big\rangle\big|.
\end{multline*}
The last sum over reducible paths is estimated by Lemma~\ref{lem:bound-reduct},
\begin{multline}\label{eq:decomp-Sj1m-red}
\big|\langle \phi_x,P\Bb(\Kk P^\bot\Bb)^nP\phi_y\rangle\big|
\,\le\,\sum_{j_1=0}^{n-1}\sum_{m=0}^\infty \bigg|T_{x,y}^n\bigg(S_{j_1}^{m}\cap\bigcup_{0\le i\le j_1<i'\le n}U_{i,i'}\bigg)\bigg|\\
+(Cn)^n\theta^{1-n}\e^{1-n}\big(C_0\exp(-\tfrac1{C_0}R^\gamma)\big)^\frac12\langle\tfrac1R(x-y)\rangle^{\e-d}.
\end{multline}
Since the sets $\{U_{i,i'}\}_{i,i':0\le i\le j_1<i'\le n}$ are not disjoint, we start by disjointifying them, defining for all $0\le i\le j_1<i'\le n$,
\[U'_{i,i'}\,:=\,U_{i,i'}\setminus\bigg(\bigcup_{j:0\le j<i\atop j':j_1<j'\le n}U_{j,j'}~\cup\bigcup_{j':j_1<j'<i'}U_{i,j'}\bigg).\]
The modified sets $\{U'_{i,i'}\}_{i,i':0\le i\le j_1<i'\le n}$ are now pairwise disjoint, while still satisfying
\[\bigcup_{0\le i\le j_1<i'\le n}U_{i,i'}\,=\,\bigcup_{0\le i\le j_1<i'\le n}U'_{i,i'},\]
which allows to bound~\eqref{eq:decomp-Sj1m-red} as
\begin{multline}\label{eq:decomp-nmix-plop}
\big|\langle \phi_x,P\Bb(\Kk P^\bot\Bb)^nP\phi_y\rangle\big|
\,\le\,\sum_{0\le i\le j_1<i'\le n}~\sum_{m=0}^\infty |T_{x,y}^n(S_{j_1}^{m}\cap U'_{i,i'})|\\
+(Cn)^n\theta^{1-n}\e^{1-n}\big(C_0\exp(-\tfrac1{C_0}R^\gamma)\big)^\frac12\langle\tfrac1R(x-y)\rangle^{\e-d}.
\end{multline}
For all $0\le i\le j_1<i'\le n$, if the bound~\eqref{eq:red-B-2} holds and is stable under any modification of iterates of the random field $\Bb$ of the form~\eqref{eq:change-kernel}, then we can appeal to Lemma~\ref{lem:disjointif} with $S:=S_{j_1}^m \cap U_{i,i'}$
and with
\[E_1:=\{0,\ldots,i-1\},\quad
F_1:=\{j_1+1,\ldots,n\},\quad
E_2:=\{i\},\quad
F_2:=\{j_1+1,\ldots,i'-1\},\]
which implies the following corresponding bound on $U_{i,i'}'$,
\begin{multline*}
|T_{x,y}^n(S_{j_1}^{m}\cap U_{i,i'}')|\,\le\,C^n\theta^{2-n}\langle\tfrac1R(x-y)\rangle^{3\theta-2d}\,(2^m)^{4\theta-d}\,\mathds1_{2^m\ge\frac1{2nR}|x-y|_\infty}\\
+(Cn)^n\theta^{1-n}\e^{1-n}\big(C_0\exp(-\tfrac1{C_0}R^{\gamma})\big)^\frac12\langle\tfrac1R(x-y)\rangle^{\e-d}.
\end{multline*}
Combined with~\eqref{eq:decomp-nmix-plop}, this would prove~\eqref{eq:red-B-1} after evaluating the dyadic sum over $m$, provided that $5\theta<d$.
This shows that it remains to establish the bound~\eqref{eq:red-B-2} and its stability.

\medskip
\step2 Second reduction: we show that it suffices to prove that for all $0\le i\le j_1<i'\le n$, for all $0\le j_2<i$ or $i'\le j_2<n$, for all $0\le l\le j_2<l'\le n$, and all~$m\ge0$ we have
\begin{equation}\label{eq:red-B-3}
|T_{x,y}^n(S_{j_1}^m\cap R_{j_2}\cap U_{i,i'}\cap U_{l,l'})|
\,\le\,C^n\theta^{2-n}\langle\tfrac1R(x-y)\rangle^{3\theta-2d}\,(2^m)^{4\theta-d}\,\mathds1_{2^m\ge\frac1{2nR}|x-y|_\infty}.
\end{equation}
and that this bound is stable under any modification of iterates of the random field $\Bb$ of the form~\eqref{eq:change-kernel},
where we have now further defined
\begin{multline*}
R_{j_2}\,:=\,\Big\{(z_1,\ldots,z_{n-1})\in(R\Z^d)^{n-1}:\max_{0\le j<j_2}|z_j-z_{j+1}|_\infty\le \tfrac1n|x-y|_{\infty},\\
~\text{and}~~|z_{j_2}-z_{j_2+1}|_\infty>\tfrac1n|x-y|_{\infty}\Big\}.
\end{multline*}
Recalling the condition $|x-y|_\infty>4nR$,
we note that for all $(z_1,\ldots,z_{n-1})\in S_{j_1}^m\cap U_{i,i'}$ the triangle inequality yields
\begin{eqnarray*}
(n-1)\max\Big\{\max_{0\le j<i}|z_j-z_{j+1}|_\infty\,,\,\max_{i'\le j<n}|z_j-z_{j+1}|_\infty\Big\}
&\ge&|x-y|_\infty-|z_i-z_{i'}|_\infty\\
&\ge&|x-y|_\infty-R\\
&>&(1-\tfrac1{4n})|x-y|_\infty,
\end{eqnarray*}
hence
\[\max\Big\{\max_{0\le j<i}|z_j-z_{j+1}|_\infty\,,\,\max_{i'\le j<n}|z_j-z_{j+1}|_\infty\Big\}\,>\,\tfrac{1}{n}|x-y|_\infty.\]
We denote by $j=j_2$ the first index realizing this inequality. In terms of the above-defined disjoint index subsets $\{R_{j_2}\}_{j_2}$, we can decompose
\[T_{x,y}^n(S_{j_1}^m\cap U_{i,i'})\,=\,\sum_{j_2:0\le j_2<i}T_{x,y}^n(S_{j_1}^m\cap R_{j_2}\cap U_{i,i'})+\sum_{j_2:i'\le j_2<n}T_{x,y}^n(S_{j_1}^m\cap R_{j_2}\cap U_{i,i'}).\]
Next, arguing as in Step~1, we appeal to Lemma~\ref{lem:bound-reduct} to restrict index sets to
\[\bigcup_{l,l':0\le l\le j_2<l'\le n}U_{l,l'},\]
which amounts to neglecting further reducible paths, and we appeal to Lemma~\ref{lem:disjointif} to disjointify these sets $\{U_{l,l'}\}_{l,l':0\le l\le j_2<l'\le n}$.
As in Step~1, we deduce in this way that it indeed remains to establish the bound~\eqref{eq:red-B-3} and its stability.

\medskip
\step3 Conclusion.\\
It remains to prove~\eqref{eq:red-B-3} and its stability.
Let $0\le i\le j_1<i'\le n$ be fixed. Without loss of generality, we can focus on the case $0\le j_2<i$ (the other case $i'\le j_2<n$ is symmetric). Now let $l,l'$ with $0\le l\le j_2<l'\le n$ and we split the analysis of $T_{x,y}^n(S_{j_1}^m\cap R_{j_2}\cap U_{i,i'}\cap U_{l,l'})$ by considering six different cases:
\begin{enumerate}[---]
\item case~1: $0\le l\le j_2<l'< i\le j_1<i'\le n$;
\item case~2: $0\le l\le j_2< i=l'\le j_1<i'\le n$;
\item case~3: $0\le l\le j_2< i<l'\le j_1<i'\le n$;
\item case~4: $0\le l\le j_2< i\le j_1<l'<i'\le n$;
\item case~5: $0\le l\le j_2< i\le j_1<i'=l'< n$;
\item case~6: $0\le l\le j_2< i\le j_1<i'<l'\le n$.
\end{enumerate}
To ease readability, it might be useful to introduce diagrammatic representations for the different cases, namely
\begingroup\allowdisplaybreaks
\begin{eqnarray}
\label{eq:diagrams}
\text{case~1:}
&&{\small\begin{tikzpicture}[scale=0.9]
\begin{scope}[every node/.style={circle,draw,fill=white,inner sep=0pt,minimum size=3pt}]
     \node[label=above:$(x)$] (A) at (0,0) {};
     \node[label=above:$(y)$] (J) at (9,0) {};
\end{scope}
\begin{scope}[every node/.style={circle,fill,draw,inner sep=0pt,minimum size=3pt}]
     \node (B) at (1,0) {};
     \node (C) at (2,0) {};
     \node (D) at (3,0) {};
     \node (E) at (4,0) {};
     \node (F) at (5,0) {};
     \node (G) at (6,0) {};
     \node (H) at (7,0) {};
     \node (I) at (8,0) {};
\end{scope}
\begin{scope}[>={Stealth[black]},
every edge/.style={draw=black,dashed}]
    \path [-] (A) edge node[above,pos=0.5] {} (C);
    \path [-] (D) edge node[above,pos=0.5] {} (G);
    \path [-] (H) edge node[above,pos=0.5] {} (J);
\end{scope}
\begin{scope}[>={Stealth[black]},
every edge/.style={draw=black,ultra thick}]
    \path [-] (C) edge node[above,pos=0.5] {} (D);
    \path [-] (G) edge node[above,pos=0.5] {} (H);
\end{scope}
\begin{scope}[every node/.style={circle,fill,draw,inner sep=0pt,minimum size=0.3pt}]
    \node (B') at (1,0.5) {};
    \node (E') at (4,0.5) {};
    \node (F') at (5,0.5) {};
    \node (I') at (8,0.5) {};
\end{scope}
\begin{scope}[>={Stealth[black]},
every node/.style={fill=white,circle},
every edge/.style={draw=black}]
    \path [-] (B) edge (B');
    \path [-] (B') edge (E');
    \path [-] (E') edge (E);
    \path [-] (F) edge (F');
    \path [-] (F') edge (I');
    \path [-] (I') edge (I);
\end{scope}
\end{tikzpicture}}\\
\text{case~2:}
&&{\small\begin{tikzpicture}[scale=0.9]
\begin{scope}[every node/.style={circle,draw,fill=white,inner sep=0pt,minimum size=3pt}]
     \node[label=above:$(x)$] (A) at (0,0) {};
     \node[label=above:$(y)$] (J) at (8,0) {};
\end{scope}
\begin{scope}[every node/.style={circle,fill,draw,inner sep=0pt,minimum size=3pt}]
     \node (B) at (1,0) {};
     \node (C) at (2,0) {};
     \node (D) at (3,0) {};
     \node (E) at (4,0) {};
     \node (G) at (5,0) {};
     \node (H) at (6,0) {};
     \node (I) at (7,0) {};
\end{scope}
\begin{scope}[>={Stealth[black]},
every edge/.style={draw=black,dashed}]
    \path [-] (A) edge node[above,pos=0.5] {} (C);
    \path [-] (D) edge node[above,pos=0.5] {} (G);
    \path [-] (H) edge node[above,pos=0.5] {} (J);
\end{scope}
\begin{scope}[>={Stealth[black]},
every edge/.style={draw=black,ultra thick}]
    \path [-] (C) edge node[above,pos=0.5] {} (D);
    \path [-] (G) edge node[above,pos=0.5] {} (H);
\end{scope}
\begin{scope}[every node/.style={circle,fill,draw,inner sep=0pt,minimum size=0.3pt}]
    \node (B') at (1,0.5) {};
    \node (E') at (4,0.5) {};
    \node (F') at (4,0.5) {};
    \node (I') at (7,0.5) {};
\end{scope}
\begin{scope}[>={Stealth[black]},
every node/.style={fill=white,circle},
every edge/.style={draw=black}]
    \path [-] (B) edge (B');
    \path [-] (B') edge (E');
    \path [-] (E') edge (I');
    \path [-] (I') edge (I);
\end{scope}
\begin{scope}[>={Stealth[black]},
every node/.style={fill=white,circle},
every edge/.style={draw=black,double}]
    \path [-] (E') edge (E);
\end{scope}
\end{tikzpicture}}
\nonumber\\
\text{case~3:}
&&{\small\begin{tikzpicture}[scale=0.9]
\begin{scope}[every node/.style={circle,draw,fill=white,inner sep=0pt,minimum size=3pt}]
     \node[label=above:$(x)$] (A) at (0,0) {};
     \node[label=above:$(y)$] (J) at (9,0) {};
\end{scope}
\begin{scope}[every node/.style={circle,fill,draw,inner sep=0pt,minimum size=3pt}]
     \node (B) at (1,0) {};
     \node (C) at (2,0) {};
     \node (D) at (3,0) {};
     \node (E) at (4,0) {};
     \node (F) at (5,0) {};
     \node (G) at (6,0) {};
     \node (H) at (7,0) {};
     \node (I) at (8,0) {};
\end{scope}
\begin{scope}[>={Stealth[black]},
every edge/.style={draw=black,dashed}]
    \path [-] (A) edge node[above,pos=0.5] {} (C);
    \path [-] (D) edge node[above,pos=0.5] {} (G);
    \path [-] (H) edge node[above,pos=0.5] {} (J);
\end{scope}
\begin{scope}[>={Stealth[black]},
every edge/.style={draw=black,ultra thick}]
    \path [-] (C) edge node[above,pos=0.5] {} (D);
    \path [-] (G) edge node[above,pos=0.5] {} (H);
\end{scope}
\begin{scope}[every node/.style={circle,fill,draw,inner sep=0pt,minimum size=0.3pt}]
    \node (B') at (1,0.5) {};
    \node (E') at (4,0.3) {};
    \node (F') at (5,0.5) {};
    \node (I') at (8,0.3) {};
\end{scope}
\begin{scope}[>={Stealth[black]},
every node/.style={fill=white,circle},
every edge/.style={draw=black}]
    \path [-] (B) edge (B');
    \path [-] (B') edge (F');
    \path [-] (F') edge (F);
    \path [-] (E) edge (E');
    \path [-] (E') edge (I');
    \path [-] (I') edge (I);
\end{scope}
\end{tikzpicture}}
\nonumber\\
\text{case~4:}
&&{\small\begin{tikzpicture}[scale=0.9]
\begin{scope}[every node/.style={circle,draw,fill=white,inner sep=0pt,minimum size=3pt}]
     \node[label=above:$(x)$] (A) at (0,0) {};
     \node[label=above:$(y)$] (J) at (9,0) {};
\end{scope}
\begin{scope}[every node/.style={circle,fill,draw,inner sep=0pt,minimum size=3pt}]
     \node (B) at (1,0) {};
     \node (C) at (2,0) {};
     \node (D) at (3,0) {};
     \node (E) at (4,0) {};
     \node (F) at (5,0) {};
     \node (G) at (6,0) {};
     \node (H) at (7,0) {};
     \node (I) at (8,0) {};
\end{scope}
\begin{scope}[>={Stealth[black]},
every edge/.style={draw=black,dashed}]
    \path [-] (A) edge node[above,pos=0.5] {} (C);
    \path [-] (D) edge node[above,pos=0.5] {} (F);
    \path [-] (G) edge node[above,pos=0.5] {} (J);
\end{scope}
\begin{scope}[>={Stealth[black]},
every edge/.style={draw=black,ultra thick}]
    \path [-] (C) edge node[above,pos=0.5] {} (D);
    \path [-] (F) edge node[above,pos=0.5] {} (G);
\end{scope}
\begin{scope}[every node/.style={circle,fill,draw,inner sep=0pt,minimum size=0.3pt}]
    \node (B') at (1,0.5) {};
    \node (H') at (7,0.5) {};
    \node (E') at (4,0.3) {};
    \node (I') at (8,0.3) {};
\end{scope}
\begin{scope}[>={Stealth[black]},
every node/.style={fill=white,circle},
every edge/.style={draw=black}]
    \path [-] (B) edge (B');
    \path [-] (B') edge (H');
    \path [-] (H') edge (H);
    \path [-] (E) edge (E');
    \path [-] (E') edge (I');
    \path [-] (I') edge (I);
\end{scope}
\end{tikzpicture}}
\nonumber\\
\text{case~5:}
&&{\small\begin{tikzpicture}[scale=0.9]
\begin{scope}[every node/.style={circle,draw,fill=white,inner sep=0pt,minimum size=3pt}]
     \node[label=above:$(x)$] (A) at (0,0) {};
     \node[label=above:$(y)$] (J) at (8,0) {};
\end{scope}
\begin{scope}[every node/.style={circle,fill,draw,inner sep=0pt,minimum size=3pt}]
     \node (B) at (1,0) {};
     \node (C) at (2,0) {};
     \node (D) at (3,0) {};
     \node (E) at (4,0) {};
     \node (F) at (5,0) {};
     \node (G) at (6,0) {};
     \node (H) at (7,0) {};
\end{scope}
\begin{scope}[>={Stealth[black]},
every edge/.style={draw=black,dashed}]
    \path [-] (A) edge node[above,pos=0.5] {} (C);
    \path [-] (D) edge node[above,pos=0.5] {} (F);
    \path [-] (G) edge node[above,pos=0.5] {} (J);
\end{scope}
\begin{scope}[>={Stealth[black]},
every edge/.style={draw=black,ultra thick}]
    \path [-] (C) edge node[above,pos=0.5] {} (D);
    \path [-] (F) edge node[above,pos=0.5] {} (G);
\end{scope}
\begin{scope}[every node/.style={circle,fill,draw,inner sep=0pt,minimum size=0.3pt}]
    \node (B') at (1,0.5) {};
    \node (H') at (7,0.5) {};
    \node (E') at (4,0.3) {};
    \node (I') at (7,0.3) {};
\end{scope}
\begin{scope}[>={Stealth[black]},
every node/.style={fill=white,circle},
every edge/.style={draw=black,double}]
    \path [-] (I') edge (H);
\end{scope}
\begin{scope}[>={Stealth[black]},
every node/.style={fill=white,circle},
every edge/.style={draw=black}]
    \path [-] (B) edge (B');
    \path [-] (B') edge (H');
    \path [-] (E) edge (E');
    \path [-] (H') edge (I');
    \path [-] (I') edge (E');
\end{scope}
\end{tikzpicture}}
\nonumber\\
\text{case~6:}
&&{\small\begin{tikzpicture}[scale=0.9]
\begin{scope}[every node/.style={circle,draw,fill=white,inner sep=0pt,minimum size=3pt}]
     \node[label=above:$(x)$] (A) at (0,0) {};
     \node[label=above:$(y)$] (J) at (9,0) {};
\end{scope}
\begin{scope}[every node/.style={circle,fill,draw,inner sep=0pt,minimum size=3pt}]
     \node (B) at (1,0) {};
     \node (C) at (2,0) {};
     \node (D) at (3,0) {};
     \node (E) at (4,0) {};
     \node (F) at (5,0) {};
     \node (G) at (6,0) {};
     \node (I) at (7,0) {};
     \node (H) at (8,0) {};
\end{scope}
\begin{scope}[>={Stealth[black]},
every edge/.style={draw=black,dashed}]
    \path [-] (A) edge node[above,pos=0.5] {} (C);
    \path [-] (D) edge node[above,pos=0.5] {} (F);
    \path [-] (G) edge node[above,pos=0.5] {} (J);
\end{scope}
\begin{scope}[>={Stealth[black]},
every edge/.style={draw=black,ultra thick}]
    \path [-] (C) edge node[above,pos=0.5] {} (D);
    \path [-] (F) edge node[above,pos=0.5] {} (G);
\end{scope}
\begin{scope}[every node/.style={circle,fill,draw,inner sep=0pt,minimum size=0.3pt}]
    \node (B') at (1,0.5) {};
    \node (I') at (7,0.3) {};
    \node (E') at (4,0.3) {};
    \node (H') at (8,0.5) {};
\end{scope}
\begin{scope}[>={Stealth[black]},
every node/.style={fill=white,circle},
every edge/.style={draw=black}]
    \path [-] (B) edge (B');
    \path [-] (B') edge (H');
    \path [-] (H') edge (H);
    \path [-] (E) edge (E');
    \path [-] (E') edge (I');
    \path [-] (I') edge (I);
\end{scope}
\end{tikzpicture}}
\nonumber
\end{eqnarray}
\endgroup
where each dotted main line represents a path $(z_0,\ldots,z_n)$ from $z_0=x$ to $z_n=y$,
where bold segments on the line represent long intervals $|z_j-z_{j+1}|_\infty>R$, and where edges above the main line indicate `coincidence' points $z_i,z_{i'}$ with $|z_i-z_{i'}|_\infty\le R$.
For shortness, we focus on cases~1 and~4: indeed, cases~2 and~3 (resp.\@ cases~5 and~6) are actually similar to case~1 (resp.\@ case~4). We split the proof into two further substeps.

\medskip
\substep{3.1} Case~1.\\
Let $0\le l\le j_2<l'< i\le j_1<i'\le n$ be fixed.
By definition of $S_{j_1}^m,R_{j_2},U_{i,i'},U_{l,l'}$, setting $\ell:=\tfrac1n|x-y|_\infty$ for shortness, and arguing as in~\eqref{eq:use-Sj0R} to replace restricted summations over $S_{j_1}^m$ and $R_{j_2}$ by kernel truncations,
we can write
\begin{multline*}
T_{x,y}^n(S_{j_1}^m\cap R_{j_2}\cap U_{i,i'}\cap U_{l,l'})\\
\,=\,\sum_{z_{j_1},z_{j_1+1},z_{j_2},z_{j_2+1},z_i,z_{i'},z_l,z_{l'}\in R\Z^d}\mathds1_{2^mR<|z_{j_1}-z_{j_1+1}|_\infty\le2^{m+1}R}\\
\times\,\mathds1_{|z_{j_2}-z_{j_2+1}|_\infty>\frac1n|x-y|_\infty}\,\mathds1_{|z_i-z_{i'}|_\infty\le R}\,\mathds1_{|z_l-z_{l'}|_\infty\le R}\\
\times\Big\langle \phi_x,P\Bb(\Kk_{(2^mR)\wedge\ell;R} P^\bot\Bb)^{l}\mathds1_{Q_R(z_l)}(\Kk_{(2^mR)\wedge\ell;R} P^\bot\Bb)^{j_2-l}\mathds1_{Q_R(z_{j_2})}\Kk_{2^mR;R} P^\bot\Bb\mathds1_{Q_R(z_{j_2+1})}\\
\times(\Kk_{2^mR;R} P^\bot\Bb)^{l'-j_2-1}\mathds1_{Q_R(z_{l'})}
(\Kk_{2^mR;R} P^\bot\Bb)^{i-l'}\mathds1_{Q_R(z_{i})}
(\Kk_{2^mR;R} P^\bot\Bb)^{j_1-i}\mathds1_{Q_R(z_{j_1})}\\
\times\Kk P^\bot\Bb\mathds1_{Q_R(z_{j_1+1})}
(\Kk_{2^{m+1}R;R} P^\bot\Bb)^{i'-j_1-1}\mathds1_{Q_R(z_{i'})}
(\Kk_{2^{m+1}R;R} P^\bot\Bb)^{n-i'}P\phi_y\Big\rangle.
\end{multline*}
Taking the supremum over $\phi_x,\phi_y$, and using~\eqref{eq:compo-Rp} and the pointwise decay of $\Kk$, we find
\begin{multline*}
\sup_{\phi_x,\phi_y}|T_{x,y}^n(S_{j_1}^m\cap R_{j_2}\cap U_{i,i'}\cap U_{l,l'})|
\,\lesssim\,n^d\langle\tfrac1R(x-y)\rangle^{-d}(2^m )^{-d}\\
\times\sum_{z_{j_1},z_{j_1+1},z_{j_2},z_{j_2+1},z_i,z_{i'},z_l,z_{l'}\in R\Z^d}\mathds1_{|z_i-z_{i'}|_\infty\le R}\,\mathds1_{|z_l-z_{l'}|_\infty\le R}
\,\cro{(\Kk_{(2^mR)\wedge\ell;R} P^\bot\Bb)^{l}}_{p,q,2;R}(x,z_l)\\
\times\cro{(\Kk_{(2^mR)\wedge\ell;R} P^\bot\Bb)^{j_2-l}}_{p,q,2;R}(z_l,z_{j_2})
\cro{(\Kk_{2^mR;R} P^\bot\Bb)^{l'-j_2-1}}_{p,q,2;R}(z_{j_2+1},z_{l'})\\
\times\cro{(\Kk_{2^mR;R} P^\bot\Bb)^{i-l'}}_{p,q,2;R}(z_{l'},z_i)
\cro{(\Kk_{2^mR;R} P^\bot\Bb)^{j_1-i}}_{p,q,2;R}(z_i,z_{j_1})\\
\times \cro{(\Kk_{2^{m+1}R;R} P^\bot\Bb)^{i'-j_1-1}}_{p,q,2;R}(z_{j_1+1},z_{i'})
\cro{(\Kk_{2^{m+1}R;R} P^\bot\Bb)^{n-i'}}_{p,q,2;R}(z_{i'},y),
\end{multline*}
where the supremum is implicitly understood to run over all functions $\phi_x,\phi_y$ satisfying~\eqref{eq:cond-phixy0}.
By the triangle inequality, we note that kernel truncations entail that the summation is restricted to
\begin{gather*}
|z_{j_1}-z_i|_\infty,|z_{j_1+1}-z_{i'}|_\infty,|z_{j_2}-z_l|_\infty,|z_{j_2+1}-z_{l'}|_\infty\,\le\, n2^{m+1}R,\\
2^m\ge\tfrac1{2nR}|x-y|_\infty.
\end{gather*}
Further appealing to the deterministic estimates of Lemma~\ref{lem:deterministic}, we are then led to  
\begin{multline}\label{eq:estim-kernels-case1}
\sup_{\phi_x,\phi_y}|T_{x,y}^n(S_{j_1}^m\cap R_{j_2}\cap U_{i,i'}\cap U_{l,l'})|
\,\le\,C^n\theta^{2-n}\langle\tfrac1R(x-y)\rangle^{-d}\,(2^m)^{-d}\mathds1_{2^m\ge\frac1{2nR}|x-y|_\infty}\\
\times\sum_{z_{j_1},z_{j_1+1},z_{j_2},z_{j_2+1},z_i,z_{i'},z_l,z_{l'}\in R\Z^d}\mathds1_{|z_i-z_{i'}|_\infty\le R}\,\mathds1_{|z_l-z_{l'}|_\infty\le R}\\
\times\mathds1_{|z_{j_1}-z_i|_\infty,|z_{j_1+1}-z_{i'}|_\infty,|z_{j_2}-z_l|_\infty,|z_{j_2+1}-z_{l'}|_\infty\,\le\, n2^{m+1}R}\\
\times\langle\tfrac1R(x-z_l)\rangle^{\theta-d}
\langle\tfrac1R(z_l-z_{j_2})\rangle^{\theta-d}
\langle\tfrac1R(z_{j_2+1}-z_{l'})\rangle^{\theta-d}
\langle\tfrac1R(z_{l'}-z_i)\rangle^{\theta-d}\\
\times\langle\tfrac1R(z_i-z_{j_1})\rangle^{\theta-d}
\langle\tfrac1R(z_{j_1+1}-z_{i'})\rangle^{\theta-d}
\langle\tfrac1R(z_{i'}-y)\rangle^{\theta-d}.
\end{multline}
Evaluating the sums over $z_{j_1},z_{j_1+1},z_{j_2},z_{j_2+1}$, and using that the summations over $(z_{i},z_{i'})$ and over $(z_l,z_{l'})$ are restricted to pairs of neighboring vertices in $R\Z^d$, we deduce
\begin{multline*}
\sup_{\phi_x,\phi_y}|T_{x,y}^n(S_{j_1}^m\cap R_{j_2}\cap U_{i,i'}\cap U_{l,l'})|
\,\le\,C^n\theta^{2-n}\langle\tfrac1R(x-y)\rangle^{-d}\,(2^m)^{4\theta-d}\,\mathds1_{2^m\ge\frac1{2nR}|x-y|_\infty}\\
\times\sum_{z_i,z_l\in R\Z^d}
\langle\tfrac1R(x-z_l)\rangle^{\theta-d}
\langle\tfrac1R(z_{l}-z_i)\rangle^{\theta-d}
\langle\tfrac1R(z_{i}-y)\rangle^{\theta-d},
\end{multline*}
and thus, estimating the remaining sums, provided that $4\theta<d$,
\begin{multline*}
\sup_{\phi_x,\phi_y}|T_{x,y}^n(S_{j_1}^m\cap R_{j_2}\cap U_{i,i'}\cap U_{l,l'})|
\,\le\,C^n\theta^{2-n}\langle\tfrac1R(x-y)\rangle^{3\theta-2d}\,(2^m)^{4\theta-d}\,\mathds1_{2^m\ge\frac1{2nR}|x-y|_\infty}.
\end{multline*}
The same estimate holds in cases~2 and~3.

\medskip
\substep{3.2} Case~4.\\
Let $0\le l\le j_2<i\le j_1<l'<i'\le n$ be fixed. Arguing as above, we then find, instead of~\eqref{eq:estim-kernels-case1},
\begin{multline*}
T_{x,y}^n(S_{j_1}^m\cap R_{j_2}\cap U_{i,i'}\cap U_{l,l'})
\,\lesssim\,C^n\theta^{2-n}\langle\tfrac1R(x-y)\rangle^{-d}(2^m)^{-d}\\
\times\sum_{z_{j_1},z_{j_1+1},z_{j_2},z_{j_2+1},z_i,z_{i'},z_l,z_{l'}\in R\Z^d}\mathds1_{|z_i-z_{i'}|_\infty\le R}\,\mathds1_{|z_l-z_{l'}|_\infty\le R}\\
\times\mathds1_{|z_{j_1}-z_i|_\infty,|z_{j_1+1}-z_{l'}|_\infty,|z_{j_2}-z_l|_\infty,|z_{j_2+1}-z_{i}|_\infty\,\le\, n2^{m+1}R}\\
\times\langle\tfrac1R(x-z_l)\rangle^{\theta-d}\langle\tfrac1R(z_l-z_{j_2})\rangle^{\theta-d}
\langle\tfrac1R(z_{j_2+1}-z_i)\rangle^{\theta-d}\langle\tfrac1R(z_i-z_{j_1})\rangle^{\theta-d}\\
\times\langle\tfrac1R(z_{j_1+1}-z_{l'})\rangle^{\theta-d}
\langle\tfrac1R(z_{l'}-z_{i'})\rangle^{\theta-d}\langle\tfrac1R(z_{i'}-y)\rangle^{\theta-d}.
\end{multline*}
Evaluating the sums over $z_{j_1},z_{j_1+1},z_{j_2},z_{j_2+1}$, and using that the summations over $(z_{i},z_{i'})$ and over $(z_l,z_{l'})$ are restricted to pairs of neighboring vertices in $R\Z^d$, we deduce again
\begin{multline*}
T_{x,y}^n(S_{j_1}^m\cap R_{j_2}\cap U_{i,i'}\cap U_{l,l'})
\,\lesssim\,C^n\theta^{2-n}\langle\tfrac1R(x-y)\rangle^{-d}(2^m)^{4\theta-d}\\
\times\sum_{z_i,z_l\in R\Z^d}
\langle\tfrac1R(x-z_l)\rangle^{\theta-d}
\langle\tfrac1R(z_{l}-z_{i})\rangle^{\theta-d}\langle\tfrac1R(z_{i}-y)\rangle^{\theta-d},
\end{multline*}
and thus, estimating the remaining sums, provided that $4\theta<d$,
\begin{multline*}
\sup_{\phi_x,\phi_y}|T_{x,y}^n(S_{j_1}^m\cap R_{j_2}\cap U_{i,i'}\cap U_{l,l'})|
\,\le\,C^n\theta^{2-n}\langle\tfrac1R(x-y)\rangle^{3\theta-2d}\,(2^m)^{4\theta-d}\,\mathds1_{2^m\ge\frac1{2nR}|x-y|_\infty}.
\end{multline*}
The same estimate holds in cases~5 and~6.
Combined with the result of Substep~3.1, this concludes the proof of~\eqref{eq:red-B-3}. As required, we note that the proof is clearly stable under any modification of iterates of the random field $\Bb$ of the form~\eqref{eq:change-kernel}. This concludes the proof of~\eqref{eq:bound-todo-mix-bn-90}, and the main decay estimate~\eqref{eq:main1} in Theorem~\ref{th:exp-mix} follows.\qed

\subsection{Proof of H\"older continuity}\label{ssect:hoelder}
We claim in Theorem \ref{th:exp-mix} that the decay estimate~\eqref{eq:main1} for the kernel of $\bar\Bc(\nabla)$ implies that its symbol is globally of H\"older class $C_b^{2d-\delta K-}$. Since the decay estimate~\eqref{eq:main1} does not hold pointwise on the kernel, this is not completely standard and we include a short proof. Derivatives of the symbol can be expressed through the convolution kernel as
\begin{eqnarray}
\nabla_\xi^\alpha\bar\Bc(i\xi )
&=&\int_{\R^d} (-ix)^\alpha e^{-ix\cdot\xi} \,\bar{\Bc}(\nabla)(x)\,\frac{dx}{(2\pi)^{d/2}}\nonumber\\
&=&\frac{(-i)^{|\alpha|}}{(2\pi)^{d/2}}\sum_{z\in\Z^d}\int_{Q(z)} \int_{Q(0)} e^{-i(x-y)\cdot\xi}(x-y)^{\alpha}\bar\Bc(\nabla)(x-y)\, dxdy.
\label{eq:decomp-der-B-kernel}
\end{eqnarray}
Using the binomial theorem to expand $(x-y)^\alpha$, this can be decomposed as follows,
\begin{equation*}
\nabla_\xi^\alpha\bar\Bc(i\xi)
\,=\,\frac{(-i)^{|\alpha|}}{(2\pi)^{d/2}}\sum_{\beta\le\alpha}(-1)^{|\beta|}\binom\alpha\beta\sum_{z\in\Z^d}\big\langle {\phi_{\xi}^{\alpha-\beta}},\mathds1_{Q(z)}\bar\Bc(\nabla)\mathds1_{Q(0)}\,\phi_{\xi}^{\beta}\big\rangle,
\end{equation*}
in terms of $\phi_{\xi}^\beta(x):=e^{ix\cdot\xi}x^\beta$,
and we are thus led to
\begin{equation*}
|\nabla_\xi^\alpha\bar\Bc(i\xi )|
\,\lesssim_\alpha\,\sum_{z\in\Z^d}\langle z\rangle^{|\alpha|}\|\mathds1_{Q(z)}\bar\Bc(\nabla)\mathds1_{Q(0)}\|_{\Ld^2(\R^d)^d\to\Ld^2(\R^d)^d}.
\end{equation*}
Now appealing to the decay estimate~\eqref{eq:main1}, we obtain
\begin{equation*}
|\nabla_\xi^\alpha\bar\Bc(i\xi )|
\,\lesssim\,\delta K\sum_{z\in\Z^d}\log(2+|z|)^K\langle z\rangle^{\delta K+|\alpha|-3d}.
\end{equation*}
This sum is convergent provided $|\alpha|\le2d-1<2d-\delta K$, which proves $\bar\Bc\in C_b^{2d-1}(i\R^d)$.
It remains to show that for $|\alpha|=2d-1$ the derivative $\nabla_\xi^\alpha\bar\Bc$ is H\"older continuous with any exponent~$<1-\delta K$.
Starting from~\eqref{eq:decomp-der-B-kernel}, we can write for all $\xi,w\in\R^d$,
\begin{multline*}
\nabla_\xi^\alpha\bar\Bc(i\xi )-\nabla_\xi^\alpha\bar\Bc(i(\xi+w))\\
\,=\,\frac{(-i)^{|\alpha|}}{(2\pi)^{d/2}}\sum_{z\in\Z^d}\int_{Q(z)}\int_{Q(0)} e^{-i(x-y)\cdot\xi}(1-e^{-i(x-y)\cdot w})\, (x-y)^\alpha \,\bar\Bc(\nabla)(x-y)\, dxdy.
\end{multline*}
Decomposing $1-e^{-i(x-y)\cdot w}=(1-e^{iy\cdot w})+(1-e^{-ix\cdot w})e^{iy\cdot w}$, and using again the binomial theorem to expand $(x-y)^\alpha$, we get
\begin{multline*}
\nabla_\xi^\alpha\bar\Bc(i\xi )-\nabla_\xi^\alpha\bar\Bc(i(\xi+w))
\,=\,\frac{(-i)^{|\alpha|}}{(2\pi)^{d/2}}\sum_{\beta\le\alpha}(-1)^{|\beta|}\binom\alpha\beta\sum_{z\in\Z^d}\langle\phi_{\xi}^{\alpha-\beta},\mathds1_{Q(z)}\bar\Bc(\nabla)\mathds1_{Q(0)}\phi^\beta_{\xi}\rho_w\rangle\\
+\frac{(-i)^{|\alpha|}}{(2\pi)^{d/2}}\sum_{\beta\le\alpha}(-1)^{|\beta|}\binom\alpha\beta\sum_{z\in\Z^d}\langle\phi^{\alpha-\beta}_{\xi}\rho_w,\mathds1_{Q(z)}\bar\Bc(\nabla)\mathds1_{Q(0)}\phi^\beta_{\xi+w}\rangle,
\end{multline*}
where we have further set $\rho_w(x):=1-e^{ix\cdot w}$.
For $0<s<1$, noting that $|\rho_w(x)|\le|x|^s|w|^s$, we deduce
\begin{equation*}
\frac{|\nabla_\xi^\alpha\bar\Bc(i\xi )-\nabla_\xi^\alpha\bar\Bc(i(\xi+w))|}{|w|^s}
\,\lesssim_\alpha\,\sum_{z\in\Z^d}\langle z\rangle^{|\alpha|+s}\|\mathds1_{Q(z)}\bar\Bc(\nabla)\mathds1_{Q(0)}\|_{\Ld^2(\R^d)^d\to\Ld^2(\R^d)^d}.
\end{equation*}
Now appealing again to the decay estimate~\eqref{eq:main1}, we find that this sum is convergent provided $|\alpha|+s<2d-\delta K$, which concludes the proof of H\"older continuity.
\qed

\section{Correlated Gaussian setting}
\label{sect:correlatedGaussianproof}
This section is devoted to the proof of Theorem~\ref{th:correl} in the Gaussian setting of Assumption~{\bf(H$_2$)}.
Recall that we consider the perturbative regime~\eqref{eq:decomp-AB}, and more precisely, setting $\delta:=\|A_0-\Id\|_{C^2_b(\R^\kappa)}\ll1$, the representation~\eqref{eq:rep-AA_0} becomes
\begin{equation}\label{eq:rep-AA_0reB}
\Aa\,=\,\Id+\delta \Bb,\qquad\Bb(x,\omega)\,=\,B_0(G(x,\omega)),\qquad\expec{\Bb}=0,\qquad\|B_0\|_{C^2_b(\R^\kappa)}=1.
\end{equation}
We start by recalling useful notions from Malliavin calculus with respect to the underlying Gaussian field $G$.
Using Malliavin calculus, we shall see that the proof in the previous section gets drastically reduced and does no longer require any use of Bourgain's disjointification lemma.
Note that a corresponding stochastic calculus could be used to similarly reduce the proof in the discrete i.i.d.\@ setting of Bourgain's original result~\cite{Bourgain-18,Lemm-18}.

\subsection{Malliavin calculus}
Since the covariance function $c$ is uniformly bounded, cf.~\eqref{eq:decay-c0}, the Gaussian random field $G$ can be viewed as a random Schwartz distribution: for all $\zeta_1,\zeta_2\in C^\infty_c(\R^d)^\kappa$, we define $G(\zeta_1)=\int_{\R^d}G\cdot\zeta_1$ and $G(\zeta_2)=\int_{\R^d}G\cdot\zeta_2$ as centered Gaussian random variables with covariance
\[\cov{G(\zeta_1)}{G(\zeta_2)}\,=\,\iint_{\R^d\times\R^d}\zeta_1(x)\cdot c(x-y)\zeta_2(y)\,dxdy.\]
We define $\Hf$ as the closure of $C_c^\infty(\R^d)^\kappa$ for the seminorm
\[\|\zeta_1\|_\Hf^2\,:=\,\langle\zeta_1,\zeta_1\rangle_\Hf,\qquad\langle\zeta_1,\zeta_2\rangle_\Hf\,:=\,\iint_{\R^d\times\R^d} \zeta_1(x)\cdot c(x-y)\zeta_2(y)\,dxdy.\]
Taking the quotient with respect to the kernel of $\|\cdot\|_\Hf$, the space $\Hf$ is a separable Hilbert space.
We recall some basic definitions of the Malliavin calculus with respect to the Gaussian field $G$ (see e.g.~\cite{NP-book} for details). Without loss of generality, we work under the assumption that the probability space $(\Omega,\Pm)$ is endowed with the $\sigma$-algebra generated by $G$, which ensures that the linear subspace
\[\Rc\,:=\,\Big\{g\big(G(\zeta_1),\ldots,G(\zeta_n)\big)\,:\,n\in\N,\,g\in C_c^\infty(\R^n),\,\zeta_1,\ldots,\zeta_n\in C_c^\infty(\R^d)^\kappa\Big\}\,\subset\,\Ld^2(\Omega)\]
is dense in $\Ld^2(\Omega)$.
For a random variable $X$ in this model subspace, say of the form $X=g(G(\zeta_1),\ldots,G(\zeta_n))$, we define its {Malliavin derivative} $DX\in \Ld^2(\Omega;\Hf)$ as
\begin{align}\label{eq:D-expl}
DX\,=\,\sum_{i=1}^n\zeta_i \,\partial_i g (G(\zeta_1),\ldots,G(\zeta_n)).
\end{align}
It can be checked that this densely defined operator $D:\Rc\subset\Ld^2(\Omega)\to\Ld^2(\Omega;\Hf)$ is closable. Setting
\begin{gather*}
\langle X,Y\rangle_{\Dm^{1,2}(\Hf^{\otimes r})}\,:=\,\expec{XY}+\expec{\langle DX,DY\rangle_{\Hf}},
\end{gather*}
and defining the {Malliavin-Sobolev space} $\Dm^{1,2}\subset\Ld^2(\Omega)$ as the closure of $\Rc$ for the corresponding norm, we may then extend the Malliavin derivative $D$ by density to this space.
Next, we define a {divergence operator} $D^*$ as the adjoint of the Malliavin derivative $D$,
and we construct the so-called {Ornstein-Uhlenbeck operator}
\[\Lc:=D^* D,\]
which is an essentially self-adjoint nonnegative operator.
We refer e.g.\@ to~\cite[p.34]{NP-book} for a description of the explicit action of $D^*$ and $\Lc$ on $\Rc$:
in particular, a direct computation leads to the commutator relation
\begin{align}\label{eq:commut-DL}
D\Lc=(1+\Lc)D.
\end{align}
Based on the above definitions, we can state the following useful result.
It is best known in the discrete Gaussian setting~\cite{HS-94},
and we refer e.g.\@ to~\cite[Appendix~A]{DO1} for a short proof.

\begin{lem}[Helffer--Sj\"ostrand identity]\label{lem:Mall}
For all $X,Y\in \Dm^{1,2}(\Omega)$, the covariance can be represented as
\begin{equation}\label{eq:HS}
\cov{X}{Y}\,=\,\expec{\langle DX,(1+\Lc)^{-1}DY\rangle_{\Hf}},
\end{equation}
where the inverse operator~$(1+\Lc)^{-1}$ is well-defined on $\Ld^2(\Omega)$ and has operator norm bounded by $1$ since $\Lc$ is nonnegative.
In particular, this implies the following Malliavin--Poincar\'e inequality,
\[\var{X}\,\le\,\expec{\|DX\|_{\Hf}^2}.\]
\end{lem}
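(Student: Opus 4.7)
The plan is to establish the Helffer--Sj\"ostrand identity~\eqref{eq:HS} via the Wiener chaos decomposition, which is the cleanest route in the Gaussian framework. Recall first that $\Ld^2(\Omega)$ admits the orthogonal decomposition $\Ld^2(\Omega)=\bigoplus_{n\ge 0}\mathcal{H}_n$, where $\mathcal{H}_n$ is the $n$th homogeneous Wiener chaos (spanned by Hermite polynomials of total degree $n$ in the Gaussian variables $G(\zeta)$, see e.g.~\cite{NP-book}). Using the explicit formula~\eqref{eq:D-expl} and the corresponding explicit expression for $D^*$ on the dense subspace $\Rc$, one checks by direct computation that $\Lc=D^*D$ acts on $\mathcal{H}_n$ as multiplication by $n$. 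In particular, the resolvent $(1+\Lc)^{-1}$ is well-defined on $\Ld^2(\Omega)$ with operator norm at most $1$, settling the final assertion of the lemma.

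Next, I would exploit the commutator relation~\eqref{eq:commut-DL}, $D\Lc=(1+\Lc)D$, to track how $D$ interacts with the chaos grading. Indeed, for $X_n\in\mathcal{H}_n$ one has $(1+\Lc)DX_n=D\Lc X_n=nDX_n$, so $DX_n$ lies in the eigenspace of $\Lc$ in $\Ld^2(\Omega;\Hf)$ associated to the eigenvalue $n-1$; heuristically, $D$ lowers the chaos degree by one, mapping $\mathcal{H}_n$ into $\mathcal{H}_{n-1}\otimes\Hf$.

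With these preparations, the identity follows by a chaos-by-chaos computation. Given $X,Y\in\Dm^{1,2}$, decompose $X=\sum_{n\ge 0}X_n$ and $Y=\sum_{n\ge 0}Y_n$ with $X_n,Y_n\in\mathcal{H}_n$; in particular $X_0=\expec{X}$ and $Y_0=\expec{Y}$, so orthogonality of distinct chaoses yields
\[\cov{X}{Y}\,=\,\sum_{n\ge 1}\expec{X_nY_n}.\]
On the other hand, using that $(1+\Lc)^{-1}DY_n=\tfrac1n DY_n$ together with the cross-orthogonality $\expecm{\langle DX_m,DY_n\rangle_\Hf}=\expec{X_m\Lc Y_n}=0$ for $m\neq n$ (a consequence of $\Lc=D^*D$ and the orthogonality of chaoses), we obtain
\[\expecm{\langle DX,(1+\Lc)^{-1}DY\rangle_{\Hf}}\,=\,\sum_{n\ge 1}\tfrac1n\expecm{\langle DX_n,DY_n\rangle_{\Hf}}\,=\,\sum_{n\ge 1}\tfrac1n\expec{X_n\Lc Y_n}\,=\,\sum_{n\ge 1}\expec{X_nY_n},\]
which matches the previous display and proves~\eqref{eq:HS}. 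The Malliavin--Poincar\'e inequality then follows immediately by specializing to $X=Y$ and using the contractivity of $(1+\Lc)^{-1}$.

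The main technical point is the density extension: the manipulations above are rigorous on the model subspace~$\Rc$ (or on finite chaos truncations), and one must then extend them to all of~$\Dm^{1,2}$. This relies on the closability of $D$ together with the continuity of both sides of~\eqref{eq:HS} with respect to the $\Dm^{1,2}$ norm, which is standard; an alternative, semigroup-based proof using Mehler's formula for $e^{-t\Lc}$ and the integral representation $(1+\Lc)^{-1}=\int_0^\infty e^{-t}e^{-t\Lc}\,dt$ is also available, and a fully detailed argument in a closely related spirit can be found in~\cite[Appendix~A]{DO1}.
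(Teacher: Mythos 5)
The paper does not include its own proof of this lemma: it simply cites \cite[Appendix~A]{DO1}. Your Wiener-chaos argument is correct and self-contained, and it is mathematically equivalent to the standard operator-theoretic route (verify $D^*(1+\Lc)^{-1}D=\Id-Q_0$, where $Q_0$ is the projection onto constants, using $D\Lc=(1+\Lc)D$): the chaos decomposition simply makes the spectral computation explicit, eigenvalue by eigenvalue. All the key steps check out — $\Lc$ acts as multiplication by $n$ on $\mathcal{H}_n$, the shifted eigenvalue relation $\Lc DX_n=(n-1)DX_n$ follows from the commutator, the cross-orthogonality $\expecm{\langle DX_m,DY_n\rangle_\Hf}=\expec{X_m\Lc Y_n}=0$ for $m\ne n$ is correct, and the density extension from $\Rc$ to $\Dm^{1,2}$ via closability of $D$ is standard. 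The main thing your proof buys over the cited reference is that it is fully self-contained within the objects already introduced (chaoses and the commutator relation~\eqref{eq:commut-DL}), whereas the semigroup/Mehler-formula route requires introducing $e^{-t\Lc}$ and the integral representation $(1+\Lc)^{-1}=\int_0^\infty e^{-t}e^{-t\Lc}\,dt$; either is fine here, and you correctly note the alternative.
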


\subsection{Proof of Theorem~\ref{th:correl}}
By duality, it suffices to prove the result for $1<q\le2$.
Starting from the perturbative expansion~\eqref{eq:barA}, it is then sufficient to prove for all $x,y\in\R^d$, $n\ge1$, $1<q\le2$, and $0<\theta\le\frac{2d}{q'}$, provided that~$\theta\ll1$ is small enough,
\begin{equation}\label{eq:bound-todo-g-bn}
\cro{P\Bb(\Kk P^\bot \Bb)^nP}_q(x,y)\,\le\,\tfrac{1}{q-1}C^n\theta^{1-n}\langle x-y\rangle^{C\theta-d-\gamma}
+C^n\theta^{2-n}\langle x-y\rangle^{C\theta-2d-d\wedge\gamma}.
\end{equation}
Now note that, in the case when $|x-y|_\infty\le4n$, say, this result simply follows from~\eqref{eq:compo-Rp} and from Lemma~\ref{lem:CZ} in the following form, for all $1<q\le2$ and $0<\theta\le\frac{2d}{q'}$,
\[\cro{P\Bb(\Kk P^\bot \Bb)^nP}_q(x,y)\,\le\,\|P\Bb(\Kk P^\bot \Bb)^nP\|_{\Ld^q(\R^d)^d\to\Ld^q(\R^d)^d}\,\le\,(\tfrac{Cq^2}{q-1})^n\,\le\,\tfrac{1}{q-1}\,C^n\theta^{1-n}.\]
It remains to prove~\eqref{eq:bound-todo-g-bn} in the case when $|x-y|_\infty>4n$, and for that purpose we can restrict to $x,y\in\Z^d$. More precisely, we shall show for all~$n\ge1$, for all~$x,y\in\Z^d$ with $|x-y|_\infty>4n$, for all $1<q\le2$ and $0<\theta\le\frac{2d}{q'}$, provided that $\theta\ll1$ is small enough,
\begin{equation}\label{eq:estim-bn-gauss}
\cro{P\Bb(\Kk P^\bot \Bb)^nP}_q(x,y)\\
\,\le\,C^n\theta^{1-n}\langle x-y\rangle^{C\theta-d-\gamma}
+C^n\theta^{2-n}\langle x-y\rangle^{C\theta-2d-d\wedge\gamma}.
\end{equation}
We split the proof into three steps.
From now on, let $x,y\in \Z^d$ be fixed with $|x-y|_\infty>4n$, let $1<q\le2$, and consider $\phi_x,\phi_y\in C^\infty_c(\R^d)^d$ such that
\begin{gather}\label{eq:cond-phixy}
\|\phi_x\|_{\Ld^{q'}(\R^d)}=\|\phi_y\|_{\Ld^q(\R^d)}=1,\qquad\supp(\phi_x)\subset Q(x),\qquad\supp(\phi_y)\subset Q(y).
\end{gather}

\medskip
\step1 Path decomposition of $P\Bb(\Kk P^\bot\Bb)^nP$.\\
Arguing as in~\eqref{eq:decomp-100}, we can decompose
\begin{multline*}
\big\langle \phi_x,P\Bb(\Kk P^\bot \Bb)^nP\phi_y\big\rangle\\
\,=\,\sum_{z_1,\ldots,z_{n-1}\in\Z^d}\Big\langle \phi_x,P\Bb(\Kk P^\bot\Bb\mathds1_{Q(z_1)})\ldots(\Kk P^\bot\Bb\mathds1_{Q(z_{n-1})})(\Kk P^\bot\Bb)P\phi_y\Big\rangle,
\end{multline*}
where the series is absolutely convergent.
As in the proof of Lemma~\ref{lem:deterministic}, we shall classify the contributions in this sum by conditioning on the first interval $|z_j-z_{j+1}|_\infty$ that reaches the largest dyadic value. More precisely, arguing as for~\eqref{eq:use-Sj0R}, we can rewrite the sum as
\begin{multline}\label{eq:decomp-Sj-termn}
\langle \phi_x,P\Bb(\Kk P^\bot \Bb)^nP\phi_y\rangle
\,=\,\sum_{j_1=0}^{n-1}\,\sum_{m=0}^\infty\,\sum_{z_{j_1},z_{j_1+1}\in\Z^d}\mathds1_{2^m<|z_{j_1}-z_{j_1+1}|_\infty\le 2^{m+1}}\\
\times\Big\langle \phi_x,P\Bb(\Kk_{2^m} P^\bot\Bb)^{j_1}\mathds1_{Q(z_{j_1})}
\Kk \mathds1_{Q(z_{j_1+1})}P^\bot\Bb(\Kk_{2^{m+1}} P^\bot\Bb)^{n-j_1-1}P\phi_y\Big\rangle,
\end{multline}
where henceforth we use the short-hand notation $\Kk_{\ell}:=\Kk_{\ell;1}$ for the discretely truncated kernel~\eqref{eq:truncate-K} with $R=1$.
Now we appeal to Lemma~\ref{lem:Mall} in the following form: for two random variables $X,Y\in\Dm^{1,2}$, we have
\begin{equation*}
PXP^\bot YP\,=\,\cov{X}{Y}{P}=\,\expec{\langle DX,(1+\Lc)^{-1}DY\rangle_\Hf}{P},
\end{equation*}
or alternatively, recalling the definition of the scalar product in $\Hf$, and introducing the short-hand notation $\hat DX\in\Ld^2(\R^d\times\Omega)$ given by $\hat D_zX\,:=\,\int_{\R^d}c_0(z-z')D_{z'}X\,dz'$,
\begin{equation*}
PXP^\bot YP\,=\,\int_{\R^d} \Big(P(\hat D_zX)(1+\Lc)^{-1}(\hat D_zY)P\Big)\,dz.
\end{equation*}
In view of~\eqref{eq:decomp-Sj-termn}, we shall apply this identity to products of $\Bb$: for all $x_0,\ldots, x_n$, considering the random variables
\[X:=\Bb(x_{j_1})P^\bot\Bb(x_{j_1-1})\ldots P^\bot\Bb(x_0),\qquad Y:=\Bb(x_{j_1+1})P^\bot\Bb(x_{j_1+2})\ldots P^\bot\Bb(x_n),\]
we find
\begin{multline*}
{P\Bb(x_0)\big(P^\bot\Bb(x_1)\ldots P^\bot\Bb(x_{j_1})\big)P^\bot \Bb(x_{j_1+1})\big(P^\bot\Bb(x_{j_1+2})\ldots P^\bot\Bb(x_n)\big)P}\\
\,=\,\cov{X}{Y}{P}
\,=\,\int_{\R^d}\Big(P(\hat D_zX)(1+\Lc)^{-1}(\hat D_zY)P\Big)\,dz.
\end{multline*}
Using the chain rule for the Malliavin derivative in form of
\begin{equation}\label{eq:Mall-chain}
\hat D_zX\,=\,\sum_{i=0}^{j_1}\big(\Bb(x_{j_1})\ldots \Bb(x_{i+1})\big)\hat D_z\Bb(x_i)\big(P^\bot\Bb(x_{i-1})\ldots P^\bot\Bb(x_0)\big),
\end{equation}
and similarly for $\hat D_zY$, we deduce
\begin{multline}\label{eq:decomp-PBPBP}
P\Bb(x_0)\big(P^\bot\Bb(x_1)\ldots P^\bot\Bb(x_{j_1})\big)P^\bot \Bb(x_{j_1+1})\big(P^\bot\Bb(x_{j_1+2})\ldots P^\bot\Bb(x_n)\big)P\\
\,=\,\sum_{i=0}^{j_1}\sum_{i'=j_1+1}^n\int_{\R^d}\Big(P\big(\Bb(x_{0})P^\bot\ldots\Bb(x_{i-1})P^\bot\big)\hat D_z\Bb(x_i)\big(\Bb(x_{i+1})\ldots\Bb(x_{j_1})\big)\\
\times(1+\Lc)^{-1}\big(\Bb(x_{j_1+1})\ldots\Bb(x_{i'-1})\big)\hat D_z\Bb(x_{i'})\big(P^\bot\Bb(x_{i'+1})\ldots P^\bot\Bb(x_n)\big)P\Big)\,dz.
\end{multline}
By definition of the Malliavin derivative, cf.~\eqref{eq:D-expl}, starting from the representation~\eqref{eq:rep-AA_0reB}, we note that $\hat D\Bb$ can be explicitly computed as
\begin{equation}\label{eq:comput-DB}
\hat D_z\Bb(x)\,=\,c_0(z-x)B_0'(G(x)).
\end{equation}
To simplify notation, let us assume that $\kappa=1$, meaning that the underlying Gaussian field~$G$ is $\R$-valued (the general case is treated similarly up to keeping track of index contractions).
Using the above identity~\eqref{eq:decomp-PBPBP} to reformulate the different terms in~\eqref{eq:decomp-Sj-termn}, first appealing to Lemma~\ref{lem:CZ-eta} to express operators as limits of absolutely converging integrals, we are easily led to
\begin{multline}\label{eq:decomp-Sj-termn-re}
\langle \phi_x,P\Bb(\Kk P^\bot \Bb)^nP\phi_y\rangle
\,=\,\sum_{0\le i\le j_1<i'\le n}\,\sum_{m=0}^\infty\,\sum_{z_{j_1},z_{j_1+1},z_i,z_{i'}\in \Z^d}\mathds1_{2^m<|z_{j_1}-z_{j_1+1}|_\infty\le 2^{m+1}}\\
\times\int_{\R^d}\Big\langle \phi_x,P(\Bb\Kk_{2^{m}} P^\bot)^{i}(\hat D_z\Bb)\mathds1_{Q_R(z_{i})}(\Kk_{2^{m}} \Bb)^{j_1-i}\mathds1_{Q(z_{j_1})}\Kk \mathds1_{Q(z_{j_1+1})}\\
\times (1+\Lc)^{-1}(\Bb \Kk_{2^{m+1}})^{i'-j_1-1}(\hat D_z\Bb )\mathds1_{Q(z_{i'})}(\Kk_{2^{m+1}} P^\bot \Bb)^{n-i'} P\phi_y\Big\rangle\,dz.
\end{multline}
Next, we split each of the terms in~\eqref{eq:decomp-Sj-termn-re} into two parts, depending on whether $|z_{i}-z_{i'}|_\infty\ge \frac12|x-y|_\infty$ or $|z_{i}-z_{i'}|_\infty<\frac12|x-y|_\infty$. In the latter case, by the triangle inequality, we note that the condition implies
\[\max\Big\{\max_{0\le j< i}|z_{j}-z_{j+1}|_\infty,\max_{i'\le j< n}|z_{j}-z_{j+1}|_\infty\Big\}\,>\,\ell:=\tfrac1{2n}|x-y|_\infty\,\ge\,1.\]
In that case, we may further
condition with respect to the first interval $|z_j-z_{j+1}|_\infty$ with $0\le j<i$ or $i'\le j<n$ that exceeds the length $\ell=\frac1{2n}|x-y|_\infty$. The corresponding index is denoted by $j=j_2$.
We can then reformulate the identity~\eqref{eq:decomp-Sj-termn-re} as
\begin{equation}\label{eq:decomp-Sj-termn-re-T012}
\langle \phi_x,P\Bb(\Kk P^\bot\Bb)^nP\phi_y\rangle
\,=\,T_0^n(x,y)+T_1^n(x,y)+T_2^n(x,y),
\end{equation}
where:
\begin{enumerate}[---]
\item $T_0^n(x,y)$ is the contribution from $|z_{i}-z_{i'}|_\infty\ge \frac12|x-y|_\infty$,
\begin{multline*}
\quad T_0^n(x,y)\,:=\,\sum_{0\le i\le j_1<i'\le n}\,\sum_{m=0}^\infty\,\sum_{z_{j_1},z_{j_1+1},z_i,z_{i'}\in\Z^d}\mathds1_{2^m<|z_{j_1}-z_{j_1+1}|_\infty\le 2^{m+1}}\mathds1_{|z_i-z_{i'}|_\infty\ge \frac12|x-y|_\infty}\\
\times\int_{\R^d}\Big\langle \phi_x,P(\Bb\Kk_{2^{m}} P^\bot)^{i}(\hat D_z\Bb)\mathds1_{Q(z_{i})}(\Kk_{2^{m}} \Bb)^{j_1-i}\mathds1_{Q(z_{j_1})}\Kk \mathds1_{Q(z_{j_1+1})}\\
\times(1+\Lc)^{-1} (\Bb \Kk_{2^{m+1}})^{i'-j_1-1}(\hat D_z\Bb )\mathds1_{Q(z_{i'})}(\Kk_{2^{m+1}} P^\bot \Bb)^{n-i'} P\phi_y\Big\rangle\,dz,
\end{multline*}
\item $T_1^n(x,y)$ and $T_2^n(x,y)$ are the contributions from $|z_{i}-z_{i'}|<\frac12|x-y|_\infty$ after further conditioning on the next long interval: more precisely, we have set
\begin{multline*}
\quad T_1^n(x,y)\,:=\,\sum_{0\le j_2< i\le j_1<i'\le n}\,\sum_{m=0}^\infty\,\sum_{z_{j_1},z_{j_1+1},z_{j_2},z_{j_2+1},z_i,z_{i'}\in \Z^d}\\
\times\mathds1_{2^m<|z_{j_1}-z_{j_1+1}|_\infty\le 2^{m+1}}\,\mathds1_{|z_{j_2}-z_{j_2+1}|_\infty>\ell}\,\mathds1_{|z_i-z_{i'}|_\infty<\frac12|x-y|_\infty}\\
\times\int_{\R^d}\Big\langle \phi_x,P(\Bb\Kk_{2^{m}\wedge\ell} P^\bot)^{j_2}\Bb P^\bot\mathds1_{Q(z_{j_2})}\Kk\mathds1_{Q(z_{j_2+1})}\\
\times (P^\bot\Bb\Kk_{2^{m+1}} )^{i-j_2-1}P^\bot(\hat D_z\Bb)\mathds1_{Q(z_{i})}(\Kk_{2^{m}} \Bb)^{j_1-i}\mathds1_{Q(z_{j_1})}\Kk \mathds1_{Q(z_{j_1+1})}\\
\times (1+\Lc)^{-1}(\Bb \Kk_{2^{m+1}})^{i'-j_1-1}(\hat D_z\Bb )\mathds1_{Q(z_{i'})}(\Kk_{2^{m+1}} P^\bot \Bb)^{n-i'} P\phi_y\Big\rangle\,dz,
\end{multline*}
and $T_2^n(x,y)$ is the symmetric term obtained by rather picking $i'\le j_2<n$.
\end{enumerate}

\medskip
\step2 Estimation of $T_0^n(x,y)$.\\
The above-defined term $T_0^n(x,y)$ can already be estimated as such: taking the supremum over $\phi_x,\phi_y$, using~\eqref{eq:compo-Rp}, the pointwise decay of $\Kk$, using~\eqref{eq:comput-DB}, and recalling that
\[\|(1+\Lc)^{-1}\|_{\Ld^2(\Omega)\to\Ld^2(\Omega)}\le1,\]
we find
\begin{multline}\label{eq:estim-T0n}
\sup_{\phi_x,\phi_y}|T_0^n(x,y)|\,\lesssim\,\sum_{0\le i\le j_1<i'\le n}\,\sum_{m=0}^\infty\,(2^m)^{-d}\sum_{z_{j_1},z_{j_1+1},z_i,z_{i'}\in \Z^d}\\
\times\mathds1_{2^m<|z_{j_1}-z_{j_1+1}|_\infty\le 2^{m+1}}\mathds1_{|z_i-z_{i'}|_\infty\ge\frac12|x-y|_\infty}
\,\cro{(\Bb\Kk_{2^{m}} P^\bot)^{i}}_{q,2}(x,z_i)\\
\times\cro{(\Kk_{2^{m}} \Bb)^{j_1-i}}_{q,2}(z_i,z_{j_1})
\cro{(\Bb \Kk_{2^{m+1}})^{i'-j_1-1}}_{q,2}(z_{j_1+1},z_{i'})\\
\times\cro{(\Kk_{2^{m+1}} P^\bot \Bb)^{n-i'} }_{q,2}(z_{i'},y)
\bigg(\int_{\R^d}\Big(\sup_{Q(z_i)}|c_0(\cdot-z)|\Big)\Big(\sup_{Q(z_{i'})}|c_0(\cdot-z)|\Big)\,dz\bigg),
\end{multline}
where the supremum is implicitly understood to run over all functions $\phi_x,\phi_y$ satisfying~\eqref{eq:cond-phixy}.
The decay assumption~\eqref{eq:decay-c0} for $c_0$ allows to estimate the last factor as follows: for all~$a,b\in\R^d$,
\begin{equation}\label{eq:estim-c0c0-sup}
\int_{\R^d}\Big(\sup_{Q(a)}|c_0(\cdot-z)|\Big)\Big(\sup_{Q(b)}|c_0(\cdot-z)|\Big)\,dz
\,\lesssim\,\langle a-b\rangle^{-\gamma}.
\end{equation}
By the triangle inequality, we note that kernel restrictions in~\eqref{eq:estim-T0n} entail that the summation is restricted to
\[|x-z_i|_\infty,|z_i-z_{j_1}|_\infty,|z_{j_1+1}-z_{i'}|_\infty,|z_{i'}-y|_\infty\le n2^{m+1},\qquad 2^m\ge\tfrac1{2n}|x-y|_\infty.\]
Further appealing to the deterministic estimates of Lemma~\ref{lem:deterministic}, for all $0<\theta\le\frac{2d}{q'}$, we deduce
\begin{multline*}
\sup_{\phi_x,\phi_y}|T_0^n(x,y)|\,\le\,C^{n}\theta^{1-n}\langle x-y\rangle^{-\gamma}
\sum_{0\le i\le j_1<i'\le n}\,\sum_{m=0}^\infty\,(2^m)^{-d}\,\mathds1_{2^m\ge\frac1{2n}|x-y|_\infty}\\
\times\sum_{z_{j_1},z_{j_1+1},z_i,z_{i'}\in\Z^d}\mathds1_{|x-z_i|_\infty,|z_i-z_{j_1}|_\infty,|z_{j_1+1}-z_{i'}|_\infty,|z_{i'}-y|_\infty\le n2^{m+1}}\\
\times  \langle x-z_i\rangle^{\theta-d}\langle z_i-z_{j_1}\rangle^{\theta-d}
\langle z_{j_1+1}-z_{i'}\rangle^{\theta-d}\langle z_{i'}-y\rangle^{\theta-d},
\end{multline*}
and thus, estimating the sums, provided that $8\theta<d$,
\begin{eqnarray}
\sup_{\phi_x,\phi_y}|T_0^n(x,y)|
&\le&C^{n}\theta^{1-n}\langle x-y\rangle^{-\gamma}\sum_{m=0}^\infty\,(2^m)^{4\theta-d}\,\mathds1_{2^m\ge\frac1{2n}|x-y|_\infty}\nonumber\\
&\le&C^{n}\theta^{1-n}\langle x-y\rangle^{4\theta-d-\gamma}.\label{eq:estim-T0n*}
\end{eqnarray}

\medskip
\step3 Analysis of $T_1^n(x,y),T_2^n(x,y)$.\\
We turn to the estimation of the last two terms $T_1^n(x,y),T_2^n(x,y)$ in~\eqref{eq:decomp-Sj-termn-re-T012}. By symmetry, we can focus on $T_1^n(x,y)$. Before estimating it, we proceed to a further refinement of the path decomposition: similarly as in~\eqref{eq:decomp-Sj-termn-re}, we can use once again the covariance structure $PXP^\bot YP$ to extract additional couplings between variables.
More precisely, we use that the first factor
$(\Bb\Kk_{2^{m}\wedge\ell}P^\bot)^{j_2}\Bb P^\bot$ in the definition of $T_1^n(x,y)$ ends with $P^\bot$:
arguing similarly as for~\eqref{eq:decomp-Sj-termn-re}, we deduce for any random variable $X$,
\begin{multline*}
P(\Bb\Kk_{2^{m}\wedge\ell}P^\bot)^{j_2}\Bb P^\bot XP\\
\,=\,\sum_{0\le l\le j_2}\sum_{z_l\in\Z^d}\int_{\R^d}\Big(P(\Bb\Kk_{2^{m}\wedge\ell}P^\bot)^{l} (\hat D_w\Bb)\mathds1_{Q(z_l)}(\Kk_{2^{m}\wedge\ell}\Bb)^{j_2-l}(1+\Lc)^{-1} (\hat D_wX)P\Big)dw,
\end{multline*}
so that $T_1^n(x,y)$ becomes
\begin{multline}\label{eq:reexpress-T1n}
T_1^n(x,y)\,=\,\sum_{0\le l\le j_2< i\le j_1<i'\le n}\,\sum_{m=0}^\infty\,\sum_{z_{j_1},z_{j_1+1},z_{j_2},z_{j_2+1},z_i,z_{i'},z_l\in\Z^d}\\
\times\mathds1_{2^m<|z_{j_1}-z_{j_1+1}|_\infty\le 2^{m+1}}\,\mathds1_{|z_{j_2}-z_{j_2+1}|_\infty>\ell}\,\mathds1_{|z_i-z_{i'}|_\infty<\frac12|x-y|_\infty}\\
\times\iint_{\R^d\times\R^d}\Big\langle \phi_x,P(\Bb\Kk_{2^{m}\wedge\ell} P^\bot)^{l}(\hat D_w\Bb)\mathds1_{Q(z_l)}(\Kk_{2^{m}\wedge\ell}\Bb)^{j_2-l}\mathds1_{Q(z_{j_2})}\Kk\mathds1_{Q(z_{j_2+1})}\\
\times (1+\Lc)^{-1}(\hat D_w X_{j_1,j_2,i,i'}^{m})\phi_y\Big\rangle\,dzdw,
\end{multline}
with the short-hand notation
\begin{multline*}
X_{j_1,j_2,i,i'}^{m}\,:=\,
(\Bb\Kk_{2^{m+1}} P^\bot)^{i-j_2-1}(\hat D_z\Bb)\mathds1_{Q(z_{i})}(\Kk_{2^{m}} \Bb)^{j_1-i}\mathds1_{Q(z_{j_1})}\Kk \mathds1_{Q(z_{j_1+1})}\\
\times (1+\Lc)^{-1}(\Bb \Kk_{2^{m+1}})^{i'-j_1-1}(\hat D_z\Bb )\mathds1_{Q(z_{i'})}(\Kk_{2^{m+1}} P^\bot \Bb)^{n-i'} P.
\end{multline*}
By the chain rule, cf.~\eqref{eq:Mall-chain}, the Malliavin derivative of the latter quantity can be split into six contributions, depending on the index $l'$ of the variable on which the derivative falls: further using the commutator relation~\eqref{eq:commut-DL} in the form
\[\hat D_w(1+\Lc)^{-1}\,=\,(2+\Lc)^{-1}\hat D_w,\]
we find
\begin{equation*}
\hat D_w X_{j_1,j_2,i,i'}^{m}\,=\,A_1+A_2+A_3+A_4+A_5+A_6,
\end{equation*}
in terms of
\begingroup\allowdisplaybreaks
\begin{eqnarray*}
A_1&:=&\sum_{j_2<l'<i}\sum_{z_{l'}\in\Z^d}(\Bb\Kk_{2^{m+1}})^{l'-j_2-1}(\hat D_w\Bb)\mathds1_{Q(z_{l'})}\Kk_{2^{m+1}} P^\bot(\Bb\Kk_{2^{m+1}} P^\bot)^{i-l'-1}\\
&&\hspace{1cm}\times(\hat D_z\Bb)\mathds1_{Q(z_{i})}(\Kk_{2^{m}} \Bb)^{j_1-i}\mathds1_{Q(z_{j_1})}\Kk \mathds1_{Q(z_{j_1+1})}\\
&&\hspace{1cm}\times (1+\Lc)^{-1}(\Bb \Kk_{2^{m+1}})^{i'-j_1-1}(\hat D_z\Bb )\mathds1_{Q(z_{i'})}(\Kk_{2^{m+1}} P^\bot \Bb)^{n-i'} P,\\
A_2&:=&(\Bb\Kk_{2^{m+1}})^{i-j_2-1}(\hat D_w\hat D_z\Bb)\mathds1_{Q(z_{i})}(\Kk_{2^{m}} \Bb)^{j_1-i}\mathds1_{Q(z_{j_1})}\Kk \mathds1_{Q(z_{j_1+1})}\\
&&\hspace{1cm}\times(1+\Lc)^{-1}(\Bb \Kk_{2^{m+1}})^{i'-j_1-1} (\hat D_z\Bb )\mathds1_{Q(z_{i'})}(\Kk_{2^{m+1}} P^\bot \Bb)^{n-i'} P,\\
A_3&:=&\sum_{i<l'\le j_1}\sum_{z_{l'}\in\Z^d}(\Bb\Kk_{2^{m+1}})^{i-j_2-1}(\hat D_z\Bb)\mathds1_{Q(z_{i})}(\Kk_{2^{m}} \Bb)^{l'-i-1}\Kk_{2^{m}}\\
&&\hspace{1cm}\times(\hat D_w\Bb)\mathds1_{Q(z_{l'})}(\Kk_{2^{m}} \Bb)^{j_1-l'}\mathds1_{Q(z_{j_1})}\Kk \mathds1_{Q(z_{j_1+1})}\\
&&\hspace{1cm}\times(1+\Lc)^{-1}(\Bb \Kk_{2^{m+1}})^{i'-j_1-1} (\hat D_z\Bb )\mathds1_{Q(z_{i'})}(\Kk_{2^{m+1}} P^\bot \Bb)^{n-i'} P,\\
A_4&:=&\sum_{j_1<l'<i'}\sum_{z_{l'}\in\Z^d}(\Bb\Kk_{2^{m+1}})^{i-j_2-1}(\hat D_z\Bb)\mathds1_{Q(z_{i})}(\Kk_{2^{m}} \Bb)^{j_1-i}\mathds1_{Q(z_{j_1})}\Kk\mathds1_{Q(z_{j_1+1})} \\
&&\hspace{1cm}\times(2+\Lc)^{-1}(\Bb \Kk_{2^{m+1}})^{l'-j_1-1}(\hat D_w\Bb)\mathds1_{Q(z_{l'})} \Kk_{2^{m+1}}\\
&&\hspace{1cm}\times(\Bb \Kk_{2^{m+1}})^{i'-l'-1} (\hat D_z\Bb )\mathds1_{Q(z_{i'})}(\Kk_{2^{m+1}} P^\bot \Bb)^{n-i'} P,\\
A_5&:=&(\Bb\Kk_{2^{m+1}})^{i-j_2-1}(\hat D_z\Bb)\mathds1_{Q(z_{i})}(\Kk_{2^{m}} \Bb)^{j_1-i}\mathds1_{Q(z_{j_1})}\Kk \mathds1_{Q(z_{j_1+1})}\\
&&\hspace{1cm}\times(2+\Lc)^{-1}(\Bb \Kk_{2^{m+1}})^{i'-j_1-1} (\hat D_w\hat D_z\Bb )\mathds1_{Q(z_{i'})}(\Kk_{2^{m+1}} P^\bot \Bb)^{n-i'} P,\\
A_6&:=&\sum_{i'<l'\le n}\sum_{z_{l'}\in\Z^d}(\Bb\Kk_{2^{m+1}})^{i-j_2-1}(\hat D_z\Bb)\mathds1_{Q(z_{i})}(\Kk_{2^{m}} \Bb)^{j_1-i}\mathds1_{Q(z_{j_1})}\Kk\mathds1_{Q(z_{j_1+1})}\\
&&\hspace{1cm}\times (2+\Lc)^{-1}(\Bb \Kk_{2^{m+1}})^{i'-j_1-1} (\hat D_z\Bb )\mathds1_{Q(z_{i'})}(\Kk_{2^{m+1}} \Bb)^{l'-i'-1}\\
&&\hspace{1cm}\times\Kk_{2^{m+1}} (\hat D_w\Bb)\mathds1_{Q(z_{l'})}(\Kk_{2^{m+1}} P^\bot \Bb)^{n-l'} P.
\end{eqnarray*}
\endgroup
Inserting this into~\eqref{eq:reexpress-T1n} leads to a corresponding decomposition of $T_1^n(x,y)$ into six different terms,
\begin{equation}\label{eq:decomp-T1-6}
T_1^n(x,y)\,=\,T_{1,1}^n(x,y)+T_{1,2}^n(x,y)+T_{1,3}^n(x,y)+T_{1,4}^n(x,y)+T_{1,5}^n(x,y)+T_{1,6}^n(x,y).
\end{equation}
Note that this decomposition is similar to the distinction of the six cases in Step~3 of the proof of Theorem~\ref{th:exp-mix} in Section~\ref{sec:proof-exp-mix}, as illustrated through the diagrammatic representation~\eqref{eq:diagrams}.
We shall proceed to a direct estimation of the different terms.
For shortness, we focus on the estimation of~$T_{1,1}^n(x,y)$ and~$T_{1,4}^n(x,y)$, while the terms~$T_{1,2}^n(x,y)$ and~$T_{1,3}^n(x,y)$ (resp.\@ $T_{1,5}^n(x,y)$ and~$T_{1,6}^n(x,y)$) are actually similar to~$T_{1,1}^n(x,y)$ (resp.\@ $T_{1,4}^n(x,y)$).
Note that in the estimation of~$T_{1,2}^n(x,y)$ and~$T_{1,5}^n(x,y)$ we further need to use the following computation of the second Malliavin derivative of $\Bb$, instead of~\eqref{eq:comput-DB},
\begin{equation*}
\hat D_w\hat D_z\Bb(x)\,=\,c_0(z-x)c_0(w-x)B_0''(G(x)).
\end{equation*}
We start with the estimation of $T_{1,1}^n(x,y)$. By the triangle inequality, we note that the kernel truncations entail that the summations in that term are further restricted to
\begin{gather*}
|z_{j_1}-z_i|_\infty,|z_{j_1+1}-z_{i'}|_\infty,
|z_{j_2}-z_l|_\infty,|z_{j_2+1}-z_{l'}|_\infty
\,\le\, n2^{m+1},\\
2^{m}\ge\tfrac1{2n}|x-y|_\infty.
\end{gather*}
Arguing similarly as for~\eqref{eq:estim-T0n*}, taking the supremum over $\phi_x,\phi_y$, using~\eqref{eq:compo-Rp}, \eqref{eq:comput-DB}, the pointwise decay of $\Kk$, using Lemma~\ref{lem:deterministic},
using~\eqref{eq:estim-c0c0-sup} to estimate the integrals with respect to $z$ and $w$, and directly evaluating the sums over $z_{j_1},z_{j_1+1},z_{j_2},z_{j_2+1}$, we easily obtain for all $0<\theta\le\frac{2d}{q'}$, provided that~$5\theta<d$,
\begin{multline*}
\sup_{\phi_x,\phi_y}|T_{1,1}^n(x,y)|\,\le\,C^{n}\theta^{2-n}\langle x-y\rangle^{4\theta-2d}
\sum_{z_i,z_{i'},z_l,z_{l'}\in \Z^d}
\langle x-z_l\rangle^{\theta-d}
\langle z_l-z_{l'}\rangle^{-\gamma}\langle z_{l'}-z_{i}\rangle^{\theta-d}\\
\times
\mathds1_{|z_i-z_{i'}|_\infty<\frac12|x-y|_\infty}
\langle z_i-z_{i'}\rangle^{-\gamma}
\langle z_{i'}-y\rangle^{\theta-d}.
\end{multline*}
The remaining sums are now easily evaluated, and we are led to
\begin{equation*}
\sup_{\phi_x,\phi_y}|T_{1,1}^n(x,y)|\,\le\,C^{n}\theta^{2-n}\langle x-y\rangle^{7\theta-d-2(d\wedge\gamma)}.
\end{equation*}
The same holds for $T_{1,2}(x,y)$ and $T_{1,3}(x,y)$.
Next, we turn to the estimation of $T_{1,4}(x,y)$, for which a similar argument leads us instead to
\begin{multline*}
\sup_{\phi_x,\phi_y}|T_{1,4}^n(x,y)|\,\le\,C^n\theta^{2-n}\langle x-y\rangle^{4\theta-2d}\sum_{z_i,z_{i'},z_l,z_{l'}\in\Z^d}\langle x-z_l\rangle^{\theta-d}\langle z_l-z_{l'}\rangle^{-\gamma}\\
\times\langle z_{l'}-z_{i'}\rangle^{\theta-d}\langle z_{i'}-y\rangle^{\theta-d},
\end{multline*}
and thus, after evaluating the remaining sums,
\begin{equation*}
\sup_{\phi_x,\phi_y}|T_{1;4}^n(x,y)|\,\le\,C^n\theta^{2-n}\langle x-y\rangle^{7\theta-2d-d\wedge\gamma}.
\end{equation*}
The same holds for $T_{1,5}(x,y)$ and $T_{1,6}(x,y)$.
Combined with~\eqref{eq:decomp-Sj-termn-re-T012}, \eqref{eq:estim-T0n*}, and~\eqref{eq:decomp-T1-6}, these different estimates lead us to the claim~\eqref{eq:estim-bn-gauss}. This proves the main decay estimate~\eqref{eq:main2} in Theorem~\ref{th:correl}. H\"older continuity of the symbol follows as in Section~\ref{ssect:hoelder}.
\qed

\section{Weak correctors}\label{sect:weakcorrectors}
This section is devoted to the proof of Theorem~\ref{th:weak-cor}, for which we focus on the stretched exponential mixing setting~{\bf(H$_1$)}.
By definition~\eqref{eq:def-varphin} of correctors, arguing by duality, it suffices to prove for all $k<2d$, $x_0\in\R^d$, $R_0\ge1$, and for all test random variables $\zeta_{0}\in\Ld^\infty(\Omega)$ that are $\sigma(\Aa|_{Q_{R_0}(x_0)})$-measurable, provided that $\delta\ll1$ is small enough,
\begin{equation}\label{eq:zeta-dual-corr}
\expec{\zeta_{0}\Big((-\nabla\cdot P^\bot\Aa P^\bot\nabla)^{-1}\nabla\cdot P^\bot\Aa P\nabla p_k\Big)(0)}\,\lesssim\,\delta \|\zeta_0\|_{\Ld^1(\Omega)}R_0^k\big(1+\langle \tfrac{x_0}{R_0}\rangle^{C\delta+k-d}\big),
\end{equation}
where we have set for shortness $p_k(x):=(\ee\cdot x)^k$.
From now on, let $x_0\in\R^d$, $R_0\ge1$, and let a $\sigma(\Aa|_{Q_{R_0}(x_0)})$-measurable test random variable $\zeta_0\in\Ld^\infty(\Omega)$ be fixed.
In the perturbative regime~\eqref{eq:decomp-AB}, using a Neumann series expansion as in~\eqref{eq:barA}, we can write
\begin{eqnarray*}
\lefteqn{\expec{\zeta_{0}\Big((-\nabla\cdot P^\bot\Aa P^\bot\nabla)^{-1}\nabla\cdot P^\bot\Aa P\nabla p_k\Big)(0)}}\\
&=&\delta\sum_{n=0}^\infty\delta^n\expec{\zeta_{0}\Big(\Uu( P^\bot\Bb\Kk)^{n} P^\bot\Bb P\nabla p_{k}\Big)(0)},
\end{eqnarray*}
with the short-hand notation $\Uu:=(-\triangle)^{-1}\nabla\cdot$.
Equivalently, denoting by $G_0$ the Green's function of the Laplacian, $\Uu^*\delta_0=-\nabla G_0$, we find
\begin{eqnarray*}
\lefteqn{\expec{\zeta_0\Big((-\nabla\cdot P^\bot\Aa P^\bot\nabla)^{-1}\nabla\cdot P^\bot\Aa P\nabla p_k\Big)(0)}}\\
&=&-\delta\sum_{n=0}^\infty\delta^n\Big\langle\nabla G_0\,,\,P\zeta_{0}( P^\bot\Bb\Kk)^{n} P^\bot\Bb P\nabla p_{k}\Big\rangle\\
&=&-\delta\sum_{n=0}^\infty\delta^n\Big\langle \nabla p_{k}\,,\,P\Bb(\Kk P^\bot\Bb)^n (\nabla G_0)P^\bot\zeta_{0}\Big\rangle,
\end{eqnarray*}
For all $1\le p, q\le\infty$, noting that
\[\|\nabla p_k\|_{\Ld^{p'}_{q'}(Q_{R_0}(x))}\,\lesssim\, R_0^{k-1+\frac{d}{p'}}\langle\tfrac{x}{R_0}\rangle^{k-1},\]
we deduce
\begin{multline}\label{eq:zeta-dual-corr-pre0}
\Big|\,\expec{\zeta_{0}\Big((-\nabla\cdot P^\bot\Aa P^\bot\nabla)^{-1}\nabla\cdot P^\bot\Aa P\nabla(\ee\cdot x)^k\Big)(0)}\!\Big|\\
\,\lesssim\,\delta R_0^{k-1+\frac{d}{p'}}\sum_{n=0}^\infty\delta^n\sum_{x\in R_0\Z^d}\langle\tfrac{x}{R_0}\rangle^{k-1}\|P\Bb(\Kk P^\bot\Bb)^n (\nabla G_0)P^\bot\zeta_{0}\|_{\Ld^p_q(Q_{R_0}(x))}.
\end{multline}
In order to prove~\eqref{eq:zeta-dual-corr},
it is therefore sufficient to establish the following kernel decay estimate: for all $x\in\R^d$, $n\ge0$, $1<p\le q<\frac{d}{d-1}$, and $\e>0$, setting $\theta:=\frac{2d}{p'}$, provided that~$\theta\ll1$ is small enough,
\begin{multline}\label{eq:kernel-todo-zetax}
\|P\Bb(\Kk P^\bot\Bb)^{n}( \nabla G_0)P^\bot\zeta_{0}\|_{\Ld^p_q(Q_{R_0}(x))}\,\lesssim_q\,R_0^{1-\frac{d}{p'}}\big(\tfrac{|\!\log\e|}{\e}\big)^{\frac{3d+1}{\gamma}}\|\zeta_{0}\|_{\Ld^1(\Omega)}C^n\theta^{-n}\\
\times\big(\langle \tfrac{x}{R_0}\rangle\wedge\langle \tfrac{x-x_0}{R_0}\rangle\big)^{C\theta+\e-2d}\langle \tfrac{x}{R_0}\rangle^{C\theta+1-d}.
\end{multline}
Indeed, inserting this into~\eqref{eq:zeta-dual-corr-pre0}, choosing $\e$ small enough, and evaluating the sum, the bound~\eqref{eq:zeta-dual-corr} would follow.
Instead of~\eqref{eq:kernel-todo-zetax}, we claim that it actually suffices to prove the following result: for all $x\in\R^d$, $n\ge0$, $R\ge R_0$, we have for all $1<p\le q<\frac{d}{d-1}$ and $\e>0$, setting $\theta:=\frac{2d}{p'}$, provided that $\theta\ll1$ is small enough,
\begin{multline}\label{eq:kernel-todo-zetax-red}
\|P\Bb( \Kk P^\bot\Bb)^{n} (\nabla G_0)P^\bot \zeta_{0}\|_{\Ld^p_q(Q_R(x))}\,\lesssim_q\,R^{1-\frac{d}{p'}}\|\zeta_{0}\|_{\Ld^1(\Omega)}C^n\theta^{-n}\\
\times\Big(\big(\langle\tfrac{x}R\rangle\wedge\langle\tfrac{x-x_0}R\rangle\big)^{C\theta-2d}\langle \tfrac xR\rangle^{C\theta+1-d}
+n^n\e^{1-n}\big(C_0\exp(-\tfrac1{C_0}R^\gamma)\big)^\frac12\langle \tfrac{x}R\rangle^{\e+1-d}\Big).
\end{multline}
As the left-hand side in this estimate is bounded below by the left-hand side of~\eqref{eq:kernel-todo-zetax}, optimizing the right-hand side with respect to the parameter $R\ge R_0$, we indeed find that this estimate implies~\eqref{eq:kernel-todo-zetax}.
Now note that the pointwise bound $|\nabla G_0(z)|\lesssim|z|^{1-d}$ entails for all $R\ge1$ and $1\le p\le q<\frac{d}{d-1}$,
\[\|\nabla G_0\|_{\Ld^p_q(Q_R(y))}\lesssim_qR^{1-\frac{d}{p'}}\langle\tfrac yR\rangle^{1-d},\]
and thus
\begin{eqnarray*}
\lefteqn{\|P\Bb( \Kk P^\bot\Bb)^{n} (\nabla G_0)P^\bot \zeta_{0}\|_{\Ld^p_q(Q_R(x))}}\\
&\le&\sum_{y\in R\Z^d}\|\nabla G_0\|_{\Ld^p_q(Q_R(y))}\cro{P\Bb( \Kk P^\bot\Bb)^{n}P^\bot \zeta_{0}}_{p,q;R}(x,y)\\
&\lesssim_q&R^{1-\frac{d}{p'}}\sum_{y\in R\Z^d}\langle\tfrac yR\rangle^{1-d}\,\cro{P\Bb( \Kk P^\bot\Bb)^{n}P^\bot \zeta_{0}}_{p,q;R}(x,y).
\end{eqnarray*}
In order to prove~\eqref{eq:kernel-todo-zetax-red}, it is therefore sufficient to establish the following kernel decay estimate: for all $x,y\in\R^d$, $n\ge0$, $R\ge R_0$, we have for all $1<p\le q\le2$ and $\e>0$, setting $\theta:=\frac{2d}{p'}$, provided that $\theta\ll1$ is small enough,
\begin{multline}\label{eq:kernel-todo-zetax-red-red}
\cro{P\Bb( \Kk P^\bot\Bb)^{n}P^\bot \zeta_{0}}_{p,q;R}(x,y)\,\lesssim_q\,\|\zeta_{0}\|_{\Ld^1(\Omega)}C^n\theta^{-n}\\
\times\Big(\big(\langle\tfrac{x-y}R\rangle\wedge\langle\tfrac{x-x_0}R\rangle\big)^{C\theta-2d}\langle \tfrac{x-y}R\rangle^{C\theta-d}
+n^n\e^{1-n}\big(C_0\exp(-\tfrac1{C_0}R^\gamma)\big)^\frac12\langle \tfrac{x-y}R\rangle^{\e-d}\Big).
\end{multline}
By a slight modification of the proof of Lemma~\ref{lem:deterministic}, we first note that we can estimate for all $1<p\le q\le2$, setting~$\theta:=\frac{2d}{p'}$, provided that~$\theta\ll1$ is small enough,
\begin{equation*}
\cro{P\Bb( \Kk P^\bot\Bb)^{n}P^\bot \zeta_{0}}_{p,q;R}(x,y)
\,\le\,\|\zeta_0\|_{\Ld^1(\Omega)}C^n\theta^{-n}\langle\tfrac{x-y}R\rangle^{\theta-d}.
\end{equation*}
In the case when $|x|_\infty\le4nR$ or when $|x-x_0|_\infty\le4nR$, this already proves the desired estimate~\eqref{eq:kernel-todo-zetax-red-red}.
It only remains to prove~\eqref{eq:kernel-todo-zetax-red-red} in the case when $\mbox{$|x|_\infty$},\mbox{$|x-x_0|_\infty$}>4nR$, and we can restrict for that purpose to $x\in R\Z^d$.
Now this can be done by a straightforward modification of the proof of~\eqref{eq:red-B-1re} in Section~\ref{sec:proof-exp-mix}. We skip the detail for conciseness.
\qed

\section{Massive approximation}\label{sect:massive}
{This section is devoted to the proof of Corollary~\ref{cor:massive}.
For all $\mu>0$, let us consider the massive solution operator $\nabla(\mu-\nabla\cdot\Aa\nabla)^{-1}\nabla$ on $\Ld^2(\R^d\times\Omega)$. Repeating the proof of Lemma~\ref{lem:Sigal}, this leads to a corresponding convolution operator~$\bar\Ac_\mu(\nabla)$ on $\Ld^2(\R^d)^d$ such that
\[\E\big[\nabla(\mu-\nabla\cdot\Aa\nabla)^{-1}\nabla\big]\,=\,\nabla(\mu-\nabla\cdot\bar\Ac_\mu(\nabla)\nabla)^{-1}\nabla,\]
which is given by
\begin{eqnarray*}
\bar \Ac_\mu(\nabla)&:=&\E[\Aa(\Id+\Phi_\mu(\cdot,\nabla))],\\
\Phi_\mu(\cdot,\nabla)&:=&P^\bot\nabla(\mu-\nabla\cdot P^\bot\Aa P^\bot\nabla)^{-1}\nabla\cdot P^\bot\Aa P.
\end{eqnarray*}
We note that the operator $\bar\Ac_\mu(\nabla)$ converges strongly to $\bar\Ac(\nabla)$ on $\Ld^2(\R^d)^d$, and it remains to show the convergence of derivatives of the symbols. For shortness, we focus on the stretched exponential $\alpha$-mixing setting of Theorem~\ref{th:exp-mix}, but the argument can be immediately adapted to the correlated Gaussian setting of Theorem~\ref{th:correl} as well.
We split the proof into two steps.

\medskip
\step1 Uniform decay estimates: we can decompose
\[\bar\Ac_\mu(\nabla)\,=\,\Aa_0+\delta\,\bar\Bc_\mu(\nabla),\]
and the following kernel estimate holds for all $x,y\in \R^d$,
\begin{equation}\label{eq:unif-decay-L}
\|\mathds1_{Q(x)}\bar\Bc_\mu(\nabla)\mathds1_{Q(y)}\|_{\Ld^2(\R^d)^d\to\Ld^2(\R^d)^d}\,\lesssim\,\delta K\log(2+|x-y|)^K\langle x-y\rangle^{\delta K-3d}e^{-\frac\mu C|x-y|}.
\end{equation}
This follows by repeating the proof of Theorem~\ref{th:exp-mix} with a positive mass $\mu>0$, further noting that all kernels have an additional exponential decay $e^{-\frac\mu C|\cdot|}$ due to the mass; see also~\cite[Theorem~1.3]{Lemm-18}.

\medskip
\step2 Conclusion.\\
The bound~\eqref{eq:unif-decay-L} ensures that for any $\mu>0$ the symbol of $\bar\Ac_\mu(\nabla)$ is locally analytic on~$i\R^d$ and that it is bounded in $C_b^{2d-\delta K-}(i\R^d)$ uniformly with respect to $\mu>0$.
This smoothness allows to define all higher-order homogenized coefficients $\{\bar\Aa_\mu^n\}_{n\ge1}$ and to get for all $\xi\in\R^d$, $0<\eta\le1$, and~$0\le \ell\le 2d-\eta-\delta K$, uniformly with respect to $\mu>0$,
\begin{equation}\label{eq:BL-Taylor}
\Big|\bar\Ac_\mu(i\xi)-\sum_{n=0}^\ell(i\xi)^{\otimes n}_{j_1\ldots j_n}\bar\Aa_{\mu;j_1\ldots j_n}^n\Big|
\,\lesssim\,|\xi|^{\ell+\frac12\eta}.
\end{equation}
Comparing different values of $\mu$ and extracting $\bar\Aa_\mu^\ell$, we deduce for all $|e|=1$, $0<\kappa,\eta\le1$, $\mu,\mu'>0$, and $0\le\ell\le2d-\eta-\delta K$,
\begin{multline}\label{eq:estim-conv-baramu-induction}
|e^{\otimes \ell}_{j_1\ldots j_\ell}(\bar\Aa_{\mu;j_1\ldots j_\ell}^\ell-\bar\Aa_{\mu';j_1\ldots j_\ell}^\ell)|\,\lesssim\,\kappa^{-\ell}|\bar\Ac_{\mu}(i\kappa e)-\bar\Ac_{\mu'}(i\kappa e)|+\kappa^{\frac12\eta}\\
+\sum_{n=0}^{\ell-1}\kappa^{n-\ell}|e^{\otimes n}_{j_1\ldots j_n}(\bar\Aa_{\mu;j_1\ldots j_n}^n-\bar\Aa_{\mu';j_1\ldots j_n}^n)|.
\end{multline}
Since $\bar\Ac_\mu(\nabla)$ converges strongly to $\bar\Ac(\nabla)$ on $\Ld^2(\R^d)^d$ as $\mu\downarrow0$ and since symbols are continuous, we can deduce the pointwise convergence of symbols $\bar\Ac_\mu(i\xi)\to\bar\Ac(i\xi)$ for all~\mbox{$\xi\in\R^d$}. Passing successively to the limit $\mu,\mu'\downarrow0$ and $\kappa\downarrow0$ in~\eqref{eq:estim-conv-baramu-induction}, we can conclude by a direct induction over $\ell$ that for all $|e|=1$ and $0\le n< 2d-\delta K$ the limit
\begin{equation}\label{eq:conv-L-approx-bara}
e^{\otimes n}_{j_1\ldots j_n}\hat\Aa_{j_1\ldots j_n}^n\,:=\,\lim_{\mu\downarrow0}\,e^{\otimes n}_{j_1\ldots j_n}\bar\Aa_{\mu;j_1\ldots j_n}^n
\end{equation}
does actually exist in $\R$.
Then passing to the limit in~\eqref{eq:BL-Taylor}, we deduce for all $\xi\in\R^d$, $\eta>0$, and $0\le \ell\le 2d-\eta-\delta K$,
\begin{equation*}
\Big|\bar\Ac(i\xi)-\sum_{n=0}^\ell(i\xi)^{\otimes n}_{j_1\ldots j_n}\hat\Aa_{j_1\ldots j_n}^n\Big|
\,\lesssim\,|\xi|^{\ell+\frac12\eta}.
\end{equation*}
As the coefficients $\{\hat\Aa^n\}_n$ satisfying this estimate are necessarily unique, they must coincide with the coefficients $\{\bar\Aa^n\}_n$ constructed in Corollary~\ref{cor:applyDGL} (up to symmetrization): the convergence~\eqref{eq:conv-L-approx-bara} then yields the conclusion.
\qed

\section{Annealed Green's function asymptotics}\label{sect:corproofs}

This section is devoted to the proof of Corollaries \ref{cor:avgcasreg}  and \ref{cor:higherorder} on asymptotics of the annealed Green's function and its derivatives.
Let $d\ge3$ and let $\chi$ be a fixed frequency cut-off with Fourier transform $\hat \chi\in C^\infty_c(\R^d)$.

\subsection{Proof of Corollary~\ref{cor:avgcasreg}}\label{sect:corproofs-avgcasreg}
Recalling the definition~\eqref{eq:def-calG} of the annealed Green's function $\Gc$ as a tempered distribution for $d>2$, we find for all multi-indices $\alpha\ge0$,
\begin{equation}\label{eq:startasymp}
\chi\ast\nabla^\alpha\Gc(x)\,=\,\int_{\R^d}e^{ix\cdot\xi}\hat \chi(\xi)\,\frac{(i\xi)^\alpha}{m(\xi)}\,\frac{d\xi}{(2\pi)^{d/2}},\qquad m(\xi):=\xi\cdot\bar\Ac(i\xi)\xi.
\end{equation}
Recalling that $\bar\Ac(i\xi)=\bar\Aa^1+O(|\xi|)$, we are naturally led to compare with derivatives of the corresponding homogenized Green's function $\bar G(x):=(-\nabla\cdot\bar\Aa^1\nabla)^{-1}(x)$, which can be written as
\[\chi\ast\nabla^\alpha\bar G(x)\,=\,
\int_{\R^d}e^{ix\cdot\xi}\hat\chi(\xi)\,\frac{(i\xi)^\alpha}{m_0(\xi)}\,\frac{d\xi}{(2\pi)^{d/2}},\qquad m_0(\xi):=\xi\cdot\bar\Aa^1\xi.\]
Let us then consider the difference
\begin{equation}\label{eq:rep-Gc-barG-chi-rep}
\chi*(\nabla^\alpha\Gc-\nabla^\alpha\bar G)(x)\,=\,\int_{\R^d}e^{i x\cdot\xi}  \hat\chi(\xi)\,(i\xi)^\alpha\,R(\xi)\,\frac{d\xi}{(2\pi)^{d/2}},
\end{equation}
in terms of
\[R(\xi)\,:=\,
\frac{m_0(\xi)-m(\xi)}{m(\xi)m_0(\xi)}.\]
We split the proof into two steps.

\medskip
\step1 Proof that for all $x\in\R^d$ and $|\alpha|<d+1-\delta K$,
\begin{equation}\label{eq:estim-Gc-barG-chi}
\big|\chi\ast(\nabla^\alpha\Gc-\nabla^\alpha\bar G)(x)\big|
\,\lesssim_{\chi}\,\langle x\rangle^{1-d-|\alpha|}.
\end{equation}
We start from the representation~\eqref{eq:rep-Gc-barG-chi-rep} and proceed by dyadic decomposition to estimate the oscillatory integrals.
Let $\varphi:\R^d\to\R$ be a smooth radial cutoff function with $\varphi(\xi)=1$ for $|\xi|\le1$ and $\varphi(\xi)=0$ for $|\xi|>2$, and set $\psi(\xi):=\varphi(\xi)-\varphi(2\xi)$. We then define $\psi_0(\xi):=1-\varphi(\xi)$ and $\psi_l(\xi):=\psi(2^{l-1}\xi)$ for all $l\geq 1$. Note that this defines a partition of unity $\sum_{l\geq 0}\psi_l=1$ on $\R^d$. In these terms, let us decompose
\begin{equation}\label{eq:decomp-dyad-nabalphG}
\chi*(\nabla^\alpha\Gc-\nabla^\alpha\bar G)(x)\,=\,\sum_{l\ge0}\int_{\R^d}e^{i x\cdot\xi}  \hat\chi(\xi)\,(i\xi)^\alpha\,\psi_l(\xi)\,R(\xi)\,\frac{d\xi}{(2\pi)^{d/2}}.
\end{equation}
We separately analyze the cases $l=0$ and $l\ne0$, and we split the proof into three further substeps.

\medskip
\substep{1.1} Case $l=0$: proof that for all $r<2d-\delta K$,
\begin{equation}\label{eq:Rdyad-estim-l=0}
\bigg|\int_{\R^d}e^{i x\cdot\xi}  \hat\chi(\xi)\,(i\xi)^\alpha\,\psi_0(\xi)\,R(\xi)\,d\xi \bigg|\,\lesssim_{\chi,r}\,\langle x\rangle^{-r}.
\end{equation}
As by Theorem~\ref{th:exp-mix} the symbol $\xi\mapsto \bar\Ac(i\xi)$ belongs to $C_b^{2d-\delta K-}(\R^d)$, we get on any compact set $K\subset \R^d\setminus\{0\}$, for all~$r<2d-\delta K$,
\[\|R\|_{C^r(K)}\,\lesssim_{K,r}\,1.\]
As $\hat \chi$ is compactly supported and as $\psi_0$ is supported in $\R^d\setminus B$, this yields the claim~\eqref{eq:Rdyad-estim-l=0}.

\medskip
\substep{1.2} Case $l\geq 1$: proof that for all $l\ge1$ and $r<2d-\delta K$,
\begin{equation}\label{eq:Rdyad-estim-l>0}
\bigg|\int_{\R^d}e^{i x\cdot\xi}  \hat\chi(\xi)\,(i\xi)^\alpha\,\psi_l(\xi)\,R(\xi)\,d\xi \bigg|\,\lesssim_{\chi,r}\,(2^{-l})^{d+|\alpha|-1}\langle2^{-l} x\rangle^{-r}.
\end{equation}
As $\bar\Ac(0)=\bar\Aa^1$, we can write, using Einstein's summation convention,
\[R(\xi)\,=\,-\frac{\xi\cdot(\bar\Ac(i\xi)-\bar\Aa^1)\xi}{m(\xi)m_0(\xi)}\,=\,-i\Big(\int_0^1\nabla_j\bar\Ac_{kl}(it\xi)dt\Big)\frac{\xi_j\xi_k\xi_l}{m(\xi)m_0(\xi)}.\]
As we have $|m(\xi)|\simeq|m_0(\xi)|\simeq|\xi|^2$ and as by Theorem~\ref{th:exp-mix} the symbol $\xi\mapsto \bar\Ac(i\xi)$ belongs to $C_b^{2d-\delta K-}(\R^d)$, we can deduce for all~$r<2d-\delta K$,
\[\|R\|_{C^{r}(2^{-l}\le|\xi|\le2^{2-l})}\,\lesssim_r\,(2^{-l})^{-1-r}.\]
As $\psi_l$ is supported in $2^{-l}\le|\xi|\le2^{2-l}$ and satisfies $\|\psi_l\|_{C^r}\lesssim_r (2^{-l})^{-r}$ for all $r\ge0$, the claim~\eqref{eq:Rdyad-estim-l>0} follows.

\medskip
\substep{1.3} Proof of~\eqref{eq:estim-Gc-barG-chi}.\\
Using~\eqref{eq:Rdyad-estim-l=0} and~\eqref{eq:Rdyad-estim-l>0} to estimate the dyadic sum~\eqref{eq:decomp-dyad-nabalphG}, and distinguishing the cases $2^l>\langle x\rangle$ and $2^l\le\langle x\rangle$,
we get for all $x\in\R^d$ and $r<2d-\delta K$, provided $r\ne d+|\alpha|-1$,
\begin{eqnarray*}
\lefteqn{\big|\chi\ast(\nabla^\alpha\Gc-\nabla^\alpha\bar G)(x)\big|}\\
&\lesssim_{\chi,r}&\langle x\rangle^{-r}+\sum_{l\ge1:2^l\le\langle x\rangle}(2^{-l})^{d+|\alpha|-1-r}\langle x\rangle^{-r}+\sum_{l\ge1:2^l\ge\langle x\rangle}(2^{-l})^{d+|\alpha|-1}\\
&\lesssim_{r}&\langle x\rangle^{-r}+\langle x\rangle^{1-d-|\alpha|},
\end{eqnarray*}
thus proving~\eqref{eq:estim-Gc-barG-chi}.

\medskip
\step2 Proof that for all $x\in\R^d$ and $|\alpha|=d$ with $0<\eta<1-\delta K$,
\[[\chi\ast(\nabla^\alpha\Gc-\nabla^\alpha\bar G)]_{C^\eta(B(x))}\,\lesssim_{\chi,\eta}\,\langle x\rangle^{1-2d-\eta}.\]
Starting from~\eqref{eq:rep-Gc-barG-chi-rep}, the fractional differential quotient can be written as
\begin{multline*}
\frac{1}{|y|^\eta}\Big(\chi\ast(\nabla^\alpha\Gc-\nabla^\alpha\bar G)(x+y)-\chi\ast(\nabla^\alpha\Gc-\nabla^\alpha\bar G)(x)\Big)\\
\,=\,\int_{\R^d}e^{ix\cdot\xi}\hat\chi(\xi)\,\Big(\frac{e^{iy\cdot\xi}-1}{|y|^\eta}(i\xi)^\alpha\Big)\,R(\xi)\,\frac{d\xi}{(2\pi)^{d/2}}.
\end{multline*}
Noting that for all $\alpha,\beta\ge0$ and $|\xi|\le1$ we have
\[\sup_{0<|y|\le1}\Big|\nabla_\xi^\beta\Big(\frac{e^{iy\cdot\xi}-1}{|y|^\eta}(i\xi)^\alpha\Big)\Big|\,\lesssim\,\left\{\begin{array}{lll}
|\xi|^{|\alpha|-|\beta|+\eta}&:&\beta\le\alpha,\\
1&:&\beta>\alpha,
\end{array}\right.\]
the conclusion then follows by repeating the analysis of Step~1.\qed

\begin{rem}
In the case when $\Aa$ is symmetric in law, we have~\mbox{$\bar\Aa^2=0$}, cf.~\cite[Lemma~2.4]{DO1}. Hence, we deduce $|\bar\Ac(i\xi)-\bar\Aa^1|\lesssim|\xi|^2$, which yields one additional exponent of decay in~\eqref{eq:Rdyad-estim-l>0} and leads to the improvement claimed in the statement.
\end{rem}

\subsection{Proof of Corollary \ref{cor:higherorder}}\label{sect:corproofs-avgcasreg-2}
Iteratively solving~\eqref{eq:corr-Green}, the definition~\eqref{eq:corr-Green-0} of the corrected Green's functions can be reformulated as follows, for $1\le \ell\le2d$: $\bar G^\ell$ is defined as the tempered distribution with Fourier transform
\begin{multline*}
\Fc[\bar G^\ell](\xi)\,:=\,(\xi\cdot\bar\Aa^1\xi)^{-1}
+\sum_{n=1}^{\ell-1}\sum_{m\ge1,r_1,\ldots,r_m\ge1\atop r_1+\ldots+ r_m=n}(\xi\cdot\bar\Aa^1\xi)^{-1}(i\xi\cdot\bar\Aa^{r_1+1}(i\xi)i\xi)\\
\ldots (\xi\cdot\bar\Aa^1\xi)^{-1} (i\xi\cdot\bar\Aa^{r_m+1}(i\xi)i\xi)(\xi\cdot\bar\Aa^1\xi)^{-1},
\end{multline*}
where we use the short-hand notation $\bar\Aa^{n+1}(i\xi):=\bar\Aa^{n+1}_{j_1\ldots j_n}(i\xi)^n_{j_1\ldots j_n}$.
Note that
\[|\Fc[\bar G^\ell](\xi)|\,\lesssim\,|\xi|^{-2},\]
which ensures that the Fourier transform can be inverted and that $\bar G^\ell$ is well-defined as a tempered distribution for $d\ge3$ (and~$\nabla\bar G^\ell$ is further defined for $d=2$).
Next, we recall that by Theorem~\ref{th:exp-mix} and by Taylor's expansion we have for all $1\le\ell\le2d$ and $\eta>\delta K$,
\[\Big|\xi\cdot\bar\Ac(i\xi)\xi-\sum_{n=0}^{\ell-1}\xi\cdot \bar\Aa^{n+1}(i\xi)\xi\Big|\,\lesssim\,\left\{\begin{array}{lll}
|\xi|^{\ell+2}&:&\ell<2d,\\
|\xi|^{2d+2-\eta}&:&\ell=2d.
\end{array}\right.\]
Using a geometric series to expand, for any $N\ge0$,
\begin{eqnarray*}
(\xi\cdot\bar\Ac(i\xi)\xi)^{-1}
&=&(\xi\cdot\bar\Aa^1\xi)^{-1}\Big(1-(\xi\cdot\bar\Aa^1\xi)^{-1}i\xi\cdot(\bar\Ac(i\xi)-\bar\Aa^1)i\xi\Big)^{-1}\\
&=&(\xi\cdot\bar\Aa^1\xi)^{-1}\sum_{n=0}^N\Big((\xi\cdot\bar\Aa^1\xi)^{-1}i\xi\cdot(\bar\Ac(i\xi)-\bar\Aa^1)i\xi\Big)^n\\
&&+(\xi\cdot\bar\Ac(i\xi)\xi)^{-1}\Big((\xi\cdot\bar\Aa^1\xi)^{-1}i\xi\cdot(\bar\Ac(i\xi)-\bar\Aa^1)i\xi\Big)^{N+1},
\end{eqnarray*}
inserting the above expansion for $\bar\Ac$,
and comparing with the definition of $\Fc[\bar G^\ell]$,
we deduce that the discrepancy
\[R_\ell(\xi)\,:=\,(\xi\cdot\bar\Ac(i\xi)\xi)^{-1}-\Fc[\bar G^\ell](\xi)\]
satisfies for all $|\xi|\le1$, $1\le\ell\le 2d$, and $\eta>\delta K$,
\[|R_\ell(\xi)|\,\lesssim\,
\left\{\begin{array}{lll}
|\xi|^{\ell-2}&:&\ell<2d,\\
|\xi|^{2d-2-\eta}&:&\ell=2d.
\end{array}\right.\]
Similarly, taking derivatives, we find for all $|\xi|\le1$, $0\le k\le2d-1$, $1\le \ell\le2d$, and~$\eta>\delta K$,
\[|\nabla_\xi^kR_\ell(\xi)|\,\lesssim_\eta\,\left\{\begin{array}{lll}
|\xi|^{\ell-k-2}&:&\ell<2d,\\
|\xi|^{2d-k-2-\eta}&:&\ell=2d.
\end{array}\right.\]
Using the representation
\[\chi\ast(\nabla^\alpha\Gc-\nabla^\alpha\bar G^\ell)(x)\,=\,\int_{\R^d}e^{ix\cdot\xi}\hat\chi(\xi)\,(i\xi)^\alpha\,R_\ell(\xi)\,\frac{d\xi}{(2\pi)^{d/2}},\]
the conclusion now follows similarly as in the proof of Corollary~\ref{cor:avgcasreg}; we skip the details for shortness.
\qed

\subsection{Estimates without frequency cut-off}\label{sect:corproofs-cutoff}
In this section, we prove the different claims contained in Remark~\ref{rem:no-frequ-cutoff} on the possibility of removing the frequency cut-off in Corollary~\ref{cor:avgcasreg}.
We split the proof into three steps, separately proving items~(a), (b), and~(c).

\medskip
\step{1} Proof of~(a): for all $\alpha\ge0$ and $p>|\alpha|+d-2$,
\begin{equation}\label{eq:estim-cutoff-barG}
|\chi\ast\nabla^\alpha\bar G(x)-\nabla^\alpha\bar G(x)|\,\lesssim_{\chi,\alpha,p}\,|x|^{-p}.
\end{equation}
Given a radial cut-off function $g\in C^\infty_c(\R^d)$ with $g(\xi)=1$ for $|\xi|\le1$, we let $g_\e(\xi):=g(\e\xi)$ for $\e>0$. 
In these terms, by an approximation argument, we can represent for $|x|>0$,
\[\chi\ast\nabla^\alpha\bar G(x)-\nabla^\alpha\bar G(x)\,=\,
\lim_{\e\downarrow0}\int_{\R^d}e^{ix\cdot\xi}g_\e(\xi)(\hat\chi(\xi)-1)\,\frac{(i\xi)^\alpha}{m_0(\xi)}\,\frac{d\xi}{(2\pi)^{d/2}}.\]
For any integer $p\ge0$, integrating by parts, we may then estimate
\begin{equation}\label{eq:estim-cutoff-barG-0}
|\chi\ast\nabla^\alpha\bar G(x)-\nabla^\alpha\bar G(x)|\,\lesssim\,|x|^{-p}
\limsup_{\e\downarrow0}\int_{\R^d}\Big|\nabla_\xi^p\Big(g_\e(\xi)(\hat\chi(\xi)-1)\,\frac{(i\xi)^\alpha}{m_0(\xi)}\Big)\Big|\,d\xi,
\end{equation}
and thus, recalling that $(\hat\chi-1)(\xi)=0$ for $|\xi|\le1$, and noting that $|\nabla_\xi^ng_\e(\xi)|\lesssim_n|\xi|^{-n}$ and $|\nabla_\xi^n(\frac1{m_0})(\xi)|\lesssim_n|\xi|^{-n-2}$ for all $n\ge0$,
\[|\chi\ast\nabla^\alpha\bar G(x)-\nabla^\alpha\bar G(x)|\,\lesssim_{\chi,\alpha,p}\,|x|^{-p}
\int_{|\xi|\ge1}|\xi|^{|\alpha|-2-p}\,d\xi.\]
This proves the claim~\eqref{eq:estim-cutoff-barG} for all $p>d-2+|\alpha|$.

\medskip
\step{2} Proof of~(b): for all $|\alpha|\le1$, we have for $|x|\ge1$,
\begin{equation}\label{eq:remove-cutoff-CN}
|\chi\ast\nabla^\alpha\Gc(x)-\nabla^\alpha\Gc(x)|\,\lesssim_{\chi,\alpha,p}\,|x|^{-d-1},
\end{equation}
and in addition, for $|\alpha|=1$, we have for $|x|\ge2$,
\begin{equation}\label{eq:remove-cutoff-CN-re}
[\chi\ast\nabla^\alpha\Gc-\nabla^\alpha\Gc]_{C^{1-\eta}(B(x))}\,\lesssim_{\chi,\alpha,\eta,p}\,\left\{\begin{array}{lll}
|x|^{-d-1}&:&\eta>\frac{5d}{6d+6}+C\delta K,\\
|x|^{-d}&:&\eta>0.
\end{array}\right.
\end{equation}
Arguing as in~\eqref{eq:estim-cutoff-barG-0}, with the same cut-off $g_\e$,
we can estimate for any multi-index $\alpha\ge0$ and integer $p\ge0$,
\[|\chi\ast\nabla^\alpha\mathcal G(x)-\nabla^\alpha\mathcal G(x)|\,\lesssim\,
|x|^{-p}\limsup_{\e\downarrow0}\int_{\R^d}\Big|\nabla_\xi^p\Big(g_\e(\xi)(\hat\chi(\xi)-1)\,\frac{(i\xi)^\alpha}{m(\xi)}\,\Big)\Big|d\xi.\]
Recalling that $\hat\chi(\xi)-1=0$ for $|\xi|\le1$, that $\hat\chi$ is compactly supported, that we have $|\nabla_\xi^ng_\e(\xi)|\lesssim_n|\xi|^{-n}$ for all~$n\ge0$, and recalling the definition $m(\xi)=\xi\cdot\bar\Ac(i\xi)\xi$ and the uniform ellipticity of $\bar\Ac(i\xi)$, we deduce
\begin{equation*}
|\chi\ast\nabla^\alpha\mathcal G(x)-\nabla^\alpha\mathcal G(x)|\,\lesssim_{\chi,\alpha,g,p}\,
|x|^{-p}\sum_{n=0}^p\int_{|\xi|\ge1}|\xi|^{|\alpha|+n-2-p}|\nabla^{n}_\xi\bar\Ac(i\xi)|\,d\xi.
\end{equation*} 
In order to estimate the right-hand side, we appeal to the high-frequency weak integrability result for derivatives of the symbol~$\bar\Ac$ as established by Conlon and Naddaf in~\cite{conlon2000green}; see Appendix~\ref{ssect:CN}. More precisely, Lemma~\ref{lem:CN} yields $\nabla^n\bar\Ac\in\Ld^{d/n}_w(i\R^d)$ for $0\le n\le d-1$. As for $1\le n\le d-1$ and $p>d+|\alpha|-2$ the test function $\xi\mapsto\mathds1_{|\xi|\ge1}|\xi|^{|\alpha|+n-2-p}$ belongs to the dual Lorentz space $\Ld^{s}_w(\R^d)^*=\Ld^{s',1}(\R^d)$ with $s=\frac dn>1$, we deduce for any integer $p>d+|\alpha|-2$,
\begin{multline}\label{eq:remove-cutoff-CN-pre}
|\chi\ast\nabla^\alpha\mathcal G(x)-\nabla^\alpha\mathcal G(x)|\\[-2mm]
\,\lesssim_{\chi,\alpha,g,p}\,
|x|^{-p}
+\mathds1_{p\ge d}\,|x|^{-p}\sum_{n=d}^p\int_{|\xi|\ge1}|\xi|^{|\alpha|+n-2-p}|\nabla^{n}_\xi\bar\Ac(i\xi)|\,d\xi.
\end{multline}
To estimate the last sum, the Conlon--Naddaf integrability result of Lemma~\ref{lem:CN} needs to be properly combined with our regularity result of Theorem~\ref{th:exp-mix} by means of an interpolation argument.
On the one hand, as $\nabla^{d-1}\bar\Ac$ belongs to~$\Ld^\infty(i\R^d)$ by Theorem~\ref{th:exp-mix} (provided that~$\delta$ is small enough), the result of Lemma~\ref{lem:CN} yields by interpolation, for all~$0<\eta<1$ and~$q>\frac d{d-\eta}$,
\[\sup_{0<|y|\le1}\bigg\|\frac{\nabla^{d-1}\bar\Ac(\cdot+iy)-\nabla^{d-1}\bar\Ac}{|y|^{1-\eta}}\bigg\|_{\Ld^q(i\R^d)}\,\lesssim_{q,\eta}\,1,\]
which implies $\nabla^{d-1}\bar\Ac\in\dot W^{1-\eta,q}(i\R^d)$. 
On the other hand, Theorem~\ref{th:exp-mix} implies that $\bar\Ac$ belongs to $W^{s,\infty}(i\R^d)$ for all $s<2d-\delta K$, but also to $H^{s}(i\R^d)$ for all $s<\frac{5d}2-\delta K$, cf.~Remark~\ref{rem:reg-Hs}.
By interpolation, provided that $\delta$ is small enough, we can deduce
\begin{gather*}
\bar\Ac\in\dot W^{s,p}(i\R^d),\\
\text{for all $d\le s<d\tfrac{5-2\delta K/d}{2+\theta(3-2\delta K/d)}$, $\tfrac2{1+\theta}<p\le2$, and $0\le\theta\le1$.}\nonumber
\end{gather*}
In particular, this yields
\begin{equation}\label{eq:reg-interpol-Asymb}
\begin{array}{rl}
\nabla^d\bar\Ac\in\Ld^p(i\R^d),\quad&\text{for all~$p>1$},\\
\nabla^{d+1}\bar\Ac\in\Ld^p(i\R^d),\quad&\text{for all $p>\frac{6d+6}{6d+1}+C\delta K$}.
\end{array}
\end{equation}
Using this to control the right-hand side in~\eqref{eq:remove-cutoff-CN-pre} with $p=d+1$, the claim~\eqref{eq:remove-cutoff-CN} follows. The claim~\eqref{eq:remove-cutoff-CN-re} is deduced similarly by considering fractional differential quotients.

\medskip
\step{3} Proof of~(c):
if $\Aa$ is rotationally symmetric in law, then for all $|\alpha|<\frac{d+3}2$ we have for~$|x|\ge1$,
\[|\chi\ast\nabla^\alpha\Gc(x)-\nabla^\alpha\Gc(x)|\,\lesssim_{\chi,\alpha}\,|x|^{2-d-\frac{d+3}2}.\]
Setting for shortness $f_\e(\xi):=\frac{1}{m(\xi)}g_\e(\xi) (\hat\chi(\xi)-1)$, we start again with the following representation, for $|x|>0$,
\[\chi\ast\Gc(x)-\Gc(x)\,=\,\lim_{\e\downarrow0}\int_{\R^d}e^{i x\cdot\xi}  f_\e(\xi)\,d\xi.\]
If $\Aa$ is rotationally symmetric in law, we find that the symbol $m(\xi)=\xi\cdot\bar\Ac(i\xi)\xi$ is radial on~$\R^d$.
Without loss of generality, we can assume that the cut-off function $\chi$ is also radial.
Since $g_\e$ was taken radial, the function $f_\e$ is also radial.
Using radial variables $|x|\equiv r$ and $|\xi|\equiv k$, and using the abusive notation $f_\e(\xi)=f_\e(k)$, the Fourier transform of $f_\e$ takes the form
\[\int_{\R^d}e^{i x\cdot\xi}  f_\e(\xi)\,d\xi 
\,=\,c r^{-\nu} \int_0^\infty k^{\nu+1} \mathcal J_{\nu}(rk) f_\e(k)\,dk,\qquad \nu:=\tfrac{d-2}{2},\]
where $c$ is some universal constant and where $\mathcal J_\nu$ stands for the Bessel function of the first kind; see e.g.~\cite[Theorem~3.3]{SteinWeiss}.
Taking spatial derivatives, we may then deduce for $\alpha\ge0$,
\[|\chi\ast\nabla^\alpha\Gc(x)-\nabla^\alpha\Gc(x)|\,\lesssim_\alpha\,\sum_{n=0}^{|\alpha|}r^{n-|\alpha|-\nu}\limsup_{\e\downarrow0}\bigg|\partial_r^n\Big(\int_0^\infty k^{\nu+1} \mathcal J_{\nu}(rk) f_\e(k)\,dk\Big)\bigg|.\]
Noting that for a smooth function $h$ we have $\partial_r^n( h(rk))=(\frac{k}{r})^n\partial_k^n( h(rk))$ for all $n\ge0$, and integrating by parts, we are led to
\[|\chi\ast\nabla^\alpha\Gc(x)-\nabla^\alpha\Gc(x)|\,\lesssim_\alpha\,r^{-|\alpha|-\nu}\sum_{n=0}^{|\alpha|}\limsup_{\e\downarrow0}\bigg|\int_0^\infty \mathcal J_{\nu}(rk)\,\partial_k^n\big(k^{\nu+n+1}f_\e(k)\big)\,dk\bigg|.\]
Now recall the following identity for Bessel functions, which serves as the basis for harnessing oscillations: $\partial_z (z^{\lambda+1} \mathcal J_{\lambda+1}(z))=z^{\lambda+1} \mathcal J_{\lambda}(z)$. After rescaling, this gives
\[\mathcal J_{\lambda}(rk)
\,=\,r^{-1}k^{-\lambda-1}\partial_k (k^{\lambda+1} \mathcal J_{\lambda+1}(rk)).\]
Iteratively applying this identity and integrating by parts, we get for an  integer $p\ge0$ to be chosen later,
\begin{multline*}
|\chi\ast\nabla^\alpha\Gc(x)-\nabla^\alpha\Gc(x)|\\
\,\lesssim_\alpha\,r^{-p-|\alpha|-\nu}\sum_{n=0}^{|\alpha|}\limsup_{\e\downarrow0}\bigg|\int_0^\infty \mathcal J_{\nu+p}(rk)\,k^{\nu+p}(\partial_k k^{-1})^p(k^{-\nu}\partial_k^n)\big(k^{\nu+n+1}f_\e(k)\big)\,dk\bigg|.
\end{multline*}
Recalling as in Step~2 that we have for $n\ge0$,
\[|\partial_k^nf_\e(k)|\lesssim_n\sum_{m=0}^nk^{m-2-n}|\partial_\xi^m\bar\Ac(i\xi)|\mathds1_{k\ge1},\]
and recalling that Bessel functions satisfy the pointwise decay $|\mathcal J_\lambda(z)|\lesssim|z|^{-1/2}$ for all $z,\lambda\ge0$, we deduce for  integers~$p\ge0$,
\begin{equation*}
|\chi\ast\nabla^\alpha\Gc(x)-\nabla^\alpha\Gc(x)|\,\lesssim_\alpha\,r^{-p-|\alpha|-\nu-\frac12}\sum_{n=0}^{|\alpha|+p}\int_{|\xi|\ge1} |\xi|^{n+\nu-p-d-\frac12}|\partial_\xi^n\bar\Ac(i\xi)|\,d\xi.
\end{equation*}
Now appealing to the Conlon--Naddaf lemma in form of~\eqref{eq:reg-interpol-Asymb}, the conclusion follows by a direct computation.
\qed

\appendix
\section{Conlon--Naddaf--Sigal approach}\label{app:Sigal}
This appendix is devoted to the proof of Lemma~\ref{lem:Sigal},
which we split into three steps.

\medskip
\step1 Proof that $\Psi(\cdot,i\xi)$ as defined in~\eqref{eq:defin-k} is stationary and has vanishing expectation and finite second moments,
\begin{equation}\label{eq:estim-mom-Phi}
\E[\Psi(\cdot,i\xi)]\,=\,0,\qquad\E[|\Psi(\cdot,i\xi)e|^2]\,\le\,C_0^4|e|^2,\qquad\text{for all $e\in\R^d$}.
\end{equation}
The well-definiteness of $\Psi(\cdot,i\xi)$ as a stationary field with vanishing expectation and finite second moments is a consequence of the Lax--Milgram lemma as e.g.\@ in~\cite[Section~7.2]{JKO94}. We turn to the proof of the actual bound~\eqref{eq:estim-mom-Phi} on second moments.
By uniform ellipticity~\eqref{eq:ellipt-a} and by definition of~$\Psi(\cdot,i\xi)$, we find
\[\E[|\Psi(\cdot,i\xi)e|^2]\le C_0\E\big[(\overline{\Psi(\cdot,i\xi)e})\cdot \Aa\Psi(\cdot,i\xi)e\big]\,=\,-C_0\E[e\cdot \Aa\Psi(\cdot,i\xi)e],\]
and thus, by the Cauchy--Schwarz inequality,
\[\E[|\Psi(\cdot,i\xi)e|^2]\le C_0^2\E[|\Aa e|^2].\]
The claim~\eqref{eq:estim-mom-Phi} then follows from the boundedness of $\Aa$, cf.~\eqref{eq:ellipt-a}.

\medskip
\step2 Proof that the matrix $\bar\Ac(i\xi)$ as defined in~\eqref{eq:defin-barA-re} is uniformly elliptic and bounded in the sense of
\[e\cdot\bar\Ac(i\xi)e\,\ge\,\tfrac1{C_0}|e|^2,\qquad|\bar\Ac(i\xi)e|\le C_0^3|e|,\qquad\text{for all $e\in\R^d$}.\]
By definition of $\bar\Ac(i\xi)$ and $\Psi(\cdot,i\xi)$, we have
\[e\cdot\bar\Ac(i\xi)e\,=\,\E[e\cdot \Aa(e+\Psi(\cdot,i\xi)e)]
\,=\,\E\big[(\overline{e+\Psi(\cdot,i\xi)e})\cdot \Aa(e+\Psi(\cdot,i\xi)e)\big],\]
and thus, by uniform ellipticity~\eqref{eq:ellipt-a} and by Jensen's inequality,
\[e\cdot\bar\Ac(i\xi)e\,\ge\,\tfrac1{C_0}\E[|e+\Psi(\cdot,i\xi)e|^2]\,\ge\,\tfrac1{C_0}|e|^2.\]
For the upper bound, we start by noting that the same argument as in Step~1 yields
\begin{equation*}
\E[|e+\Psi(\cdot,i\xi)e|^2]
\,\le\,C_0\E\big[(\overline{e+\Psi(\cdot,i\xi)e})\cdot\Aa(e+\Psi(\cdot,i\xi)e)\big]
\,=\,C_0\E\big[e\cdot\Aa(e+\Psi(\cdot,i\xi)e)\big],
\end{equation*}
and thus
\[\E[|e+\Psi(\cdot,i\xi)e|^2]\,\le\,
C_0^2\E[|\Aa e|^2]\,\le\,C_0^4|e|^2,\]
which leads us to
\[|\bar\Ac(i\xi)e|\,=\,|\E[\Aa(e+\Psi(\cdot,i\xi)e)]|\,\le\,C_0\E[|e+\Psi(\cdot,i\xi)e|]\,\le\,C_0^3|e|,\]
as claimed.

\medskip
\step3 Proof that the ensemble average $\E[\nabla u_{\e,f}]=\nabla\bar u_{\e,f}$ satisfies the following well-posed pseudo-differential equation,
\[-\nabla\cdot\bar\Ac(\e\nabla)\nabla\bar u_{\e,f}=\nabla\cdot f,\]
and that fluctuations of $\nabla u_{\e,f}$ can be described through
\[\nabla u_{\e,f}-\E[\nabla u_{\e,f}]\,=\,\Psi(\tfrac\cdot\e,\e\nabla)\E[\nabla u_{\e,f}].\]
In terms of the projections $P=\E$ and $P^\bot=\Id-\E$ on $\Ld^2(\R^d\times\Omega)$, we consider the block decomposition~\eqref{eq:block-schur} of the elliptic operator $L=-\nabla\cdot\Aa\nabla$. By the Schur complement formula, this entails the following block decomposition of the solution operator on~\mbox{$\Ld^2(\R^d\times\Omega)$,}
\begin{eqnarray*}
P\nabla L^{-1}\nabla P
&=&\nabla\big(PLP-PLP^\bot(P^\bot LP^\bot)^{-1}P^\bot LP\big)^{-1}\nabla,\\
P^\bot\nabla L^{-1}\nabla P
&=&-\nabla(P^\bot LP^\bot)^{-1}P^\bot LP\big(PLP-PLP^\bot(P^\bot LP^\bot)^{-1}P^\bot LP\big)^{-1}\nabla,
\end{eqnarray*}
provided that the inverses do make sense. In terms of $\Psi$ and $\bar\Ac$, cf.~\eqref{eq:defin-K} and~\eqref{eq:defin-barA}, this precisely means
\begin{eqnarray*}
P\nabla L^{-1}\nabla P
&=&\nabla(-\nabla\cdot \bar\Ac(\nabla)\nabla)^{-1}\nabla P,\\
P^\bot\nabla L^{-1}\nabla P
&=&\Psi(\cdot,\nabla) \nabla(-\nabla\cdot \bar\Ac(\nabla)\nabla)^{-1}\nabla P.
\end{eqnarray*}
By Steps~1 and~2, these expressions both make sense on $\Ld^2(\R^d\times\Omega)^d$ and the conclusion then follows by $\e$-scaling.\qed

\section{Conlon--Naddaf lemma}\label{ssect:CN}
We recall the following result due to Conlon and Naddaf~\cite{conlon2000green}, which provides some frequency decay for derivatives of the symbol $\bar\Ac$. This high-frequency result is somehow orthogonal to the local regularity of the symbol that we are concerned with elsewhere in this work: it is unrelated to homogenization and it holds in the general stationary setting without any mixing assumption.
We refer to~\cite[Lemmas~3.9 and~3.10]{conlon2000green} for a proof in the discrete setting, which is easily generalized to the continuous setting as indicated in~\cite[Lemma~6.4 and eqn~(6.26)]{conlon2000green}.
A sketch of the proof is included below for the readers' convenience.

\begin{lem}[Conlon \& Naddaf~\cite{conlon2000green}]\label{lem:CN}
Let $\Aa$ be a stationary measurable random coefficient field satisfying the uniform ellipticity and boundedness assumptions~\eqref{eq:ellipt-a}, and let $\bar\Ac(\nabla)$ be the bounded convolution operator defined in Lemma~\ref{lem:Sigal}.
For all integers $n<d$ we have~$\nabla_\xi^n\bar\Ac\in \Ld^{d/n}_w(i\R^d)$, and in addition for all $0<\eta<1$,
\[\sup_{0<|y|\le1}\bigg\|\frac{\nabla_\xi^n\bar\Ac(\cdot+iy)-\nabla_\xi^n\bar\Ac}{|y|^\eta}\bigg\|_{\Ld^{\frac d{n+\eta}}_w(i\R^d)}\,\lesssim_\eta\,1.\]
\end{lem}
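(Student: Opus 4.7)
The plan is to reduce the lemma to the pointwise decay estimate
\[|\nabla_\xi^n\bar\Ac(i\xi)|\,\lesssim_n\,|\xi|^{-n},\qquad\xi\ne0,\]
valid for every integer $n\ge 0$, from which both weak-Lebesgue conclusions follow by direct computation: the inclusion $|\xi|^{-n}\in\Ld^{d/n}_w(i\R^d)$ for $n<d$ is immediate from the level sets, and the fractional-quotient statement (with $0<\eta<1$, for which $n+\eta<d$ automatically since $n$ is an integer strictly less than $d$) is obtained by splitting the range of $\xi$ into the regions $|\xi|\gtrsim|y|$ (where Taylor's theorem combined with the pointwise bound on $\nabla_\xi^{n+1}\bar\Ac$ yields $\lesssim|y||\xi|^{-n-1}$, hence $\lesssim|y|^\eta|\xi|^{-n-\eta}$ after dividing by $|y|^\eta$) and $|\xi|\lesssim|y|$ (where the two pointwise bounds on $\nabla_\xi^n\bar\Ac$ evaluated at $\xi$ and $\xi+y$ yield a uniformly bounded $\Ld^{d/(n+\eta)}$ contribution by direct integration, using $n<d$).

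To obtain the pointwise bound I would start from the Schur-complement representation of Lemma~\ref{lem:Sigal}. Setting $D_\xi:=\nabla+i\xi$, where $\nabla$ denotes the generator of the shift action of $\R^d$ on $(\Omega,\Pm)$, the shifted elliptic operator $\mathcal L_\xi:=-D_\xi\cdot P^\bot\Aa P^\bot D_\xi$, and its resolvent $R_\xi:=\mathcal L_\xi^{-1}$ on $\Ld^2_0(\Omega):=\{v\in\Ld^2(\Omega):\E[v]=0\}$, one obtains the bilinear representation
\[e'\cdot(\bar\Ac(i\xi)-\E[\Aa])e\,=\,-\langle\phi^{e'}_{-\xi},R_\xi\phi^e_\xi\rangle_{\Ld^2(\Omega)},\qquad\phi^e_\xi:=D_\xi\cdot P^\bot\Aa e.\]
Combining the coercivity $\Re\langle v,\mathcal L_\xi v\rangle\ge C_0^{-1}\|P^\bot D_\xi v\|^2$ with Lax--Milgram and the spectral structure of $D_\xi$ on mean-zero stationary fields (where the imaginary shift $i\xi$ provides an effective mass of order $|\xi|$), one then derives the following uniform operator bounds on $\Ld^2_0(\Omega)$, for $\xi\ne 0$,
\[\|R_\xi\|\lesssim|\xi|^{-2},\qquad\|D_\xi R_\xi\|,\,\|R_\xi D_\xi\|\lesssim|\xi|^{-1},\qquad\|D_\xi R_\xi D_\xi\|\lesssim 1,\]
the last being equivalent to the uniform bound on $\Psi(\cdot,i\xi)$ already contained in Lemma~\ref{lem:Sigal}. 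Next, one differentiates iteratively using $\partial_{\xi_j}R_\xi=-R_\xi(\partial_{\xi_j}\mathcal L_\xi)R_\xi$, noting that $\partial_{\xi_j}\mathcal L_\xi$ splits as a sum of two terms of the form $D_\xi\cdot M$ or $M\cdot D_\xi$ with $M$ a bounded multiplication operator. By induction on $n$, the derivative $\nabla_\xi^n\bar\Ac(i\xi)$ is then a combinatorial sum of bilinear pairings whose factors can always be bundled into the four bounded blocks above; counting the powers of $|\xi|^{-1}$ (each differentiation replaces a saturated block $D_\xi R_\xi D_\xi$ by a product of two unsaturated blocks, adding exactly one factor of $|\xi|^{-1}$) immediately yields the claimed pointwise bound.

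The main obstacle is the rigorous justification of the uniform resolvent bounds above within the stationary random framework. This requires interpreting $\nabla$ via the spectral theorem for the commuting self-adjoint generators $(-i\nabla_1,\ldots,-i\nabla_d)$ of the shift group on $(\Omega,\Pm)$, and carrying out the Lax--Milgram analysis of $\mathcal L_\xi$ on a suitable Sobolev-type completion adapted to the shifted gradient $D_\xi$. Once this analytic framework is set up, the derivative book-keeping in the induction is combinatorial in nature and does not raise additional analytic difficulties.
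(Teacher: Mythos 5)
Your plan hinges on the pointwise estimate $|\nabla_\xi^n\bar\Ac(i\xi)|\lesssim_n|\xi|^{-n}$, which you would deduce from resolvent bounds $\|R_\xi\|\lesssim|\xi|^{-2}$ and $\|D_\xi R_\xi\|,\|R_\xi D_\xi\|\lesssim|\xi|^{-1}$ on $\Ld^2_0(\Omega)$. These operator bounds are false in the general stationary random setting, and this is a genuine gap rather than a technicality. For a non-periodic stationary coefficient field, the shift group on $(\Omega,\Pm)$ has \emph{continuous} spectrum filling all of $\R^d$: the spectral measure of a mean-zero stationary field $v\in\Ld^2_0(\Omega)$ may be concentrated arbitrarily close to any frequency in $\R^d\setminus\{0\}$, in particular arbitrarily close to $-\xi$. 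For such $v$, $D_\xi v=(\nabla+i\xi)v$ is arbitrarily small in $\Ld^2(\Omega)$, so the imaginary shift $i\xi$ provides no ``effective mass of order $|\xi|$'' and $\mathcal L_\xi$ has no spectral gap at scale $|\xi|^2$. (This intuition is correct in the periodic case, where the spectrum of $\nabla$ is a discrete lattice and $\dist(\xi,\text{lattice})\gtrsim|\xi|$ for small $\xi$, but that is exactly the feature that disappears in the random case.) Only the saturated combination $D_\xi R_\xi D_\xi$ is uniformly bounded on $\Ld^2_0(\Omega)$, which is the Lax--Milgram estimate already recorded in Lemma~\ref{lem:Sigal}; and indeed the paper's proof sketch of Lemma~\ref{lem:CN} states explicitly that the Riesz potential $U_\xi=\nabla_\xi\Delta_\xi^{-1}P^\bot$ is \emph{not} bounded on $\Ld^2(\Omega)$ for any fixed $\xi$. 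As soon as you differentiate $\xi\mapsto K_\xi$, you produce unsaturated $D_\xi$ or $R_\xi$ factors, the combinatorial bookkeeping no longer closes, and the claimed pointwise bound does not follow. Note also that your reduction, while correct as far as it goes, replaces the lemma by a strictly stronger pointwise statement: the lemma only asserts weak-$\Ld^{d/n}$ membership, which precisely tolerates local blow-ups of $\nabla_\xi^n\bar\Ac$ away from the origin that cannot be ruled out.

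The paper's route (following Conlon--Naddaf) is designed to sidestep exactly this obstruction. Instead of fixing $\xi$ and trying to bound operators on $\Ld^2(\Omega)$, one views $\xi\mapsto U_\xi\phi$ as a map into $\Ld^2(\Omega)$-valued functions of $\xi$ and writes $U_\xi\phi=(U_\phi h)(\xi)$ with $h(\xi)=\xi|\xi|^{-2}\in\Ld^d_w(\R^d)$ and $U_\phi$ a convolution-in-$\xi$ operator whose kernel is the spectral measure $\hat C_\phi$, a positive finite measure by Bochner's theorem. One then proves $\Ld^2\to\Ld^2$ and $\Ld^\infty\to\Ld^\infty$ bounds for $U_\phi$ in the $\xi$-variable, interpolates (Hunt/Marcinkiewicz) to a weak-$\Ld^d$ bound, and evaluates on $h$. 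The weak-Lebesgue norm in $\xi$ is doing the work that a pointwise bound cannot do here: it averages out the frequencies at which the effective spectral gap degenerates.
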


\begin{proof}[Sketch of the proof]
Let us focus on the proof that $\nabla_\xi\bar\Ac\in\Ld_w^d(i\R^d)$.
We recall that
\begin{gather*}
\bar\Ac(i\xi)\,=\,\E[\Aa(\Id+\Psi(\cdot,i\xi))],\qquad\Psi(\cdot,i\xi)=K_\xi \Aa,\\
K_\xi:=P^\bot\nabla_\xi\big(-\nabla_\xi\cdot P^\bot\Aa P^\bot\nabla_\xi\big)^{-1}\nabla_\xi\cdot P^\bot,\qquad\nabla_\xi:=\nabla+i\xi.
\end{gather*}
Taking the derivative of the above expression for $\bar\Ac(i\xi)$ with respect to $\xi$, we find
\begin{multline*}
\partial_{\xi_l}\bar\Ac(i\xi)\,=\,2\E\Big[\Aa P^\bot e_l\big(-\nabla_\xi\cdot P^\bot\Aa P^\bot\nabla_\xi\big)^{-1}\nabla_\xi\cdot P^\bot\Aa\Big]\\
+2\E\Big[\Aa P^\bot \nabla_\xi\big(-\nabla_\xi\cdot P^\bot\Aa P^\bot\nabla_\xi\big)^{-1}\big(e_l\cdot P^\bot\Aa P^\bot\nabla_\xi\big)\big(-\nabla_\xi\cdot P^\bot\Aa P^\bot\nabla_\xi\big)^{-1}\nabla_\xi\cdot P^\bot\Aa\Big],
\end{multline*}
which can be written as follows (without being too precise with matrix contractions),
\begin{equation}\label{eq:diff-A(xi)}
\nabla_{\xi}\bar\Ac(i\xi)
\,=\,2\E[\Aa  U_\xi K_\xi \Aa]
+2\E[\Aa K_\xi U_\xi \Aa K_\xi \Aa],
\end{equation}
in terms of the Riesz potential
\[U_\xi:=\nabla_\xi\Delta_\xi^{-1}P^\bot.\]
Note that the operator $K_\xi$ obviously satisfies for all $\xi$,
\[\|K_\xi\|_{\Ld^2(\Omega)\to\Ld^2(\Omega)}\,\le\,1,\] 
while on the contrary $U_\xi$ is not bounded on $\Ld^2(\Omega)$ for any fixed $\xi$.
To grasp a better understanding of $U_\xi$, we first note that it can be written as
\begin{equation}\label{eq:rep-Uxi}
U_\xi \phi\,=\,\int_{\R^d}e^{-ix\cdot\xi}\,\nabla\Delta^{-1}(x)\,P^\bot\phi(\tau_x\cdot)\,dx\,=\,h\ast P^\bot\hat\phi(\xi),\qquad h(\xi):=\xi|\xi|^{-2},
\end{equation}
in terms of the Fourier transform $\hat\phi(\xi):=\int e^{-ix\cdot\xi}\phi(\tau_x\cdot)\,dx$, which is defined almost surely in the sense of tempered distributions, where $x\mapsto\phi(\tau_x\cdot)$ stands for the stationary extension of $\phi$.
For fixed $\phi\in\Ld^2(\Omega)$, this motivates to consider the map $U_\phi:f\mapsto f\ast P^\bot\hat\phi$ defined on~$C^\infty_c(\R^d)$.
As by Bochner's theorem the Fourier transform~$\hat C_\phi$ of the covariance function $C_\phi(x):=\cov{\phi(\tau_x\cdot)}\phi$ is a positive measure with $\int_{\R^d}\hat C_\phi(dk)\le\|\phi\|_{\Ld^2(\Omega)}^2$, we can compute
\[\|(U_\phi f)(\xi)\|_{\Ld^2(\Omega)}\,=\,\Big(\int_{\R^d}|f(\xi+k)|^2\hat C_\phi(dk)\Big)^\frac12,\]
and we note that
\begin{eqnarray*}
\|U_\phi f\|_{\Ld^2(\R^d;\Ld^2(\Omega))}&\le&\|\phi\|_{\Ld^2(\Omega)}\|f\|_{\Ld^2(\R^d)},\\
\|U_\phi f\|_{\Ld^\infty(\R^d;\Ld^2(\Omega))}&\le&\|\phi\|_{\Ld^2(\Omega)}\|f\|_{\Ld^\infty(\R^d)}.
\end{eqnarray*}
By Hunt's interpolation, this entails
for all $2<p<\infty$,
\[\|U_\phi f\|_{\Ld^p_w(\R^d;\Ld^2(\Omega))}\,\le\,C_p\|\phi\|_{\Ld^2(\Omega)}\|f\|_{\Ld_w^p(\R^d)}.
\]
Applying this to~\eqref{eq:rep-Uxi} in form of $U_\xi\phi=(U_\phi h)(\xi)$ with $h\in\Ld^d_w(\R^d)$,
we deduce that the map $\phi\mapsto U\phi$ given by $U\phi(\xi):=U_\xi\phi$ satisfies
\[\|U\phi\|_{\Ld^d_w(\R^d;\Ld^2(\Omega))}\,\lesssim\,\|\phi\|_{\Ld^2(\Omega)}.\]
Using this and the boundedness of $K$ to estimate~\eqref{eq:diff-A(xi)}, we can deduce $\nabla_\xi\bar\Ac\in\Ld^d_w(\R^d)$.
As shown in~\cite{conlon2000green}, for higher derivatives, a careful (nontrivial) iteration of this argument is possible and yields the conclusion.
\end{proof}

\section*{Acknowledgements}
MD acknowledges financial support from the F.R.S.-FNRS, as well as from the European Union (ERC, PASTIS, Grant Agreement n$^\circ$101075879).\footnote{Views and opinions expressed are however those of the authors only and do not necessarily reflect those of the European Union or the European Research Council Executive Agency. Neither the European Union nor the granting authority can be held responsible for them.} The research of ML is partially supported by the DFG
through the grant TRR 352 -- Project-ID 470903074.
 FP was funded by the SNSF.

\bibliographystyle{plain}
\bibliography{biblio}

\end{document}